\newtheorem*{thma}{Theorem~A}
\newtheorem*{thmb}{Theorem~B}
\newtheorem*{thmc}{Theorem~C}
\newtheorem*{thmd}{Theorem~D}
\newtheorem{thm}{Theorem}[section]
\newtheorem{cor}[thm]{Corollary}
\newtheorem{prop}[thm]{Proposition}
\newtheorem{fact}[thm]{Fact}
\newtheorem{lemma}[thm]{Lemma}
\newtheorem{claim}{Claim}[thm]
\theoremstyle{definition}
\newtheorem{defn}[thm]{Definition}
\newtheorem{definition}[thm]{Definition}
\newtheorem{q}[thm]{Question}
\newtheorem{conv}[thm]{Convention}
\theoremstyle{remark}
\newtheorem{remark}[thm]{Remark}
\newtheorem{remarks}{Remark}
\newcommand*\axiomfont[1]{\textsf{\textup{#1}}}
\newcommand\zfc{\axiomfont{ZFC}}
\newcommand\s{\subseteq}
\newcommand\sq{\sqsubseteq}
\newcommand\symdiff{\triangle}
\newcommand\conc{{^\smallfrown}}
\newcommand\last[2]{\eth_{#1,#2}}
\newcommand\br{\blacktriangleright}
\renewcommand\mid{\mathrel{|}\allowbreak}
\newcommand\Mid{\mathrel{}\middle|\mathrel{}}
\renewcommand{\restriction}{\mathbin\upharpoonright}
\newcommand{\stick}{{{\ensuremath \mspace{2mu}\mid\mspace{-12mu} {\raise0.6em\hbox{$\bullet$}}}}}
\newcommand{\pro}[5]{\pr_1(#1,\allowbreak\faktor{{\scriptstyle{#2\circledast#1}}}{ {}^{#3\circledast#1}},\allowbreak#4,#5)}
\DeclareMathOperator{\reg}{Reg}
\DeclareMathOperator{\cf}{cf}
\DeclareMathOperator{\cl}{cl}
\DeclareMathOperator{\Tr}{Tr}
\DeclareMathOperator{\tr}{tr}
\DeclareMathOperator{\im}{Im}
\DeclareMathOperator{\otp}{otp}
\DeclareMathOperator{\dom}{dom}
\DeclareMathOperator{\add}{Add}
\DeclareMathOperator{\acc}{acc}
\DeclareMathOperator{\nacc}{nacc}
\DeclareMathOperator{\p}{P}
\DeclareMathOperator{\pr}{Pr}
\DeclareMathOperator{\pl}{P\ell}
\DeclareMathOperator{\ssup}{ssup}
\author{Assaf Rinot}
\address{Department of Mathematics, Bar-Ilan University, Ramat-Gan 5290002, Israel.}
\urladdr{http://www.assafrinot.com}
\author{Jing Zhang}
\address{Department of Mathematics, Bar-Ilan University, Ramat-Gan 5290002, Israel.}
\urladdr{https://jingjzzhang.github.io/}
\keywords{Strong colorings, transformations of the transfinite plane, walks on ordinals, nonreflecting stationary set, square, xbox, stick, proxy principle}
\title{Strongest transformations}
\begin{document}
\begin{abstract} We continue our study of maps transforming high-dimensional complicated objects into squares of stationary sets.
Previously, we proved that many such transformations exist in $\zfc$,
and here we address the consistency of the strongest conceivable transformations.

Along the way, we obtain new results on Shelah's coloring principle $\pr_1$.
For $\kappa$ inaccessible, we prove the consistency of $\pr_1(\kappa,\kappa,\kappa,\kappa)$.
For successors of regulars, we obtain a full lifting of Galvin's 1980 theorem.
In contrast, the full lifting of Galvin's theorem to successors of singulars is shown to be inconsistent.
\end{abstract}
\date{Preprint as of April~30, 2021. For the latest version, visit \textsf{http://p.assafrinot.com/45}.}
\maketitle
\section{Introduction}
Throughout the paper, $\kappa$ denotes a regular uncountable cardinal,
and $\theta,\chi$ denote (possibly finite) cardinals $\le\kappa$. 
In \cite{paper44}, the authors introduced the transformation principle $\pl_1(\kappa,\theta,\chi)$ (see Definition~\ref{fulldefpl1}),
proved that it is strictly stronger than Shelah's coloring principle $\pr_1(\kappa,\kappa,\theta,\chi)$,
proved that $\pl_1(\kappa,1,\chi)$ implies that $\pr_1(\kappa,\kappa,\theta,\chi)$
is no stronger than the classical negative partition relation $\kappa\nrightarrow[\kappa]^2_\theta$,
and demonstrated the utility of this reduction in additive Ramsey theory.

Most of \cite{paper44} was devoted to providing sufficient conditions for instances of $\pl_1(\ldots)$ to hold.
In particular, combining walks on ordinals with strong forms of the oscillation oracle $\pl_6(\ldots)$, 
it was shown that  many instances of $\pl_1(\ldots)$  are theorems of $\zfc$.
Note that even the very weak instance $\pl_1(\kappa,1,3)$ is quite powerful,
as it allows the transformation of rectangles into squares, as in \cite{paper13}.

The current paper is dedicated to studying the strongest instance of $\pl_1(\kappa,\theta,\chi)$,
being $\theta:=\kappa$ and $\chi:=\sup(\reg(\kappa))$ and a further strengthening of which, as follows:

\begin{defn} For a stationary subset $\Gamma\s\kappa$,
$\pl_2(\kappa,\Gamma,\chi)$ asserts the existence of a transformation $\mathbf t:[\kappa]^2\rightarrow[\kappa]^2$ satisfying the following:
\begin{itemize}
\item for every $(\alpha,\beta)\in[\kappa]^2$, if $\mathbf t(\alpha,\beta)=(\alpha^*,\beta^*)$, then $\alpha^*\le\alpha<\beta^*\le\beta$;
\item for every $\sigma<\chi$ and every pairwise disjoint subfamily $\mathcal A\s[\kappa]^{\sigma}$ of size $\kappa$,
there exists a club $D\s\kappa$, such that, for all $(\alpha^*,\beta^*)\in[\Gamma\cap D]^2$,
there exists $(a,b)\in[\mathcal A]^2$ with $\mathbf t[a\times b]=\{(\alpha^*,\beta^*)\}$.
\end{itemize}
\end{defn}
Our first main result concerns successors of regular cardinals:

\begin{thma} For every infinite regular cardinal $\mu$, any of the following imply that
$\pl_2(\mu^+,E^{\mu^+}_\mu,\mu)$ holds:
\begin{enumerate}
\item $\stick(\mu^+)$;
\item  $(\mu^+)^{\aleph_0}=\mu^+$ and $\clubsuit(S)$ holds for some nonreflecting stationary $S\s\mu^+$.
\end{enumerate}
\end{thma}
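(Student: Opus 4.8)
The plan is to construct a single transformation $\mathbf t$ that works under either hypothesis, by isolating a common \emph{capturing} feature and feeding it into walks on ordinals. I would fix a $C$-sequence $\langle C_\gamma:\gamma<\mu^+\rangle$ with $\otp(C_\gamma)\le\mu$ for all $\gamma$ and with $C_\delta$ a club in $\delta$ of order type $\mu$ for every $\delta\in E^{\mu^+}_\mu$, and let $\Tr$ be the induced trace function. Define $\mathbf t(\alpha,\beta)$ by walking from $\beta$ down toward $\alpha$ and reading off two ordinals: a \emph{lower} value $\alpha^*$, taken as a supremum of the form $\sup(C_\gamma\cap\alpha)$ for the last relevant $\gamma$ on the walk (hence $\alpha^*\le\alpha$), and an \emph{upper} value $\beta^*$, taken from an early step of the walk out of $\beta$ (hence $\alpha<\beta^*\le\beta$). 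With this design the monotonicity clause $\alpha^*\le\alpha<\beta^*\le\beta$ is automatic, so all the content lies in the second clause of $\pl_2$.

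I would reduce that clause to the following capturing statement. Given a pairwise disjoint $\mathcal A\s[\mu^+]^{\sigma}$ of size $\mu^+$ with $\sigma<\mu$, choose one representative $r_a\in a$ per block; then $R=\{r_a:a\in\mathcal A\}$ is unbounded, and the guessing object supplied by the hypothesis captures $R$. Under (1), $\stick(\mu^+)$ gives this verbatim: some member of the stick family is a $\mu$-sized subset of $R$, so $\mu$-many entire blocks of $\mathcal A$ are captured at once, located cofinally in $\mu^+$. Under (2) the picture is subtler: since a nonreflecting stationary set is genuinely constrained only below $E^{\mu^+}_\mu$, the natural choice is $S\s E^{\mu^+}_\omega$, and $\clubsuit(S)$ then yields merely \emph{countable} captures $A_\delta\s R$. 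Here the nonreflection of $S$ is what keeps the later walk analysis \emph{local}, the capturing points failing to accumulate inside a smaller ordinal and corrupt traces, while the arithmetic hypothesis $(\mu^+)^{\aleph_0}=\mu^+$ is used to enumerate the ladders $\langle A_\delta:\delta\in S\rangle$ and their finitary walk-data, so that the $\omega$-sized captures can be amalgamated along a recursion of length $\mu$ into the $\mu$-sized aligned capture that case (1) receives for free.

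The heart of the matter, and the step I expect to be the main obstacle, is converting one capture into a pair of blocks $a,b$ on which $\mathbf t$ is \emph{constant} with value a prescribed target $(\alpha^*,\beta^*)\in[E^{\mu^+}_\mu\cap D]^2$. Collapsing the entire rectangle forces all $|a|\cdot|b|$ many walks from elements of $b$ down to elements of $a$ to share a single trace near $\beta^*$ and near $\alpha^*$, and since guessing delivers no coherence of the $C$-sequence, this uniformity must be extracted by thinning. I would align the $\mu$ captured blocks increasingly against the cofinal sequence $C_{\beta^*}$ of order type $\mu$, and then, using the regularity of $\mu$ together with a pressing-down argument, stabilize the relevant $<\mu$-valued invariant of the walk (which initial segment of $C_{\beta^*}$ the elements step out through) on a captured subfamily of full size $\mu$; as $\sigma<\mu$, this survives the further passage from representatives to whole blocks and isolates a single $b$ on which the upper coordinate is uniformly $\beta^*$. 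Relativizing the same device below $\beta^*$, against $C_{\alpha^*}$ and using captured blocks lying in $(\alpha^*,\beta^*)$, produces an $a$ on which the lower coordinate is uniformly $\alpha^*$ with $\max(a)<\beta^*\le\min(b)$, whence $\mathbf t[a\times b]=\{(\alpha^*,\beta^*)\}$.

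Finally, $D$ should be the club of $\gamma<\mu^+$ closed under the block-enumeration of $\mathcal A$ and under the natural closure conditions attached to the $C$-sequence, so that for $\delta\in E^{\mu^+}_\mu\cap D$ the captured blocks appear cofinally both below and above $\delta$; the essential point is that $D$ depends only on $\mathcal A$ and $\sigma$, never on the target, which is precisely what the quantifier order in $\pl_2$ demands. I therefore expect the genuine difficulty to be one of \emph{uniformity}: the trace-stabilization above must be organized once across the whole captured structure rather than target by target, so that a single club $D$ services every pair of targets simultaneously. Securing this uniform coherence extraction from mere guessing—trivial to state but delicate to arrange, especially through the length-$\mu$ recursion of case (2)—is where I anticipate the bulk of the technical effort to concentrate.
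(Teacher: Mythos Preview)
Your proposal has a structural gap at the very definition of $\mathbf t$. You set $\mathbf t(\alpha,\beta)$ purely from the walk $\tr(\alpha,\beta)$: $\beta^*$ is ``an early step of the walk out of $\beta$'' and $\alpha^*$ is ``$\sup(C_\gamma\cap\alpha)$ for the last relevant $\gamma$.'' But once the $C$-sequence is fixed, these values are completely determined by $(\alpha,\beta)$, and there is no parameter left by which $\mathbf t$ can hit a \emph{prescribed} target. Your later stabilization can arrange that the walk from every $\beta\in b$ to every $\alpha\in a$ \emph{passes through} a given $\beta^*$; it cannot make $\mathbf t$ \emph{output} $\beta^*$ rather than some other step, because $\mathbf t$ was fixed before the target was named and has no way to learn which step you want. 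The lower coordinate is worse: $\sup(C_\gamma\cap\alpha)$ is a single $\lambda$-type ordinal over which you have essentially no control, and certainly no mechanism for placing it in $E^{\mu^+}_\mu$. A secondary symptom of the same problem is that one $\stick$-capture yields $\mu$ blocks sitting in a fixed interval, which cannot simultaneously straddle every target pair in $[\Gamma\cap D]^2$.

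The paper's route is different in kind: it spends the guessing hypothesis not on capturing $\mathcal A$ directly but on manufacturing an auxiliary coloring. From $\stick(\mu^+)$ it first proves $\pr_1(\mu^+,\mu^+,\mu^+,\mu)$ (Theorem~\ref{thm62}); in case~(2) it passes through a proxy principle to the same conclusion (Theorem~\ref{pr51}). This is upgraded to $\pr_1^+$ (Lemma~\ref{Lemma35}), and only then is $\mathbf t$ defined (Theorems~\ref{thm42a} and~\ref{thm47}): $\mathbf t(\alpha,\beta)$ reads the value $o(\alpha,\beta)$ of the $\pr_1^+$-coloring, decodes from it a pair of parameters, uses one to select \emph{which} step of the walk to output as $\beta^*$, and uses the other to name $\alpha^*$ outright. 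The $\pr_1^+$ property is precisely what guarantees, for each target, blocks $a<b$ on which $o$ is constantly the right code, and an elementary-submodel argument produces the club $D$ uniformly in the target. In your scheme no object carries target information into $\mathbf t$, and that is the missing idea.
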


\begin{remarks} $\stick(\mu^+)$ is the \emph{stick} principle (see Definition~\ref{stickp}) which is a weakening of the assertion that $2^\mu=\mu^+$.
In Clause~(2), we moreover get  $\pl_2(\mu^+,\mu^+,\mu)$.
\end{remarks}
Theorem~A sheds a new light on \cite[Question~2.3]{Sh:572},
in particular showing that, for every infinite regular cardinal $\mu$, $2^\mu=\mu^+$ implies $\pr_1(\mu^+,\mu^+,\mu^+,\mu)$.
The case $\mu=\aleph_0$ was proved by Galvin back in 1980 \cite{galvin},
but his original proof only generalizes to show that $2^\mu=\mu^+$ implies $\pr_1(\mu^+,\mu^+,\mu^+,\aleph_0)$.

\medskip

Our second main result concerns (weakly) inaccessible cardinals:

\begin{thmb} For every inaccessible cardinal $\kappa$, any of the following imply that
$\pl_1(\kappa,\kappa,\kappa)$ holds:
\begin{enumerate}
\item $\square(\kappa)$ and $\diamondsuit(S)$ for some stationary $S\s\kappa$ that does not reflect at regulars;
\item $\square(\kappa)$ and $\diamondsuit^*(\kappa)$;
\item $\boxtimes^-(\kappa)$ and $\diamondsuit(\kappa)$;
\item $\kappa$ is Mahlo, $\diamondsuit(S)$ for some stationary $S\s\kappa$ that does not reflect,
and there exists a nonreflecting stationary subset of $\reg(\kappa)$. 
\end{enumerate}
\end{thmb}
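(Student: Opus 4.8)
The plan is to route all four implications through a single combinatorial core and then appeal to the walks-plus-oscillation machinery of \cite{paper44}. The core I would isolate is a $C$-sequence $\vec C=\langle C_\alpha\mid\alpha<\kappa\rangle$ witnessing $\boxtimes^-(\kappa)$ (so that walks on ordinals along $\vec C$, and in particular the trace function $\Tr$ and the last-step function $\last{\alpha}{\beta}$, are available), together with a $\diamondsuit$-type guessing sequence concentrated on a stationary $S\s\kappa$ that does not reflect at regular cardinals. From these two ingredients I would read off the transformation $\mathbf t\colon[\kappa]^2\to[\kappa]^2$ directly from the walk, arranging the output $(\alpha^*,\beta^*)$ so that $\beta^*$ is a step of the walk lying strictly above $\alpha$ while $\alpha^*$ is decoded from the position of $\alpha$ relative to the guess at the relevant level; the verification that $\mathbf t$ realizes $\pl_1(\kappa,\kappa,\kappa)$ is then where the oscillation oracle $\pl_6(\ldots)$ enters, the guessing being used to steer the walks on a prescribed family toward a prescribed target $(\alpha^*,\beta^*)$ and the oscillation being used to force the value to be constant across each product $a\times b$.

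The bulk of the work is the clause-by-clause reduction to ``$\boxtimes^-(\kappa)$ plus localized guessing.'' For clauses (1) and (2) the coherence half is immediate, since $\square(\kappa)$ is known to imply $\boxtimes^-(\kappa)$, so only the guessing sequence needs attention. In (1) it is handed to us as $\diamondsuit(S)$ with $S$ already nonreflecting at regulars; in (2) I would obtain it by restricting the $\diamondsuit^*(\kappa)$-sequence to a stationary set that is nonreflecting at regulars, such a set being supplied by the $\square(\kappa)$-sequence itself. Clause (3) gives $\boxtimes^-(\kappa)$ outright, and here I would extract a suitable nonreflecting $S$ from $\vec C$ (e.g.\ from its nonaccumulation behaviour) and thin $\diamondsuit(\kappa)$ down to $\diamondsuit(S)$.

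The genuinely delicate case is clause (4), where $\square(\kappa)$ is unavailable and must be manufactured from Mahloness together with the two nonreflecting stationary sets: one, $S\s\kappa$, for the guessing, and one, $T\s\reg(\kappa)$, for the coherence. Here I would hand-build a $C$-sequence witnessing $\boxtimes^-(\kappa)$ by exploiting the regularity of the points of $T$ to install closed unbounded traces at those levels, using the nonreflection of $T$---so that $T\cap\alpha$ is nonstationary for every $\alpha<\kappa$---to preserve coherence at all other levels, and invoking Mahloness to guarantee enough inaccessible room for the recursion to close off. I expect this construction---and the bookkeeping that makes $\vec C$ simultaneously coherent and compatible with the $\diamondsuit(S)$-guessing---to be the main obstacle, since it is exactly the step that stands in for the missing square sequence and forces the three hypotheses of (4) to be woven together. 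Once $\vec C$ and the localized guessing are in place in every clause, the concluding passage through the $\pl_6$/walks apparatus of \cite{paper44} is an application of existing technology; the one point demanding care, and the reason the nonreflection-at-regulars hypotheses are imposed, is that attaining the extremal parameter $\chi=\sup(\reg(\kappa))=\kappa$ requires running the oscillation oracle uniformly at blocks of every size $\sigma<\kappa$, which is feasible precisely because $S$ avoids reflection at regulars.
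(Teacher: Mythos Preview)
Your plan has a structural gap that blocks the endgame in every clause: the oscillation oracle $\pl_6(\kappa,\chi)$ from \cite{paper44} cannot be invoked with $\chi=\kappa$, because $\pl_6(\mu,\mu)$ provably fails for every infinite cardinal $\mu$ (Proposition~\ref{prop84} of the present paper). The machinery of \cite{paper44} derives $\pl_1(\kappa,\theta,\chi)$ from $\pl_6(\kappa,\chi)$, so it caps out strictly below $\chi=\kappa$; the nonreflection-at-regulars hypothesis does not rescue this, since the failure of $\pl_6(\kappa,\kappa)$ is unconditional. Reaching the extremal parameter $\chi=\kappa$ is precisely what is new here and requires a different route.

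The paper's route is not a uniform reduction to $\boxtimes^-(\kappa)$ plus localized guessing. For clauses (1)--(3) it first establishes the coloring principle $\pr_1(\kappa,\kappa,\kappa,\kappa)$ by a Galvin-style argument (Theorems~\ref{thm52} and~\ref{thm54}): the diamond-type hypothesis yields an auxiliary coloring $d_0$ (Fact~\ref{factfrom47}), the $\square$/$\boxtimes^-$ hypothesis yields a $C$-sequence whose characteristic $\chi_1(\vec C)$ or $\chi_2(\vec C,\kappa)$ equals $\kappa$ (Lemma~\ref{Lemma32}) and hence a second auxiliary coloring $d_1$ (Lemma~\ref{done}), and these are woven into a witness to $\pr_1$. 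Only then is $\pr_1$ upgraded to $\pl_1$ via a transformation built directly from walks along the same $C$-sequence (Theorem~\ref{thm47}, Corollary~\ref{cor48}), with no oscillation oracle involved. Note in particular that the paper keeps $\square(\kappa)$ and $\boxtimes^-(\kappa)$ as separate hypotheses handled by separate characteristics; it neither claims nor needs that the former implies the latter.

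Clause~(4) is handled by yet another mechanism: the hypotheses yield the proxy principle $\p^-(\kappa,\kappa^+,{\sq^*},1,\{\reg(\kappa)\},2)$ (Corollary~\ref{cor77}), whose $\sq^*$-coherence is strictly weaker than the full coherence of $\boxtimes^-(\kappa)$, and the transformation is then read off directly from a decorated $C$-sequence satisfying four tailored clauses (Theorems~\ref{lemma51} and~\ref{thm58}). Your proposal to hand-build a genuine $\boxtimes^-(\kappa)$-sequence from Mahloness and two nonreflecting stationary sets asks for more coherence than those hypotheses are known to provide; the paper deliberately works with the weaker $\sq^*$ to avoid exactly this obstacle.
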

\begin{remarks} $\boxtimes^-(\kappa)$  is a simple instance of the Brodsky-Rinot proxy principle (see Definition~\ref{xbox}).  In Clauses (3) and (4), we moreover get  $\pl_2(\kappa,\kappa,\kappa)$.
\end{remarks}
 
Our third main result concerns successors of singulars. Here, we 
uncover $\zfc$ constraints on the extent of the combinatorial principles under discussion.
\begin{thmc} Let $\mu$ be a singular cardinal. Then:
\begin{enumerate}
\item  $\pr_1(\mu^+,\mu^+,2,\mu)$ fails;
\item $\pr_1(\mu^+, \mu^+,2, \cf(\mu)^+)$ fails, provided that 
 $\mu$ is a (singular) limit of strongly compact cardinals;
\item $\pr_6(\mu^+,\mu^+,2,\mu)$ fails;
\item $\pl_6(\mu^+,\mu)$ fails.
\end{enumerate}
\end{thmc}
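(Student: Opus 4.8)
The plan is to treat Clause~(1) as the core and obtain the rest from it. First I would record the implications $\pl_6(\mu^+,\mu)\Rightarrow\pr_6(\mu^+,\mu^+,2,\mu)\Rightarrow\pr_1(\mu^+,\mu^+,2,\mu)$ relating these principles at the stated parameters; under them the failure of the weakest principle propagates upward, so that Clause~(1) yields Clause~(3), and Clause~(3) yields Clause~(4). It therefore suffices to refute $\pr_1(\mu^+,\mu^+,2,\mu)$ outright in $\zfc$, and, under the large-cardinal hypothesis, to refute $\pr_1(\mu^+,\mu^+,2,\cf(\mu)^+)$. Note that the latter is the genuinely harder statement: since $\cf(\mu)^+\le\mu$ and a larger set-size parameter yields a stronger principle, Clause~(2) in fact implies Clause~(1) in its range of hypotheses, the point of Clause~(2) being to forbid the smaller admissible set-size $\cf(\mu)$.

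For Clause~(1), fix $\lambda:=\cf(\mu)$ and an increasing continuous sequence $\langle\mu_i\mid i<\lambda\rangle$ of cardinals cofinal in $\mu$, and suppose toward a contradiction that $c\colon[\mu^+]^2\to 2$ witnesses $\pr_1(\mu^+,\mu^+,2,\mu)$. The witnessing demand is that for every pairwise-disjoint family $\mathcal A\s[\mu^+]^{<\mu}$ of size $\mu^+$ and each $i<2$ there are $a$ below $b$ in $\mathcal A$ with $c\restriction(a\times b)\equiv i$; realizing colour $i$ with top $b$ amounts to placing some family member inside $Z_i(b):=\{\alpha<\min b\mid \forall\beta\in b,\ c(\alpha,\beta)=i\}$. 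My aim is to build a single family defeating one of the two colours, say a family no member of which is contained in $Z_0(b)$ for any top $b$ drawn from the family. The leverage of singularity is that, on letting the set-sizes run cofinally through the $\mu_i$, a top $b$ of size $\mu_i$ imposes $\alpha$-by-$\alpha$ constancy across $\mu_i$-many coordinates, so by monotonicity $Z_0(b)$ becomes increasingly thin as $i\to\lambda$; the two colours cannot both remain plentiful along a cofinal sequence of sizes. I would make this quantitative along an interval partition $\langle I_\xi\mid\xi<\mu^+\rangle$ of $\mu^+$ into blocks of order-type $\mu$, choosing $a_\xi\s I_\xi$ of size $\mu_{i(\xi)}$ so that, against every previously chosen $a_\eta$ with $\eta<\xi$, the rectangle carries a $1$, and using the cofinal growth of sizes to secure the same against all later tops.

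The hard part is exactly this simultaneity: each $a_\xi$ must escape $Z_0(b)$ for all $\mu^+$-many tops $b$ at once, whereas the recursion sees only the tops chosen so far. I would resolve it by fixing an elementary submodel $M\prec H_{\mu^{++}}$ (or a continuous $\in$-chain of such, together with a scale for $\mu^+$ furnished by $\mathrm{pcf}$, available since $\mu$ is singular) containing $c$ and the relevant parameters, and placing the $\xi$-th set inside $M$-controlled blocks so that the decision $a_\xi\not\s Z_0(b)$ reflects to a property $M$ already forces for every admissible top, with the scale taming the tails uniformly and converting the step-by-step escapes into a global one. Arranging the reflection so that a single colour is provably absent on all $\mu^+\times\mu^+$ rectangles of the family is where the combinatorial work concentrates, and it is the step I expect to be most delicate.

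For Clause~(2) the set-size is cut to $\le\cf(\mu)$, too small for the cardinal-arithmetic thinning above, and here I would spend the hypothesis that $\mu$ is a limit of strongly compact cardinals. Fix a strongly compact $\delta$ with $\cf(\mu)<\delta<\mu$ and a $\delta$-complete uniform ultrafilter $U$ on the index set $\mu^+$ (available since $\mu^+\ge\delta$). Given a family $\langle a_\xi\mid\xi<\mu^+\rangle$ of size-$\le\cf(\mu)$ sets, each rectangle $a_\xi\times a_\eta$ has size $\le\cf(\mu)<\delta$, so the $\delta$-completeness of $U$ lets me decide, for $U$-almost all $\eta$, a single typical colour of the whole rectangle coherently across its fewer-than-$\delta$ coordinates; homogenizing in both endpoints by passing to a derived ultrafilter on pairs of indices, a pressing-down and pigeonhole over the $\mu^+$ indices then isolates a $U$-large subfamily on which one colour never fills a rectangle, refuting $\pr_1(\mu^+,\mu^+,2,\cf(\mu)^+)$. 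The completeness of $U$ is precisely what lets $\le\cf(\mu)$-sized sets behave like the large sets of Clause~(1); the residual obstacle is to run the two-variable homogenization uniformly, which I expect the product/derived ultrafilter to deliver.
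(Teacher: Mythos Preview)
Your plan has two substantial gaps.

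First, the reduction of Clauses~(3) and~(4) to Clause~(1) is unfounded. You assert the chain $\pl_6(\mu^+,\mu)\Rightarrow\pr_6(\mu^+,\mu^+,2,\mu)\Rightarrow\pr_1(\mu^+,\mu^+,2,\mu)$ without argument, but these implications are not established anywhere and are not obviously true: the principles $\pr_6$ and $\pl_6$ concern maps out of ${}^{<\omega}\kappa$ and sequences $(u_\alpha,v_\alpha)$ indexed by a club, not pairwise-disjoint families in $[\kappa]^\sigma$, and the known bridge from $\pl_6/\pr_6$ to $\pr_1$ involves a cardinal shift (e.g.\ $\pl_6(\mu,\mu)$ yields $\pr_1(\mu^+,\mu^+,\mu^+,\mu)$), not a same-level implication. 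The paper does not attempt any such reduction; it refutes $\pr_6(\mu^+,\mu^+,2,\mu)$ and $\pl_6(\mu^+,\mu)$ by short direct arguments: given a putative witness $d$, one first shows (using $d$ itself, applied to sequences of length~$3$) that for any small set $A$ and any $\beta$ there is a tail $\sigma$ with $d(\langle\alpha,\beta\rangle{}^\smallfrown\sigma)\neq 1$ for all $\alpha\in A$, and then packages these tails into $v_\beta$'s of size $\cf(\mu)$ against singleton $u_\alpha$'s to defeat colour~$1$ globally.

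Second, your treatment of Clause~(1) is a programme with an explicitly admitted gap (``the step I expect to be most delicate''), and the apparatus you propose --- scales, pcf, elementary chains --- is far heavier than needed. The paper's argument is elementary: assuming $c$ witnesses $\pr_1(\mu^+,\mu^+,2,\mu)$, one first finds $A\in[\mu^+]^{\mu^+}$ such that every $a\in[A]^{<\mu}$ has cofinally many $\beta$ with $c[a\times\{\beta\}]=\{0\}$ (else a straightforward recursion produces a family on which colour~$0$ is never realized, contradicting $\pr_1$). Then for each $\delta\in A$ one decomposes $A\cap\delta$ into $\cf(\mu)$ pieces each of size ${<}\mu$, picks for each piece a single $\beta_{\delta,i}>\delta$ realizing colour~$0$ over it, and sets $b_\delta:=\{\delta\}\cup\{\beta_{\delta,i}:i<\cf(\mu)\}$, a set of size $\le\cf(\mu)<\mu$. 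Applying $\pr_1$ for colour~$1$ to the sparsified family $\{b_\delta\}$ yields $\gamma<\delta$ with $c[b_\gamma\times b_\delta]=\{1\}$; but $\gamma$ lies in some piece $A_{\delta,i}$, so $c(\gamma,\beta_{\delta,i})=0$, a contradiction. Your approach to Clause~(2) is in the right spirit (a uniform $\theta^+$-complete ultrafilter stabilizes colours over sets of size $\le\theta$), but the paper's execution is again concrete rather than via derived ultrafilters on pairs: for each size $\mu_i$ it produces a set $X_i$ and a fixed colour $j$ such that every $x\in[X_i]^{\mu_i}$ has cofinally many $\beta\in X_i$ with $c[x\times\{\beta\}]=\{j\}$, then builds a matrix $\langle\beta_{\delta,i}\rangle$ and reaches the contradiction exactly as in Clause~(1).
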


As a corollary, we confirm that in G\"odel's constructible universe,
every regular uncountable cardinal admits the strongest conceivable transformation:
\begin{thmd} Assuming $V=L$, for every regular uncountable cardinal $\kappa$ and every regular cardinal $\chi\le\chi(\kappa)$,
$\pl_2(\kappa,\kappa,\chi)$ holds.
\end{thmd}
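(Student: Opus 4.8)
The plan is to deduce Theorem~D from Theorems~A, B, and~C, using that $L$ validates $\gch$ together with Jensen's full suite of square, diamond, club, and proxy principles. I would open with the monotonicity of the third parameter: inspecting the definition of $\pl_2$, whose nontrivial clause quantifies over $\sigma<\chi$, one sees that $\pl_2(\kappa,\kappa,\chi)$ follows from $\pl_2(\kappa,\kappa,\chi')$ whenever $\chi\le\chi'$. It therefore suffices to secure the single instance $\pl_2(\kappa,\kappa,\chi(\kappa))$, and the verification splits according to the type of the regular uncountable cardinal $\kappa$. Recording $\chi(\mu^+)=\cf(\mu)$ at successors and $\chi(\kappa)=\kappa$ at (non--weakly compact) inaccessibles unifies the bookkeeping.

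For $\kappa=\mu^+$ with $\mu$ infinite regular, so $\chi(\kappa)=\mu$, I would apply Clause~(2) of Theorem~A. In $L$ we have $(\mu^+)^{\aleph_0}=\mu^+$ by $\gch$, and $\diamondsuit(S)$ — hence $\clubsuit(S)$ — holds for a nonreflecting stationary $S\s\mu^+$ (one exists in $L$, e.g.\ from $\square_\mu$, and trivially when $\mu=\omega$). The remark accompanying Theorem~A then upgrades the conclusion to $\pl_2(\mu^+,\mu^+,\mu)$. For $\kappa$ inaccessible and not weakly compact, so $\chi(\kappa)=\kappa$, I would instead use Clause~(3) of Theorem~B: $L$ supplies $\square(\kappa)$ and with it the Brodsky--Rinot proxy instance $\boxtimes^-(\kappa)$, as well as $\diamondsuit(\kappa)$, and the accompanying remark yields the full $\pl_2(\kappa,\kappa,\kappa)$.

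I expect the genuine difficulty to lie in $\kappa=\mu^+$ with $\mu$ singular, where $\chi(\kappa)=\cf(\mu)$. Here Clause~(1) of Theorem~C already refutes $\pl_2(\mu^+,\mu^+,\mu)$ in $\zfc$ (through the implication $\pl_2\Rightarrow\pr_1$), so the strongest transformation is genuinely unavailable, and the positive content of Theorem~D collapses to the construction $\pl_2(\mu^+,\mu^+,\cf(\mu))$; the pinning of the invariant at $\cf(\mu)$ is further motivated by Clause~(2), which under large cardinals pushes the failure down to $\cf(\mu)^+$. Since Theorem~A is formulated for regular $\mu$ and does not transfer verbatim, the plan is to re-run its transformation construction over a $\square_\mu$-guided sequence (available in $L$), exploiting that any family of fewer than $\cf(\mu)$ pairwise disjoint sets can be amalgamated below a single guessing point of cofinality $<\cf(\mu)$; the cap $\sigma<\cf(\mu)$ is exactly what preserves the oscillation analysis of \cite{paper44} once $\mu$ is singular, and explains why the reach descends from $\mu$ to $\cf(\mu)$.

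Finally I would dispatch the weakly compact inaccessibles, where $\square(\kappa)$ and all nonreflecting stationary sets vanish, so that no clause of Theorem~B is available; indeed $\kappa\to(\kappa)^2_2$ already refutes $\pr_1(\kappa,\kappa,2,2)$ and hence any nontrivial $\pl_2(\kappa,\kappa,\chi)$. For such $\kappa$ the invariant $\chi(\kappa)$ degenerates, the admissible range of $\chi$ is empty, and the statement holds vacuously. Assembling the four cases completes the proof.
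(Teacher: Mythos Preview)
The decisive gap is in the successor-of-singular case, and it stems from a wrong value of the invariant. You record $\chi(\mu^+)=\cf(\mu)$, but in fact $\chi(\mu^+)=\mu$ for every infinite cardinal $\mu$ (this is \cite[Lemma~2.2(5)]{paper35}, which the paper cites). Consequently, Theorem~D demands $\pl_2(\mu^+,\mu^+,\chi)$ for every \emph{regular} $\chi\le\mu$, i.e.\ for every regular $\chi<\mu$; your target $\pl_2(\mu^+,\mu^+,\cf(\mu))$ is strictly too weak (e.g.\ for $\mu=\aleph_\omega$ you must obtain $\pl_2(\aleph_{\omega+1},\aleph_{\omega+1},\aleph_n)$ for every $n<\omega$, not merely for $\aleph_0$). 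Your appeal to Theorem~C to justify stopping at $\cf(\mu)$ is a red herring: Clause~(1) only rules out the case $\chi=\mu$, which is already excluded by the regularity constraint on $\chi$, and says nothing about regular $\chi$ strictly between $\cf(\mu)$ and $\mu$. The sketch ``re-run Theorem~A's construction over a $\square_\mu$-guided sequence, capping at $\cf(\mu)$'' is therefore aimed at the wrong instance and, as written, is not an argument.

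The paper's route for singular $\mu$ is concrete and quite different from anything in your outline: for each regular $\chi<\mu$, Jensen's work in $L$ provides a nonreflecting stationary subset of $E^{\mu^+}_\chi$, whence $\pr_1(\mu^+,\mu^+,\mu^+,\chi)$ holds by \cite{paper15}; since $\boxtimes^-(\mu^+)$ holds in $L$ (by \cite[Corollary~1.10(5)]{paper22}), Corollary~\ref{cor48}(2) upgrades this to $\pl_2(\mu^+,\mu^+,\chi)$. Your treatment of the regular-successor and weakly compact cases is fine and essentially matches the paper. For the non--weakly compact inaccessible case, note a smaller issue: you invoke Theorem~B(3), which requires $\boxtimes^-(\kappa)$, but you have not justified that $\boxtimes^-(\kappa)$ holds in $L$ for inaccessible $\kappa$ (the cited \cite[Corollary~1.10(5)]{paper22} is a successor-cardinal result); the paper instead goes through Corollary~\ref{cor77}, using a nonreflecting stationary $E\s\kappa$ with $\diamondsuit(E)$ and $\square(E)$, both of which are available in $L$ by \cite{jensen72}.
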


\subsection{Organization of this paper}
In Section~\ref{subsectionwalks}, we define all the combinatorial principles discussed in this paper,
prove Theorem~C and also prove 
that $\pl_6(\mu,\mu)$ and $\pr_6(\mu,\mu,2,\mu)$ fail for any infinite cardinal $\mu$.
These theorems should be understood as verifying positive partition relations.

In Section~\ref{cseqsection}, we make some contributions to the theory of $C$-sequences. 
This will play a role in the proofs of Section~\ref{pumpup}.

In Section~\ref{pumpup}, we provide sufficient conditions for $\pr_1(\kappa,\kappa,\kappa,\chi)$ to imply $\pl_1(\kappa,\kappa,\chi)$
or $\pl_2(\kappa,\Gamma,\chi)$.

In Section~\ref{inaccessibles}, we deal with inaccessible cardinals, in particular,
proving Clauses (1)--(3) of Theorem~B.

In Section~\ref{galvin}, we first prove our generalization of Galvin's theorem
and then use the results of Section~\ref{pumpup} to obtain Clause~(1) of Theorem~A.

In Section~\ref{proxysect}, we derive transformations by walking along $C$-sequences witnessing an instance of the Brodsky-Rinot proxy principle. 
This is how Clause~(2) of Theorem~A, Clause~(4) of Theorem~B,
and Theorem~D are obtained.

\subsection{Notation and conventions}
By an \emph{inaccessible} we mean a regular uncountable limit cardinal.
Let $E^\kappa_\chi:=\{\alpha < \kappa \mid \cf(\alpha) = \chi\}$,
and define $E^\kappa_{\le \chi}$, $E^\kappa_{<\chi}$, $E^\kappa_{\ge \chi}$, $E^\kappa_{>\chi}$,  $E^\kappa_{\neq\chi}$ analogously.
The collection of all sets of hereditary cardinality less than $\kappa$ is denoted by $\mathcal H_\kappa$.
The set of all infinite and regular cardinals below $\kappa$ is denoted by $\reg(\kappa)$.
The length of a finite sequence $\varrho$ is denoted by $\ell(\varrho)$.
A stationary subset $S\s\kappa$ is \emph{nonreflecting} iff there exists no $\alpha\in E^\kappa_{>\omega}$ such that $S\cap\alpha$ is stationary in $\alpha$.
For a set of ordinals $a$, we write 
$\ssup(a):=\sup\{\alpha+1\mid \alpha\in a\}$,
$\acc^+(a) := \{\alpha < \ssup(a) \mid \sup(a \cap \alpha) = \alpha > 0\}$,
$\acc(a) := a \cap \acc^+(a)$, $\nacc(a) := a \setminus \acc(a)$,
and $\cl(a):= a\cup\acc^+(a)$.
For sets of ordinals, $a$ and $b$, we let $a\circledast b:=\{(\alpha,\beta)\in a\times b\mid \alpha<\beta\}$,
and write $a < b$ to express that $a\times b$ coincides with $a\circledast b$.
For any set $\mathcal A$, we write
$[\mathcal A]^\chi:=\{ \mathcal B\s\mathcal A\mid |\mathcal B|=\chi\}$ and
$[\mathcal A]^{<\chi}:=\{\mathcal B\s\mathcal A\mid |\mathcal B|<\chi\}$.
This convention admits two refined exceptions:
\begin{itemize}
\item For an ordinal $\sigma$ and a set of ordinals $A$, we write 
$[A]^\sigma$ for $\{ B\s A\mid \otp(B)=\sigma\}$;
\item For a set $\mathcal{A}$ which is either an ordinal or a collection of sets
of ordinals, we interpret $[\mathcal{A}]^2$ as the collection of \emph{ordered} pairs $\{ (a,b)\in\mathcal A\times\mathcal A\mid a<b\}$.
\end{itemize}
In particular, $[\kappa]^2=\{(\alpha,\beta)\mid \alpha<\beta<\kappa\}$.
Likewise, we let $[\kappa]^3:=\{(\alpha,\beta,\gamma)\in\kappa\times\kappa\times\kappa\mid \alpha<\beta<\gamma<\kappa\}$.

\section{Combinatorial principles and inconsistent instances of which}\label{subsectionwalks}\label{positive}

\begin{conv}\label{conv28} For any coloring $f:[\kappa]^2\rightarrow\theta$ and $\delta<\kappa$,
while $(\delta,\delta)\notin[\kappa]^2$, we extend the definition of $f$, and agree to let $f(\delta,\delta):=0$.
\end{conv}

\begin{defn}[\cite{shelah_productivity}]\label{def_pr1}
$\pr_1(\kappa, \kappa, \theta, \chi)$ asserts the existence of a coloring $c:[\kappa]^2 \rightarrow \theta$ such that for every  $\sigma<\chi$, every  
pairwise disjoint subfamily $\mathcal{A}  \subseteq [\kappa]^{\sigma}$ of size $\kappa$,
and every $\tau< \theta$, there is $(a,b) \in [\mathcal{A}]^2$ such that $c[a \times b] = \{\tau\}$.
\end{defn}

\begin{defn}[\cite{paper49}]
$\pro{\kappa}{\mu}{1}{\theta}{\chi}$ asserts the existence of a coloring $c:[\kappa]^2\rightarrow\theta$
satisfying that for every $\sigma<\chi$, and every pairwise disjoint subfamilies $\mathcal A,\mathcal B$ of $[\kappa]^\sigma$ with $|\mathcal A|=\mu$ and $|\mathcal B|=\kappa$,
there is $a\in\mathcal A$ such that,
for every $\tau<\theta$,
there is $b\in\mathcal B$ with $a<b$ such that $c[a\times b]=\{\tau\}$.
\end{defn}

\begin{defn}[\cite{paper44}]\label{fulldefpl1} $\pl_1(\kappa,\theta,\chi)$ asserts the existence of a transformation $\mathbf t:[\kappa]^2\rightarrow[\kappa]^3$ satisfying the following:
\begin{itemize}
\item for every $(\alpha,\beta)\in[\kappa]^2$, if $\mathbf t(\alpha,\beta)=(\tau^*,\alpha^*,\beta^*)$, then $\tau^*\le\alpha^*\le\alpha<\beta^*\le\beta$;
\item for every $\sigma<\chi$ and every pairwise disjoint subfamily $\mathcal A\s[\kappa]^{\sigma}$ of size $\kappa$,
there exists a stationary $S\s\kappa$ such that, for all $(\alpha^*,\beta^*)\in[S]^2$ and $\tau^*<\min\{\theta,\alpha^*\}$,
there exist $(a,b)\in[\mathcal A]^2$ with $\mathbf t[a\times b]=\{(\tau^*,\alpha^*,\beta^*)\}$.
\end{itemize}
\end{defn}

Note that for every stationary $\Gamma\s\kappa$, $\pl_2(\kappa,\Gamma,\chi)$ implies $\pl_1(\kappa,\kappa,\chi)$.
By \cite[\S2.3]{paper44}, $\pl_1(\kappa,\theta,\chi)$ implies $\pr_1(\kappa,\kappa,\theta+\aleph_0,\chi)$.

\begin{defn}[\cite{Sh:572}] $\pr_6(\kappa,\kappa,\theta,\chi)$ asserts the existence of a coloring $d:{}^{<\omega}\kappa\rightarrow\theta$
satisfying the following. 
For every $\tau<\theta$,
and every sequence $\langle (u_\alpha,v_\alpha)\mid\alpha\in E\rangle$ such that:
\begin{enumerate}
\item $E$ is a club in $\kappa$;
\item  $u_\alpha$ and $v_\alpha$ are nonempty elements of $[{}^{<\omega}\kappa]^{<\chi}$;
\item $\alpha\in\im(\varrho)$ for all $\varrho\in u_\alpha$;
\item $\alpha\in\im(\sigma)$ for all $\sigma\in v_\alpha$,
\end{enumerate}
there exists $(\alpha,\beta)\in[E]^2$ such that $d(\varrho{}^\frown\sigma)=\tau$ for all $\varrho\in u_\alpha$ and $\sigma\in v_\beta$.
\end{defn}

\begin{defn}[\cite{paper15}]\label{defpl6} $\pl_6(\kappa,\chi)$ asserts the existence of a map $d:{}^{<\omega}\kappa\rightarrow\omega$
satisfying the following. For every sequence $\langle (u_\alpha,v_\alpha,\sigma_\alpha)\mid\alpha<\kappa\rangle$ and $\varphi:\kappa\rightarrow\kappa$ with
\begin{enumerate}
\item $\varphi$ is eventually regressive. That is, $\varphi(\alpha)<\alpha$ for co-boundedly many $\alpha<\kappa$;
\item  $u_\alpha$ and $v_\alpha$ are nonempty elements of $[{}^{<\omega}\kappa]^{<\chi}$;
\item $\alpha\in\im(\varrho)$ for all $\varrho\in u_\alpha$;
\item $\sigma_\alpha{}^\frown\langle\alpha\rangle\sqsubseteq \sigma$ for all $\sigma\in v_\alpha$,
\end{enumerate}
there exists $(\alpha,\beta)\in[\kappa]^2$ with $\varphi(\alpha)=\varphi(\beta)$ such that $d(\varrho\conc\sigma)=\ell(\varrho)$ for all $\varrho\in u_\alpha$ and $\sigma\in v_\beta$.
\end{defn}

\begin{prop} Let $\mu$ be a singular cardinal. 

Then $\pr_1(\mu^+,\mu^+,2,\mu)$ and $\pl_1(\mu^+,1,\mu)$ fail.
\end{prop}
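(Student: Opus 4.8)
The plan is to deduce both failures from the single statement that $\pr_1(\mu^+,\mu^+,2,\mu)$ fails. For the $\pl_1$ clause, recall from the paragraph following Definition~\ref{fulldefpl1} that $\pl_1(\mu^+,1,\mu)$ implies $\pr_1(\mu^+,\mu^+,\aleph_0,\mu)$. Any coloring witnessing the latter yields one witnessing $\pr_1(\mu^+,\mu^+,2,\mu)$: compose it with a surjection $g:\aleph_0\to 2$, and, given a pairwise disjoint family and a target color $\tau<2$, apply the $\aleph_0$-colored principle to some color in $g^{-1}\{\tau\}$. So it suffices to refute $\pr_1(\mu^+,\mu^+,2,\mu)$, and from now on I fix an arbitrary $c:[\mu^+]^2\to 2$ and aim to defeat it.

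First I would reformulate what a monochromatic product means. For $\emptyset\ne a\s\mu^+$ and $\tau<2$, write $W^\tau_a:=\{\beta<\mu^+\mid \ssup(a)\le\beta \text{ and } c(\alpha,\beta)=\tau \text{ for every } \alpha\in a\}$. Since, whenever $a<b$, one has $c[a\times b]=\{\tau\}$ exactly when $b\s W^\tau_a$, the task becomes: find $\sigma<\mu$ and a color $\tau<2$, and build a $<$-increasing family $\langle a_\eta\mid \eta<\mu^+\rangle$ of pairwise disjoint members of $[\mu^+]^\sigma$ such that $a_{\eta'}\not\s W^\tau_{a_\eta}$ for all $\eta<\eta'$. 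This family, which is a legitimate instance of the principle, then admits no product monochromatic in color $\tau$.

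I would run this as a recursion of length $\mu^+$, driven by the following \emph{splitting lemma}: there are a color $\tau<2$, a cardinal $\sigma<\mu$, and a stationary $G\s E^{\mu^+}_{\cf(\mu)}$ such that for each $\delta\in G$ there is $a_\delta\s\delta+1$ with $\max(a_\delta)=\delta$, $\otp(a_\delta)=\sigma$, and $W^\tau_{a_\delta}$ nonstationary, and moreover $a_\delta$ may be taken inside any prescribed final segment of $\delta$. Granting this, the recursion is routine: at stage $\eta'$, the complements of the at most $\mu$ sets $W^\tau_{a_\eta}$ ($\eta<\eta'$) each contain a club, and their common part is a club $C_{\eta'}$ because $\mu<\cf(\mu^+)=\mu^+$; choosing $\delta\in G\cap C_{\eta'}$ above the supremum of the ordinals used so far and setting $a_{\eta'}:=a_\delta$ (pushed into the corresponding final segment) keeps the family pairwise disjoint and $<$-increasing, while $\delta\in a_{\eta'}\cap C_{\eta'}$ witnesses $a_{\eta'}\not\s W^\tau_{a_\eta}$ for every $\eta<\eta'$.

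The hard part will be the splitting lemma, and it is precisely here that the singularity of $\mu$ is indispensable — at successors of regulars the strongest transformations do hold, so no such $\zfc$ obstruction can exist there. To establish it I would set $\lambda:=\cf(\mu)$, fix an increasing sequence $\langle\mu_i\mid i<\lambda\rangle$ of regular cardinals cofinal in $\mu$ with $\lambda<\mu_0$, and analyze the columns $c(\cdot,\delta)$ for $\delta\in E^{\mu^+}_{\lambda}$ along cofinal sequences extracted from a continuous $\in$-chain of elementary submodels of size $\mu$. A pressing-down argument over a stationary set of such $\delta$ should pin a single color $\tau$ and a single $\sigma<\mu$ for which club-many $\beta$ admit a $(1-\tau)$-neighbour inside a set of order type $\sigma$ topped at $\delta$; the whole point is that the gap $\lambda<\mu$ lets $\sigma$ remain below $\mu$ while the assembled set still meets the complement of $W^\tau$ on a club. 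Locating this uniform $\sigma$ and $\tau$ simultaneously for stationarily many $\delta$ is the crux, and I expect the pcf-type structure of the cofinal sequence to be exactly what supplies it.
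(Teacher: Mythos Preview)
Your reduction of the $\pl_1$ clause to the $\pr_1$ clause is correct and matches the paper. The difficulty is your splitting lemma: as stated for an arbitrary $c$, it is simply false. Partition $\mu^+=S_0\cup S_1$ into two disjoint stationary sets and put $c(\alpha,\beta):=i$ iff $\beta\in S_i$. Then for every nonempty $a$ and every $\tau<2$ one has $W^\tau_a = S_\tau\setminus\ssup(a)$, which is stationary; so no $a_\delta$ with $W^\tau_{a_\delta}$ nonstationary exists, for any choice of $\tau$, $\sigma$, or $\delta$. Your sketch does not explain how assuming (towards a contradiction) that $c$ witnesses $\pr_1$ would rescue the lemma, and I do not see that it can: $\pr_1$ produces monochromatic pairs, not small fans, and the pressing-down/pcf outline you give is aimed at a conclusion (nonstationarity of a specific $W^\tau$) that is qualitatively different from anything the hypotheses readily deliver.

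The paper's argument avoids this obstacle entirely by going in the opposite direction. Working by contradiction with a putative witness $c$, it first shows---using $\pr_1$ itself, via an exhaustion argument---that there is $A\in[\mu^+]^{\mu^+}$ such that every $a\in[A]^{<\mu}$ has \emph{cofinally many} $\beta$ with $c[a\times\{\beta\}]=\{0\}$. Then for each $\delta\in A$ it decomposes $A\cap\delta$ into $\cf(\mu)$ pieces $A_{\delta,i}$ each of size $<\mu$, picks for each piece a $0$-witness $\beta_{\delta,i}>\delta$, and sets $b_\delta:=\{\delta\}\cup\{\beta_{\delta,i}\mid i<\cf(\mu)\}$. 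Singularity of $\mu$ is used precisely here, to guarantee $|b_\delta|\le\cf(\mu)+1<\mu$. A sparse $<$-increasing subfamily $\{b_\delta\mid\delta\in\Delta\}$ then defeats color~$1$: for $\gamma<\delta$ in $\Delta$ the point $\gamma\in b_\gamma$ lies in some $A_{\delta,i}$, whence $c(\gamma,\beta_{\delta,i})=0$ with $\beta_{\delta,i}\in b_\delta$. The moral is that one should not try to make $W^\tau_a$ small, but rather build each $b_\delta$ so that it carries a $0$-connection back to \emph{every} earlier point of $A$.
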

\begin{proof} As mentioned earlier, $\pl_1(\mu^+,1,\mu)$ implies $\pr_1(\mu^+, \mu^+,\aleph_0, \mu)$,
so, towards a contradiction, let us suppose that $c:[\mu^+]^2\rightarrow2$ witnesses $\pr_1(\mu^+,\mu^+,\allowbreak2,\mu)$.

\begin{claim} There exists $A\in[\mu^+]^{\mu^+}$ such that, for every $a\in[A]^{<\mu}$,
there are cofinally many $\beta<\mu^+$ with $c[a\times\{\beta\}]=\{0\}$.
\end{claim}
\begin{proof} Suppose not. Construct a sequence $\langle (A_i,a_i,B_i)\mid i<\mu^+\rangle$ by recursion on $i<\mu^+$, as follows:

$\br$ Let $A_0:=\mu^+$. By the indirect assumption, we may find $a_0\in[\mu^+]^{<\mu}$
such that $B_0:=\{\beta<\mu^+\mid \beta>\sup(a_0)\ \&\ c[a_0\times\{\beta\}]=\{0\}\}$ is bounded in $\mu^+$.

$\br$ Suppose that $i<\mu^+$ is nonzero, and that $\langle (A_j,a_j,B_j)\mid j<i\rangle$ has already been defined.
Set $A_i:=\mu^+\setminus(\sup(\bigcup_{j<i}B_j)+1)$.
By the indirect assumption, we may now find $a_i\in[A_i]^{<\mu}$
such that $B_i:=\{\beta<\mu^+\mid \beta>\sup(a_i)\ \&\ c[a_i\times\{\beta\}]=\{0\}\}$ is bounded in $\mu^+$.

This completes the description of the recursion.
Fix $\sigma<\mu$ and $I\in[\mu^+]^{\mu^+}$ such that $\otp(a_i)=\sigma$ for all $i\in I$.
Then $\langle a_i\mid i\in I\rangle$ is a $<$-increasing sequence of elements of $[\mu^+]^\sigma$.
Thus, by the choice of $c$, we may find $(j,i)\in[I]^2$ such that $c[a_j\times a_i]=\{0\}$.
However, $a_i\s A_i$, so that $a_i\cap B_j=\emptyset$. This is a contradiction.
\end{proof}
Fix $A$ as in the claim. Without loss of generality, $\min(A)\ge\mu$.
Let $\delta\in A$. Fix a decomposition $A\cap \delta=\biguplus_{i<\cf(\mu)}A_{\delta,i}$ such that $|A_{\delta,i}|<\mu$ for all $i<\cf(\mu)$,
and then, for every $i<\cf(\mu)$, fix $\beta_{\delta,i}>\delta$ such that $d[A_{\delta,i}\times\{\beta_{\delta,i}\}]\s\{0\}$.
Denote $b_\delta:=\{ \delta, \beta_{\delta,i}\mid i<\cf(\mu)\}$,
so that $b_\delta\in[\mu^+\setminus\delta]^{\le\cf(\mu)}$.

Fix a sparse enough $\Delta\in[A]^{\mu^+}$ such that, for all $(\gamma,\delta)\in[\Delta]^2$,
$\sup(b_\gamma)<\min(b_\delta)$.
Now, by the choice of $c$, there must exist $(\gamma,\delta)\in[\Delta]^2$ such that $c[b_\gamma\times b_\delta]=\{1\}$.
Pick $i<\cf(\mu)$ such that $\gamma\in A_{\delta,i}$.
Then $d(\gamma,\beta_{\delta,i})=0$,
contradicting the fact that $(\gamma,\beta_{\delta,i})\in b_\gamma\times b_\delta$.
\end{proof}

The preceding proof makes it clear that the following holds, as well.
\begin{prop} For every infinite regular cardinal $\mu$,
$\pr_1(\mu^+,\mu^+,2,\mu^+)$ fails.\qed
\end{prop}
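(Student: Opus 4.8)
The plan is to reprise the preceding proof essentially verbatim, with the singularity of $\mu$ replaced by the observation that, $\mu$ now being regular, $\cf(\mu)=\mu$; the price for this is that the auxiliary blocks may have size $\mu$ rather than size $<\mu$, and this is exactly what forces the fourth parameter up from $\mu$ to $\mu^+$.

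So, towards a contradiction, I would fix a coloring $c:[\mu^+]^2\rightarrow2$ witnessing $\pr_1(\mu^+,\mu^+,2,\mu^+)$. Since this instance is stronger than $\pr_1(\mu^+,\mu^+,2,\mu)$, and since the proof of the Claim above only ever applies $c$ to families of sets of size $<\mu$, that proof goes through word for word and yields a set $A\in[\mu^+]^{\mu^+}$ such that, for every $a\in[A]^{<\mu}$, there are cofinally many $\beta<\mu^+$ with $c[a\times\{\beta\}]=\{0\}$.

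Next, granting $\min(A)\ge\mu$, for each $\delta\in A$ I would fix a decomposition $A\cap\delta=\biguplus_{i<\mu}A_{\delta,i}$ with $|A_{\delta,i}|<\mu$ for all $i<\mu$, which is possible as $|A\cap\delta|\le\mu=\cf(\mu)$. Using the Claim together with the regularity of $\mu^+$, I would then recursively pick a strictly increasing sequence $\langle\beta_{\delta,i}\mid i<\mu\rangle$ of ordinals above $\delta$ with $c[A_{\delta,i}\times\{\beta_{\delta,i}\}]\s\{0\}$ for every $i<\mu$, and set $b_\delta:=\{\delta\}\cup\{\beta_{\delta,i}\mid i<\mu\}$. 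The strict monotonicity secures $\otp(b_\delta)=\mu$ for every $\delta\in A$, and this uniformity of order type is the one genuinely new point: the blocks now have full size $\mu$, so---unlike the singular case, where they had size $<\mu$ and a common order type dropped out of a crude pigeonhole---a common order type has to be manufactured by hand during the construction.

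Finally, I would thin out to a sparse $\Delta\in[A]^{\mu^+}$ along which $\sup(b_\gamma)<\min(b_\delta)$ whenever $\gamma<\delta$, so that $\langle b_\delta\mid\delta\in\Delta\rangle$ is a pairwise disjoint subfamily of $[\mu^+]^\mu$ of size $\mu^+$. As $\mu<\mu^+$, the choice of $c$ may be invoked with $\sigma:=\mu$ and $\tau:=1$ to furnish $(\gamma,\delta)\in[\Delta]^2$ with $c[b_\gamma\times b_\delta]=\{1\}$. Choosing $i<\mu$ with $\gamma\in A_{\delta,i}$---legitimate, since $\gamma\in A\cap\delta$---we then get $c(\gamma,\beta_{\delta,i})=0$ while $(\gamma,\beta_{\delta,i})\in b_\gamma\times b_\delta$, which is the desired contradiction. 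I anticipate no real obstacle beyond this bookkeeping: the sole deviation from the preceding argument is the single appeal to $\cf(\mu)=\mu$, together with the attendant need for blocks of order type $\mu$ and hence for the fourth parameter $\mu^+$.
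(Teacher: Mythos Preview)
Your proposal is correct and is precisely the adaptation the paper has in mind when it writes ``the preceding proof makes it clear'' and appends only a \qed. The single substantive change---that $\cf(\mu)=\mu$ forces the blocks $b_\delta$ to have size $\mu$, hence the need for $\chi=\mu^+$ and for arranging a common order type $\mu$ by hand---is exactly the point, and you have handled it cleanly.
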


The next result is suggested by the proof of \cite[Theorem~2.14]{paper34}.

\begin{prop}  Suppose that $\mu$ is a singular limit of strongly compact cardinals.

Then $\pr_1(\mu^+, \mu^+,2, \cf(\mu)^+)$ and $\pl_1(\mu^+,1,\cf(\mu)^+)$ fail. 
\end{prop}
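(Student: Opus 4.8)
The plan is to refute $\pr_1(\mu^+,\mu^+,2,\cf(\mu)^+)$; this suffices, since (by \cite[\S2.3]{paper44}) $\pl_1(\mu^+,1,\cf(\mu)^+)$ implies $\pr_1(\mu^+,\mu^+,\aleph_0,\cf(\mu)^+)$, which in turn implies $\pr_1(\mu^+,\mu^+,2,\cf(\mu)^+)$. So suppose $c:[\mu^+]^2\rightarrow2$ is a witness, and aim to derive a contradiction along the lines of the preceding proof. Concretely, I would produce a set $A\in[\mu^+]^{\mu^+}$, a color $\epsilon<2$, and for each $\delta\in A$ a set $b_\delta\in[\mu^+\setminus\delta]^{\le\cf(\mu)}$ with $\min(b_\delta)=\delta$, such that every $\gamma\in A\cap\delta$ admits some $\beta\in b_\delta$ with $c(\gamma,\beta)=\epsilon$. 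Granting this, a recursion produces a sparse $\Delta\in[A]^{\mu^+}$ with $\sup(b_\gamma)<\delta$ for all $(\gamma,\delta)\in[\Delta]^2$; thinning $\Delta$ to a constant value of $\otp(b_\delta)=\sigma<\cf(\mu)^+$ and feeding the pairwise disjoint family $\{b_\delta\mid\delta\in\Delta\}$ to $c$ with $\tau:=1-\epsilon$ would yield $(\gamma,\delta)\in[\Delta]^2$ with $c[b_\gamma\times b_\delta]=\{1-\epsilon\}$. But $\gamma\in A\cap\delta$, so the witnessing $\beta\in b_\delta$ gives $c(\gamma,\beta)=\epsilon$ with $(\gamma,\beta)\in b_\gamma\times b_\delta$, a contradiction.

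The difficulty, compared with the preceding proof, is that $\chi=\cf(\mu)^+$ only permits feeding $c$ families of sets of size $\le\cf(\mu)$, so its Claim---whose proof applied $\pr_1$ to families of sets of size up to $<\mu$---is no longer available to manufacture the witnesses $\beta$. This is where I would spend the large cardinals. Fix an increasing sequence $\langle\lambda_i\mid i<\cf(\mu)\rangle$ of strongly compact cardinals with $\sup_i\lambda_i=\mu$, and note that $\mu$, being a limit of inaccessibles, is a strong limit. For each $i$, the strong compactness of $\lambda_i$ lets me extend the (trivially $\lambda_i$-complete) cobounded filter on $\mu^+$ to a $\lambda_i$-complete uniform ultrafilter $U_i$ on $\mu^+$, so that every $U_i$-large set is cofinal in $\mu^+$. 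For $\gamma<\mu^+$, exactly one of $\{\beta>\gamma\mid c(\gamma,\beta)=0\}$ and $\{\beta>\gamma\mid c(\gamma,\beta)=1\}$ lies in $U_i$; record this by the \emph{pattern} $e_\gamma\in{}^{\cf(\mu)}2$ whose value $e_\gamma(i)$ is the color whose $i$-th successor set is $U_i$-large.

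The crux is to homogenize these patterns so that a single $A$ serves all subsets of size $<\mu$ at once. Since $2^{\cf(\mu)}<\mu<\mu^+=\cf(\mu^+)$, the map $\gamma\mapsto e_\gamma$ is constant, say with value $e$, on some $A\in[\mu^+]^{\mu^+}$. As $\cf(\mu)$ is regular, I may fix a color $\epsilon<2$ for which $I:=\{i<\cf(\mu)\mid e(i)=\epsilon\}$ is cofinal in $\cf(\mu)$, and re-enumerate $\langle\lambda_i\mid i\in I\rangle$ increasingly and cofinally in $\mu$. Now, fixing $\delta\in A$ and using $|A\cap\delta|\le\mu=\sup_{i\in I}\lambda_i$, partition $A\cap\delta=\biguplus_{i\in I}P_i$ with $|P_i|<\lambda_i$. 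For each $i\in I$ and each $\gamma\in P_i\subseteq A$, the $\epsilon$-successor set of $\gamma$ is $U_i$-large, so by $\lambda_i$-completeness $\bigcap_{\gamma\in P_i}\{\beta>\gamma\mid c(\gamma,\beta)=\epsilon\}$ is $U_i$-large, hence cofinal, and I may pick $\beta_i>\delta$ in it. Then $b_\delta:=\{\delta\}\cup\{\beta_i\mid i\in I\}$ is as required, completing the plan.

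The main obstacle is precisely this homogenization step, i.e.\ securing one set $A$ of full size $\mu^+$ that works simultaneously for all subsets of size $<\mu$, rather than only $<\lambda_i$ for a single $i$: a lone $\lambda_i$-complete ultrafilter controls only sets of size $<\lambda_i$, and no $\mu$-complete uniform ultrafilter on $\mu^+$ is available, $\mu$ being singular. The pattern-pigeonhole---made possible by $\mu$ being a strong limit---is what lets the cofinal family $\{U_i\mid i\in I\}$ cooperate on a common $A$, and is the step I expect to require the most care.
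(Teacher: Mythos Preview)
Your proposal is correct and uses the same core mechanism as the paper: strongly compact cardinals below $\mu$ supply complete uniform ultrafilters on $\mu^+$, and these let you produce, for each $\delta$, a set of size $\le\cf(\mu)$ that witnesses color $\epsilon$ against every earlier point, contradicting $\pr_1$ when fed back in.

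The organization differs in an interesting way. The paper first proves, for each threshold $\theta<\mu$, a self-contained Claim yielding a set $X\in[\mu^+]^{\mu^+}$ and a color $j$ such that every $x\in[X]^\theta$ has cofinally many witnesses \emph{inside} $X$; it then applies this for each $\mu_i$ to get separate $X_i$'s (with a common color after thinning), and builds a recursive matrix $\langle\beta_{\delta,i}\rangle$ in which each $\beta_{\delta,i}\in X_i$ witnesses for the size-$\le\mu_i$ set $\{\beta_{\gamma,i}\mid\gamma\in\Gamma_{\delta,i}\}$, so the resulting $a_\delta$'s are automatically $<$-increasing. Your approach instead coordinates the ultrafilters up front via the pattern map $\gamma\mapsto e_\gamma\in{}^{\cf(\mu)}2$ and the pigeonhole $2^{\cf(\mu)}<\mu<\mu^+$; this yields a \emph{single} set $A$ on which all $U_i$ (for $i\in I$) agree on the color $\epsilon$, and then for each $\delta\in A$ you build $b_\delta$ directly, without recursion, paying the price of an extra sparsification and order-type normalization at the end. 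Your route avoids the need for the ``closure'' feature of the paper's Claim (witnesses lying back in $X$) and the recursive bookkeeping, at the cost of an additional pigeonhole; both arguments are of comparable length and neither clearly dominates the other.
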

\begin{proof} As mentioned earlier, $\pl_1(\mu^+,1,\cf(\mu)^+)$ implies $\pr_1(\mu^+, \mu^+,\aleph_0, \cf(\mu)^+)$,
so, towards a contradiction, let us suppose that $c:[\mu^+]^2\rightarrow2$ witnesses $\pr_1(\mu^+,\mu^+,\allowbreak2,\cf(\mu)^+)$.

\begin{claim} Let $\theta<\mu$. There exists $X\in[\mu^+]^{\mu^+}$ and $j<2$ such that, for every $x\in[X]^{\theta}$,
there are cofinally many $\beta\in X$ with $c[x\times\{\beta\}]=\{j\}$.
\end{claim}
\begin{proof}
For all $\alpha<\mu^+$ and $j<2$, let $$B^\alpha_j:=\{ \beta<\mu^+\mid \beta>\alpha\ \&\ c(\alpha,\beta)=j\}.$$
As there exists a strongly compact cardinal in-between $\theta$ and $\mu$,
let us fix a uniform, $\theta^+$-complete ultrafilter $U$ on $\mu^+$.
Then, for each $\alpha < \mu^+$, find $j_{\alpha} < 2$ such that $B_{j_{\alpha}}^\alpha$ is in $U$.
Finally, find $j<2$ such that $X:=\{\alpha<\mu^+\mid j_{\alpha}=j\}$ is in $U$.
Then $X$ is as sought.
\end{proof}

Fix a strictly increasing sequence $\langle \mu_i\mid i<\cf(\mu)\rangle$
of cardinals converging to $\mu$, with $\mu_0\ge\cf(\mu)$. For each $i<\cf(\mu)$, let $X_i$ and $j_i$ be given by the above claim.
By thinning out, we may also assume the existence of $j<2$ such that $j_i=j$ for all $i<\cf(\mu)$.

For every $\delta<\mu^+$, fix a decomposition $\delta=\biguplus_{i<\cf(\mu)}\Gamma_{\delta,i}$ such that $|\Gamma_{\delta,i}|\le\mu_i$ for all $i<\cf(\mu)$.
We shall now construct a matrix $\langle \beta_{\delta,i}\mid \delta<\mu^+, i<\cf(\mu)\rangle$
in such a way that, for each $\delta<\mu^+$, $\langle \beta_{\delta,i}\mid i<\cf(\mu)\rangle\in\prod_{i<\cf(\mu)}X_i$.
The definition is by recursion on $\delta<\mu^+$:

$\br$ For $\delta=0$, let $\beta_{\delta,i}:=\min(X_i)$ for all $i<\cf(\mu)$.

$\br$ For $\delta>0$ such that $\langle \beta_{\gamma,i}\mid \gamma<\delta, i<\cf(\mu)\rangle$ has already been defined,
since, for each $i<\cf(\mu)$, $x_{\delta,i}:=\{ \beta_{\gamma,i}\mid \gamma\in\Gamma_{\delta,i}\}$ is a subset of $X_i$
of size no more than $\mu_i$, we may pick $\beta_{\delta,i}\in X_i$ above $\sup\{ \beta_{\gamma,\iota}\mid \gamma<\delta, \iota<\cf(\mu)\}$
such that $c[x_{\delta,i}\times\{\beta_{\delta,i}\}]=\{j\}$.

This completes the construction.

For each $\delta<\mu^+$, let $a_\delta:=\{ \beta_{\delta,i}\mid i<\cf(\mu)\}$.
Evidently, $\langle a_\delta\mid \delta<\mu^+\rangle$ is $<$-increasing.
So, by the choice of the coloring $c$, we may now pick $(\gamma,\delta)\in[\mu^+]^2$ such that $c[a_\gamma\times a_\delta]=\{1-j\}$.
Find $i<\cf(\mu)$ such that $\gamma\in\Gamma_{\delta,i}$.
Then $\beta_{\gamma,i}\in x_{\delta,i}\cap a_\gamma$, $\beta_{\delta,i}\in a_\delta$,
and $c(\beta_{\gamma,i},\beta_{\delta,i})=j$. This is a contradiction.
\end{proof}

By \cite[Theorem~3.1]{paper15}, 
for every infinite regular cardinal $\mu$, if $\pl_6(\mu,\mu)$ holds 
then so does $\pr_1(\mu^+,\mu^+,\mu^+,\mu)$,
and 
for every infinite singular cardinal $\mu$, if $\pl_6(\mu^+,\mu)$ holds,
then so does $\pr_1(\mu^{++},\mu^{++},\mu^{++},\mu)$,
The same conclusions may be drawn from 
$\pr_6(\mu,\mu,\mu,\mu)$ and $\pr_6(\mu^+,\mu^+,\mu^+,\mu)$, respectively.
However, the upcoming series of results show that none of these instances are consistent.

\begin{prop} Let $\mu$ be a singular cardinal. 

Then $\pl_6(\mu^+,\mu)$ and $\pr_6(\mu^+,\mu^+,2,\mu)$ both fail.
\end{prop}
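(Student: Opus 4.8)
The plan is to obtain both failures from the failure of $\pr_1(\mu^+,\mu^+,2,\mu)$ just proved, by transcribing that argument one level up, at the level of the sequence‑coloring. I first treat $\pr_6(\mu^+,\mu^+,2,\mu)$. Towards a contradiction, fix a witness $d\colon{}^{<\omega}(\mu^+)\to 2$. The real effect of the image conditions (3)--(4) is that every sequence listed in $u_\alpha$ (resp.\ $v_\alpha$) must contain the index $\alpha$, so a set can be encoded only after attaching a common marker. I therefore pass to the marked plane $X:=\mu^+\times\mu^+$, whose points $(m,v)$ I picture as lying on the \emph{line} $m$, well‑ordered with the marker $m$ primary (so that co‑bounded subsets of $X$ meet a tail of lines), and I put $\hat c((m,v),(m',v')):=d(\langle v,m,m',v'\rangle)$ whenever $m<m'$. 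Encoding a single‑line set $S\s\{m\}\times\mu^+$ of size ${<}\mu=\chi$ by $u_m:=\{\langle v,m\rangle\mid(m,v)\in S\}$ and $v_m:=\{\langle m,v\rangle\mid(m,v)\in S\}$, the conclusion of $\pr_6$ becomes a partition property of $\hat c$: for any family $\langle S_m\mid m\in E\rangle$ of single‑line sets of size ${<}\mu$ indexed along a club $E$ of lines, and any $\tau<2$, there are $m<m'$ in $E$ with $\hat c\restriction(S_m\times S_{m'})\equiv\tau$. Note that $\hat c$ is consulted only on cross‑line pairs, exactly where it is defined.

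Now I run the dichotomy of the Proposition for $\hat c$, keeping every set single‑line. Call $A\s X$ \emph{good} if for every single‑line $a\s A$ of size ${<}\mu$ there are cofinally many $q$ with $\hat c[a\times\{q\}]=\{0\}$. If no co‑bounded good set exists, the recursion of the Proposition produces a $<$‑increasing sequence $\langle a_i\rangle$ of single‑line sets on distinct lines $m_i$, chosen continuously so that $\{m_i\}$ is a club, with each $a_i$ disjoint from the bounded sets $B_j:=\{q\mid\hat c[a_j\times\{q\}]=\{0\}\}$ $(j<i)$; the $\tau=0$ instance of the partition property would drive some $a_i$ into some $B_j$, so this family already refutes $\pr_6$. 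Otherwise fix a co‑bounded good $A$ (hence one meeting a tail of lines). The key upgrade is that, since any $a\s A$ of size ${<}\mu$ meets fewer than $\mu$ lines and $\mu^+$ is regular, intersecting the cofinal $0$‑sets of its single‑line pieces shows that cofinally many $q$ satisfy $\hat c[a\times\{q\}]=\{0\}$ for \emph{multi‑line} $a\s A$ of size ${<}\mu$ as well.

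Using this I build single‑line blocks refuting the $\tau=1$ instance. Recursively over $\delta<\mu^+$, given single‑line blocks $\langle b_\gamma\mid\gamma<\delta\rangle$ each carrying a distinguished point $p_\gamma$ of $A$ on its line $m_\gamma$, I split these ${\le}\mu$ distinguished points as $\bigcup_{i<\cf(\mu)}P_i$ with $|P_i|\le\mu_i<\mu$ along a fixed cofinal $\langle\mu_i\rangle$ in $\mu$. The upgrade yields, for each $i$, cofinally many $q$ with $\hat c[P_i\times\{q\}]=\{0\}$; intersecting these $\cf(\mu)$ cofinal sets of admissible lines (legitimate as $\cf(\mu)<\mu^+$) and intersecting once more with the tail of lines met by $A$, I locate a common fresh line $m_\delta$ carrying both an $A$‑point $p_\delta$ and, for each $i$, a point $q_{\delta,i}$ with $\hat c[P_i\times\{q_{\delta,i}\}]=\{0\}$; set $b_\delta:=\{p_\delta\}\cup\{q_{\delta,i}\mid i<\cf(\mu)\}\s\{m_\delta\}\times\mu^+$. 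Then every earlier $\gamma$, whose $p_\gamma$ lies in some $P_i$, satisfies $\hat c(p_\gamma,q_{\delta,i})=0$ with $p_\gamma\in b_\gamma$ and $q_{\delta,i}\in b_\delta$; hence $\langle b_\delta\rangle$ has no $\tau=1$ monochromatic pair, refuting $\pr_6$. Either horn settles $\pr_6(\mu^+,\mu^+,2,\mu)$, and since $\pr_6(\mu^+,\mu^+,\theta,\mu)$ implies $\pr_6(\mu^+,\mu^+,2,\mu)$ for all $\theta\ge2$, the stronger instances fall too.

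Finally, $\pl_6(\mu^+,\mu)$ reduces to the case settled: from a witness $d$ I realize a prescribed target color through the length $\ell(\varrho)$ by padding all left sequences to a fixed length and postcomposing with a two‑valued map, take the stems $\sigma_\alpha$ empty so that clause~(4) of $\pl_6$ degenerates to clause~(4) of $\pr_6$, and absorb the eventually regressive $\varphi$ via Fodor's lemma, locating the pair inside a stationary constant fibre of $\varphi$. I expect the main obstacle to be precisely the club requirement built into $\pr_6$ and $\pl_6$, which is absent at the level of pairs (where $\pr_1$ ranges over all families of size $\kappa$): the escape‑points $q_{\delta,i}$ are produced only \emph{cofinally}, so forcing them to sit on the \emph{prescribed} line $m_\delta$ at continuity stages of the indexing club—while simultaneously keeping each block's distinguished point inside the good set $A$ and on that same single line—is the delicate point. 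Reconciling this cofinal‑versus‑club tension is the heart of the construction, and is what makes the sequence‑level principles genuinely harder to refute than their pair‑level shadow.
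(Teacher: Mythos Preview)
Your approach has a genuine gap, and it stems from restricting the coloring $d$ to length-$4$ sequences via $\hat c$. The ``upgrade'' from single-line to multi-line good sets does not go through: you claim that intersecting the cofinal $0$-sets of the single-line pieces of $a$ yields a cofinal set, but fewer-than-$\mu$ cofinal subsets of a linear order of cofinality $\mu^+$ can have empty intersection. Without this upgrade, you cannot place all the escape points $q_{\delta,i}$ on a common line $m_\delta$, which is exactly what the single-line encoding into $\pr_6$ requires. The same club-versus-cofinal tension (which you yourself flag) also undermines the first horn: at limit stages of the recursion you must place the bad witness $a_\delta$ on the prescribed line $m_\delta=\sup_{i<\delta}m_i$, but the failure of goodness only hands you a bad single-line set on \emph{some} line, and if you instead place a harmless singleton there, its $0$-set $B_\delta$ need not be bounded, breaking the avoidance at later stages.

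The paper's argument is both shorter and avoids all of this by exploiting precisely the feature you discarded: variable-length sequences. The key claim is that for any $A\in[\mu^+]^{<\mu}$ and any $\beta$, one can find a tail $\sigma\in{}^{<\omega}\mu^+$ with $d(\langle\alpha,\beta\rangle{}^\smallfrown\sigma)\neq 1$ for every $\alpha\in A$; this is proved by applying the principle itself with $u_\gamma=\{\langle\alpha,\beta,\gamma\rangle\mid\alpha\in A\}$ and $v_\delta=\{\langle\delta\rangle\}$, obtaining a value $3$ (for $\pl_6$) or $0$ (for $\pr_6$) on all of $\langle\alpha,\beta,\gamma,\delta\rangle$. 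Then for each $\beta$ one decomposes $\beta$ into $\cf(\mu)$ pieces $A_{\beta,i}$, attaches the corresponding tails to form $v_\beta=\{\langle\beta\rangle{}^\smallfrown\sigma_{\beta,i}\mid i<\cf(\mu)\}$ (legal since $\cf(\mu)<\mu$), takes $u_\alpha=\{\langle\alpha\rangle\}$, and applies the principle once more with target value $1$ to reach a contradiction. No encoding, no club bookkeeping, and $\pl_6$ and $\pr_6$ are handled in parallel rather than by a reduction.
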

\begin{proof}  Towards a contradiction, suppose that $d:{}^{<\omega}\mu^+\rightarrow\omega$ witnesses $\pl_6(\mu^+,\mu)$
(resp.~$d:{}^{<\omega}\mu^+\rightarrow2$ witnesses $\pr_6(\mu^+,\mu^+,2,\mu)$).

\begin{claim} Let $A\in[\mu^+]^{<\mu}$ and $\beta<\mu^+$.
There exists $\sigma\in{}^{<\omega}\mu^+$ such that $d(\langle \alpha,\beta\rangle\conc\sigma)\neq 1$ for any $\alpha\in A$.
\end{claim}
\begin{proof} For all $\gamma<\mu^+$, let $u_\gamma:=\{ \langle\alpha,\beta,\gamma\rangle\mid \alpha\in A\rangle$
and $v_\gamma:=\{\langle\gamma\rangle\}$. 

$\br$ Assuming that $d:{}^{<\omega}\mu^+\rightarrow\omega$ witnesses $\pl_6(\mu^+,\mu)$,
we may now fix $(\gamma,\delta)\in[\mu^+]^2$ such that 
$d(\eta\conc\rho)=\ell(\eta)$ for all $\eta\in u_\gamma$ and $\rho\in v_\delta$.
Let $\sigma:=\langle \gamma,\delta\rangle$. Then, for every $\alpha\in\Gamma$,
$\langle \alpha,\beta,\gamma\rangle\in u_\gamma$ and $\langle\delta\rangle\in v_\delta$, so that 
$d(\langle \alpha,\beta\rangle\conc\sigma)=d(\langle\alpha,\beta,\gamma\rangle\conc\langle\delta\rangle)=3$.

$\br$ Assuming that $d:{}^{<\omega}\mu^+\rightarrow2$ witnesses $\pr_6(\mu^+,\mu^+,2,\mu)$,
we may now fix $(\gamma,\delta)\in[\mu^+]^2$ such that 
$d(\eta\conc\rho)=0$ for all $\eta\in u_\gamma$ and $\rho\in v_\delta$.
Let $\sigma:=\langle \gamma,\delta\rangle$. Then, for every $\alpha\in\Gamma$,
$\langle \alpha,\beta,\gamma\rangle\in u_\gamma$ and $\langle\delta\rangle\in v_\delta$, so that 
$d(\langle \alpha,\beta\rangle\conc\sigma)=0$.
\end{proof}

Let $\beta<\mu^+$. Fix a decomposition $\beta=\biguplus_{i<\cf(\mu)}A_{\beta,i}$ such that $|A_{\beta,i}|<\mu$ for all $i<\cf(\mu)$,
and then, for every $i<\cf(\mu)$, fix $\sigma_{\beta,i}\in{}^{<\omega}\mu^+$ such that $d(\langle \alpha,\beta\rangle\conc\sigma_{\beta,i})\neq1$ for any $\alpha\in A_{\beta,i}$.
Let $u_\beta:=\{\beta\}$ and $v_\beta:=\{ \langle\beta\rangle\conc\sigma_{\beta,i}\mid i<\cf(\mu)\}$ .

$\br$ Assuming that $d:{}^{<\omega}\mu^+\rightarrow\omega$ witnesses $\pl_6(\mu^+,\mu)$,
we may now fix $(\alpha,\beta)\in[\mu^+]^2$ such that 
$d(\eta\conc\rho)=\ell(\eta)$ for all $\eta\in u_\alpha$ and $\rho\in v_\beta$.
Find $i<\cf(\mu)$ such that $\alpha\in A_{\beta,i}$.
As $\langle\alpha\rangle\in u_\alpha$ and $\langle\beta\rangle\conc\sigma_{\beta,i}\in v_\beta$,
this must mean that $d(\langle\alpha\rangle\conc\langle\beta\rangle\conc\sigma_{\beta,i})=1$,
contradicting the fact that $\alpha\in A_{\beta,i}$.

$\br$ Assuming that $d:{}^{<\omega}\mu^+\rightarrow2$ witnesses $\pr_6(\mu^+,\mu^+,2,\mu)$,
we may now fix $(\alpha,\beta)\in[\mu^+]^2$ such that 
$d(\eta\conc\rho)=1$ for all $\eta\in u_\alpha$ and $\rho\in v_\beta$.
Find $i<\cf(\mu)$ such that $\alpha\in A_{\beta,i}$.
As $\langle\alpha\rangle\in u_\alpha$ and $\langle\beta\rangle\conc\sigma_{\beta,i}\in v_\beta$,
this must mean that $d(\langle\alpha\rangle\conc\langle\beta\rangle\conc\sigma_{\beta,i})=1$,
contradicting the fact that $\alpha\in A_{\beta,i}$.
\end{proof}

\begin{prop}\label{prop84} Let $\mu$ be an infinite cardinal. 
Then  $\pl_6(\mu,\mu)$ fails. Furthermore:
\begin{itemize}
\item[(a)] For every map $d:{}^{<\omega}\mu\rightarrow\omega$,
there exist a cardinal $\nu<\mu$ and two sequences $\langle u_\alpha\mid \alpha<\mu\rangle$ and $\langle v_\beta\mid \beta<\mu\rangle$,
with
\begin{enumerate}
\item $u_\alpha\s {}^{<\omega}\mu$, $|u_\alpha|=\nu$, and, for all $\varrho\in u_\alpha$, $\alpha\in\im(\varrho)$;
\item $v_\beta\s {}^{<\omega}\mu$, $|v_\beta|=1$, and, for all $\sigma\in v_\beta$,  $\langle \beta\rangle\sqsubseteq \sigma$,
\end{enumerate}
such that, for every $(\alpha,\beta)\in[\mu]^2$,
there are $\varrho\in u_\alpha$ and $\sigma\in v_\beta$
with $d(\varrho{}^\smallfrown\sigma)\neq\ell(\varrho)$.
\item[(b)] For every map $d:{}^{<\omega}\mu\rightarrow\omega$,
there exist two sequences $\langle u_\alpha\mid \alpha<\mu\rangle$ and $\langle v_\beta\mid \beta<\mu\rangle$,
with
\begin{enumerate}
\item $u_\alpha\s {}^{<\omega}\mu$, $|u_\alpha|=1$, and, for all $\varrho\in u_\alpha$, $\alpha\in\im(\varrho)$;
\item $v_\beta\s {}^{<\omega}\mu$, $|v_\beta|=|\beta|$, and, for all $\sigma\in v_\beta$,  $\langle \beta\rangle\sqsubseteq \sigma$,
\end{enumerate}
such that, for every $(\alpha,\beta)\in[\mu]^2$,
there are $\varrho\in u_\alpha$ and $\sigma\in v_\beta$
with $d(\varrho{}^\smallfrown\sigma)\neq\ell(\varrho)$.
\end{itemize}
\end{prop}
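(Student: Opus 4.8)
The plan is to establish the two ``furthermore'' clauses (a) and (b); the failure of $\pl_6(\mu,\mu)$ then drops out as a formality. For that reduction, fix an arbitrary $d:{}^{<\omega}\mu\rightarrow\omega$ and feed into Definition~\ref{defpl6} the sequences $\langle u_\alpha\rangle$ and $\langle v_\beta\rangle$ produced by (a) or (b), together with $\sigma_\alpha:=\langle\rangle$ (so clause~(4) reads $\langle\alpha\rangle\sqsubseteq\sigma$, exactly our hypothesis on $v_\alpha$) and the eventually regressive map $\varphi\equiv 0$. Clauses (1)--(4) hold, since $|u_\alpha|=\nu<\mu=\chi$ and $|v_\beta|\le|\beta|<\mu=\chi$, while all sets are nonempty (in (b) one sets $v_0$ to a dummy singleton). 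Yet (a)/(b) guarantee that \emph{every} $(\alpha,\beta)\in[\mu]^2$, in particular every pair with $\varphi(\alpha)=\varphi(\beta)$, admits $\varrho\in u_\alpha$ and $\sigma\in v_\beta$ with $d(\varrho\conc\sigma)\neq\ell(\varrho)$, contradicting the conclusion of $\pl_6$.

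For clause (b), where $u_\alpha$ is a singleton and $v_\beta$ may be large, I would isolate the per-$\alpha$ fact: there is $\varrho_\alpha$ with $\alpha\in\im(\varrho_\alpha)$ such that for every $\beta>\alpha$ some $\sigma\sqsupseteq\langle\beta\rangle$ satisfies $d(\varrho_\alpha\conc\sigma)\neq\ell(\varrho_\alpha)$. Run a dichotomy on the candidate $\langle\alpha\rangle$: if it works, keep it; otherwise there is $\beta_*>\alpha$ with $d(\langle\alpha,\beta_*\rangle\conc w)=1$ for \emph{all} finite $w$, and then $\varrho_\alpha:=\langle\alpha,\beta_*\rangle$ is a ``universal defeater'', as $\ell(\varrho_\alpha)=2$ while $d(\varrho_\alpha\conc s)=1$ for every $s$. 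Setting $u_\alpha:=\{\varrho_\alpha\}$ and gathering, for each $\beta$, one witnessing $\sigma$ per $\alpha<\beta$ into $v_\beta$ (padded to size $|\beta|$ by further sequences extending $\langle\beta\rangle$) yields the configuration.

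Clause (a), where now $v_\beta$ is the singleton and $u_\alpha$ carries size $\nu<\mu$, is the heart of the matter. I would again open with the cone dichotomy in global form. If $d$ is constant with value $c$ on the cone above some $p^*$, then for each $\alpha$ any extension $\varrho_\alpha\sqsupseteq p^*$ with $\alpha\in\im(\varrho_\alpha)$ and $\ell(\varrho_\alpha)\neq c$ is a universal defeater, and $u_\alpha:=\{\varrho_\alpha\}$, $\sigma_\beta:=\langle\beta\rangle$ finish with $\nu=1$. The hard case is when no cone is monochromatic: universal defeaters need not exist (for $d(t)=\ell(t)-1$ the set $\{\sigma:\ell(\sigma)=1\}$ is undefeated by \emph{every} family), so one cannot kill all suffixes at once and must exploit the freedom in choosing the \emph{singleton} $\sigma_\beta$. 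The strategy is to engineer, for each $\alpha$, a family $u_\alpha$ of size $\nu$ built from sequences of $\nu$-many distinct lengths, so that membership in the undefeated set $N_\alpha:=\{\sigma:\forall\varrho\in u_\alpha,\ d(\varrho\conc\sigma)=\ell(\varrho)\}$ forces simultaneous agreement of $d$ with several distinct values; one then chooses $\sigma_\beta$ by recursion on $\beta<\mu$, at stage $\beta$ taking $\sigma_\beta\sqsupseteq\langle\beta\rangle$ outside $\bigcup_{\alpha<\beta}N_\alpha$.

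The main obstacle is precisely the \emph{thinness} of $N_\alpha$ together with the demand that $\nu$ be a single cardinal below $\mu$. For $\mu=\lambda^+$ this is comfortable ($\nu=\lambda$, with $u_\alpha$ ranging over $\lambda$-many lengths), and the recursion succeeds because $\bigcup_{\alpha<\beta}N_\alpha$ then meets the $\mu$-sized cone above $\langle\beta\rangle$ in fewer than $\mu$ points. The delicate point is an \emph{inaccessible} $\mu$, where $|\beta|$ is cofinal in $\mu$ and a uniform $\nu$ (ideally $\aleph_0$) must be made to work; here I expect to lean on the countability of $\omega$, so that $N_\alpha$ is an intersection of countably many $d$-level sets and, for representatives chosen with large pairwise distinct entries, each cross-section $N_\alpha\cap\{\sigma\sqsupseteq\langle\beta\rangle\}$ stays uniformly bounded. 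Verifying this uniform bound, and separating the regular from the singular estimate of $\bigcup_{\alpha<\beta}N_\alpha$, is where the real work lies.
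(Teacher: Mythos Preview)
Your treatment of (b) is correct and pleasantly direct: the dichotomy on $\langle\alpha\rangle$ cleanly produces a singleton $u_\alpha$, and collecting one witness $\sigma_{\alpha,\beta}$ per $\alpha<\beta$ into $v_\beta$ is exactly right. The paper instead argues (b) by the same bootstrap-by-contradiction it uses for (a); your route is a legitimate alternative.

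The gap is in (a). Your plan hinges on recursively choosing $\sigma_\beta$ outside $\bigcup_{\alpha<\beta}N_\alpha$, but you give no mechanism for controlling $N_\alpha$. Since sequences live in ${}^{<\omega}\mu$, there are only $\omega$ possible lengths, so ``$\nu$-many distinct lengths'' forces $\nu\le\aleph_0$; this already contradicts your ``comfortable'' choice $\nu=\lambda$ for $\mu=\lambda^+$. Worse, an intersection of countably many $d$-level sets can have size $\mu$, and neither the cone dichotomy nor the non-monochromatic assumption yields any smallness of $N_\alpha$. You yourself flag the inaccessible case as ``where the real work lies'' and leave it open; in fact no cardinality bound on $N_\alpha$ is available along this line.

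The paper's proof of (a) abandons the direct construction entirely and bootstraps by contradiction. Assuming (a) fails means that $d$ defeats \emph{every} admissible configuration, for \emph{every} $\nu<\mu$; now use this twice. First, fix $\beta$ and take $u_\gamma:=\{\langle\alpha,\beta,\gamma\rangle:\alpha<\beta\}$ (so $\nu=|\beta|$) and $v_\delta:=\{\langle\delta\rangle\}$; the failure of (a) produces $(\gamma,\delta)$ with $d(\langle\alpha,\beta,\gamma,\delta\rangle)=3$ for all $\alpha<\beta$, and set $\eta_\beta:=\langle\gamma,\delta\rangle$. Second, with $\nu=1$, feed in $u_\alpha:=\{\langle\alpha\rangle\}$ and $v_\beta:=\{\langle\beta\rangle\conc\eta_\beta\}$; the failure of (a) now yields $(\alpha,\beta)$ with $d(\langle\alpha,\beta\rangle\conc\eta_\beta)=1$, contradicting the first step. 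The key idea you are missing is that the negation of the statement is itself a strong Ramsey-type assertion about $d$, and one should exploit it rather than try to outrun it with a direct construction.
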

\begin{proof} (a) Suppose not, and let $d$ be a counterexample.

\begin{claim} Let $\beta<\mu$. There exists $\eta\in{}^{<\omega}\mu$ such that,
for all $\alpha<\beta$, $d(\langle\alpha,\beta\rangle^\smallfrown\eta)\neq1$.
\end{claim}
\begin{proof} For every $\gamma<\mu$, let $u_\gamma:=\{ \langle \alpha,\beta,\gamma\rangle\mid \alpha<\beta\}$.
For every $\delta<\mu$, let $v_\delta:=\{\langle\delta\rangle\}$. 
Now, by the choice of $d$ (using $\nu:=|\beta|$), there must exist $(\gamma,\delta)\in[\mu]^2$,
such that $d(\varrho{}^\smallfrown\sigma)=\ell(\varrho)$ for all $\varrho\in u_\gamma$ and $\sigma\in v_\delta$.
So, for all $\alpha<\beta$, $d(\langle\alpha,\beta,\gamma,\delta\rangle)=3$.
Set $\eta:=\langle\gamma,\delta\rangle$.
Then, for all $\alpha<\beta$, $d(\langle\alpha,\beta\rangle^\smallfrown\eta)=3$.
\end{proof}

For every $\alpha<\mu$, let $u_\alpha:=\{\langle\alpha\rangle\}$. 
For every $\beta<\mu$, pick $\eta_{\beta}\in{}^{<\omega}\mu$ as in the claim,
and let $v_\beta:=\{ \langle\beta\rangle^\smallfrown\eta_{\beta}\}$.
Now, by the choice of $d$ (using $\nu:=1$), there must exist $(\alpha,\beta)\in[\mu]^2$,
such that $d(\varrho{}^\smallfrown\sigma)=\ell(\varrho)$ for all $\varrho\in u_\alpha$ and $\sigma\in v_\beta$.
In particular, $d(\langle\alpha\rangle{}^\smallfrown\langle\beta\rangle{}^\smallfrown\eta_{\beta})=1$,
contradicting the choice of $\eta_{\beta}$.

(b) Left to the reader (but see the proof of Proposition~\ref{prop33}(b)).
\end{proof}

\begin{prop}\label{prop33} Let $\mu$ be an infinite cardinal. 
Then $\pr_6(\mu,\mu,2,\mu)$ fails. Furthermore:
\begin{itemize}
\item[(a)] For every map $d:{}^{<\omega}\mu\rightarrow2$,
there exist a cardinal $\nu<\mu$, $i<2$, and two sequences $\langle u_\alpha\mid \alpha<\mu\rangle$ and $\langle v_\beta\mid \beta<\mu\rangle$,
with
\begin{enumerate}
\item $u_\alpha\s {}^{<\omega}\mu$, $|u_\alpha|=\nu$, and, for all $\varrho\in u_\alpha$, $\alpha\in\im(\varrho)$;
\item $v_\beta\s {}^{<\omega}\mu$, $|v_\beta|=1$, and, for all $\sigma\in v_\beta$,  $\beta\in\im(\sigma)$,
\end{enumerate}
such that, for every $(\alpha,\beta)\in[\mu]^2$,
there are $\varrho\in u_\alpha$ and $\sigma\in v_\beta$
with $d(\varrho{}^\smallfrown\sigma)\neq i$.
\item[(b)] For every map $d:{}^{<\omega}\mu\rightarrow2$,
there exist $i<2$ and two sequences $\langle u_\alpha\mid \alpha<\mu\rangle$ and $\langle v_\beta\mid \beta<\mu\rangle$,
with
\begin{enumerate}
\item $u_\alpha\s {}^{<\omega}\mu$, $|u_\alpha|=1$, and, for all $\varrho\in u_\alpha$, $\alpha\in\im(\varrho)$;
\item $v_\beta\s {}^{<\omega}\mu$, $|v_\beta|=|\beta|$, and, for all $\sigma\in v_\beta$, $\beta\in\im(\sigma)$,
\end{enumerate}
such that, for every $(\alpha,\beta)\in[\mu]^2$,
there are $\varrho\in u_\alpha$ and $\sigma\in v_\beta$
with $d(\varrho{}^\smallfrown\sigma)\neq i$.
\end{itemize}
\end{prop}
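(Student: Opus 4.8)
The plan is to prove the two ``Furthermore'' clauses, since either of them immediately yields the failure of $\pr_6(\mu,\mu,2,\mu)$: given any $d:{}^{<\omega}\mu\rightarrow2$, clause (a) or (b) produces a color $i$ and sequences which, restricted to a club $E$ on which all entries are nonempty (so as to meet the hypotheses of $\pr_6$), show that $d$ cannot serve as a witness for the parameter $\tau:=i$, as no $(\alpha,\beta)\in[E]^2$ can have $d$ constantly $i$. Throughout I would follow the template of Proposition~\ref{prop84}, the one structural novelty being that the target is now a fixed color rather than $\ell(\varrho)$; accordingly, I must play the two colors $0$ and $1$ against each other, establishing an auxiliary \emph{Claim} with one color and then forcing the opposite color at the last step.

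For (a) I would argue by contradiction. Negating the clause supplies a $d$ such that for every cardinal $\nu<\mu$, every $i<2$, and every admissible pair of sequences, there is $(\alpha,\beta)\in[\mu]^2$ with $d(\varrho\conc\sigma)=i$ for all $\varrho\in u_\alpha$ and $\sigma\in v_\beta$. The Claim would read: for each $\beta<\mu$ there is $\eta_\beta\in{}^{<\omega}\mu$ with $d(\langle\alpha,\beta\rangle\conc\eta_\beta)=1$ for every $\alpha<\beta$; this I would extract by applying the above with $\nu:=|\beta|$ and $i:=1$ to $u_\gamma:=\{\langle\alpha,\beta,\gamma\rangle\mid\alpha<\beta\}$ and $v_\delta:=\{\langle\delta\rangle\}$, reading off $\eta_\beta:=\langle\gamma,\delta\rangle$ from the resulting pair. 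Then, feeding in $u_\alpha:=\{\langle\alpha\rangle\}$ and $v_\beta:=\{\langle\beta\rangle\conc\eta_\beta\}$ (so $\nu:=1$) at the opposite color $i:=0$, I would obtain $(\alpha,\beta)\in[\mu]^2$ with $d(\langle\alpha,\beta\rangle\conc\eta_\beta)=0$, contradicting the Claim.

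For (b) the difficulty, and the step I expect to be the main obstacle, is that the large family is now pinned to the $v$-side ($|u_\alpha|=1$ and $|v_\beta|=|\beta|$), so the auxiliary applications can no longer sweep over all $\alpha<\beta$ by fattening the $u$-side as in (a). My plan is to compensate by proving the Claim \emph{one pair at a time}: negating (b) gives a $d$ for which every admissible $(b)$-shaped pair of sequences admits a constant-$i$ pair, and for each fixed $(\alpha,\beta)$ with $\alpha<\beta$ I would apply this at color $1$ to $u'_\gamma:=\{\langle\alpha,\gamma\rangle\}$ and $v'_\delta:=\{\langle\delta,\beta,\alpha,\xi\rangle\mid\xi<\delta\}$, obtaining a pair $(\gamma,\delta)$ and setting $\eta_{\beta,\alpha}:=\langle\gamma,\delta,\beta,\alpha,0\rangle$. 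Hard-coding $\beta$ and $\alpha$ into every word of $v'_\delta$ serves two purposes at once: it guarantees $\beta\in\im(\eta_{\beta,\alpha})$ (so that these words are legal members of a $v_\beta$) and that the $\eta_{\beta,\alpha}$ are pairwise distinct across $\alpha$ (so that $|v_\beta|=|\beta|$), while the conclusion of the application secures $d(\langle\alpha\rangle\conc\eta_{\beta,\alpha})=1$. Finally, with $u_\alpha:=\{\langle\alpha\rangle\}$ and $v_\beta:=\{\eta_{\beta,\alpha}\mid\alpha<\beta\}$, I would invoke the negation at color $0$ to find $(\alpha,\beta)\in[\mu]^2$ with $d(\langle\alpha\rangle\conc\sigma)=0$ for all $\sigma\in v_\beta$; since $\alpha<\beta$, the word $\eta_{\beta,\alpha}$ lies in $v_\beta$, and evaluating there contradicts the Claim. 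The only routine residue is the treatment of small $\beta$, which is harmless since the conclusions quantify only over $(\alpha,\beta)\in[\mu]^2$.
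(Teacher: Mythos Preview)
Your proposal is correct and follows essentially the same two-step contradiction template as the paper: an auxiliary Claim at one color, then a final application at the opposite color. Part~(a) matches the paper's argument (which it leaves to the reader, referring back to Proposition~\ref{prop84}(a)) up to the harmless choice of which color is used where.

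For part~(b) there is a minor but genuine difference worth noting. The paper proves the Claim with $u_\gamma:=\{\langle\alpha,\beta,\gamma\rangle\}$ and $v_\delta:=\{\langle\delta\rangle\}$, and then at the last step prepends $\langle\beta\rangle$ to each $\eta_{\alpha,\beta}$ to force $\beta\in\im(\sigma)$. You instead hard-code both $\beta$ and $\alpha$ (and a dummy $\xi<\delta$) into the words of $v'_\delta$, which simultaneously arranges $\beta\in\im(\eta_{\beta,\alpha})$, injectivity of $\alpha\mapsto\eta_{\beta,\alpha}$, and the exact size $|v'_\delta|=|\delta|$. This extra care is not merely cosmetic: the paper's choice $v_\delta=\{\langle\delta\rangle\}$ has $|v_\delta|=1$, so if one reads the constraint $|v_\beta|=|\beta|$ literally, a padding step is implicitly needed there; your encoding handles this cleanly. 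Otherwise the arguments are the same.
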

\begin{proof} (a) Left to the reader (but see the proof of Proposition~\ref{prop84}(a)).

(b)  Suppose not, and let $d$ be a counterexample.

\begin{claim} Let $(\alpha,\beta)\in[\mu]^2$. There exists $\eta\in{}^{<\omega}\mu$ such that $d(\langle\alpha,\beta\rangle^\smallfrown\eta)\neq1$.
\end{claim}
\begin{proof} For every $\gamma<\mu$, let $u_\gamma:=\{ \langle \alpha,\beta,\gamma\rangle\}$.
For every nonzero $\delta<\mu$, let $v_\delta:=\{\langle\delta\rangle\}$. 
By the choice of $d$, there must exist $(\gamma,\delta)\in[\mu]^2$,
such that $d(\varrho{}^\smallfrown\sigma)=0$ for all $\varrho\in u_\gamma$ and $\sigma\in v_\delta$.
Set $\eta:=\langle\gamma,\delta\rangle$.
Then, $d(\langle\alpha,\beta\rangle^\smallfrown\eta)=0$.
\end{proof}

For each $(\alpha,\beta)\in[\mu]^2$, let $\eta_{\alpha,\beta}\in{}^{<\omega}\mu$ be given by the claim.
For every $\alpha<\mu$, let $u_\alpha:=\{\langle\alpha\rangle\}$. 
For every $\beta<\mu$, let $v_\beta:=\{ \langle\beta\rangle^\smallfrown\eta_{\alpha,\beta}\mid \alpha<\beta\}$.
By the choice of $d$, pick $(\alpha,\beta)\in[\mu]^2$ such that $d(\varrho{}^\smallfrown\sigma)=1$ for all $\varrho\in u_\alpha$ and $\sigma\in v_\beta$.
Then $d(\langle\alpha\rangle{}^\smallfrown\langle\beta\rangle{}^\smallfrown\eta_{\alpha,\beta})=1$,
contradicting the choice of $\eta_{\alpha,\beta}$.
\end{proof}

\section{$C$-sequences}\label{cseqsection}
\begin{defn} A \emph{$C$-sequence over $\kappa$}
is  sequence $\vec C=\langle C_\alpha\mid\alpha<\kappa\rangle$
such that, for all $\alpha<\kappa$, $C_\alpha$ is a closed subset of $\alpha$ with $\sup(C_\alpha)=\sup(\alpha)$.
\end{defn}

\begin{defn} A $C$-sequence $\langle C_\alpha\mid\alpha<\kappa\rangle$ is said to \emph{avoid} a set $\Gamma$
iff $\acc(C_\alpha)\cap\Gamma=\emptyset$ for all $\alpha<\kappa$.
\end{defn}

Note that a stationary subset $\Gamma$ of $\kappa$ is nonreflecting iff there exists a $C$-sequence over $\kappa$ that avoids it.

In this paper, we shall make use of two instances of the parameterized proxy principle from \cite{paper22,paper23}.
The first instance reads as follows (see \cite[Definition~1.3]{paper22}):

\begin{definition}\label{xbox}
$\boxtimes^-(\kappa)$ asserts
the existence of a $C$-sequence $\langle C_\alpha\mid\alpha<\kappa\rangle$ such that:
\begin{itemize}
\item for all $\alpha < \kappa$ and $\delta\in\acc(C_\alpha)$, $C_\alpha\cap\delta=C_{\delta}$;
\item for every cofinal $B\s\kappa$, there exist stationarily many $\alpha<\kappa$ such that $\sup(\nacc(C_\alpha)\cap B)=\alpha$.
\end{itemize}
\end{definition}

The second instance reads as follows (see \cite[Definition~4.10, Theorem~4.15(iii) and Convention 4.18]{paper23}):
\begin{definition}\label{proxydef} For a stationary subset $S\s\kappa$,
 $\p^-(\kappa,\kappa^+,\allowbreak{\sq^*},1,\{S\},2)$ asserts the existence of a $C$-sequence $\langle C_\alpha\mid\alpha<\kappa\rangle$ 
and a stationary subset $\Delta\s S$ such that:
\begin{itemize}
\item   for all $\alpha<\kappa$ and $\delta\in\acc(C_\alpha)\cap\Delta$, $\sup((C_\alpha\cap\delta)\symdiff C_{\delta})<\delta$;
\item  for every cofinal  $B\s\kappa$,
there exist stationarily many $\alpha \in \Delta$ such that:
$$\sup\{\varepsilon\in B\cap\alpha\mid \min(C_\alpha\setminus(\varepsilon+1))\in B\}=\alpha.$$
\end{itemize}
\end{definition}

\subsection{Walks on ordinals}
For the rest of this subsection, 
let us fix a $C$-sequence $\vec C=\langle C_\alpha\mid\alpha<\kappa\rangle$ over $\kappa$.
The next definition is due to Todorcevic; see \cite{TodWalks} for a comprehensive treatment.
\begin{defn}[Todorcevic] From $\vec C$, derive maps $\Tr:[\kappa]^2\rightarrow{}^\omega\kappa$,
$\rho_2:[\kappa]^2\rightarrow	\omega$,
$\tr:[\kappa]^2\rightarrow{}^{<\omega}\kappa$ and $\lambda:[\kappa]^2\rightarrow\kappa$, as follows.
Let $(\alpha,\beta)\in[\kappa]^2$ be arbitrary.
\begin{itemize}
\item $\Tr(\alpha,\beta):\omega\rightarrow\kappa$ is defined by recursion on $n<\omega$:
$$\Tr(\alpha,\beta)(n):=\begin{cases}
\beta,&n=0\\
\min(C_{\Tr(\alpha,\beta)(n-1)}\setminus\alpha),&n>0\ \&\ \Tr(\alpha,\beta)(n-1)>\alpha\\
\alpha,&\text{otherwise}
\end{cases}
$$
\item $\rho_2(\alpha,\beta):=\min\{n<\omega\mid \Tr(\alpha,\beta)(n)=\alpha\}$;
\item $\tr(\alpha,\beta):=\Tr(\alpha,\beta)\restriction \rho_2(\alpha,\beta)$;
\item $\lambda(\alpha,\beta):=\max\{ \sup(C_{\Tr(\alpha,\beta)(i)}\cap\alpha) \mid i<\rho_2(\alpha,\beta)\}$.
\end{itemize}
\end{defn}

The next three facts are quite elementary. See \cite{paper44} for details.
\begin{fact}\label{fact1} Whenever $0<\delta<\beta<\kappa$, if $\delta\notin\bigcup_{\alpha<\kappa}\acc(C_\alpha)$, then $\lambda(\delta,\beta)<\delta$.
\end{fact}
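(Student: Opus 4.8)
The plan is to unwind the single walk from $\beta$ down to $\delta$ and to read the definition of $\lambda(\delta,\beta)$ off it step by step. Writing $\beta_i:=\Tr(\delta,\beta)(i)$, I have $\beta_0=\beta$, and the recursion gives $\beta_{i+1}=\min(C_{\beta_i}\setminus\delta)\in C_{\beta_i}\s\beta_i$ as long as $\beta_i>\delta$. In particular the trace is strictly decreasing and stays $\ge\delta$, and since $\delta<\beta=\beta_0$, the first time the value $\delta$ is attained is at the (finite, positive) step $\rho_2(\delta,\beta)$; hence $\beta_i>\delta$ for every $i<\rho_2(\delta,\beta)$. By definition,
\[
\lambda(\delta,\beta)=\max\{\sup(C_{\beta_i}\cap\delta)\mid i<\rho_2(\delta,\beta)\},
\]
a maximum over a nonempty finite index set, so it suffices to bound each term.

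The crux is the local claim that $\sup(C_{\beta_i}\cap\delta)<\delta$ for every such $i$. Clearly $\sup(C_{\beta_i}\cap\delta)\le\delta$, so the only way this can fail is $\sup(C_{\beta_i}\cap\delta)=\delta$, and I would rule this out using the hypothesis on $\delta$. Indeed, such an equality makes $\delta$ a limit point of $C_{\beta_i}$ lying strictly below $\beta_i$; since $C_{\beta_i}$ is a closed subset of $\beta_i$, this limit point belongs to $C_{\beta_i}$. Together with $\delta>0$ and $\delta<\delta+1\le\ssup(C_{\beta_i})$, this gives $\delta\in\acc^+(C_{\beta_i})$, and therefore $\delta\in C_{\beta_i}\cap\acc^+(C_{\beta_i})=\acc(C_{\beta_i})$. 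But then $\delta\in\bigcup_{\alpha<\kappa}\acc(C_\alpha)$, witnessed by $\alpha=\beta_i$, contradicting the assumption on $\delta$.

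Finally, as a maximum of finitely many ordinals each strictly below $\delta$ is itself strictly below $\delta$, the local claim yields $\lambda(\delta,\beta)<\delta$, as desired. I do not expect any serious obstacle: the entire content is the observation that, by closedness of $C_{\beta_i}$, a point at which $\sup(C_{\beta_i}\cap\delta)$ reaches $\delta$ is forced to be an accumulation point of $C_{\beta_i}$, which the hypothesis on $\delta$ was designed to forbid. The only spots asking for a moment's care are the bookkeeping that the index set $\{i\mid i<\rho_2(\delta,\beta)\}$ is nonempty (guaranteed by $\delta<\beta$) and the harmless edge case of successor $\delta$, where $\sup(C_{\beta_i}\cap\delta)<\delta$ holds trivially.
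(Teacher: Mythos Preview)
Your argument is correct. The paper does not actually prove this fact; it simply refers the reader to \cite{paper44} with the remark that the statement is elementary, and your direct unwinding of the definitions is exactly the standard verification one would expect. One minor point: your inequality $\delta+1\le\ssup(C_{\beta_i})$ is justified most cleanly by the observation you have already made that $\delta\in C_{\beta_i}$ (by closedness), which immediately gives $\ssup(C_{\beta_i})\ge\delta+1$; alternatively, the existence of $\beta_{i+1}=\min(C_{\beta_i}\setminus\delta)$ does the job.
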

\begin{fact}\label{fact2} Whenever $\lambda(\delta,\beta)<\alpha<\delta<\beta<\kappa$, 
$\tr(\alpha,\beta)=\tr(\delta,\beta){}^\smallfrown \tr(\alpha,\delta)$.
\end{fact}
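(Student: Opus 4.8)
The plan is to show that the walk from $\beta$ down to $\alpha$ factors through $\delta$: it first traces the walk from $\beta$ to $\delta$, and then continues as the walk from $\delta$ to $\alpha$. Writing $n:=\rho_2(\delta,\beta)$, I would first prove by induction on $i\le n$ that $\Tr(\alpha,\beta)(i)=\Tr(\delta,\beta)(i)$. The base case $i=0$ is immediate, since both values equal $\beta$. For the inductive step, suppose $i<n$ and set $\gamma:=\Tr(\alpha,\beta)(i)=\Tr(\delta,\beta)(i)$; since $i<n=\rho_2(\delta,\beta)$, the definition of $\rho_2$ together with the fact that the walk toward $\delta$ stays strictly above $\delta$ until it reaches it guarantees $\gamma>\delta$, and in particular $\gamma>\alpha$. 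Thus the $(i+1)$-st steps of the two walks are $\min(C_\gamma\setminus\delta)$ and $\min(C_\gamma\setminus\alpha)$, respectively, and I must argue that these agree.

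This equality is exactly where the hypothesis $\lambda(\delta,\beta)<\alpha$ enters, and it is the crux of the argument. By the definition of $\lambda$, we have $\sup(C_\gamma\cap\delta)\le\lambda(\delta,\beta)<\alpha$, which says that $C_\gamma$ has no element in the interval $[\alpha,\delta)$. Consequently every element of $C_\gamma$ lying above $\alpha$ in fact lies above $\delta$, so $C_\gamma\setminus\alpha=C_\gamma\setminus\delta$ and hence $\min(C_\gamma\setminus\alpha)=\min(C_\gamma\setminus\delta)$, completing the induction. Instantiating at $i=n$ yields $\Tr(\alpha,\beta)(n)=\Tr(\delta,\beta)(n)=\delta$, so the walk from $\beta$ to $\alpha$ does visit $\delta$, precisely at step $n$.

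For the second phase, I would exploit the fact that the recursion defining $\Tr(\cdot,\cdot)$ toward a fixed lower endpoint $\alpha$ is memoryless: from any current value greater than $\alpha$, the next value depends only on that current value. Since $\Tr(\alpha,\beta)(n)=\delta=\Tr(\alpha,\delta)(0)$, a routine induction on $j$ then gives $\Tr(\alpha,\beta)(n+j)=\Tr(\alpha,\delta)(j)$ for all $j$, and in particular $\rho_2(\alpha,\beta)=n+\rho_2(\alpha,\delta)$. Reading off the trace, the first $n$ entries of $\tr(\alpha,\beta)$ reproduce $\tr(\delta,\beta)$ by phase one, while the remaining $\rho_2(\alpha,\delta)$ entries reproduce $\tr(\alpha,\delta)$ by phase two; this is exactly the asserted identity $\tr(\alpha,\beta)=\tr(\delta,\beta)\conc\tr(\alpha,\delta)$.

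I expect the only genuine obstacle to be the inductive step of phase one, namely the passage from $\sup(C_\gamma\cap\delta)<\alpha$ to $\min(C_\gamma\setminus\alpha)=\min(C_\gamma\setminus\delta)$; everything else is bookkeeping with the recursion. Some care must also be taken to verify at each step that $\gamma>\delta$ throughout phase one, so that each step genuinely descends and the two steps being compared are both defined, but the chain of inequalities $\alpha<\delta<\beta$ makes this straightforward.
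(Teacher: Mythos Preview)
Your argument is correct and is the standard proof of this concatenation property for walks on ordinals. Note that the paper does not actually prove this statement: it is recorded as a \emph{Fact} with the remark ``See \cite{paper44} for details,'' so there is no proof in the present paper to compare against. Your two-phase induction---first showing the walks from $\beta$ toward $\alpha$ and toward $\delta$ agree up to step $\rho_2(\delta,\beta)$ via the key observation $C_\gamma\cap[\alpha,\delta)=\emptyset$, then invoking the memoryless nature of the recursion---is precisely the argument one finds in the cited source and in Todorcevic's monograph.
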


\begin{fact}\label{fact3}  Whenever $\alpha<\delta<\beta<\kappa$ with  $\delta\in\im(\tr(\alpha,\beta))$,
$$\lambda(\alpha,\beta)=\max\{\lambda(\delta,\beta),\lambda(\alpha,\delta)\}.$$
\end{fact}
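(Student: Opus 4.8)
The final statement is Fact~\ref{fact3}, which asserts that whenever $\alpha<\delta<\beta<\kappa$ with $\delta\in\im(\tr(\alpha,\beta))$,
$$\lambda(\alpha,\beta)=\max\{\lambda(\delta,\beta),\lambda(\alpha,\delta)\}.$$

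\textbf{Plan of proof.} The plan is to exploit the concatenation structure of traces provided by Fact~\ref{fact2}, which says that under the hypothesis $\lambda(\delta,\beta)<\alpha<\delta<\beta$ one has $\tr(\alpha,\beta)=\tr(\delta,\beta){}^\smallfrown\tr(\alpha,\delta)$. My first task is therefore to verify that the hypotheses of Fact~\ref{fact2} are in fact available here, since Fact~\ref{fact3} only assumes $\delta\in\im(\tr(\alpha,\beta))$ rather than the stronger numerical inequality $\lambda(\delta,\beta)<\alpha$. The key observation is that $\delta$ appearing on the trace $\tr(\alpha,\beta)$ forces the walk from $\beta$ down to $\alpha$ to pass through $\delta$ as one of its way-stations $\Tr(\alpha,\beta)(i)=\delta$; by the recursive definition of $\Tr$, the tail of the walk from $\delta$ to $\alpha$ is literally the walk computing $\tr(\alpha,\delta)$, and the head is the walk computing $\tr(\delta,\beta)$. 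This decomposition of the trace should give both the concatenation identity and the inequality $\lambda(\delta,\beta)<\alpha$ automatically, since each step of the walk below $\delta$ stays in $[\alpha,\delta)$ while each $C_{\Tr(i)}\cap\alpha$ contributing to $\lambda(\delta,\beta)$ is computed from a step at or above $\delta$.

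\textbf{Main computation.} Once the trace decomposes as $\tr(\alpha,\beta)=\tr(\delta,\beta){}^\smallfrown\tr(\alpha,\delta)$, I would compute $\lambda(\alpha,\beta)$ directly from its definition as a maximum of $\sup(C_{\Tr(\alpha,\beta)(i)}\cap\alpha)$ taken over $i<\rho_2(\alpha,\beta)$. The index set $\{i\mid i<\rho_2(\alpha,\beta)\}$ splits according to the concatenation into the indices belonging to the $\tr(\delta,\beta)$-part and those belonging to the $\tr(\alpha,\delta)$-part. For the first block, the way-stations are exactly $\Tr(\delta,\beta)(i)$, so the contribution is $\max_i\sup(C_{\Tr(\delta,\beta)(i)}\cap\alpha)$; the subtlety is that this uses $\cap\alpha$ whereas $\lambda(\delta,\beta)$ is defined with $\cap\delta$. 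I would argue that because $\lambda(\delta,\beta)<\alpha<\delta$ (established in the previous step), we have $\sup(C_{\Tr(\delta,\beta)(i)}\cap\delta)<\alpha$ for every relevant $i$, whence $C_{\Tr(\delta,\beta)(i)}\cap\alpha=C_{\Tr(\delta,\beta)(i)}\cap\delta$ and the first block contributes precisely $\lambda(\delta,\beta)$. For the second block the way-stations are the $\Tr(\alpha,\delta)(i)$, all lying in $[\alpha,\delta)$, and since they are computed relative to the target $\alpha$, the corresponding suprema $\sup(C_{\Tr(\alpha,\delta)(i)}\cap\alpha)$ are exactly those defining $\lambda(\alpha,\delta)$. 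Taking the maximum over both blocks yields $\lambda(\alpha,\beta)=\max\{\lambda(\delta,\beta),\lambda(\alpha,\delta)\}$, as desired.

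\textbf{Main obstacle.} I expect the delicate point to be the matching of the $\cap\alpha$ with the $\cap\delta$ clipping in the first block, that is, showing that replacing $\alpha$ by $\delta$ in the truncation does not change the relevant suprema. This rests entirely on the inequality $\lambda(\delta,\beta)<\alpha$, so the whole argument hinges on first extracting that inequality from the bare assumption $\delta\in\im(\tr(\alpha,\beta))$. I would isolate this as the first lemma-step and handle it by examining the last way-station of $\tr(\delta,\beta)$ before reaching $\delta$: the walk from $\beta$ to $\alpha$ reaching $\delta$ means $\delta=\min(C_{\Tr(\alpha,\beta)(i-1)}\setminus\alpha)$ for some $i$, and the definition of $\lambda(\delta,\beta)$ measures how far $C_{\Tr(\delta,\beta)(j)}$ reaches \emph{below} $\delta$, which must be strictly below $\alpha$ precisely because the walk chose the minimal element of the relevant club above $\alpha$ at each step. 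Everything else is a routine unwinding of the recursive definitions of $\Tr$, $\rho_2$, $\tr$, and $\lambda$.
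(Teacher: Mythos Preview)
Your approach is correct and is the standard argument for this elementary fact. The paper itself does not supply a proof; it merely states Fact~\ref{fact3} alongside Facts~\ref{fact1} and~\ref{fact2} with the remark ``See \cite{paper44} for details.'' Your plan---first showing that $\delta\in\im(\tr(\alpha,\beta))$ forces the initial segment of the walk $\Tr(\alpha,\beta)$ down to $\delta$ to coincide with $\Tr(\delta,\beta)$, which in turn yields $\lambda(\delta,\beta)<\alpha$ via closedness of the clubs, and then splitting the maximum defining $\lambda(\alpha,\beta)$ across the two blocks of the concatenated trace---is exactly how the argument goes.

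One small remark on presentation: you frame the verification of $\lambda(\delta,\beta)<\alpha$ as a prerequisite for invoking Fact~\ref{fact2}, but in fact once you have directly shown that $\Tr(\alpha,\beta)(i)=\Tr(\delta,\beta)(i)$ for $i\le k$ and $\Tr(\alpha,\beta)(k+j)=\Tr(\alpha,\delta)(j)$ (both of which follow straight from the recursive definition of $\Tr$, using only that $\Tr(\alpha,\beta)(i+1)=\min(C_{\gamma_i}\setminus\alpha)\ge\delta$ for $i<k$), you have the concatenation in hand without appealing to Fact~\ref{fact2} at all. The inequality $\lambda(\delta,\beta)<\alpha$ and the identity $C_{\gamma_i}\cap\delta=C_{\gamma_i}\cap\alpha$ then both drop out of the same observation that $C_{\gamma_i}\cap[\alpha,\delta)=\emptyset$. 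So there is no circularity, and the argument is slightly cleaner if you bypass Fact~\ref{fact2} rather than reducing to it.
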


\begin{defn}[\cite{paper44}] For every $(\alpha,\beta)\in[\kappa]^2$, we define an ordinal $\last{\alpha}{\beta}\in[\alpha,\beta]$ via:
$$\last{\alpha}{\beta}:=\begin{cases}
\alpha,&\text{if }\lambda(\alpha,\beta)<\alpha;\\
\min(\im(\tr(\alpha,\beta)),&\text{otherwise}.
\end{cases}$$
\end{defn}

\begin{fact}[\cite{paper44}]\label{last} Let $(\alpha,\beta)\in[\kappa]^2$ with $\alpha>0$. Then
\begin{enumerate}
\item $\lambda(\last{\alpha}{\beta},\beta)<\alpha$;\footnote{Recall Convention~\ref{conv28}.}
\item If $\last{\alpha}{\beta}\neq\alpha$, then $\alpha\in\acc(C_{\last{\alpha}{\beta}})$;
\item $\tr(\last{\alpha}{\beta},\beta)\sq\tr(\alpha,\beta)$.
\end{enumerate}
\end{fact}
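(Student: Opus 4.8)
The plan is to set $\eth:=\last{\alpha}{\beta}$ and branch on the definition. When $\lambda(\alpha,\beta)<\alpha$ we have $\eth=\alpha$, and all three clauses are immediate: (1) is the hypothesis of this branch, (2) is vacuous, and (3) reads $\tr(\alpha,\beta)\sq\tr(\alpha,\beta)$. So the real content lies in the branch $\lambda(\alpha,\beta)\ge\alpha$, where $\eth=\min(\im(\tr(\alpha,\beta)))$. First I would note that the trace strictly decreases: for $i<\rho_2(\alpha,\beta)$ one has $\Tr(\alpha,\beta)(i)>\alpha$, so $\Tr(\alpha,\beta)(i+1)=\min(C_{\Tr(\alpha,\beta)(i)}\setminus\alpha)$ belongs to $C_{\Tr(\alpha,\beta)(i)}\s\Tr(\alpha,\beta)(i)$ and is $\ge\alpha$. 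Writing $n:=\rho_2(\alpha,\beta)\ge 1$ and $m:=n-1$, the least value of the trace is thus attained at its last index, so $\eth=\Tr(\alpha,\beta)(m)$, and $\eth>\alpha$ because $m<n$. (Note also that this branch forces $\alpha$ to be a limit ordinal.)

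The crux is a single coherence claim: the walk from $\beta$ to $\eth$ agrees with the walk from $\beta$ to $\alpha$ up to step $m$, that is, $\Tr(\eth,\beta)(i)=\Tr(\alpha,\beta)(i)$ for all $i\le m$ and $\rho_2(\eth,\beta)=m$. I would prove this by induction on $i\le m$, the one nontrivial point being that the step maps agree: for $i<m$ I compare $\min(C_{\Tr(\alpha,\beta)(i)}\setminus\alpha)$ with $\min(C_{\Tr(\alpha,\beta)(i)}\setminus\eth)$. Since $\Tr(\alpha,\beta)(i+1)$ is the least member of $C_{\Tr(\alpha,\beta)(i)}$ that is $\ge\alpha$, no member of $C_{\Tr(\alpha,\beta)(i)}$ lies in $[\alpha,\Tr(\alpha,\beta)(i+1))$; as $i+1\le m$ gives $\eth=\Tr(\alpha,\beta)(m)\le\Tr(\alpha,\beta)(i+1)$, no member lies in $[\alpha,\eth)$ either, so the two minima coincide and the walks cannot diverge. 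This agreement is the step I expect to demand the most care. It yields Clause~(3) at once, since $\tr(\eth,\beta)=\Tr(\alpha,\beta)\restriction m$ is an initial segment of $\tr(\alpha,\beta)=\Tr(\alpha,\beta)\restriction n$; alternatively, Clause~(3) also follows from Fact~\ref{fact2} once Clause~(1) is known and $\eth<\beta$, the case $\eth=\beta$ being trivial.

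Clauses~(1) and~(2) rest on the observation that, for $i<n$, one can have $\sup(C_{\Tr(\alpha,\beta)(i)}\cap\alpha)=\alpha$ only at $i=m$: such an equality makes $\alpha$ a limit point of the closed set $C_{\Tr(\alpha,\beta)(i)}$, hence $\alpha\in C_{\Tr(\alpha,\beta)(i)}$, whence $\Tr(\alpha,\beta)(i+1)=\min(C_{\Tr(\alpha,\beta)(i)}\setminus\alpha)=\alpha$ and so $i+1=n$. Because each summand is at most $\alpha$ while the branch hypothesis gives $\lambda(\alpha,\beta)\ge\alpha$, the maximum defining $\lambda(\alpha,\beta)$ equals $\alpha$ and must be witnessed at this unique index $m$; thus $\sup(C_\eth\cap\alpha)=\alpha$, and combined with $\alpha=\min(C_\eth\setminus\alpha)\in C_\eth$ (read off from $\Tr(\alpha,\beta)(m+1)=\alpha$) this gives $\alpha\in\acc(C_\eth)$, which is Clause~(2). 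For Clause~(1), the coherence claim lets me compute $\lambda(\eth,\beta)=\max\{\sup(C_{\Tr(\alpha,\beta)(i)}\cap\eth)\mid i<m\}$; for each $i<m$ the absence of members of $C_{\Tr(\alpha,\beta)(i)}$ in $[\alpha,\eth)$ gives $\sup(C_{\Tr(\alpha,\beta)(i)}\cap\eth)=\sup(C_{\Tr(\alpha,\beta)(i)}\cap\alpha)$, which by the same observation is strictly below $\alpha$. Hence $\lambda(\eth,\beta)<\alpha$, where the degenerate case $m=0$ (i.e.\ $\eth=\beta$) is handled by reading the empty maximum as $\lambda(\beta,\beta)=0<\alpha$ through Convention~\ref{conv28}.
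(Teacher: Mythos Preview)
Your argument is correct. Note, however, that the paper does not supply a proof of this statement: it is stated as a \emph{Fact} imported from \cite{paper44}, so there is no in-paper proof to compare against. Your direct unwinding of the definitions---splitting on whether $\lambda(\alpha,\beta)<\alpha$, and in the nontrivial branch using that the trace is strictly decreasing to identify $\eth=\Tr(\alpha,\beta)(\rho_2(\alpha,\beta)-1)$, then establishing step-by-step coherence of the walks from $\beta$ to $\alpha$ and from $\beta$ to $\eth$---is the natural elementary route and handles all three clauses cleanly, including the degenerate case $\eth=\beta$ via Convention~\ref{conv28}.
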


For the purpose of this paper, we also introduce the following ad-hoc notation.
\begin{defn}\label{etanotation}
For every ordinal $\eta<\kappa$ and a pair $(\alpha,\beta)\in[\kappa]^2$, we let
$$\eta_{\alpha,\beta}:=\min\{ n<\omega\mid \eta\in C_{\Tr(\alpha,\beta)(n)}\text{ or }n=\rho_2(\alpha,\beta)\}+1.$$
\end{defn}

We conclude this subsection by proving a useful lemma.
\begin{lemma}\label{Lemma212} For ordinals $\eta<\alpha<\delta<\beta<\kappa$, 
if $\lambda(\delta,\beta)=\eta$ and $\rho_2(\delta,\beta)=\eta_{\delta,\beta}$,
then $\tr(\alpha,\beta)(\eta_{\alpha,\beta})=\delta$.
\end{lemma}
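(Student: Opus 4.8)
The plan is to reduce everything to the concatenation property of finite traces supplied by Fact~\ref{fact2}. Since the hypothesis gives $\lambda(\delta,\beta)=\eta<\alpha$, Fact~\ref{fact2} applies verbatim and yields
$$\tr(\alpha,\beta)=\tr(\delta,\beta)\conc\tr(\alpha,\delta).$$
Writing $m:=\rho_2(\delta,\beta)$ for the length of $\tr(\delta,\beta)$, this tells us that the first $m$ entries of the walk from $\beta$ to $\alpha$ agree with those of the walk from $\beta$ to $\delta$, that is, $\Tr(\alpha,\beta)(n)=\Tr(\delta,\beta)(n)$ for all $n<m$, while the $m$-th entry of $\tr(\alpha,\beta)$ is the first entry of $\tr(\alpha,\delta)$, namely $\tr(\alpha,\beta)(m)=\tr(\alpha,\delta)(0)=\delta$. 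Thus the entire problem reduces to showing that $\eta_{\alpha,\beta}=m$.

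First I would unpack the second hypothesis, $\rho_2(\delta,\beta)=\eta_{\delta,\beta}$. By Definition~\ref{etanotation}, this asserts that $m-1$ is the least $n<\omega$ for which either $\eta\in C_{\Tr(\delta,\beta)(n)}$ or $n=m$; since $m-1\neq m$, this forces $\eta\in C_{\Tr(\delta,\beta)(m-1)}$, while $\eta\notin C_{\Tr(\delta,\beta)(n)}$ for every $n<m-1$. Note that $m\ge1$ because $\delta<\beta$, so the index $m-1$ is legitimate.

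Next I would transport this information to the pair $(\alpha,\beta)$. Using the agreement $\Tr(\alpha,\beta)(n)=\Tr(\delta,\beta)(n)$ for $n<m$ from the first paragraph, we obtain $\eta\notin C_{\Tr(\alpha,\beta)(n)}$ for all $n<m-1$, and $\eta\in C_{\Tr(\alpha,\beta)(m-1)}$. Since moreover $m<\rho_2(\alpha,\beta)$ (because $\rho_2(\alpha,\delta)\ge1$), in particular $m-1<\rho_2(\alpha,\beta)$, so the ordinal $m-1$ is once again the least $n$ satisfying the defining condition of $\eta_{\alpha,\beta}$ in Definition~\ref{etanotation}. Hence $\eta_{\alpha,\beta}=(m-1)+1=m$, and combined with $\tr(\alpha,\beta)(m)=\delta$ this yields $\tr(\alpha,\beta)(\eta_{\alpha,\beta})=\delta$, as desired.

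I do not anticipate a genuine obstacle: the lemma is essentially a bookkeeping exercise around Fact~\ref{fact2}, and the only points demanding care are the off-by-one built into the ``$+1$'' of Definition~\ref{etanotation} and the verification that the relevant index $m-1$ lies below $\rho_2(\alpha,\beta)$, so that $\tr(\alpha,\beta)(\eta_{\alpha,\beta})$ is genuinely defined.
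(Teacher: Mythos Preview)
Your proof is correct and follows the same approach as the paper: apply Fact~\ref{fact2} to obtain the concatenation $\tr(\alpha,\beta)=\tr(\delta,\beta)\conc\tr(\alpha,\delta)$, then compute $\eta_{\alpha,\beta}=\rho_2(\delta,\beta)$ by transferring the information about $\eta$ along the shared initial segment. Your write-up is in fact more explicit than the paper's, which compresses the index bookkeeping into a single displayed line; your attention to the off-by-one in Definition~\ref{etanotation} and to the inequality $m<\rho_2(\alpha,\beta)$ is exactly what is needed.
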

\begin{proof} Under the above hypothesis, Fact~\ref{fact2} entails that $\tr(\alpha,\beta)=\tr(\delta,\beta){}^\smallfrown \tr(\alpha,\delta)$.
As $\eta_{\delta,\beta}=\rho_2(\delta,\beta)<\rho_2(\delta,\beta)+1$, it altogether follows that 
$$\begin{array}{lll}\eta_{\alpha,\beta}&=&\min\{ n<\omega\mid \eta\in C_{\Tr(\alpha,\beta)(n)}\text{ or }n=\rho_2(\alpha,\beta)\}+1\\
&=&\min\{ n<\omega\mid \eta\in C_{\Tr(\delta,\beta)(n)}\}+1\\
&=&\rho_2(\delta,\beta),\end{array}$$
so that $\tr(\alpha,\beta)(\eta_{\alpha,\beta})=\delta$.
\end{proof}

\subsection{Cardinal characteristics of $C$-sequences}

\begin{defn} For a $C$-sequence $\vec C=\langle C_\alpha\mid\alpha<\kappa\rangle$ and a subset $\Gamma\s\kappa$:
\begin{itemize}
\item $\chi_1(\vec C)$ is the supremum of $\sigma+1$ over all $\sigma<\kappa$ satisfying the following.
For every pairwise disjoint subfamily $\mathcal B\s[\kappa]^{\sigma}$ of size $\kappa$, there are a stationary set $\Delta\s\kappa$ and an ordinal $\eta<\kappa$
such that, for every $\delta\in\Delta$, there exist $\kappa$ many $b\in\mathcal B$ such that,
for every $\beta\in b$,
$\lambda(\delta,\beta)=\eta$ and $\rho_2(\delta,\beta)=\eta_{\delta,\beta}$.
\item $\chi_2(\vec C,\Gamma)$ is the supremum of $\sigma+1$ over all $\sigma<\kappa$ satisfying the following.
For every pairwise disjoint subfamily $\mathcal B\s[\kappa]^{\sigma}$ of size $\kappa$, there are club many $\delta\in\Gamma$ 
for which there exist an ordinal $\eta<\delta$ and $\kappa$ many $b\in\mathcal B$ such that,
for every $\beta\in b$,
$\lambda(\delta,\beta)=\eta$ and $\rho_2(\delta,\beta)=\eta_{\delta,\beta}$.
\end{itemize}
\end{defn}

Note that if $\Gamma$ is stationary, then $\chi_2(\vec C,\Gamma)\le\chi_1(\vec C)$.

\begin{lemma}\label{lemma45}  Suppose that $\Gamma\s\kappa$ is a nonreflecting stationary set, and $\kappa\ge\aleph_2$.
Then there exists a $C$-sequence $\vec C$ that avoids $\Gamma$
such that $$\chi_1(\vec C)\ge\sup\{\sigma<\kappa\mid \Gamma\cap E^\kappa_{>\sigma}\text{ is stationary}\}.$$
\end{lemma}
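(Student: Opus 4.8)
The plan is to fix an arbitrary $\sigma<\sup\{\tau<\kappa\mid \Gamma\cap E^\kappa_{>\tau}\text{ is stationary}\}$ and produce, from a single well-chosen $C$-sequence $\vec C$ avoiding $\Gamma$, a stationary $\Delta\s\kappa$ and an ordinal $\eta<\kappa$ witnessing that $\sigma+1\le\chi_1(\vec C)$; since this will hold for every such $\sigma$, passing to the supremum yields the asserted lower bound. As $\Gamma$ is nonreflecting, a $C$-sequence avoiding $\Gamma$ exists; the real work is to refine it so that the walk invariants $\lambda$ and $\rho_2$ entering the definition of $\chi_1(\vec C)$ become controllable. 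The first move is a reformulation of the target condition. For $\delta\in\Gamma$ and $\delta<\beta<\kappa$, since $\vec C$ avoids $\Gamma$ we have $\delta\notin\acc(C_\alpha)$ for every $\alpha$, so the terminal step $\gamma:=\Tr(\delta,\beta)(\rho_2(\delta,\beta)-1)$ satisfies $\delta\in\nacc(C_\gamma)$ and $\sup(C_\gamma\cap\delta)=\max(C_\gamma\cap\delta)<\delta$. Unwinding Definition~\ref{etanotation} and using that each club $C_{\Tr(\delta,\beta)(i)}$ is closed, one checks that the conjunction ``$\lambda(\delta,\beta)=\eta$ and $\rho_2(\delta,\beta)=\eta_{\delta,\beta}$'' is equivalent to: (i) $\eta=\max(C_\gamma\cap\delta)$, and (ii) $\sup(C_{\Tr(\delta,\beta)(i)}\cap\delta)<\eta$ for all $i<\rho_2(\delta,\beta)-1$. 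In words, the maximal drop of the walk from $\beta$ to $\delta$ is attained \emph{only} at the terminal step.

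Second, I would build $\vec C$ so as to canonicalize these terminal drops. Using nonreflection, for each limit $\alpha$ I would choose $C_\alpha$ to dodge $\Gamma$ in its accumulation points while threading it through a fixed predecessor attached to each $\Gamma$-point it absorbs: the aim is to assign to each $\delta\in\Gamma$ a single ordinal $\eta_\delta<\delta$ such that every club containing $\delta$ has $\max(C_\alpha\cap\delta)=\eta_\delta$, while clubs that can occur as non-terminal steps are kept away from $[\eta_\delta,\delta)$. When $\delta\notin\acc(\Gamma)$ one may take $\eta_\delta\in[\sup(\Gamma\cap\delta),\delta)$, so that the reserved interval $(\eta_\delta,\delta)$ contains no further point of $\Gamma$; the recursion then goes through cleanly, keeping $\vec C$ a genuine $C$-sequence avoiding $\Gamma$, and for such $\delta$ one gets $\lambda(\delta,\beta)=\eta_\delta$ together with (i)--(ii) for \emph{every} $\beta>\delta$.

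Third comes the counting. Fix a pairwise disjoint $\mathcal A\s[\kappa]^{\sigma}$ of size $\kappa$ and set $S:=\Gamma\cap E^\kappa_{>\sigma}$, which is stationary. The crucial use of the cofinality hypothesis is that for $\delta\in S$ and a block $b\in\mathcal A$ with $\delta<\min(b)$, the number of drops $\sup(C_{\Tr(\delta,\beta)(i)}\cap\delta)$ arising from the $\sigma$-many finite walks $\langle\tr(\delta,\beta)\mid\beta\in b\rangle$ is fewer than $\cf(\delta)$, hence this whole packet of ordinals is bounded strictly below $\delta$. I would use this boundedness precisely at the points $\delta\in S\cap\acc(\Gamma)$, where the globally canonical $\eta_\delta$ of the previous step is unavailable: it lets me discard the blocks whose non-terminal drops reach too high and retain $\kappa$-many blocks on which (i)--(ii) can be reinstated about a common terminal level. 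In all cases one extracts, for each $\delta\in S$, an ordinal $\eta_\delta<\delta$ and $\kappa$-many $b\in\mathcal A$ all of whose members $\beta$ satisfy $\lambda(\delta,\beta)=\eta_\delta$ and $\rho_2(\delta,\beta)=\eta_{\delta,\beta}$; here I also use that a set of size $<\kappa$ of ``bad'' ordinals meets only $<\kappa$ of the pairwise disjoint blocks. A final application of Fodor's lemma to the regressive map $\delta\mapsto\eta_\delta$ on $S$ returns a stationary $\Delta\s S$ and a single $\eta$ with $\eta_\delta=\eta$ on $\Delta$, as the definition of $\chi_1(\vec C)$ demands.

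The main obstacle is exactly the interaction of the second and third steps at accumulation points of $\Gamma$. One cannot isolate every $\delta\in\Gamma$ from below by a reserved interval $(\eta_\delta,\delta)$, because for $\delta\in\acc(\Gamma)$ such an interval necessarily contains other points of $\Gamma$, which must themselves lie in some clubs and so cannot be globally expelled; thus the value of $\lambda(\delta,\beta)$ genuinely depends on the route the walk takes, and there is no single $\eta_\delta$ realized on a co-$(<\kappa)$ set of $\beta$. The resolution is to let the cofinality bound $\sigma<\cf(\delta)$ do the work block-by-block rather than uniformly in $\beta$: because a size-$\sigma$ block contributes fewer than $\cf(\delta)$ drops, these can be forced below a common terminal level on $\kappa$-many blocks even when no global canonicalization exists. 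Making this selection of $\kappa$-many blocks yield a \emph{single} threshold $\eta_\delta$ per $\delta$, robust enough to survive the pressing-down to one global $\eta$, is the delicate point, and it is where nonreflection (guaranteeing $\lambda(\delta,\beta)<\delta$) and the cofinality of the landing points $\delta\in\Gamma\cap E^\kappa_{>\sigma}$ must be combined with care.
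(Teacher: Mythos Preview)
Your proposal leaves the central difficulty unresolved, and the resolution you gesture at does not work.

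The heart of the problem is your third step. For a fixed $\delta\in\Gamma\cap E^\kappa_{>\sigma}$ and a block $b$ with $\min(b)>\delta$, the values $\{\lambda(\delta,\beta):\beta\in b\}$ are indeed bounded below $\delta$ (since $\vec C$ avoids $\Gamma$ and $|b|<\cf(\delta)$), but they need not coincide, and there is no mechanism for a single value $\eta_\delta$ to be realized across $\kappa$-many blocks simultaneously. The condition you need is that $\lambda(\delta,\beta)=\eta$ and $\rho_2(\delta,\beta)=\eta_{\delta,\beta}$ for \emph{every} $\beta$ in the block; boundedness alone does not give this. Your second step tries to force $\max(C_\alpha\cap\delta)=\eta_\delta$ for every club containing $\delta$, but this is only sketched for $\delta\notin\acc(\Gamma)$, and even there you do not explain how to keep all non-terminal clubs out of $[\eta_\delta,\delta)$. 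You correctly flag $\delta\in\acc(\Gamma)$ as the obstacle, but your proposed fix (``let the cofinality bound do the work block-by-block'') only bounds the drops---it does not make them equal. Since $\Gamma\cap E^\kappa_{>\sigma}$ may lie entirely inside $\acc(\Gamma)$ (e.g., whenever $\Gamma\supseteq E^\kappa_\omega$), this gap is fatal to the approach as stated.

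The paper's argument is structurally different: it does \emph{not} take $\Delta\s\Gamma$. Instead it postprocesses the avoiding $C$-sequence (using club-guessing machinery for amenable $C$-sequences) so that for every club $D$ there is $\gamma\in\Gamma\cap E^\kappa_{>\sigma}$ with $\sup(\nacc(C_\gamma)\cap D)=\gamma$. Given a block $b$ above $\gamma$, one then picks $\delta\in\nacc(C_\gamma)\cap D$ high enough that $\eta:=\sup(C_\gamma\cap\delta)$ exceeds $\sup_{\beta\in b}\lambda(\gamma,\beta)$; by Facts~\ref{fact2} and~\ref{fact3} this forces $\lambda(\delta,\beta)=\eta$ uniformly over $\beta\in b$. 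Since there are fewer than $\kappa$ possible pairs $(\delta,\eta)$ below $\gamma$, pigeonhole yields one pair serving $\kappa$-many blocks. The key idea you are missing is this insertion of an extra step through $\gamma$: the last leg of the walk to $\delta$ goes through $C_\gamma$, where the drop is controlled by the choice of $\delta\in\nacc(C_\gamma)$, rather than trying to canonicalize drops directly at points of $\Gamma$.
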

\begin{proof} As $\Gamma$ is nonreflecting, we commence by fixing a $C$-sequence $\vec C=\langle C_\alpha\mid\alpha<\kappa\rangle$ that avoid $\Gamma$.
It is clear that $\vec C$ is \emph{amenable} in the sense of \cite[Definition~1.3]{paper29}.
Let $\Lambda:=\{\sigma<\kappa\mid \Gamma\cap E^\kappa_{>\sigma}\text{ is stationary}\}$.
It is clear that $\Lambda$ is some limit ordinal $\le\kappa$, and that if $\Lambda=0$, there is nothing that needs to be done, so we assume $\Lambda>0$.
For each $\sigma\in\Lambda$, $\Omega^\sigma:=\Gamma\cap E^\kappa_{>\sigma}$ is stationary.
By \cite[Lemma~1.15]{paper29}, we may now fix a conservative postprocessing function $\Phi:\mathcal K(\kappa)\rightarrow\mathcal K(\kappa)$,
a cofinal subset $\Sigma\s\Lambda$ and an injection $h:\Sigma\rightarrow\kappa$ such that $\{\alpha\in\Omega^\sigma\mid \min(\Phi(C_\alpha))=h(\sigma)\}$ is stationary for all $\sigma\in\Sigma$.
In simple words, this means that there exists a $C$-sequence $\vec D=\langle D_\alpha\mid \alpha<\kappa\rangle$ such that:
\begin{itemize}
\item $D_\alpha\s C_\alpha$ for all $\alpha<\kappa$;
\item $\Gamma^{\sigma}:=\{\alpha\in \Gamma\cap E^\kappa_{>\sigma}\mid \min(D_\alpha)=h(\sigma)\}$ is stationary for all $\sigma\in\Sigma$.
\end{itemize}

Note that since $h$ is injective, $\langle \Gamma^{\sigma}\mid \sigma\in\Sigma\rangle$ consists of pairwise disjoint stationary sets.
For each $\sigma\in\Sigma$, since $\vec D\restriction\Gamma^\sigma$ is an amenable $C$-sequence, 
it follows from \cite[Lemma~2.2 and Fact~2.4(2)]{paper29} that there exists a $C$-sequence 
$\langle C_\alpha^\bullet\mid \alpha\in\Gamma^\sigma\rangle$ such that:
\begin{itemize}
\item $\acc(C_\alpha^\bullet)\s\acc(D_\alpha)$ for all $\alpha\in\Gamma^\sigma$;
\item For every club $D\s\kappa$, there exists $\alpha\in\Gamma^\sigma$ with $\sup(\nacc(C^\bullet_\alpha)\cap D)=\alpha$.
\end{itemize}

For every $\alpha\in\kappa\setminus\bigcup_{\sigma\in\Sigma}\Gamma^\sigma$, let $C_\alpha^\bullet:=D_\alpha$.
Recalling that $\Sigma$ is a cofinal subset of $\Lambda$, we altogether infer that:
\begin{itemize}
\item $\acc(C_\alpha^\bullet)\cap\Gamma=\emptyset$ for all $\alpha<\kappa$;
\item For every club $D\s\kappa$ and every $\sigma<\kappa$ such that $\Gamma\cap E^\kappa_{>\sigma}$ is stationary, there exists $\alpha\in \Gamma\cap E^\kappa_{>\sigma}$ with $\sup(\nacc(C^\bullet_\alpha)\cap D)=\alpha$.
\end{itemize}

We now walk along $\vec C^\bullet:=\langle C_\alpha^\bullet\mid\alpha<\kappa\rangle$, and verify that it is as sought.
For this, suppose that $\sigma<\kappa$ is such that $\Gamma\cap E^\kappa_{>\sigma}$ is stationary,
and that we are given a pairwise disjoint subfamily $\mathcal B\s[\kappa]^{\sigma}$ of size $\kappa$.
It suffices to prove that for every club $D\s\kappa$,
there exists $\delta\in D$, such that, for $\kappa$ many $b\in\mathcal B$, for some $\eta<\delta$, 
for every $\beta\in b$,
$\lambda(\delta,\beta)=\eta$ and $\rho_2(\delta,\beta)=\eta_{\delta,\beta}$.

Thus, let $D$ be an arbitrary club in $\kappa$. Without loss of generality, $D\s\acc(\kappa)$.
Pick $\gamma\in \Gamma\cap E^\kappa_{>\sigma}$ with $\sup(\nacc(C^\bullet_\gamma)\cap D)=\gamma$.
\begin{claim} Let $b\in\mathcal B$ with $\min(b)>\gamma$.
There are $\delta\in D\cap\gamma$ and $\eta<\delta$ such that,
for every $\beta\in b$,
$\lambda(\delta,\beta)=\eta$ and $\rho_2(\delta,\beta)=\eta_{\delta,\beta}$.
\end{claim}
\begin{proof} Set $\Lambda:=\sup\{\lambda(\gamma,\beta)\mid \beta\in b\}$.
As $|b|<\cf(\gamma)$ and $\gamma\in\Gamma$, it follows from Fact~\ref{fact1} that $\Lambda<\gamma$.
Now pick $\delta\in\nacc(C_\gamma^\bullet)\cap D$ for which $\eta:=\sup(C_\gamma^\bullet\cap\delta)$ is bigger than $\Lambda$.
For every $\beta\in b$, 
$\lambda(\gamma,\beta)\le\Lambda<\delta<\gamma<\beta$, 
so by Facts \ref{fact2} and \ref{fact3},
$\lambda(\delta,\beta)=\max\{\lambda(\gamma,\beta),\lambda(\delta,\gamma)\}$.
As $\lambda(\gamma,\beta)\le\Lambda<\eta=\lambda(\delta,\gamma)$, it follows that $\lambda(\delta,\beta)=\eta$ and $\rho_2(\delta,\beta)=\eta_{\delta,\beta}$.
\end{proof}

As $|D\cap\gamma|<\kappa=|\mathcal B|$, there must be $\delta\in D\cap\gamma$ and $\eta<\delta$ such that, 
for $\kappa$ many $b\in\mathcal B$, for some $\eta<\delta$, 
for every $\beta\in b$,
$\lambda(\delta,\beta)=\eta$ and $\rho_2(\delta,\beta)=\eta_{\delta,\beta}$.
\end{proof}

It follows that for every regular uncountable cardinal $\mu$, there exists a $C$-sequence $\vec C$ over $\mu^+$ with $\chi_1(\vec C)\ge\mu$.
As made clear by the proof of \cite[Lemma~2.4]{paper13},
for every singular cardinal $\mu$ of uncountable cofinality, there exists a $C$-sequence $\vec C$ over $\mu^+$ with $\chi_1(\vec C)\ge\cf(\mu)$.
This raises the following question:
\begin{q} Suppose that $\mu$ is an infinite cardinal of countable cofinality. Must there exist a $C$-sequence $\vec C$ over $\mu^+$ with $\chi_1(\vec C)\ge\omega$?
\end{q}

\begin{lemma}\label{Lemma32} 
\begin{enumerate}
\item If $\square(\kappa)$ holds and $\kappa\ge\aleph_2$, then there exists a $C$-sequence $\vec C$ over $\kappa$
such that $\chi_1(\vec C)=\sup(\reg(\kappa))$;
\item If $\boxtimes^-(\kappa)$ holds,  then there exists a $C$-sequence $\vec C$ over $\kappa$
such that $\chi_2(\vec C,\kappa)=\sup(\reg(\kappa))$.
\end{enumerate}
\end{lemma}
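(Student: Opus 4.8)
The plan is to reduce both clauses to walk arguments modeled on the proof of Lemma~\ref{lemma45}, using coherence of the witnessing $C$-sequence to hold the $\lambda$-values down (via Fact~\ref{fact1}) and then reading the conclusion off Facts~\ref{fact2} and \ref{fact3} (with Lemma~\ref{Lemma212} governing the auxiliary condition $\rho_2(\delta,\beta)=\eta_{\delta,\beta}$). The inequality $\chi_1(\vec C)\le\sup(\reg(\kappa))$ holds for every $C$-sequence and is the easy direction—it is immediate when $\kappa$ is inaccessible, since then $\sup(\reg(\kappa))=\kappa$ and every $\sigma<\kappa$ is admissible by definition—and $\chi_2(\vec C,\kappa)\le\chi_1(\vec C)$ was already observed; so the substance lies in the reverse inequalities, namely producing a single sequence that serves a pairwise disjoint $\mathcal B\s[\kappa]^\sigma$ of size $\kappa$ for cofinally many $\sigma<\sup(\reg(\kappa))$.

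For Clause~(1) I would first manufacture a \emph{cofinality-rich} nonreflecting set. It is a standard consequence of $\square(\kappa)$ that $E^\kappa_\lambda$ has a nonreflecting stationary subset $T_\lambda$ for each regular $\lambda<\kappa$ with $E^\kappa_\lambda$ stationary. Fixing a sparse increasing sequence $\langle\lambda_i\rangle$ of such $\lambda$ cofinal in $\sup(\reg(\kappa))$ and setting $\Gamma:=\bigcup_i T_{\lambda_i}$, sparseness guarantees that for every limit $\alpha$, $\Gamma\cap\alpha$ is a union of fewer than $\cf(\alpha)$ nonstationary sets, hence nonstationary by $\cf(\alpha)$-completeness of the nonstationary ideal; thus $\Gamma$ is nonreflecting while $\Gamma\cap E^\kappa_{>\sigma}$ is stationary for every $\sigma<\sup(\reg(\kappa))$. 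Feeding $\Gamma$ into Lemma~\ref{lemma45} yields a $C$-sequence $\vec C$ with $\chi_1(\vec C)\ge\sup\{\sigma\mid\Gamma\cap E^\kappa_{>\sigma}\text{ stationary}\}=\sup(\reg(\kappa))$, the passage from the per-$\delta$ ordinal to a single uniform $\eta$ being exactly the pressing-down step internal to that proof. (When $\kappa=\mu^+$ with $\mu$ regular one may instead take $\Gamma:=E^{\mu^+}_\mu$ directly.)

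For Clause~(2), note first that $\boxtimes^-(\kappa)$ entails $\square(\kappa)$ (a thread would violate the hitting feature), so the same cofinality-rich nonreflecting $\Gamma$ is available. The decisive extra strength of $\boxtimes^-(\kappa)$ is that its hitting holds for \emph{every cofinal} $B$, not merely every club, and this is precisely what upgrades "stationarily many good $\delta$" to "club many". Granting a single $\vec C$ that both \emph{avoids} $\Gamma$—so $\lambda(\gamma,\beta)<\gamma$ for all $\gamma\in\Gamma$ and $\beta>\gamma$ by Fact~\ref{fact1}—and inherits the cofinal-set hitting of $\boxtimes^-(\kappa)$ on each $\Gamma\cap E^\kappa_{>\sigma}$, I would argue by contradiction: let $Z$ be the stationary set of $\delta$ that are \emph{not} good and apply the strong hitting to the cofinal set $Z$, obtaining $\gamma\in\Gamma\cap E^\kappa_{>\sigma}$ with $\sup(\nacc(C_\gamma)\cap Z)=\gamma$. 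By disjointness all but $<\kappa$ members of $\mathcal B$ lie above $\gamma$, and for each such $b$ one has $\sup\{\lambda(\gamma,\beta)\mid\beta\in b\}<\gamma$ (as $|b|<\cf(\gamma)$ and $\gamma\in\Gamma$); choosing $\delta_b\in\nacc(C_\gamma)\cap Z$ with $\sup(C_\gamma\cap\delta_b)$ above this supremum makes $\delta_b$ good for $b$ via Facts~\ref{fact2}--\ref{fact3}. Since there are fewer than $\kappa$ candidates in $\nacc(C_\gamma)\cap Z$, pigeonholing produces one $\delta^*\in Z$ good for $\kappa$ many $b$, contradicting $\delta^*\in Z$. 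Hence $Z$ is nonstationary, i.e.\ club many $\delta$ are good, giving $\chi_2(\vec C,\kappa)\ge\sup(\reg(\kappa))$.

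The step I expect to be the main obstacle is the construction just invoked in Clause~(2): exhibiting \emph{one} $C$-sequence that simultaneously avoids the cofinality-rich nonreflecting $\Gamma$ (forcing $\lambda(\gamma,\cdot)<\gamma$ at the high-cofinality points $\gamma\in\Gamma$) and carries the \emph{cofinal}-set hitting of $\boxtimes^-(\kappa)$ exactly on the sets $\Gamma\cap E^\kappa_{>\sigma}$. This is where the amenability and conservative-postprocessing machinery must be applied with care, since the postprocessing used for Lemma~\ref{lemma45} only delivers club-set hitting, whereas here the full strength "for every cofinal $B$" has to survive the modification—this being precisely the gap between $\boxtimes^-(\kappa)$ and $\square(\kappa)$ that separates $\chi_2$ from $\chi_1$.
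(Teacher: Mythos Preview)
Your approach diverges substantially from the paper's, and both clauses have real gaps.

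For Clause~(1), the construction of the cofinality-rich nonreflecting $\Gamma$ breaks down when $\kappa$ is Mahlo. Since $\sup(\reg(\kappa))=\kappa$, the sequence $\langle\lambda_i\rangle$ must be cofinal in $\kappa$, hence of length $\kappa$. At a point $\alpha$ with $\cf(\alpha)=\nu$ inaccessible, the set $\{i\mid\lambda_i<\nu\}$ will have size $\nu$ regardless of how ``sparse'' the $\lambda_i$ are (any strictly increasing $\kappa$-sequence of cardinals places $\nu$ terms below each inaccessible $\nu<\kappa$). The nonstationary ideal on $\alpha$ is only $\nu$-complete, so the union of these $\nu$ many nonstationary pieces $T_{\lambda_i}\cap\alpha$ need not be nonstationary, and $\Gamma$ may reflect at $\alpha$. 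Your sparseness argument only works when $\kappa$ is a successor or a non-Mahlo inaccessible.

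For Clause~(2), the gap you flag is genuine and does not obviously close. The $\boxtimes^-(\kappa)$ hitting gives stationarily many $\gamma$ with $\sup(\nacc(C_\gamma)\cap B)=\gamma$, but with no control over which stationary set those $\gamma$ fall into; you need them specifically in $\Gamma\cap E^\kappa_{>\sigma}$. Meanwhile you also want $\vec C$ to avoid $\Gamma$. Postprocessing to force avoidance typically destroys the full coherence that drives the cofinal-set hitting, and there is no off-the-shelf result that delivers both simultaneously on a prescribed stationary target.

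The paper avoids all of this via a single device you did not use: the operator $\last{\tau}{\beta}$ of Fact~\ref{last}. Rather than demanding $\tau$ lie in a set avoided by $\vec C$ (so that Fact~\ref{fact1} would give $\lambda(\tau,\beta)<\tau$), the paper invokes Fact~\ref{last}(1), which guarantees $\lambda(\last{\tau}{\beta},\beta)<\tau$ unconditionally. This allows working directly with the coherent $\square(\kappa)$- or $\boxtimes^-(\kappa)$-sequence itself: press down the regressive map $\tau\mapsto\sup\{\lambda(\last{\tau}{\beta},\beta)\mid\beta\in b_\tau\}$ on all of $E^\kappa_{>\sigma}$, obtain a stationary $T$ with common bound $\zeta$, use the hitting hypothesis to find $\gamma$ with $\nacc(C_\gamma)$ meeting the target above $\zeta$, and then run a case analysis (on whether $\gamma\in\acc(C_\tau)$, $\gamma\in\nacc(C_\tau)$, or neither) exploiting coherence to verify the $\lambda$- and $\rho_2$-conditions. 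No auxiliary nonreflecting $\Gamma$ is ever constructed, and the difference between Clauses~(1) and~(2) reduces exactly to whether one hits clubs or arbitrary stationary sets.
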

\begin{proof} In Case~(1), since $\square(\kappa)$ holds and $\kappa\ge\aleph_2$, by \cite[Proposition~3.5]{paper24}, we may fix a $C$-sequence $\vec C=\langle C_\alpha\mid\alpha<\kappa\rangle$ satisfying the two:
\begin{enumerate}
\item[($\aleph$)] for every $\alpha<\kappa$ and $\delta\in\acc(\kappa)$, $C_{\delta}=C_\alpha\cap\delta$;
\item[($\beth$)] for every club $D\s\kappa$, there exists $\gamma>0$ with $\sup(\nacc(C_\gamma)\cap D)=\gamma$.
\end{enumerate}

In Case~(2), just let $\vec C=\langle C_\alpha\mid\alpha<\kappa\rangle$ be a $\boxtimes^-(\kappa)$-sequence.

Let $X_1$ denote an arbitrary club in $\kappa$,
and let $X_2$ denote an arbitrary stationary set in $\kappa$.
Let $n\in\{1,2\}$.
To verify Clause~$(n)$, 
we shall prove that given $\sigma<\sup(\reg(\kappa))$
and a pairwise disjoint subfamily $\mathcal B\s[\kappa]^{\sigma}$ of size $\kappa$,
there exists $\delta\in X_n$, such that, for $\kappa$ many $b\in\mathcal B$, for some $\eta<\delta$, 
for every $\beta\in b$,
$\lambda(\delta,\beta)=\eta$ and $\rho_2(\delta,\beta)=\eta_{\delta,\beta}$.

Without loss of generality, $X_n\s\acc(\kappa)$.
For every $\tau<\kappa$, fix $b_\tau\in\mathcal B$ with $\min(b_\tau)>\tau$.
Define a function $f:E^\kappa_{>\sigma}\rightarrow\kappa$ via
$$f(\tau):=\sup\{\lambda(\last{\tau}{\beta},\beta)\mid \beta\in b_\tau\}.$$

As $|b_\tau|<\cf(\tau)$, Fact~\ref{last}(1) entails that $f$ is regressive.
So, fix a stationary $T\s E^\kappa_{>\sigma}$ such that $f\restriction T$ is constant with value, say, $\zeta$.
Now, if $n=1$, then  using Clause~($\beth$), we may pick a nonzero ordinal $\gamma<\kappa$ with $\sup(\nacc(C_\gamma)\cap (X_1\setminus\zeta))=\gamma$,
and if $n=2$, then we may pick a nonzero ordinal $\gamma<\kappa$ with $\sup(\nacc(C_\gamma)\cap (X_2\setminus\zeta))=\gamma$.
\begin{claim} Let $\tau\in T$ above $\gamma$. 
There are $\delta\in X_n\cap\gamma$ and $\eta<\delta$ such that,
for every $\beta\in b_\tau$,
$\lambda(\delta,\beta)=\eta$ and $\rho_2(\delta,\beta)=\eta_{\delta,\beta}$.
\end{claim}
\begin{proof} By Fact~\ref{last}(1), the following ordinal is smaller than $\gamma$:
$$\zeta':=\begin{cases}
0,&\text{if }\gamma\in\acc(C_{\tau});\\
\sup(C_{\tau}\cap\gamma),&\text{if }\gamma\in\nacc(C_{\tau});\\
\lambda(\last{\gamma}{\tau},\tau),&\text{otherwise}.
\end{cases}$$
Thus, we may pick a large enough $\delta\in\nacc(C_\gamma)\cap X_n$ such that $\sup(C_\gamma\cap\delta)>\max\{\zeta,\zeta'\}$.
Denote $\eta:=\sup(C_\gamma\cap\delta)$, so that $\eta<\delta$.
Let $\beta\in b_\tau$ be arbitrary.
We have
$$\lambda(\last{\tau}{\beta},\beta)\le f(\tau)\le\max\{\zeta,\zeta'\}<\eta<\eta+1<\delta<\gamma<\tau<\beta.$$
We shall show that $\lambda(\delta,\beta)=\eta$ and $\rho_2(\delta,\beta)=\eta_{\delta,\beta}$.

By Fact~\ref{fact3}, $\lambda(\delta,\beta)=\max\{\lambda(\last{\tau}{\beta},\beta),\lambda(\delta,\last{\tau}{\beta})\}$.
Now, there are three cases to consider:

$\br$ If $\gamma\in\acc(C_{\tau})$, then $C_{\tau}\cap\gamma=C_\gamma$,
and since $\delta\in C_\gamma$,
$\tr(\delta,\beta)=\tr(\last{\tau}{\beta},\beta){}^\smallfrown\langle \last{\tau}{\beta}\rangle$,
and $\lambda(\delta,\last{\tau}{\beta})=\sup(C_\gamma\cap\delta)=\eta>\zeta\ge\lambda(\last{\tau}{\beta},\beta)$,
so the conclusion follows.

$\br$ If $\gamma\in\nacc(C_{\tau})$, then, since $\delta\in C_\gamma$,
$\tr(\delta,\beta)=\tr(\last{\tau}{\beta},\beta){}^\smallfrown\langle \last{\tau}{\beta},\gamma\rangle$,
so that $\lambda(\delta,\beta)=\max\{\lambda(\last{\tau}{\beta},\beta),\sup(C_{\last{\tau}{\beta}}\cap\delta),\sup(C_\gamma\cap\delta)\}=
\max\{\lambda(\last{\tau}{\beta},\beta),\zeta',\eta\}$,
and the conclusion follows.

$\br$ Otherwise, $\last{\gamma}{\tau}\neq\tau$.
Then $\lambda(\last{\gamma}{\tau},\tau)=\zeta'<\delta<\gamma\le\last{\gamma}{\tau}<\tau$,
and so, by Fact~\ref{fact2},
$\tr(\delta,\tau)=\tr(\last{\gamma}{\tau},\tau){}^\smallfrown\tr(\delta,\last{\gamma}{\tau})$.
Thus,  by Fact~\ref{fact3},
$$\lambda(\delta,\tau)=\max\{\lambda(\last{\gamma}{\tau},\tau),\lambda(\delta,\last{\gamma}{\tau})\}=\max\{\zeta',\lambda(\delta,\last{\gamma}{\tau})\}.$$

By Clause~(1) together with Fact~\ref{last}(2),
$\lambda(\delta,\last{\tau}{\beta})=\lambda(\delta,\tau)$.
As $\delta\in C_\gamma\cap \last{\gamma}{\tau}=C_{\last{\gamma}{\tau}}$, we get that $\lambda(\delta,\last{\gamma}{\tau})=\sup(C_\gamma\cap\delta)=\eta$.
Altogether, $\lambda(\delta,\beta)=\max\{\lambda(\last{\tau}{\beta},\beta),\zeta',\eta\}$.
But, $\eta>\max\{\zeta,\zeta'\}\ge\{\lambda(\last{\tau}{\beta},\beta),\zeta'\}$,
and the conclusion follows.
\end{proof}
As $|X_n\cap\gamma|<\kappa=|T|$, there must be $\delta\in X_n\cap\gamma$ and $\eta<\delta$ such that, 
for $\kappa$ many $b\in\mathcal B$, for some $\eta<\delta$, 
for every $\beta\in b$,
$\lambda(\delta,\beta)=\eta$ and $\rho_2(\delta,\beta)=\eta_{\delta,\beta}$.
\end{proof}

\begin{lemma}\label{lemma316}
Suppose that $S\s\kappa$ is a stationary set and $\vec C$ is a witness to $\p^-(\kappa,\kappa^+,\allowbreak{\sq^*},1,\{S\},2)$.
For every cardinal $\chi$ such that $S\cap E^\kappa_{<\chi}$ is nonstationary,
$\chi_2(\vec C,\kappa)\ge\chi$.
\end{lemma}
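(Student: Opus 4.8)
The plan is to follow the template of the proof of Lemma~\ref{Lemma32}(2), with the full coherence of a $\square$-sequence replaced by the bounded ($\sq^*$) coherence that $\p^-(\kappa,\kappa^+,{\sq^*},1,\{S\},2)$ supplies at points of $\Delta$. Write $\vec C=\langle C_\alpha\mid\alpha<\kappa\rangle$ and let $\Delta\s S$ be as in Definition~\ref{proxydef}. Since $\chi_2(\vec C,\kappa)$ is the supremum of $\sigma+1$ over the qualifying $\sigma$, to conclude $\chi_2(\vec C,\kappa)\ge\chi$ it suffices to verify, for each $\sigma<\chi$, the clause defining membership of $\sigma$ in that supremum. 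So fix $\sigma<\chi$ and a pairwise disjoint $\mathcal B\s[\kappa]^\sigma$ of size $\kappa$, and let $G$ be the set of all $\delta<\kappa$ that admit an $\eta<\delta$ and $\kappa$-many $b\in\mathcal B$ with $\lambda(\delta,\beta)=\eta$ and $\rho_2(\delta,\beta)=\eta_{\delta,\beta}$ for all $\beta\in b$. As usual, ``$G$ contains a club'' is equivalent to ``every stationary $X\s\kappa$ meets $G$'', and this is what I would prove.

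So fix a stationary $X$. Feeding the cofinal set $B:=X$ into the second clause of Definition~\ref{proxydef}, and using that $S\cap E^\kappa_{<\chi}$ (hence $\Delta\cap E^\kappa_{<\chi}$) is nonstationary, I would pick $\gamma\in\Delta\cap E^\kappa_{\ge\chi}$ with $\sup\{\varepsilon\in X\cap\gamma\mid\min(C_\gamma\setminus(\varepsilon+1))\in X\}=\gamma$. For each of the $\kappa$-many $b\in\mathcal B$ with $\min(b)>\gamma$, and each $\beta\in b$, put $\gamma'_\beta:=\last{\gamma}{\beta}$, and let $v_b$ be the supremum over $\beta\in b$ of $\lambda(\gamma'_\beta,\beta)$ together with $\sup((C_{\gamma'_\beta}\cap\gamma)\symdiff C_\gamma)$ whenever $\gamma'_\beta\neq\gamma$. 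Because $|b|=\sigma<\chi\le\cf(\gamma)$, Fact~\ref{last}(1) bounds each $\lambda(\gamma'_\beta,\beta)$ below $\gamma$, while Fact~\ref{last}(2) places $\gamma\in\acc(C_{\gamma'_\beta})\cap\Delta$ when $\gamma'_\beta\neq\gamma$, so the first clause of Definition~\ref{proxydef} bounds each symmetric-difference term below $\gamma$; hence $v_b<\gamma$. As $\kappa$ is regular and $\cf(\gamma)<\kappa$, a union of $\cf(\gamma)$-many sets of size $<\kappa$ cannot exhaust $\mathcal B$, so pigeonhole yields a single $v<\gamma$ and $\mathcal B'\in[\mathcal B]^\kappa$ with $v_b<v$ for all $b\in\mathcal B'$. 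Finally, using the displayed hitting property together with the cofinality of $C_\gamma$ in $\gamma$, I would choose $\varepsilon\in X\cap\gamma$ so that $\delta:=\min(C_\gamma\setminus(\varepsilon+1))\in X$ and $\eta:=\sup(C_\gamma\cap\delta)>v$; thus $\delta\in X\cap\nacc(C_\gamma)$, $\eta\in C_\gamma$, and $\eta<\delta<\gamma$.

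The heart of the matter, and the step I expect to be the main obstacle, is to show that this single $\delta$ is good, witnessed by $\eta$ and $\mathcal B'$. Fix $b\in\mathcal B'$ and $\beta\in b$, and write $\gamma':=\gamma'_\beta=\last{\gamma}{\beta}$, so $\delta<\gamma\le\gamma'\le\beta$. The goal is that the walk $\tr(\delta,\beta)$ terminates as $\tr(\gamma',\beta)\conc\langle\gamma'\rangle$, with $\sup(C_{\gamma'}\cap\delta)=\eta\in C_{\gamma'}$. If $\gamma'=\gamma$ this is immediate; if $\gamma'\neq\gamma$ it is exactly where the $\sq^*$-coherence enters: since $\gamma\in\acc(C_{\gamma'})\cap\Delta$ and $(C_{\gamma'}\cap\gamma)\symdiff C_\gamma$ is bounded by $v_b<v<\eta$, the two clubs coincide on $(v,\gamma)$, and since $\delta,\eta\in(v,\gamma)\cap C_\gamma$ with $C_\gamma\cap(\eta,\delta)=\emptyset$, I get $\delta,\eta\in C_{\gamma'}$ and $\sup(C_{\gamma'}\cap\delta)=\eta$ (the degenerate case $\gamma'=\beta$ being even simpler). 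As $\lambda(\gamma',\beta)\le v_b<\eta<\delta$, Fact~\ref{fact2} gives $\tr(\delta,\beta)=\tr(\gamma',\beta)\conc\langle\gamma'\rangle$. Every earlier node $\Tr(\gamma',\beta)(i)$, $i<\rho_2(\gamma',\beta)$, then satisfies $\sup(C_{\Tr(\gamma',\beta)(i)}\cap\delta)\le\lambda(\gamma',\beta)<\eta$, which forces $\lambda(\delta,\beta)=\eta$ and, as $\eta<\delta$, prevents $\eta$ from lying in any such $C_{\Tr(\gamma',\beta)(i)}$; since $\eta\in C_{\gamma'}$, the ordinal $\eta$ first enters a $C$ along the walk precisely at the penultimate node $\gamma'$, whence $\rho_2(\delta,\beta)=\eta_{\delta,\beta}$. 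This holds for all $\beta\in b$ and all $b\in\mathcal B'$, so $\delta\in X\cap G$, and the proof that $\chi_2(\vec C,\kappa)\ge\chi$ is complete.
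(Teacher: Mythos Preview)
Your proof is correct and follows essentially the same route as the paper's: both arguments use the hitting clause of the proxy principle to find a point of $\Delta$ of cofinality ${>}\sigma$ whose $\nacc$ meets the given stationary set cofinally, and then use the $\sq^*$-coherence clause (via Fact~\ref{last}(2)) to transfer the local structure of $C_\gamma$ to $C_{\last{\gamma}{\beta}}$ above a bound depending on $b$. The only organisational difference is that you pigeonhole on the bound $v_b$ \emph{before} selecting the target $\delta\in\nacc(C_\gamma)\cap X$, obtaining a single pair $(\delta,\eta)$ that works for all of $\mathcal B'$, whereas the paper first picks a $b$-dependent $\gamma\in\nacc(C_\delta)\cap\Gamma$ and leaves the final pigeonhole implicit; you also spell out the verification of $\rho_2(\delta,\beta)=\eta_{\delta,\beta}$, which the paper omits.
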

\begin{proof} 
Write $\vec C$ as $\langle C_\alpha\mid\alpha<\kappa\rangle$.
Fix a stationary subset $\Delta\s S$ such that:
\begin{enumerate}
\item   for all $\alpha<\kappa$ and $\delta\in\acc(C_\alpha)\cap\Delta$, $\sup((C_\alpha\cap\delta)\symdiff C_{\delta})<\delta$;
\item  for every cofinal  $B\s\kappa$,
there exist stationarily many $\alpha \in \Delta$ such that:
$$\sup\{\varepsilon\in B\cap\alpha\mid \min(C_\alpha\setminus(\varepsilon+1))\in B\}=\alpha.$$
\end{enumerate}

Now, suppose that  $\chi$ is cardinal such that $S\cap E^\kappa_{<\chi}$ is nonstationary.
Let $\sigma<\chi$, let $\mathcal B\s[\kappa]^{\sigma}$ be a pairwise disjoint family of size $\kappa$,
and let $\Gamma$ be an arbitrary stationary subset of $\kappa$;
we shall show that there exist $\gamma\in\Gamma$ and $\mathcal B'\in[\mathcal B]^\kappa$
such that, for every $b\in\mathcal B'$, there exists $\eta<\gamma$,
such that, for every $\beta\in b$,
$\lambda(\gamma,\beta)=\eta$ and $\rho_2(\gamma,\beta)=\eta_{\gamma,\beta}$.

Using Clause~(2),
fix $\delta\in\Delta\cap E^\kappa_{>\sigma}$ such that $\sup(\nacc(C_\delta)\cap\Gamma)=\delta$,
and then set $\mathcal B':=\{ b\in\mathcal B\mid \min(b)>\delta\}$.

Now, let $b\in\mathcal B'$ be arbitrary.
By Fact~\ref{last}(1) and as 
$\cf(\delta)>\sigma=\otp(b)$, $\Lambda:=\sup_{\beta\in b}\lambda(\last{\delta}{\beta},\beta)$ is $<\delta$.
Using Clause~(1) and Fact~\ref{last}(2), 
$$\Lambda':=\sup_{\beta\in b}\min\{\epsilon\in C_\delta\setminus\Lambda\mid C_\delta\cap[\epsilon,\delta)=C_{\last{\delta}{\beta}}\cap[\epsilon,\delta)\}$$ is $<\delta$, as well.
Fix a large enough $\gamma\in \nacc(C_\delta)\cap\Gamma$ for which $\eta:=\sup(C_\delta\cap\gamma)$ is $>\Lambda'$.
Note that $\gamma\in\Gamma\cap\delta$ and $\eta<\gamma$.
Let $\beta\in b$ be arbitrary. We have:
$$\lambda(\last{\delta}{\beta},\beta)\le\Lambda<\gamma<\delta\le\last{\delta}{\beta}\le\beta,$$
so, by Facts \ref{fact2} and \ref{fact3}, $\lambda(\gamma,\beta)=\max\{\lambda(\last{\delta}{\beta},\beta),\lambda(\gamma,\last{\delta}{\beta})\}$.
Fix $\epsilon\in C_\delta\cap[\Lambda,\gamma)$ such that
$C_\delta\cap[\epsilon,\delta)=C_{\last{\delta}{\beta}}\cap[\epsilon,\delta)$.
As $\gamma>\epsilon$, we have that $\gamma\in\nacc(C_{\last{\delta}{\beta}})$
and $\lambda(\gamma,\last{\delta}{\beta})=\sup(C_{\last{\delta}{\beta}}\cap\gamma)\ge\epsilon\ge\Lambda>\lambda(\last{\delta}{\beta},\beta)$.
Altogether, $\lambda(\gamma,\beta)=\sup(C_{\last{\delta}{\beta}}\cap\gamma)=\sup(C_\delta\cap\gamma)=\eta$.
\end{proof}

\begin{lemma}\label{done} For a cardinal $\chi$, assume any of the two:
\begin{enumerate}
\item $\pl_1(\kappa,1,\chi)$ holds, or
\item There exists a $C$-sequence $\vec C$ over $\kappa$ with $\chi_1(\vec C)\ge\chi$.
\end{enumerate}

Then there exists a coloring $d_1:[\kappa]^2\rightarrow\kappa$ \emph{good for $\chi$} in the following sense.
For all $\sigma<\chi$ and a pairwise disjoint subfamily $\mathcal B\s[\kappa]^{\sigma}$ of size $\kappa$, there are a stationary set $\Delta\s\kappa$ and an ordinal $\eta<\kappa$
such that, for every $\delta\in\Delta$, there exists $b\in\mathcal B$ with $\min(b)>\max\{\eta,\delta\}$ satisfying $d_1[\{\eta\}\times b]=\{\delta\}$.
\end{lemma}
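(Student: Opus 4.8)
The plan is to produce the coloring $d_1$ separately in each of the two cases, extracting in each case a helper coloring $d_0:[\kappa]^2\to\kappa$ that behaves like a projection onto a prescribed target value, and then combine $d_0$ with the walk-theoretic data supplied by a suitable $C$-sequence. The desired property of $d_1$ is that, given any small-width pairwise disjoint family $\mathcal B$ of size $\kappa$, we can find a single stationary $\Delta$ and a single "pivot" ordinal $\eta$ so that for every $\delta\in\Delta$ some $b\in\mathcal B$ lying above $\max\{\eta,\delta\}$ is monochromatically painted the color $\delta$ on the slice $\{\eta\}\times b$. The natural way to arrange that the color equals $\delta$ is to define $d_1(\eta,\beta)$ by walking from $\eta$ to $\beta$ along a $C$-sequence and reading off the ordinal $\Tr(\eta,\beta)(\eta_{\eta,\beta})$ sitting at the step indexed by $\eta_{\eta,\beta}$, so that Lemma~\ref{Lemma212} forces this value to be exactly $\delta$ whenever $\lambda(\delta,\beta)=\eta$ and $\rho_2(\delta,\beta)=\eta_{\delta,\beta}$.

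\textbf{Case (2).} Here I would take the $C$-sequence $\vec C$ with $\chi_1(\vec C)\ge\chi$ as given and walk along it. Define $d_1(\eta,\beta):=\Tr(\eta,\beta)(\eta_{\eta,\beta})$ for $\eta<\beta$ (and by convention $d_1(\eta,\eta):=0$). Given $\sigma<\chi$ and a pairwise disjoint $\mathcal B\s[\kappa]^\sigma$ of size $\kappa$, the definition of $\chi_1(\vec C)$ hands me a stationary $\Delta\s\kappa$ and a single $\eta<\kappa$ such that, for every $\delta\in\Delta$, there are $\kappa$-many $b\in\mathcal B$ with $\lambda(\delta,\beta)=\eta$ and $\rho_2(\delta,\beta)=\eta_{\delta,\beta}$ for all $\beta\in b$. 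For any such $b$, I may additionally require $\min(b)>\max\{\eta,\delta\}$, since only boundedly many members of $\mathcal B$ can fail this and there are $\kappa$-many candidates; Lemma~\ref{Lemma212} (applied with the roles of its $\alpha$ and $\eta$ matched to my $\eta$ and, say, the pivot ordinal) then yields $d_1(\eta,\beta)=\delta$ for every $\beta\in b$, i.e. $d_1[\{\eta\}\times b]=\{\delta\}$, as required.

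\textbf{Case (1).} Assuming $\pl_1(\kappa,1,\chi)$, I would unpack the transformation $\mathbf t:[\kappa]^2\to[\kappa]^3$ and read off the coloring it induces. Write $\mathbf t(\alpha,\beta)=(\tau^*,\alpha^*,\beta^*)$; the second-coordinate map $(\alpha,\beta)\mapsto\alpha^*$ is the natural candidate for $d_1$, since the $\pl_1$-clause guarantees, for each pairwise disjoint $\mathcal A\s[\kappa]^\sigma$ of size $\kappa$, a stationary $S$ so that for all $(\alpha^*,\beta^*)\in[S]^2$ and admissible $\tau^*$ one hits the value $(\tau^*,\alpha^*,\beta^*)$ on some $a\times b$. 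To match the target format of the lemma, I would fix $\eta$ to be a point of $S$ below the part of $S$ we use as the stationary parameter set $\Delta$, exploiting that for each $\delta\in\Delta\s S$ above $\eta$ the pair $(\eta,\delta)$ or $(\delta,\cdot)$ lies in $[S]^2$ and thus some $b\in\mathcal A$ realizes the prescribed second coordinate $\delta$ on the slice $\{\eta\}\times b$. The bookkeeping to make "$\alpha^*=\delta$'' literally the color $d_1$ assigns, and to secure $\min(b)>\max\{\eta,\delta\}$, is routine thinning.

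\textbf{The main obstacle} I anticipate is Case~(1): the $\pl_1$ output is stated for pairs $(\alpha^*,\beta^*)$ ranging over a stationary square with the transformation hitting a whole product $a\times b$, whereas the lemma asks for a one-sided statement where a single fixed $\eta$ is paired against each $\delta\in\Delta$ and only the slice $\{\eta\}\times b$ must be monochromatic with value $\delta$. Reconciling the two requires carefully choosing $\eta$ below $\Delta$ and verifying that the $\pl_1$ stationary set can be intersected with the constraints $\tau^*<\min\{\theta,\alpha^*\}$ while keeping $\delta$ as the realized second coordinate; since here $\theta=1$, the $\tau^*$-range collapses and the transformation essentially reduces to a two-dimensional coloring, which should make the reduction go through cleanly, but one must check that the \emph{same} $\eta$ works uniformly across all $\delta\in\Delta$ rather than varying with $\delta$. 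Case~(2) is comparatively mechanical, being a direct reading of $\chi_1(\vec C)$ through Lemma~\ref{Lemma212}.
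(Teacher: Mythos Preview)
Your proposal has a concrete error in each case.

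\textbf{Case (2).} Your definition $d_1(\eta,\beta):=\Tr(\eta,\beta)(\eta_{\eta,\beta})$ is the constant function $d_1(\eta,\beta)=\eta$. Indeed, the walk from $\beta$ down to $\eta$ reaches $\eta$ at step $\rho_2(\eta,\beta)$ precisely because $\eta\in C_{\Tr(\eta,\beta)(\rho_2(\eta,\beta)-1)}$; hence the minimum $n$ with $\eta\in C_{\Tr(\eta,\beta)(n)}$ is at most $\rho_2(\eta,\beta)-1$, and the very next step lands on $\eta$. So your coloring carries no information. Lemma~\ref{Lemma212} cannot be applied with its $\alpha$ equal to your $\eta$: the hypothesis there is $\eta<\alpha<\delta$, strictly. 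The paper fixes this by setting $d_1(\eta,\beta):=\Tr(\eta+1,\beta)(\eta_{\eta+1,\beta})$, i.e., walking from $\eta+1$ rather than from $\eta$; then Lemma~\ref{Lemma212} with $\alpha:=\eta+1$ gives $d_1(\eta,\beta)=\delta$ whenever $\lambda(\delta,\beta)=\eta$ and $\rho_2(\delta,\beta)=\eta_{\delta,\beta}$, which is exactly what $\chi_1(\vec C)\ge\chi$ provides.

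\textbf{Case (1).} You propose $d_1(\alpha,\beta):=\alpha^*$, the second coordinate of $\mathbf t(\alpha,\beta)$. But the defining constraint of $\pl_1$ forces $\alpha^*\le\alpha$, so $d_1(\eta,\beta)\le\eta$ for every $\beta$. The range of $d_1[\{\eta\}\times\kappa]$ is then bounded by $\eta+1$, and you cannot realise $d_1[\{\eta\}\times b]=\{\delta\}$ for $\delta$ ranging over a stationary (hence unbounded) set $\Delta$. The paper instead takes $d_1(\alpha,\beta):=\beta^*$, the \emph{third} coordinate; since $\alpha<\beta^*\le\beta$, this value is unbounded as $\beta$ grows. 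Applying $\pl_1$ to $\mathcal B$ gives a stationary $S$ so that for any $(\alpha^*,\beta^*)\in[S]^2$ there is $(a,b)\in[\mathcal B]^2$ with $\mathbf t[a\times b]=\{(0,\alpha^*,\beta^*)\}$; since $\min(a)<\beta^*$, a Fodor argument on $\beta^*\mapsto\min(a)$ stabilises a single $\eta\in a$ across a stationary set of $\beta^*$'s, and for each such $\delta:=\beta^*$ one gets $d_1[\{\eta\}\times b]=\{\delta\}$.
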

\begin{proof} (1) Suppose that $\mathbf t:[\kappa]^2\rightarrow[\kappa]^3$ witnesses $\pl_1(\kappa,1,\chi)$.
Define $d_1:[\kappa]^2\rightarrow\kappa$ by letting $d_1(\alpha,\beta):=\beta^*$ whenever $\mathbf t(\alpha,\beta)=(\tau,\alpha^*,\beta^*)$.
It is clear that $d_1$ is as sought.

(2) Walk along such a $\vec C$.
Now, pick any coloring $d_1:[\kappa]^2\rightarrow\kappa$ such that, for every $\eta,\beta<\kappa$ with $\eta+1<\beta$,
$d_1(\eta,\beta)=\Tr(\eta+1,\beta)(\eta_{\eta+1,\beta})$.
By Lemma~\ref{Lemma212}, $d_1$ is as sought.
\end{proof}

We conclude this section with an improvement of \cite[Lemma~2.16]{paper44}.

\begin{defn}[\cite{paper35}]For a $C$-sequence $\vec{C} = \langle C_\alpha \mid \alpha < \kappa \rangle$,
      $\chi(\vec{C})$ is the least cardinal $\chi \leq \kappa$ such that there exists
      $\Delta \in [\kappa]^\kappa$ with the property that, for every $\epsilon<\kappa$,
      for some $a\in[\kappa]^\chi$,        $\Delta\cap\epsilon\s\bigcup_{\alpha\in a}C_\alpha$.
      
If $\kappa$ is weakly compact, then $\chi(\kappa)$ is defined to be $0$;\footnote{$\chi(\kappa)$ should be understood 
as a measure of how far $\kappa$ is from being weakly compact.
By \cite[Theorem~6.3.5]{TodWalks}, 
if $\kappa$ is weakly compact,
then $\chi(\vec C)=1$ for every $C$-sequence $\vec C$ over $\kappa$.}
 otherwise,
$\chi(\kappa)$ is the supremum of $\chi(\vec C)$ over all $C$-sequences $\vec C=\langle C_\alpha\mid\alpha<\kappa\rangle$.
\end{defn}
\begin{lemma} Suppose that $\kappa$ is an inaccessible cardinal. 

For any cardinal $\chi$ such that there exists a coloring $d_1$ good for $\chi$ in the sense of Lemma~\ref{done},
$\chi(\kappa)\ge\chi$.
In particular, if $\pl_1(\kappa,1,\chi)$ holds, then $\chi(\kappa)\ge\chi$.
\end{lemma}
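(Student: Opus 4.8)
The plan is to unwind the definition of $\chi(\kappa)$ as a supremum and reduce to a single construction. Since $\chi(\kappa)$ is a cardinal $\le\kappa$, the inequality $\chi(\kappa)\ge\chi$ is equivalent to $\chi(\kappa)>\sigma$ for every cardinal $\sigma<\chi$; so it suffices to produce, for each cardinal $\sigma$ with $1\le\sigma<\chi$, a $C$-sequence $\vec C^*=\langle C^*_\alpha\mid\alpha<\kappa\rangle$ over $\kappa$ with $\chi(\vec C^*)>\sigma$. Such a $\vec C^*$ pays for itself twice: as $\chi(\vec C^*)\ge 2$, the footnote to the definition (via \cite[Theorem~6.3.5]{TodWalks}) rules out $\kappa$ being weakly compact, so that $\chi(\kappa)$ really is $\sup_{\vec C}\chi(\vec C)\ge\chi(\vec C^*)>\sigma$; letting $\sigma$ range over the cardinals below $\chi$ then yields $\chi(\kappa)\ge\chi$. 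The final ``In particular'' assertion is immediate from this, since Lemma~\ref{done}(1) converts $\pl_1(\kappa,1,\chi)$ into a coloring good for $\chi$.

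First I would fix a cardinal $\sigma<\chi$ and manufacture $\vec C^*$ out of $d_1$, using the inaccessibility of $\kappa$ to code, on a club of indices $\alpha$, a pair of parameters (a ``level'' and a ``color'') drawn from $\alpha$, and letting $C^*_\alpha$ be the closure in $\alpha$ of the associated $d_1$-fiber (with an arbitrary club filling in the remaining indices). The point of the coding is that an economical cover $D\cap\epsilon\s\bigcup_{\alpha\in a}C^*_\alpha$ with $|a|\le\sigma$ should translate, modulo closure points, into a cover of $D\cap\epsilon$ by $\sigma$ many $d_1$-fibers. To exploit this, let $D\in[\kappa]^\kappa$ be arbitrary, partition a cofinal subset of $D$ into pairwise disjoint blocks of order type $\sigma+1$ to form a family $\mathcal B\s[D]^{\sigma+1}$ of size $\kappa$ (legitimate since $\sigma+1<\chi$), and feed $\mathcal B$ to the goodness of $d_1$. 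This returns a single level $\eta<\kappa$ and a \emph{stationary} set $\Delta$ of colors such that each $\delta\in\Delta$ is realized by a full block $b_\delta\in\mathcal B$ with $\min(b_\delta)>\delta$ and $d_1[\{\eta\}\times b_\delta]=\{\delta\}$. The blocks $\langle b_\delta\mid\delta\in\Delta\rangle$ are then pairwise disjoint and cofinal in $\kappa$, and each is monochromatic for $d_1(\eta,\cdot)$.

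The goal of the endgame is to show that no $\sigma$-sized subfamily of $\vec C^*$ can absorb a tail of $\{b_\delta\mid\delta\in\Delta\}$: once this is established, choosing $\epsilon$ just above an unabsorbed block $b_\delta$ gives $D\cap\epsilon\not\s\bigcup_{\alpha\in a}C^*_\alpha$ for every $a\in[\kappa]^\sigma$, which is exactly $\chi(\vec C^*)>\sigma$. I expect the main obstacle to lie precisely here, in the design of $\vec C^*$ and in reconciling the \emph{single} level $\eta$ delivered by goodness with the arbitrary levels implicit in a cover: a member of $\vec C^*$ coded at level $\eta$ is monochromatic for $d_1(\eta,\cdot)$ and hence can swallow a whole block $b_\delta$ for at most one $\delta$, but a member coded at some level $\eta'\ne\eta$ carries no such constraint, and the crux is to arrange the coding so that these foreign-level sets still meet each $\eta$-monochromatic block in a proper subset. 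The clean way I would attempt to close this is to first thin $\Delta$ to a set on which the $\le\sigma$ level-and-color labels appearing across the covers $\langle a_\epsilon\rangle$ stabilize, combine the bounds $\min(b_\delta)>\delta$ with a Fodor-style pressing-down pass over $\Delta$, and thereby extract one $\epsilon$ beyond which the stabilized labels can no longer account for a fresh color $\delta\in\Delta$ — contradicting the unboundedness of $\Delta$ and furnishing the required witness.
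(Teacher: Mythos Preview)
Your proposal identifies the right overall shape—build a $C$-sequence from $d_1$ and derive a contradiction from a small cover—but the specific construction you describe does not close, and the gap you yourself flag (the ``foreign-level'' sets) is fatal to this design rather than a technicality.

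The core issue is your choice of $C^*_\alpha$ as the closure of a \emph{single} $d_1$-fiber $\{\beta<\alpha\mid d_1(\eta_\alpha,\beta)=\delta_\alpha\}$. This ties $C^*_\alpha$ to one coded level $\eta_\alpha$, and you then need goodness to return exactly that level—which it need not. A $C^*_\alpha$ with $\eta_\alpha\ne\eta$ has no structural relationship to your $\eta$-monochromatic blocks $b_\delta$; nothing prevents such a set from containing cofinally many of them. Your proposed fix—Fodor on $\Delta$ to ``stabilize the labels across the covers $\langle a_\epsilon\rangle$''—does not work, because the labels live on the $\alpha$'s in the (adversarially chosen) covers $a_\epsilon$, not on elements of $\Delta$; there is no regressive function on $\Delta$ that captures them. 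There is also a quantifier slip in your endgame: to show $\chi(\vec C^*)>\sigma$ you need \emph{one} $\epsilon$ that defeats \emph{every} $a\in[\kappa]^\sigma$, whereas ``some block $b_\delta$ is unabsorbed by $a$'' yields an $\epsilon$ depending on $a$.

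The paper sidesteps all of this with a different $C$-sequence: for regular uncountable $\alpha<\kappa$, set $C_\alpha:=\{\gamma<\alpha\mid \forall\eta<\gamma\,[d_1(\eta,\alpha)<\gamma]\}$, the club of closure points of $d_1(\cdot,\alpha)$, and for singular $\alpha$ take a short club with $\min(C_\alpha)\ge\cf(\alpha)=\otp(C_\alpha)$. This definition controls \emph{all} levels at once: whenever $\eta<\gamma\in C_\alpha$, automatically $d_1(\eta,\alpha)<\gamma$, so if $d_1(\eta,\alpha)=\delta$ then $C_\alpha\cap(\eta,\delta]=\emptyset$. One then argues by contradiction: assume $\chi(\kappa)<\chi$, hence $\sigma:=\chi(\vec C)<\chi$ and $\kappa$ is Mahlo. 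Fix $\Delta\in[\kappa]^\kappa$ and covers $a_\epsilon\in[\kappa]^\sigma$. Two pressing-down passes on $\reg(\kappa)$ prune the singular contributions (this is where the ``short club with large minimum'' for singular $\alpha$ is used), yielding $A\in[\kappa]^\kappa$ and a pairwise disjoint $\mathcal B\subseteq[\reg(\kappa)]^\sigma$ such that each $b\in\mathcal B$ with $\min(b)>\delta$ covers $A\cap\delta$. Now feed $\mathcal B$—not blocks of $D$—to goodness of $d_1$: get $\eta$ and stationary $\Delta$ such that for $\delta\in\Delta$ there is $b\in\mathcal B$ with $\min(b)>\max\{\eta,\delta\}$ and $d_1[\{\eta\}\times b]=\{\delta\}$. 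Pick $\delta\in\Delta\cap\acc^+(A)$ in the relevant club and such a $b$. Then $\bigcup_{\alpha\in b}C_\alpha$ misses $(\eta,\delta]$ entirely, yet must cover $A\cap\delta$, which meets $(\eta,\delta)$. Contradiction.

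The moral: encode $d_1$ into $\vec C$ via closure points rather than fibers, so that whatever level $\eta$ goodness hands you, every $C_\alpha$ in the covering family acquires a gap $(\eta,d_1(\eta,\alpha)]$; and apply goodness to the \emph{covering family} itself, so that the gap lands exactly where the set $A$ must be covered.
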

\begin{proof} Suppose $d_1:[\kappa]^2\rightarrow\kappa$ is a coloring good for $\chi$.
By a straight-forward modification, we may assume that $d_1(\eta,\beta)<\beta$ for all $\eta<\beta<\kappa$.
Denote $\Sigma:=\{\alpha<\kappa\mid \cf(\alpha)<\alpha\}$.
Now, define a $C$-sequence $\vec C=\langle C_\alpha\mid\alpha<\kappa\rangle$, as follows.

$\br$ Set $C_0:=\emptyset$ and $C_\omega:=\omega$;

$\br$ For every $\alpha\in\Sigma$, let $C_\alpha$ be a closed subset of $\alpha$ with $\sup(C_\alpha)=\sup(\alpha)$ and $\min(C_\alpha)\ge\cf(\alpha)=\otp(C_\alpha)$;

$\br$ For every regular uncountable cardinal $\alpha<\kappa$,
set $C_\alpha:=\{ \gamma<\alpha\mid \forall \eta<\gamma[d_1(\eta,\alpha)<\gamma]\}$.

Note that it follows that $C_{\beta+1}=\{\beta\}$ for all $\beta<\kappa$.
Now, towards a contradiction, suppose that $\chi(\kappa)<\chi$. 
In particular, $\chi(\kappa)<\kappa$ so that, by \cite[Lemma~2.21(1)]{paper35}, 
$\kappa$ is a Mahlo cardinal.
\begin{claim} There exist $A \in [\kappa]^\kappa$, $\sigma<\chi$ and a pairwise disjoint subfamily $\mathcal B\s[\reg(\kappa)]^\sigma$ of size $\kappa$ with the property that, for every $\epsilon<\kappa$,
for some $b\in\mathcal B$, $A\cap\epsilon\s\bigcup_{\alpha\in b}C_\alpha$.
\end{claim}
\begin{proof} Set $\sigma:=\chi(\vec C)$, so that $\sigma\le\chi(\kappa)<\chi$.
Fix  $\Delta \in [\kappa]^\kappa$ and a sequence $\langle a_\epsilon\mid \epsilon<\kappa\rangle$ of sets in $[\kappa]^\sigma$ with the property that, for every $\epsilon<\kappa$,
$\Delta\cap\epsilon\s\bigcup_{\alpha\in a_\epsilon}C_\alpha$. 
Define a function $f_0:\reg(\kappa)\setminus(\sigma+1)\rightarrow\kappa$ via:
$$f_0(\epsilon):=\sup((a_\epsilon\cap\epsilon)\cup\bigcup\{ \otp(C_\alpha\cap\epsilon)\mid \alpha\in a_\epsilon\cap\Sigma\}).$$

Note that $f_0$ is regressive, because otherwise for some $\alpha\in a_\epsilon\cap\Sigma$,
$\otp(C_\alpha\cap\epsilon)=\epsilon>0$, and in particular $\epsilon>\min(C_\alpha)\ge\otp(C_\alpha)\ge\otp(C_\alpha\cap\epsilon)=\epsilon$.

Fix a stationary subset $S_0\s\dom(f_0)$ on which $f_0$ is constant with value, say, $\tau_0$. 
Define a function $f_1:S_0\setminus(\tau_0+1)\rightarrow\kappa$ via:
$$f_1(\epsilon):=\sup(\bigcup\{ \sup(C_\alpha\cap\epsilon)\mid \alpha\in a_\epsilon\cap\Sigma\}).$$

Note that $f_1$ is regressive, because otherwise for some $\alpha\in a_\epsilon\cap\Sigma$,
$\sup(C_\alpha\cap\epsilon)=\epsilon$, and in particular $\tau_0=f_0(\epsilon)\ge\otp(C_\alpha\cap\epsilon)\ge\cf(\epsilon)=\epsilon>\tau_0$.

Fix a stationary subset $S_1\s\dom(f_1)$ on which $f_1$ is constant with value, say, $\tau_1$. 
Set $A:=\Delta\setminus(\tau_0+\tau_1+1)$, and for every $\epsilon\in S_1$, set $b_\epsilon:=(a_\epsilon\cap\reg(\kappa))\setminus\epsilon$.

As $\min(b_\epsilon)\ge\epsilon$ for all $\epsilon\in S_1$, we may find $S_2\in[S_1]^\kappa$ such that $\mathcal B:=\{ b_\epsilon\mid \epsilon\in S_2\}$ 
is a pairwise disjoint family (of size $\kappa$).
Now, to see that $A$, $\sigma$ and $\mathcal B$ are as sought,
it suffices to show that for every $\epsilon\in S_2$, we have $A\cap\epsilon\s\bigcup_{\alpha\in b_\epsilon}C_\alpha$.

Let $\epsilon\in S_2$ and $\delta\in A\cap\epsilon$ be arbitrary.
In particular, $\delta\in \Delta\cap\epsilon$,
so we may fix $\alpha\in a_\epsilon$ such that $\delta\in C_\alpha$.

$\br$ If $\alpha\in a_\epsilon\cap\epsilon$, then $\alpha\le f_0(\epsilon)=\tau_0<\min(A)\le\delta$, contradicting the fact that $\delta\in C_\alpha\s\alpha$

$\br$ If $\alpha\in a_\epsilon\cap\Sigma$, then $\delta\le f_1(\epsilon)\le\tau_1<\min(A)\le\delta$ which is a contradiction.

So, $\alpha\in b_\epsilon$. Altogether, $A\cap\epsilon\s\bigcup_{\alpha\in b_\epsilon}C_\alpha$, as sought.
\end{proof}

Let $A$, $\sigma$ and $\mathcal B$ be given by the claim.
By throwing away at most one set from $\mathcal B$, we may assume that $\omega\notin b$ for all $b\in\mathcal B$.
By thinning out even further we may assume the existence of a club $D\s\kappa$ such that, for all $\delta\in D$ and $b\in\mathcal B$,
if $\min(b)>\delta$, then $A\cap\delta\s\bigcup_{\alpha\in b}C_\alpha$.

Now, by the choice of the coloring $d_1$, we may fix a stationary subset $\Delta\s\kappa$ and an ordinal $\eta<\kappa$
such that, for every $\delta\in\Delta$, there exists $b\in\mathcal B$ with $\min(b)>\max\{\eta,\delta\}$ satisfying $d_1[\{\eta\}\times b]=\{\delta\}$.

Fix $\delta\in D\cap\acc^+(A)\cap \Delta$,
and then fix $b\in\mathcal B$ with $\min(b)>\max\{\eta,\delta\}$ satisfying $d_1[\{\eta\}\times b]=\{\delta\}$.

As $\delta\in D$, $A\cap\delta\s\bigcup_{\alpha\in b}C_\alpha$.
As $\delta\in\acc^+(A)$, we may fix $\alpha\in b$ and $\gamma\in C_\alpha$ with $\eta<\gamma<\delta$.
As $\alpha\in b$, it is a regular uncountable cardinal, so it follows from the definition of $C_\alpha$ that $d_1(\eta,\alpha)<\gamma<\delta$,
contradicting the fact that $d_1(\eta,\alpha)=\delta$.
\end{proof}

\begin{q} Does  $\pl_1(\kappa,1,\chi(\kappa))$ hold for every inaccessible cardinal $\kappa$?
\end{q}

\section{From colorings to transformations}\label{pumpup}

For the sake of this section, we introduce the following ad-hoc principle.

\begin{defn} $\pr_1^+(\kappa,\theta,\chi)$ asserts 
the existence of a coloring $o:[\kappa]^2\rightarrow\theta$ satisfying:
\begin{enumerate}
\item For all nonzero $\alpha<\beta<\kappa$, $o(\alpha,\beta)<\alpha$;
\item For all $\zeta<\theta$, $\sigma<\chi$,
and a pairwise disjoint subfamily $\mathcal A\s [\kappa]^{\sigma}$ of size $\kappa$,
there is $\gamma<\kappa$ such that, for every $b\in[\kappa\setminus\gamma]^{\sigma}$,
for some $a\in\mathcal A\cap\mathcal P(\gamma)$, $o[a\times b]=\{\zeta\}$.
\end{enumerate}
\end{defn}

It turns out that the above variation is not much stronger than the original.

\begin{lemma}\label{Lemma35} Suppose that $\pr_1(\kappa,\kappa,\theta,\chi+\chi)$ holds.
Then so does $\pr_1^+(\kappa,\theta,\chi)$.
\end{lemma}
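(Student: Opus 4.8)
The plan is to extract from the given witness $c\colon[\kappa]^2\to\theta$ for $\pr_1(\kappa,\kappa,\theta,\chi+\chi)$ a single coloring $o$ that simultaneously satisfies the regressiveness Clause~(1) and the uniform-threshold Clause~(2). For Clause~(1), I would simply \emph{truncate}: set $o(\alpha,\beta):=c(\alpha,\beta)$ if $c(\alpha,\beta)<\alpha$, and $o(\alpha,\beta):=0$ otherwise. This makes $o$ regressive in the first coordinate for free. The crucial observation that keeps the truncation harmless is that in Clause~(2) one only ever needs to realize a \emph{small} target color $\zeta$ over a set $a$ whose minimum can be taken above $\zeta$; whenever $c$ takes value $\zeta<\min(a)\le\alpha$ on a pair $(\alpha,\beta)$, the truncation leaves it untouched, so $o$ and $c$ agree there.

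For Clause~(2), I would argue by contradiction. Fix $\zeta<\theta$, $\sigma<\chi$, and a pairwise disjoint $\mathcal A\s[\kappa]^\sigma$ of size $\kappa$, and suppose no threshold $\gamma$ works; that is, for every $\gamma<\kappa$ there is an \emph{escaping} $b\in[\kappa\setminus\gamma]^\sigma$ with $o[a\times b]\neq\{\zeta\}$ for all $a\in\mathcal A\cap\mathcal P(\gamma)$. The engine of the proof is a \emph{doubling} construction that converts this failure into a single pairwise disjoint family of sets of order type $\sigma+\sigma$, to which $\pr_1$ with the parameter $\chi+\chi$ can be applied; this is the sole reason the hypothesis is stated with $\chi+\chi$ rather than $\chi$, since for finite $\sigma$ the value $\sigma+\sigma$ may exceed $\chi$, whereas $\sigma+\sigma<\chi+\chi$ always holds. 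Concretely, by recursion on $\xi<\kappa$ I would build a continuous increasing chain $\langle\gamma_\xi\mid\xi<\kappa\rangle$ together with, at each step, an $a_\xi\in\mathcal A$ with $\min(a_\xi)>\max\{\zeta,\gamma_\xi\}$ (possible because the minima of a pairwise disjoint $\kappa$-family are cofinal in $\kappa$), and then an escaping witness $b_\xi$ lying above $\gamma'_\xi:=\sup(a_\xi)+1$; setting $\gamma_{\xi+1}:=\sup(b_\xi)+1$ spreads everything out so that $e_\xi:=a_\xi\cup b_\xi$ has $a_\xi<b_\xi$, order type exactly $\sigma+\sigma$, and $\langle e_\xi\mid\xi<\kappa\rangle$ is $<$-increasing and pairwise disjoint.

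Applying $\pr_1(\kappa,\kappa,\theta,\chi+\chi)$ to this family with target color $\zeta$ yields $\xi<\xi'$ with $c[e_\xi\times e_{\xi'}]=\{\zeta\}$, and in particular $c[a_\xi\times b_{\xi'}]=\{\zeta\}$. Since $\min(a_\xi)>\zeta$, the truncation defining $o$ is inactive on $a_\xi\times b_{\xi'}$, whence $o[a_\xi\times b_{\xi'}]=\{\zeta\}$ as well. The bookkeeping then gives $a_\xi\s\gamma'_\xi\le\gamma'_{\xi'}$, so $a_\xi\in\mathcal A\cap\mathcal P(\gamma'_{\xi'})$, which directly contradicts the fact that $b_{\xi'}$ escapes every member of $\mathcal A\cap\mathcal P(\gamma'_{\xi'})$.

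I expect the main obstacle to be the bookkeeping in the doubling recursion: one must interleave the choice of the below-threshold sets $a_\xi$ with the escaping witnesses $b_\xi$ so that (i) every $e_\xi$ has the \emph{same} order type $\sigma+\sigma$, (ii) the family is genuinely pairwise disjoint and $<$-increasing, so that the pair returned by $\pr_1$ has the expected index order, and, most importantly, (iii) each $a_\xi$ still lies below the threshold $\gamma'_{\xi'}$ governing the later witness $b_{\xi'}$, which is precisely what forces the monochromatic $\pr_1$-pair to collide with an escaping witness. Reconciling the prescribed regressiveness with an arbitrary target color is the conceptual subtlety, and it dissolves once one notices that it suffices to realize $\zeta$ only at first coordinates exceeding $\zeta$.
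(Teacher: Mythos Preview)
Your proposal is correct and follows essentially the same approach as the paper: truncate $c$ to obtain the regressive coloring $o$, assume a counterexample $(\zeta,\sigma,\mathcal A)$, pair each member of $\mathcal A$ with an escaping witness to form sets of order type at most $\sigma+\sigma$, and apply $\pr_1(\kappa,\kappa,\theta,\chi+\chi)$ to reach a contradiction. The only cosmetic difference is that you build the doubled family by an explicit recursion ensuring $a_\xi<b_\xi$ and exact order type $\sigma+\sigma$, whereas the paper picks $a_\gamma,b_\gamma$ independently for each $\gamma$ and then thins to a club $C$ so that the resulting $x_\gamma=a_\gamma\cup b_\gamma$ are $<$-increasing; your version is arguably cleaner since it avoids the implicit pigeonhole step needed to uniformize the order types.
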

\begin{proof} Let $c$ be a witness to $\pr_1(\kappa,\kappa,\theta,\chi+\chi)$.
Define a coloring $o:[\kappa]^2\rightarrow\theta$
by letting $o(\alpha,\beta):=c(\alpha,\beta)$ whenever $c(\alpha,\beta)<\alpha<\beta<\kappa$,
and $o(\alpha,\beta):=0$, otherwise.
Towards a contradiction, suppose that $o$ is not as sought.
Let $\zeta$, $\sigma$ and $\mathcal A$ form together a counterexample.
This means that, for each $\gamma<\kappa$, we may fix $b_\gamma\in[\kappa\setminus\gamma]^{\sigma}$ 
such that, for all $a\in\mathcal A\cap\mathcal P(\gamma)$, $o[a\times b_\gamma]\neq\{\zeta\}$.
Also, fix $a_\gamma\in\mathcal A$ with $\min(a_\gamma)>\gamma$, and then set $x_\gamma:=a_\gamma\cup b_\gamma$.

Fix a club $C\s\kappa$ such that, for all $\gamma\in C$, $(\bigcup_{\gamma'<\gamma}x_{\gamma'})\s \gamma$.
In particular, $\langle x_\gamma\mid\gamma\in C\setminus\zeta\rangle$
is a $<$-increasing sequence of elements of $[\kappa]^{\le\sigma+\sigma}\s[\kappa]^{<\chi+\chi}$.
Thus, by the choice of $c$, we may find $(\gamma',\gamma)\in[\Gamma]^2$ such that $c[x_{\gamma'}\times x_\gamma]=\{\zeta\}$.
As $\min(a_{\gamma'})>\gamma'\ge\zeta$, it thus follows that $o[a_{\gamma'}\times b_\gamma]=\{\zeta\}$,
contradicting the fact that $a_{\gamma'}\in\mathcal A\cap\mathcal P(\gamma)$ and the choice of $b_\gamma$.
\end{proof}

\begin{thm}\label{thm42a} 
Suppose that $\mu$ is an infinite regular cardinal, $\chi\le\mu$,
and $\Gamma\s \mu^+$ is a nonreflecting stationary set.

If $\pr_1^+(\mu^+,\mu,\chi)$ holds, then so does $\pl_2(\mu^+,\Gamma,\chi)$.
\end{thm}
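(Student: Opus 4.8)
The plan is to build the transformation $\mathbf t:[\mu^+]^2\to[\mu^+]^2$ by combining a coloring $o$ witnessing $\pr_1^+(\mu^+,\mu,\chi)$ with walks along a $C$-sequence $\vec C$ that avoids $\Gamma$. Since $\Gamma$ is nonreflecting, such a $\vec C$ exists, and I would invoke Lemma~\ref{lemma45} to obtain one with $\chi_1(\vec C)\ge\chi$ (noting $\chi\le\mu$, and using that over $\mu^+$ the relevant supremum is at least $\mu$). The coloring $o$ supplies a value below $\alpha$ that we can prescribe, while the walk-based machinery supplies the two ordinals $\alpha^*,\beta^*$ that we wish to land at. Concretely, I expect $\mathbf t(\alpha,\beta)$ to be defined using the walk $\tr(\alpha,\beta)$: the ordinal $\beta^*$ should be read off as a specified node $\Tr(\alpha,\beta)(n)$ of the walk (so that $\beta^*\le\beta$), and $\alpha^*$ should be extracted as $\last{\alpha}{\beta}$ or a closely related ordinal of the walk (so that $\alpha^*\le\alpha$), while the value $o(\alpha,\beta)$ is used to encode/select the target. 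The monotonicity clause $\alpha^*\le\alpha<\beta^*\le\beta$ must be arranged by construction, which is the routine bookkeeping part.

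The heart of the argument is the second clause. Given $\sigma<\chi$ and a pairwise disjoint $\mathcal A\s[\mu^+]^\sigma$ of size $\mu^+$, I would fix a target pair $(\alpha^*,\beta^*)\in[\Gamma\cap D]^2$ for a club $D$ to be produced, and must find $(a,b)\in[\mathcal A]^2$ with $\mathbf t[a\times b]=\{(\alpha^*,\beta^*)\}$. The strategy is: first use the $\chi_1(\vec C)$-property (or $d_1$ from Lemma~\ref{done}) to locate, for a stationary set of $\delta$'s and a fixed auxiliary $\eta$, many members $b\in\mathcal A$ for which the walk from $\delta$ into each $\beta\in b$ has controlled $\lambda$ and $\rho_2$ values — this is exactly the data that, via Facts~\ref{fact2}, \ref{fact3} and Lemma~\ref{Lemma212}, forces a prescribed node of $\tr(\alpha,\beta)$ to equal $\delta$. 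This pins down the $\beta^*$-coordinate to be $\delta$ uniformly across $b$. Then I would run the $\pr_1^+$-property of $o$: for the prescribed color $\zeta$ (chosen to match the desired $\alpha^*$ under the decoding built into $\mathbf t$) and the family $\mathcal A$, there is $\gamma$ such that every high block $b$ sees some low $a\in\mathcal A\cap\mathcal P(\gamma)$ with $o[a\times b]=\{\zeta\}$ — giving the uniform first coordinate and the $\alpha^*$-coordinate simultaneously.

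The main obstacle is making the two selection mechanisms cohere on a single pair $(a,b)$ while keeping \emph{both} output coordinates constant across \emph{all} of $a\times b$. The walk-control lemma delivers the $\beta^*$-coordinate constantly over $\beta\in b$ for a \emph{fixed} low ordinal $\delta=\gamma$, whereas $\pr_1^+$ delivers the $\zeta$-coordinate constantly over $a\times b$ but with $a$ ranging below some threshold $\gamma$; I must choose the quantifier order so that the threshold supplied by $\pr_1^+$ is compatible with the $\delta$ produced by the $\chi_1$-analysis, and so that the same $\delta$ simultaneously serves as $\beta^*\in\Gamma\cap D$. The natural remedy is to feed $\mathcal A$ (suitably re-indexed as a family of blocks $b$ playing the role of the ``upper'' family) into Lemma~\ref{done} to harvest a stationary $\Delta\s\Gamma$ and a single $\eta$, then intersect with the club $D$ on which $o$'s threshold behaves well, and finally apply the $\pr_1^+$ clause with $b$ chosen above $\gamma$ and $a\in\mathcal A\cap\mathcal P(\gamma)$; the coloring $o$ being strictly regressive (clause~(1)) is what guarantees $\alpha^*\le\alpha$. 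I would close by verifying the clause for an arbitrary $(\alpha^*,\beta^*)\in[\Gamma\cap D]^2$, pushing the second target coordinate through a second layer of the same walk-control analysis between $\alpha^*$ and $\beta^*$, which is where the nonreflection of $\Gamma$ (hence $\acc(C_\alpha)\cap\Gamma=\emptyset$, via Fact~\ref{fact1}) is used to keep the walk from $\alpha^*$ clean.
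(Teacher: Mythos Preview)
Your outline follows the template of Theorems~\ref{thm45} and~\ref{thm47}, but it misses the central difficulty that distinguishes Theorem~\ref{thm42a}: here $o$ witnesses $\pr_1^+(\mu^+,\mu,\chi)$, so its \emph{range is only $\mu$}, not $\mu^+$. Hence $o(\alpha,\beta)<\mu$ cannot directly encode the target $\alpha^*<\mu^+$, nor the walk-pivot ordinal $\eta<\mu^+$ that the $\chi_1$-mechanism (Definition~\ref{etanotation}, Lemma~\ref{Lemma212}) requires. Your suggestion to read $\alpha^*$ off as $\last{\alpha}{\beta}$ does not help either: $\last{\alpha}{\beta}\in[\alpha,\beta]$ always, so this yields no control over an $\alpha^*\le\alpha$. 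The vague ``second layer of walk-control'' in your last paragraph would again need an ordinal $<\mu^+$ to be packed into $o(\alpha,\beta)<\mu$, and you do not say how.

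The paper's proof overcomes this with two devices particular to successor cardinals, neither of which appears in your plan. First, an almost-disjoint family $\{Z_\epsilon\mid\epsilon<\mu^+\}\s[\mu]^\mu$ is fixed, and a selector $\xi^{\alpha,\beta}$ is defined as the first step $n$ of the walk with $\xi\in Z_{\Tr(\alpha,\beta)(n)}$; thus a single $\xi<\mu$ singles out the desired $\beta^*$ among the finitely many nodes of $\tr(\alpha,\beta)$. Second, surjections $\psi_\beta:\mu\to\beta+1$ are fixed, and one sets $\alpha^*:=\psi_{\beta^*}(\tau)$; since $\alpha^*<\beta^*$, a single $\tau<\mu$ recovers $\alpha^*$ once $\beta^*$ is known. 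Writing $\pi(o(\alpha,\beta))=(\tau,\xi)$ packs both coordinates into one $\mu$-valued color. Note also that the auxiliary claim needs only a Fodor-type bound $\lambda(\delta,\beta)\le\eta$ on a stationary $\Delta\s E^{\mu^+}_\mu$ (using Fact~\ref{fact1} and $\otp(C_\alpha)\le\mu$), not the full $\chi_1$-conclusion; Lemma~\ref{lemma45} is not invoked---and could not be, since it assumes $\kappa\ge\aleph_2$ and its bound depends on the cofinality profile of $\Gamma$, contrary to your parenthetical assertion that ``the relevant supremum is at least $\mu$''.
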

\begin{proof} 
Suppose that $\pr_1^+(\mu^+,\mu,\chi)$ holds,
as witnessed by $o:[\mu^+]^2\rightarrow\mu$.
Fix a $C$-sequence $\vec C=\langle C_\alpha\mid\alpha<\mu^+\rangle$ such that, for all $\alpha<\mu^+$,
$\otp(C_\alpha)=\cf(\alpha)$ and $C_\alpha\cap\Gamma=\emptyset$.
We shall walk along $\vec C$.
Fix a bijection $\pi: \mu\leftrightarrow\mu\times \mu$. For every $\beta<\mu^+$, fix a surjection $\psi_\beta:\mu\rightarrow\beta+1$.
Fix an almost disjoint family $\{Z_\epsilon \mid \epsilon<\mu^+\}\s[\mu]^\mu$.
For every ordinal $\xi<\mu$ and a pair $(\alpha,\beta)\in[\mu^+]^2$, we let
$$\xi^{\alpha,\beta}:=\min\{ n<\omega\mid \xi\in Z_{\Tr(\alpha,\beta)(n)}\text{ or }n=\rho_2(\alpha,\beta)+1\}.$$

Define $\mathbf t: [\mu^+]^2\rightarrow[\mu^+]^2$, as follows.
Let $\mathbf t(\alpha,\beta):=(\alpha^*,\beta^*)$ provided that the following hold:
\begin{itemize}
\item $(\tau,\xi):=\pi(o(\alpha,\beta))$;
\item $\beta^*:=\Tr(\alpha,\beta)(\xi^{\alpha,\beta})$ is $>\alpha$;
\item $\alpha^*:=\psi_{\beta^*}(\tau)$ is $<\alpha$.
\end{itemize}
Otherwise, just let $\mathbf t(\alpha,\beta):=(\alpha,\beta)$ 

To verify that $\mathbf t$ witnesses $\pl_2(\mu^+,\Gamma,\chi)$,
suppose that we are given $\sigma<\chi$ and a pairwise disjoint subfamily $\mathcal A\s[\mu^+]^\sigma$ of size $\mu^+$.
Fix a sequence  $\vec x=\langle x_\delta\mid \delta<\mu^+\rangle$ such that,
for all $\delta<\mu^+$, $x_\delta\in\mathcal A$ with $\min(x_\delta)>\delta$.

\begin{claim} There exists $\eta<\mu^+$ and a stationary $\Delta\s E^{\mu^+}_\mu$ such that,
for every $\delta\in\Delta$ and $\beta\in x_\delta$, $\lambda(\delta,\beta)\le\eta$.
\end{claim}
\begin{proof} 
Let $\delta\in E^{\mu^+}_\mu$ be arbitrary.
As $|x_\delta|<\chi\le\mu$, Fact~\ref{fact1} entails the existence of a large enough $\eta<\delta$ such that $\lambda(\delta,\beta)\le\eta$ for all $\beta\in x_\delta$.
Now, appeal to Fodor's lemma.
\end{proof}

Let $\eta$ and $\Delta$ be given by the claim. 
Let $D\s\mu^+$ be the club of all $\delta<\mu^+$ for which
there exists an elementary submodel $\mathcal M_\delta\prec\mathcal H_{\mu^{++}}$ with $\mathcal M_\delta\cap\mu^+=\delta$
such that $\{\langle x_\gamma\mid \gamma\in\Delta\rangle,o,\pi,\mu,\eta\}\in\mathcal M_\delta$.

\begin{claim} Let $(\alpha^*,\beta^*)\in [D\cap\Gamma]^2$. Then there exists $(a,b)\in\mathcal[A]^2$ such that $\mathbf t[a\times b]=\{(\alpha^*,\beta^*)\}$.
\end{claim}
\begin{proof} Fix $\delta^*\in\Delta$ above $\beta^*$. Pick $\xi\in Z_{\beta^*}\setminus\bigcup\{ Z_{\tr(\beta^*,\beta)(n)}\mid \beta\in x_{\delta^*}, n<\rho_2(\beta^*,\beta)\}$,
and $\tau<\mu$ such that $\psi_{\beta^*}(\tau)=\alpha^*$.
Let $\zeta:=\pi^{-1}(\tau,\xi)$.
As $\beta^*\in\Gamma$, it follows from Fact~\ref{fact1} that $\lambda(\beta^*,\delta^*)<\beta^*$.
Set $\eta^*:=\max\{\eta,\lambda(\beta^*,\delta^*)\}$. 
Let $\mathcal A':=\{x_\delta\mid \delta\in\Delta\setminus\eta^*\}$.
As $\mathcal A'$ and $o$ are in $\mathcal M_{\beta^*}$
there exists some $\gamma<\beta^*$ such that, for every $b\in[\mu^+\setminus\gamma]^{\sigma}$,
for some $a\in\mathcal A'\cap\mathcal P(\gamma)$, $o[a\times b]=\{\zeta\}$.
In particular, we may find $\delta\in\mathcal M_{\beta^*}\cap \Delta\setminus\eta^*$ such that $o[x_\delta\times x_{\delta^*}]=\{\zeta\}$.
Denote $a:=x_\delta$ and $b:=x_{\delta^*}$, so that $(a,b)\in[\mathcal A]^2$.
Let $(\alpha,\beta)\in a\times b$ be arbitrary.
Evidently, $$\max\{\lambda(\delta^*,\beta),\lambda(\beta^*,\delta^*)\}\le\eta^*\le\delta<\alpha<\beta^*<\delta^*<\beta.$$
So, Fact~\ref{fact2} implies that $\tr(\alpha,\beta)=\tr(\delta^*,\beta)\conc\tr(\beta^*,\delta^*)\conc\tr(\alpha,\beta^*)$.
Now, by the choice of $\xi$, we have $\Tr(\alpha,\beta)(\xi^{\alpha,\beta})=\beta^*$.
So, as $(\tau,\xi):=\pi(o(\alpha,\beta))$
and $\psi_{\beta^*}(\tau)=\alpha^*$, it follows that $\mathbf (\alpha,\beta)=(\alpha^*,\beta^*)$, as sought.
\end{proof}
This completes the proof.
\end{proof}

\begin{cor}\label{cor43} \begin{enumerate}
\item  For every integer $n\ge2$, $\pl_2(\aleph_1,\aleph_1,n)$ holds;
\item  If $\pr_1(\aleph_1,\aleph_1,\aleph_0,\aleph_0)$ holds, then so does $\pl_2(\aleph_1,\aleph_1,\aleph_0)$;
\end{enumerate}
\end{cor}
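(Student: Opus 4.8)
The strategy is to read off both clauses as immediate consequences of Theorem~\ref{thm42a}, after lining up the parameters and checking the arithmetic hypotheses. In both cases we take $\mu:=\aleph_0$, so that $\mu^+=\aleph_1$, and we take $\Gamma:=\aleph_1$ itself as the target stationary set, noting that $\aleph_1$ is trivially a nonreflecting stationary subset of $\aleph_1$ (there is no $\alpha\in E^{\aleph_1}_{>\omega}$ at all, since every $\alpha<\aleph_1$ has countable cofinality). Theorem~\ref{thm42a} then tells us that $\pr_1^+(\aleph_1,\aleph_0,\chi)$ implies $\pl_2(\aleph_1,\aleph_1,\chi)$ for the relevant $\chi$. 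Via Lemma~\ref{Lemma35}, it suffices to produce a witness to $\pr_1(\aleph_1,\aleph_1,\aleph_0,\chi+\chi)$.

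For Clause~(1), I would fix an integer $n\ge2$ and set $\chi:=n$, so $\chi+\chi$ is again a finite cardinal. The plan is to invoke the fact that $\pr_1(\aleph_1,\aleph_1,\aleph_0,\aleph_0)$ is a theorem of $\zfc$ --- this is precisely Todorcevic's classical result that walks on ordinals yield $\aleph_1\nrightarrow[\aleph_1]^2_{\aleph_0}$ in its strong square-bracket form, available since $\aleph_1$ is a successor of a regular. Since $\chi+\chi<\aleph_0$, the instance $\pr_1(\aleph_1,\aleph_1,\aleph_0,\aleph_0)$ immediately yields $\pr_1(\aleph_1,\aleph_1,\aleph_0,\chi+\chi)$, whence $\pr_1^+(\aleph_1,\aleph_0,n)$ by Lemma~\ref{Lemma35}, and finally $\pl_2(\aleph_1,\aleph_1,n)$ by Theorem~\ref{thm42a}.

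For Clause~(2), I would take $\chi:=\aleph_0$, so the hypothesis $\pr_1(\aleph_1,\aleph_1,\aleph_0,\aleph_0)$ is assumed outright. Here I must verify that $\chi+\chi=\aleph_0+\aleph_0=\aleph_0$, so the hypothesis is exactly what Lemma~\ref{Lemma35} requires to deliver $\pr_1^+(\aleph_1,\aleph_0,\aleph_0)$; Theorem~\ref{thm42a} (applied with $\mu=\aleph_0$, $\chi=\aleph_0$, $\Gamma=\aleph_1$) then yields $\pl_2(\aleph_1,\aleph_1,\aleph_0)$.

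The only point demanding genuine care --- and the step I would flag as the main obstacle --- is the verification that the parameter conventions mesh: Theorem~\ref{thm42a} requires $\chi\le\mu$, which for Clause~(2) is the boundary case $\aleph_0\le\aleph_0$, and requires $\Gamma$ to be a \emph{nonreflecting} stationary subset of $\mu^+$. The latter is where one must not slip: one should confirm via the definition of nonreflecting (no $\alpha\in E^{\mu^+}_{>\omega}$ with $\Gamma\cap\alpha$ stationary) that $\Gamma=\aleph_1$ qualifies, which it does vacuously for $\mu=\aleph_0$ because $E^{\aleph_1}_{>\omega}=\emptyset$. Everything else is a matter of citing the $\zfc$-theorem $\pr_1(\aleph_1,\aleph_1,\aleph_0,\aleph_0)$ for Clause~(1) and tracking that cardinal addition on finite or countable arguments stays within the bound $\chi+\chi$.
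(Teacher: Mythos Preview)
Your treatment of Clause~(2) is correct and matches the paper exactly: Lemma~\ref{Lemma35} with $\chi=\aleph_0$ (so $\chi+\chi=\aleph_0$) and Theorem~\ref{thm42a} with $\mu=\aleph_0$, $\Gamma=\aleph_1$.

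Your argument for Clause~(1), however, has a genuine gap. You claim that $\pr_1(\aleph_1,\aleph_1,\aleph_0,\aleph_0)$ is a theorem of $\zfc$, identifying it with Todorcevic's $\aleph_1\nrightarrow[\aleph_1]^2_{\aleph_0}$. These are not the same principle: Todorcevic's negative square-bracket relation corresponds to $\pr_1(\aleph_1,\aleph_1,\aleph_0,2)$, i.e., the family $\mathcal A$ consists of singletons. The principle $\pr_1(\aleph_1,\aleph_1,\aleph_0,\aleph_0)$ demands monochromatic rectangles $a\times b$ for $a,b$ of arbitrary prescribed finite size, which is strictly stronger and is \emph{not} known to follow from $\zfc$. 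Indeed, were it a $\zfc$ theorem, Clause~(2) would be stated unconditionally.

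The paper repairs this by citing the result of Peng and Wu \cite{MR3742590} that $\pr_1(\aleph_1,\aleph_1,\aleph_1,m)$ holds in $\zfc$ for every positive integer $m$. Taking $m:=n+n$ (still finite) yields $\pr_1(\aleph_1,\aleph_1,\aleph_1,n+n)$, hence $\pr_1(\aleph_1,\aleph_1,\aleph_0,n+n)$, and then Lemma~\ref{Lemma35} and Theorem~\ref{thm42a} apply as you described. So the overall architecture of your proof is right; you just need to replace the incorrect citation with the Peng--Wu theorem.
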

\begin{proof} 
(1) By Lemma~\ref{Lemma35}, Theorem~\ref{thm42a}, and the fact that, by Peng and Wu \cite{MR3742590}, $\pr_1(\aleph_1,\aleph_1,\aleph_1,n+n)$ holds for every positive integer $n$.

(2) By Lemma~\ref{Lemma35} and Theorem~\ref{thm42a}.
\end{proof}

\begin{thm}\label{thm45}
Suppose that $\chi\le\kappa$,
and $\Gamma\s \kappa$ is a nonreflecting stationary set
such that $\Gamma\cap E^\kappa_{>\sigma}$ stationary for every $\sigma<\chi$.

If $\pr_1^+(\kappa,\kappa,\chi)$ holds, then so does $\pl_2(\kappa,\Gamma,\chi)$.
\end{thm}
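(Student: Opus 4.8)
The plan is to reproduce the proof of Theorem~\ref{thm42a} almost verbatim, now drawing both the coloring and the coding data from $\kappa$ instead of from $\mu$, and replacing the automatic stationary set $E^{\mu^+}_\mu$ by $\Gamma\cap E^\kappa_{>\sigma}$. Fix a witness $o:[\kappa]^2\rightarrow\kappa$ to $\pr_1^+(\kappa,\kappa,\chi)$ and, since $\Gamma$ is nonreflecting, a $C$-sequence $\vec C=\langle C_\alpha\mid\alpha<\kappa\rangle$ that avoids $\Gamma$; we walk along $\vec C$. Fix a bijection $\pi:\kappa\leftrightarrow\kappa\times\kappa$, surjections $\psi_\beta:\kappa\rightarrow\beta+1$ for $\beta<\kappa$, and a family $\langle Z_\epsilon\mid\epsilon<\kappa\rangle$ of subsets of $\kappa$; then, as in Theorem~\ref{thm42a}, put $\xi^{\alpha,\beta}:=\min\{n<\omega\mid \xi\in Z_{\Tr(\alpha,\beta)(n)}\text{ or }n=\rho_2(\alpha,\beta)+1\}$ and declare $\mathbf t(\alpha,\beta):=(\alpha^*,\beta^*)$ whenever $(\tau,\xi):=\pi(o(\alpha,\beta))$, $\beta^*:=\Tr(\alpha,\beta)(\xi^{\alpha,\beta})$ is $>\alpha$, and $\alpha^*:=\psi_{\beta^*}(\tau)$ is $<\alpha$, and $\mathbf t(\alpha,\beta):=(\alpha,\beta)$ otherwise.

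The pinning-down claim now asserts that there are $\eta<\kappa$ and a stationary $\Delta\s\Gamma\cap E^\kappa_{>\sigma}$ with $\lambda(\delta,\beta)\le\eta$ for all $\delta\in\Delta$ and $\beta\in x_\delta$, where $x_\delta\in\mathcal A$ is chosen with $\min(x_\delta)>\delta$. This is exactly where the two clauses of the hypothesis combine: for $\delta\in\Gamma$, the avoidance of $\Gamma$ by $\vec C$ gives $\delta\notin\bigcup_{\alpha}\acc(C_\alpha)$, so Fact~\ref{fact1} yields $\lambda(\delta,\beta)<\delta$; and since $\delta\in E^\kappa_{>\sigma}$ we have $|x_\delta|=\sigma<\cf(\delta)$, whence $\sup_{\beta\in x_\delta}\lambda(\delta,\beta)<\delta$, so Fodor's lemma on the stationary set $\Gamma\cap E^\kappa_{>\sigma}$ delivers the uniform value $\eta$. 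Thus $\Gamma\cap E^\kappa_{>\sigma}$ assumes the role that $E^{\mu^+}_\mu$ played before. I would then let $D$ be the club of $\delta<\kappa$ carrying an elementary submodel $\mathcal M_\delta\prec\mathcal H_{\kappa^+}$ with $\mathcal M_\delta\cap\kappa=\delta$ and containing $o,\pi,\eta$, the sequence $\langle x_\gamma\mid\gamma\in\Delta\rangle$, and the coding family. Given a target $(\alpha^*,\beta^*)\in[\Gamma\cap D]^2$, I would pick $\delta^*\in\Delta$ above $\beta^*$, set $b:=x_{\delta^*}$, select $\xi\in Z_{\beta^*}$ avoiding the relevant sets $Z_{\tr(\beta^*,\beta)(n)}$ ($\beta\in x_{\delta^*}$, $n<\rho_2(\beta^*,\beta)$), and $\tau$ with $\psi_{\beta^*}(\tau)=\alpha^*$, and put $\zeta:=\pi^{-1}(\tau,\xi)$ and $\eta^*:=\max\{\eta,\lambda(\beta^*,\delta^*),\alpha^*\}$ (adding $\alpha^*$ to the threshold so that every $\alpha$ in the eventual $a$ satisfies $\alpha^*<\alpha$). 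The verification that $\mathbf t[a\times b]=\{(\alpha^*,\beta^*)\}$ for $a=x_\delta$ with $\eta^*\le\delta<\beta^*$ is then word-for-word that of Theorem~\ref{thm42a}: Facts~\ref{fact2} and~\ref{fact3} give $\tr(\alpha,\beta)=\tr(\delta^*,\beta)\conc\tr(\beta^*,\delta^*)\conc\tr(\alpha,\beta^*)$, and the choice of $\xi$ forces $\Tr(\alpha,\beta)(\xi^{\alpha,\beta})=\beta^*$, whence $\mathbf t(\alpha,\beta)=(\alpha^*,\beta^*)$.

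The one genuinely new difficulty --- and what I expect to be the heart of the proof --- is the reflection step that produces the witness $a=x_\delta$ \emph{below} $\beta^*$. In Theorem~\ref{thm42a} the coloring is $\mu$-valued and the family lives in $[\mu]^\mu$, so the code $\zeta=\pi^{-1}(\tau,\xi)$ automatically satisfies $\zeta<\mu<\beta^*$; being an element of $\mathcal M_{\beta^*}$, elementarity converts $\pr_1^+$ into a bound $\gamma<\beta^*$ together with a witness $a\s\gamma$, and then $x_\delta\s\beta^*$. Here $o$ is $\kappa$-valued, and if $\zeta\ge\beta^*$ then $\zeta\notin\mathcal M_{\beta^*}$ and the reflection collapses; moreover, by Fodor's lemma, one cannot hope to choose, for stationarily many targets $\beta^*$, a selector $\xi$ lying below $\beta^*$ out of a family of \emph{pairwise disjoint} $Z_\epsilon$'s. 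The resolution I would pursue is to localize the coding family below its index: take each $Z_\epsilon$ to be a subset of $\epsilon$ of size $|\epsilon|$ with $|Z_\epsilon\cap Z_{\epsilon'}|<\min(|\epsilon|,|\epsilon'|)$ for $\epsilon\ne\epsilon'$. Then $\xi\in Z_{\beta^*}\s\beta^*$ is automatically below $\beta^*$; arranging $\psi_{\beta^*}\restriction\beta^*$ to already surject onto $\beta^*+1$ we may take $\tau<\beta^*$, and as $\beta^*\in D$ is closed under $\pi$ this yields $\zeta<\beta^*$, so that $\zeta\in\mathcal M_{\beta^*}$ and the Theorem~\ref{thm42a} reflection goes through unchanged. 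The delicate point, where I would concentrate the remaining work, is to construct such a localized almost disjoint family and to check that the selector survives the removal of the walk-steps' sets: the offending indices are all $>\beta^*$ and number at most $\sigma$, each contributing fewer than $|\beta^*|$ elements of $Z_{\beta^*}$, so after discarding the small initial segment of $D$ below $\sigma^+$ (making $|\beta^*|>\sigma$), a counting argument should show that $Z_{\beta^*}$ remains large after the removal and a suitable $\xi$ can still be found.
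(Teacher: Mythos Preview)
You have correctly diagnosed the heart of the difficulty: the code $\zeta$ must land in $\mathcal M_{\beta^*}$, and the almost-disjoint-family machinery of Theorem~\ref{thm42a} does not obviously export because the selector $\xi$ depends on $\beta^*$. However, your proposed fix --- a localized family $Z_\epsilon\subseteq\epsilon$ with $|Z_\epsilon|=|\epsilon|$ and $|Z_\epsilon\cap Z_{\epsilon'}|<\min(|\epsilon|,|\epsilon'|)$ --- has a genuine gap beyond the unfinished construction. Your counting argument asserts that removing from $Z_{\beta^*}$ its intersections with at most $\sigma\cdot\omega$ many $Z_\gamma$ (each of size ${<}|\beta^*|$) leaves $|\beta^*|$ many points; but this requires $\cf(|\beta^*|)>\sigma$, not merely $|\beta^*|>\sigma$. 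Since $\pl_2$ demands a club $D$ such that \emph{every} $\beta^*\in\Gamma\cap D$ works, you would need $\{\beta^*<\kappa:\cf(|\beta^*|)>\sigma\}$ to contain a club relative to $\Gamma$; for $\kappa$ inaccessible this fails in general (singular cardinals of cofinality $\omega$ are stationary, and $\Gamma$ may meet the intervals above them). Discarding an initial segment below $\sigma^+$ does not help.

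The paper circumvents the issue by abandoning almost disjoint families. It first invokes Lemma~\ref{lemma45}: the hypothesis on $\Gamma$ yields a $C$-sequence $\vec C$ avoiding $\Gamma$ with $\chi_1(\vec C)\ge\chi$. This characteristic provides, for the given family $\mathcal A$, a \emph{single fixed} ordinal $\eta$ and a stationary $\Delta$ such that for every $\delta\in\Delta$ there is $x_\delta\in\mathcal A$ with $\lambda(\delta,\beta)=\eta$ and $\rho_2(\delta,\beta)=\eta_{\delta,\beta}$ for all $\beta\in x_\delta$ (in the notation of Definition~\ref{etanotation}, where the role of the $Z_\epsilon$'s is played by the clubs $C_\epsilon$ themselves). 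The transformation encodes $(n,\alpha^*,\eta)$ via a bijection $\pi:\kappa\leftrightarrow\omega\times\kappa\times\kappa$, and decodes $\beta^*$ as $\Tr(\alpha,\beta)(\eta_{\alpha,\beta}+n)$: first $\eta$ locates $\delta^*$ in the walk (via Lemma~\ref{Lemma212}), and then $n:=\rho_2(\beta^*,\delta^*)$ additional steps reach $\beta^*$. Since $n<\omega$, $\alpha^*<\beta^*$, and the single ordinal $\eta$ is placed in $\mathcal M_{\beta^*}$ at the outset, $\zeta=\pi^{-1}(n,\alpha^*,\eta)\in\mathcal M_{\beta^*}$ is automatic --- no cardinality hypothesis on $\beta^*$, and no auxiliary family to build.
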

\begin{proof} 
By Lemma~\ref{lemma45}, we may fix a $C$-sequence $\vec C=\langle C_\alpha\mid\alpha<\kappa\rangle$ 
that avoids $\Gamma$ and satisfying $\chi_1(\vec C)\ge\chi$.
Fix a coloring $o:[\kappa]^2\rightarrow\kappa$, witnessing $\pr_1^+(\kappa,\kappa,\chi)$.
Fix a bijection $\pi:\kappa\leftrightarrow\omega\times\kappa\times\kappa$.
Define $\mathbf t: [\kappa]^2\rightarrow[\kappa]^2$, as follows.
Let $\mathbf t(\alpha,\beta):=(\alpha^*,\beta^*)$ provided that the following hold:
\begin{itemize}
\item $(n,\tau,\eta):=\pi(o(\alpha,\beta))$;
\item $\beta^*:=\Tr(\alpha,\beta)(\eta_{\alpha,\beta}+n)$ is $>\alpha$;
\item $\alpha^*:=\tau$ is $<\alpha$.
\end{itemize}
Otherwise, just let $\mathbf t(\alpha,\beta):=(\alpha,\beta)$.

To verify that $\mathbf t$ witnesses $\pl_2(\kappa,\Gamma,\chi)$,
suppose that we are given $\sigma<\chi$ and a pairwise disjoint subfamily $\mathcal A\s[\kappa]^\sigma$ of size $\kappa$.
As $\sigma<\chi_1(\vec C)$,
we may fix $\eta<\kappa$ and a sequence $\langle x_\delta\mid\delta\in\Delta\rangle$ such that $\Delta$ is a stationary subset of $\kappa$,
and, for every $\delta\in\Delta$, $x_\delta\in\mathcal A$ with $\min(x_\delta)>\delta$,
and, for every $\beta\in x_\delta$, $\lambda(\delta,\beta)=\eta$ and $\rho_2(\delta,\beta)=\eta_{\delta,\beta}$.

Let $D\s\kappa$ be the club of all $\delta<\kappa$ for which
there exists an elementary submodel $\mathcal M_\delta\prec\mathcal H_{\mu^{++}}$ with $\mathcal M_\delta\cap\kappa=\delta$
such that $\{\langle x_\gamma\mid \gamma\in\Delta\rangle,o,\pi,\eta\}\in\mathcal M_\delta$.

\begin{claim} Let $(\alpha^*,\beta^*)\in [D\cap\Gamma]^2$. Then there exists $(a,b)\in\mathcal[A]^2$ such that $\mathbf t[a\times b]=\{(\alpha^*,\beta^*)\}$.
\end{claim}
\begin{proof} Fix $\delta^*\in\Delta$ above $\beta^*$. Let $n:=\rho_2(\beta^*,\delta^*)$.
Let $\zeta:=\pi^{-1}(n,\alpha^*,\eta)$.
As $\beta^*\in\Gamma$, it follows from Fact~\ref{fact1} that $\lambda(\beta^*,\delta^*)<\beta^*$.
Set $\eta^*:=\max\{\eta,\lambda(\beta^*,\delta^*)\}$. 
Let $\mathcal A':=\{x_\delta\mid \delta\in\Delta\setminus\eta^*\}$.
As $\mathcal A'$ and $o$ are in $\mathcal M_{\beta^*}$
there exists some $\gamma<\beta^*$ such that, for every $b\in[\kappa\setminus\gamma]^{\sigma}$,
for some $a\in\mathcal A'\cap\mathcal P(\gamma)$, $o[a\times b]=\{\zeta\}$.
In particular, we may find $\delta\in\mathcal M_{\beta^*}\cap \Delta\setminus\eta^*$ such that $o[x_\delta\times x_{\delta^*}]=\{\zeta\}$.
Denote $a:=x_\delta$ and $b:=x_{\delta^*}$, so that $(a,b)\in[\mathcal A]^2$.
Let $(\alpha,\beta)\in a\times b$ be arbitrary.
Evidently, $$\max\{\lambda(\delta^*,\beta),\lambda(\beta^*,\delta^*)\}\le\eta^*\le\delta<\alpha<\beta^*<\delta^*<\beta.$$
So, Fact~\ref{fact2} implies that $\tr(\alpha,\beta)=\tr(\delta^*,\beta)\conc\tr(\beta^*,\delta^*)\conc\tr(\alpha,\beta^*)$.
Now, by the choice of $\eta$, we have $\Tr(\alpha,\beta)(\eta_{\alpha,\beta})=\delta^*$ and $\Tr(\alpha,\beta)(\eta_{\alpha,\beta}+n)=\beta^*$.
So, as $(n,\alpha^*,\eta):=\pi(o(\alpha,\beta))$,
it altogether follows that $\mathbf (\alpha,\beta)=(\alpha^*,\beta^*)$, as sought.
\end{proof}
This completes the proof.
\end{proof}

\begin{cor}
Suppose that $\chi<\chi^+<\kappa$ are infinite regular cardinals,
and $\Gamma\s\kappa$ is a nonreflecting stationary set
such that $\Gamma\cap E^\kappa_{\ge\chi}$ stationary.

Then $\pl_2(\kappa,\Gamma,\chi)$ holds.
\end{cor}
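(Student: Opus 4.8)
The plan is to obtain $\pl_2(\kappa,\Gamma,\chi)$ as a single application of Theorem~\ref{thm45}. That theorem has two hypotheses to arrange: that $\Gamma\cap E^\kappa_{>\sigma}$ is stationary for every $\sigma<\chi$, and that $\pr_1^+(\kappa,\kappa,\chi)$ holds. The first is immediate from the assumption that $\Gamma\cap E^\kappa_{\ge\chi}$ is stationary: for every $\sigma<\chi$ we have $E^\kappa_{\ge\chi}\s E^\kappa_{>\sigma}$ (as $\cf(\alpha)\ge\chi>\sigma$ forces $\cf(\alpha)>\sigma$), so $\Gamma\cap E^\kappa_{>\sigma}\supseteq\Gamma\cap E^\kappa_{\ge\chi}$ is stationary. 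For the second, I would first secure the coloring principle $\pr_1(\kappa,\kappa,\kappa,\chi)$ and then feed it into Lemma~\ref{Lemma35}. Crucially, since $\chi$ is an \emph{infinite} cardinal, $\chi+\chi=\chi$, so Lemma~\ref{Lemma35} applied with $\theta:=\kappa$ converts $\pr_1(\kappa,\kappa,\kappa,\chi)$ directly into $\pr_1^+(\kappa,\kappa,\chi)$, with no need for a larger last parameter (this is precisely what distinguishes the present infinite-$\chi$ situation from the finite case treated in Corollary~\ref{cor43}).

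It therefore remains to produce $\pr_1(\kappa,\kappa,\kappa,\chi)$, and this is where the hypotheses on $\Gamma$ do the real work. Since $\Gamma$ is a nonreflecting stationary set, there is a $C$-sequence $\vec C$ over $\kappa$ avoiding $\Gamma$; and since $\Gamma\cap E^\kappa_{\ge\chi}$ is stationary, at each $\gamma\in\Gamma\cap E^\kappa_{\ge\chi}$ the walk absorbs every set of order type $<\chi$, exactly as in Fact~\ref{fact1} and the argument of Lemma~\ref{lemma45}, using that $|b|<\chi\le\cf(\gamma)$. Walking along such a $\vec C$ yields a coloring witnessing $\pr_1(\kappa,\kappa,\kappa,\chi)$; this is the known ZFC phenomenon that a nonreflecting stationary subset of $E^\kappa_{\ge\chi}$ forces $\pr_1(\kappa,\kappa,\kappa,\chi)$ (cf.\ \cite{paper13}), and it is here that the hypotheses $\chi<\chi^+<\kappa$, $\chi$ regular, are genuinely consumed, guaranteeing that the target instance is non-degenerate and that $\kappa$ is far from weakly compact.

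The three inputs then chain together: from $\Gamma$ one gets $\pr_1(\kappa,\kappa,\kappa,\chi)$, Lemma~\ref{Lemma35} upgrades this to $\pr_1^+(\kappa,\kappa,\chi)$, and finally Theorem~\ref{thm45}---whose cofinality hypothesis was verified in the first paragraph, with the \emph{same} $\Gamma$ serving as the target set---delivers $\pl_2(\kappa,\Gamma,\chi)$. I expect the substantive step to be the passage from the bare nonreflecting stationary set to the full-strength coloring $\pr_1(\kappa,\kappa,\kappa,\chi)$; once that is in hand, the remainder is the bookkeeping of matching parameters and invoking the two results of this section.
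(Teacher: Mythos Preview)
Your proposal is correct and follows essentially the same route as the paper: derive $\pr_1(\kappa,\kappa,\kappa,\chi)$ from the nonreflecting stationary subset of $E^\kappa_{\ge\chi}$, upgrade to $\pr_1^+(\kappa,\kappa,\chi)$ via Lemma~\ref{Lemma35} (using $\chi+\chi=\chi$), and then invoke Theorem~\ref{thm45}. The only correction is the citation for the coloring step: the relevant result is the main theorem of \cite{paper15} (``Complicated colorings''), not \cite{paper13}; the latter concerns rectangle-to-square transformations rather than the existence of $\pr_1(\kappa,\kappa,\kappa,\chi)$ from a nonreflecting stationary set.
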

\begin{proof}
By the main result of \cite{paper15},
the hypothesis implies that $\pr_1(\kappa,\kappa,\kappa,\chi)$ holds.
So by Lemma~\ref{Lemma35}, $\pr_1^+(\kappa,\kappa,\chi)$ holds, as well.
In addition, by Lemma~\ref{lemma45}, we may find a $C$-sequence $\vec C$ avoiding $\Gamma$ with $\chi_1(\vec C)\ge\chi$.
Now, appeal to Theorem~\ref{thm45}.
\end{proof}

\begin{thm}\label{thm47} Suppose that $\pr_1^+(\kappa,\kappa,\chi)$ holds for some cardinal $\chi\le\kappa$,
and that $\Gamma\s \kappa$ is a stationary set.
\begin{enumerate}
\item If there is a $C$-sequence $\vec C$ over $\kappa$ with $\chi_1(\vec C)\ge\chi$, 
then $\pl_1(\kappa,\kappa,\chi)$ holds;
\item If there is a $C$-sequence $\vec C$ over $\kappa$ 
with $\chi_2(\vec C,\Gamma)\ge\chi$, then $\pl_2(\kappa,\Gamma,\chi)$ holds.
\end{enumerate}
\end{thm}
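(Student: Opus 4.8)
The plan is to follow the blueprint of Theorems~\ref{thm42a} and~\ref{thm45}: walk along the given $C$-sequence $\vec C$, read off a node of the walk as the upper target $\beta^*$, and use a coloring $o$ witnessing $\pr_1^+(\kappa,\kappa,\chi)$ to carry the remaining data. Fix such an $o$, and let $\pi$ be a G\"odel-type bijection---for Clause~(1), $\pi:\kappa\leftrightarrow\kappa\times\kappa\times\kappa$, and for Clause~(2), $\pi:\kappa\leftrightarrow\kappa\times\kappa$---chosen so that every coordinate of $\pi(\zeta)$ is $\le\zeta$. For Clause~(1), I define $\mathbf t:[\kappa]^2\rightarrow[\kappa]^3$ by $\mathbf t(\alpha,\beta):=(\tau^*,\alpha^*,\beta^*)$, where $(\tau^*,\alpha^*,\eta):=\pi(o(\alpha,\beta))$ and $\beta^*:=\Tr(\alpha,\beta)(\eta_{\alpha,\beta})$, provided $\tau^*\le\alpha^*<\alpha<\beta^*\le\beta$, and $\mathbf t(\alpha,\beta):=(0,\alpha,\beta)$ otherwise. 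For Clause~(2), I define $\mathbf t:[\kappa]^2\rightarrow[\kappa]^2$ the same way, decoding $(\alpha^*,\eta):=\pi(o(\alpha,\beta))$ and outputting $(\alpha^*,\beta^*)$ when $\alpha^*<\alpha<\beta^*\le\beta$ (and $(\alpha,\beta)$ otherwise). Since $o(\alpha,\beta)<\alpha$ by Clause~(1) of $\pr_1^+$, the G\"odel bound makes each decoded coordinate $<\alpha$; this yields the ordering requirement of the target principle for free and, decisively, gives $\eta<\alpha$.

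For the verification of Clause~(1), fix $\sigma<\chi$ and a pairwise disjoint $\mathcal A\s[\kappa]^\sigma$ of size $\kappa$. As $\sigma<\chi_1(\vec C)$, fix a stationary $\Delta\s\kappa$ and an ordinal $\eta<\kappa$ witnessing this for $\mathcal A$. Let $S:=\Delta\cap E$, where $E$ is the club of $\beta^*<\kappa$ admitting $\mathcal M_{\beta^*}\prec\mathcal H_{\kappa^+}$ with $\mathcal M_{\beta^*}\cap\kappa=\beta^*$ and $\{\mathcal A,o,\pi,\eta\}\in\mathcal M_{\beta^*}$. Given $(\alpha^*,\beta^*)\in[S]^2$ and $\tau^*<\alpha^*$, set $\zeta:=\pi^{-1}(\tau^*,\alpha^*,\eta)$; as $\tau^*,\alpha^*,\eta<\beta^*$ lie in $\mathcal M_{\beta^*}$, so does $\zeta$, whence $\zeta<\beta^*$. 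Since $\beta^*\in\Delta$, fix a good $b\in\mathcal A$ for $\beta^*$ with $\min(b)>\beta^*$. Applying Clause~(2) of $\pr_1^+$ to $\mathcal A$ and $\zeta$ inside $\mathcal M_{\beta^*}$ produces $\gamma<\beta^*$; feeding in this $b$ (legitimate as $\min(b)>\beta^*>\gamma$) yields $a\in\mathcal A$ with $a\s\gamma$ and $o[a\times b]=\{\zeta\}$. For every $(\alpha,\beta)\in a\times b$ we then have $\eta<\alpha<\beta^*<\beta$ with $\lambda(\beta^*,\beta)=\eta<\alpha$, so Lemma~\ref{Lemma212} (resting on Fact~\ref{fact2}) places $\beta^*$ at step $\eta_{\alpha,\beta}$ of the walk, i.e.\ $\Tr(\alpha,\beta)(\eta_{\alpha,\beta})=\beta^*$. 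As $o(\alpha,\beta)=\zeta$ decodes to $(\tau^*,\alpha^*,\eta)$ and $\alpha^*<\alpha$, we obtain $\mathbf t(\alpha,\beta)=(\tau^*,\alpha^*,\beta^*)$, as required.

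Clause~(2) is identical in shape, now using $\chi_2(\vec C,\Gamma)\ge\chi$: for $\mathcal A$ this furnishes a club $C_0$ so that every $\delta\in\Gamma\cap C_0$ is good, with a witnessing parameter $\eta_\delta<\delta$. Put $D:=C_0\cap E$; for $(\alpha^*,\beta^*)\in[\Gamma\cap D]^2$, pick a good $b\in\mathcal A$ for $\beta^*$ with $\min(b)>\beta^*$, set $\zeta:=\pi^{-1}(\alpha^*,\eta_{\beta^*})$, extract $a\s\gamma<\beta^*$ with $o[a\times b]=\{\zeta\}$ exactly as above, and conclude $\mathbf t(\alpha,\beta)=(\alpha^*,\beta^*)$ on $a\times b$. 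The conceptual advantage over Theorem~\ref{thm45} is that the good nodes supplied by $\chi_2(\vec C,\Gamma)$ already lie in $\Gamma$: here $\beta^*\in\Gamma$ is itself a node of the walk, so we neither require $\vec C$ to avoid $\Gamma$ nor route through a separate base node and an appeal to Fact~\ref{fact1} at $\beta^*$---the very steps that the nonreflection hypothesis of Theorem~\ref{thm45} was there to support.

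The step I expect to be most delicate is guaranteeing that the ordinal $\eta$ read off from $o(\alpha,\beta)$ is exactly the walk-parameter of the intended node $\beta^*$, so that Lemma~\ref{Lemma212} fires and pins $\beta^*$ to position $\eta_{\alpha,\beta}$ of $\tr(\alpha,\beta)$; this is what dictates encoding $\eta$ (resp.\ $\eta_{\beta^*}$) into $\zeta$ and then using $\pr_1^+$ to force $o$ to take the value $\zeta$ across the whole rectangle $a\times b$. The G\"odel pairing does double duty here, keeping the decoded coordinates below $o(\alpha,\beta)<\alpha$---which simultaneously secures $\alpha^*<\alpha$ and the hypothesis $\eta<\alpha$ of Lemma~\ref{Lemma212}---while elementarity of $\mathcal M_{\beta^*}$ confines both $\gamma$ and the witnessing $a$ below $\beta^*$, so that $a<b$ and hence $(a,b)\in[\mathcal A]^2$.
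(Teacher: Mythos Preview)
Your proof is correct and follows essentially the same approach as the paper: walk along $\vec C$, encode $(\alpha^*,\eta)$ (resp.\ $(\tau^*,\alpha^*,\eta)$) into a color $\zeta$, use $\pr_1^+$ inside an elementary submodel $\mathcal M_{\beta^*}$ to produce $a\subseteq\gamma<\beta^*$ with $o[a\times b]=\{\zeta\}$, and invoke Lemma~\ref{Lemma212} to land at $\beta^*$. Your choice of a G\"odel-type $\pi$ with coordinates bounded by the input is a neat simplification: it gives $\eta<\alpha$ automatically, whereas the paper (not assuming any monotonicity of $\pi$) instead restricts the family to $\{a\in\mathcal A_{\alpha^*}\mid\min(a)>\eta\}$ before applying $\pr_1^+$; otherwise the arguments coincide.
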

\begin{proof} We focus on the proof of Clause~(2). The proof of Clause~(1) is very similar.
Fix a coloring $o:[\kappa]^2\rightarrow\kappa$ witnessing $\pr_1^+(\kappa,\kappa,\chi)$.
Suppose that $\vec C=\langle C_\alpha\mid\alpha<\kappa\rangle$ is a $C$-sequence over $\kappa$
with $\chi_2(\vec C,\Gamma)\ge\chi$, and let us walk along $\vec C$.
Fix a bijection $\pi:\kappa\leftrightarrow\kappa\times\kappa$.
Define $\mathbf t: [\kappa]^2\rightarrow[\kappa]^2$, as follows.
Let $\mathbf t(\alpha,\beta):=(\alpha^*,\beta^*)$ provided that the following hold:
\begin{itemize}
\item $(\tau,\eta):=\pi(o(\alpha,\beta))$;
\item $\beta^*:=\Tr(\alpha,\beta)(\eta_{\alpha,\beta})$ is $>\alpha$;
\item $\alpha^*:=\tau$ is $<\alpha$.
\end{itemize}
Otherwise, just let $\mathbf t(\alpha,\beta):=(\alpha,\beta)$.

To verify that $\mathbf t$ is as sought,
suppose that we are given $\sigma<\chi$ and a pairwise disjoint subfamily $\mathcal A\s[\kappa]^\sigma$ of size $\kappa$.
As $\sigma<\chi_2(\vec C,\Gamma)$, fix $\Delta\s\Gamma$ for which $\Gamma\setminus\Delta$ is nonstationary
such that, 
for every $\delta\in\Gamma\cap C$,
there exist an ordinal $\eta_\delta<\delta$ and a subfamily $\mathcal A_\delta\in[\mathcal A]^\kappa$
such that,
for all $b\in\mathcal A_\delta$ and $\beta\in b$,
$\lambda(\delta,\beta)=\eta_\delta$ and $\rho_2(\delta,\beta)=(\eta_\delta)_{\delta,\beta}$.

Next, let $D$ be the club of all $\delta<\kappa$ for which
there exists an elementary submodel $\mathcal M_\delta\prec\mathcal H_{\mu^{++}}$ with $\mathcal M_\delta\cap\kappa=\delta$
such that $\{\langle \mathcal A_\gamma\mid \gamma\in\Delta\rangle,o,\pi\}\in\mathcal M_\delta$.

\begin{claim} Let $(\alpha^*,\beta^*)\in [\Delta\cap D]^2$. Then there exists $(a,b)\in\mathcal[A]^2$ such that $\mathbf t[a\times b]=\{(\alpha^*,\beta^*)\}$.
\end{claim}
\begin{proof} Denote $\eta:=\eta_{\beta^*}$. 
Evidently, $\{ a\in \mathcal A_{\alpha^*}\mid \min(a)>\eta\}$ and  $\zeta:=\pi^{-1}(\alpha^*,\eta)$ are in $\mathcal M_{\beta^*}$.
As $o\in\mathcal M_{\beta^*}$, it follows that there exists $\gamma<\beta^*$
such that, for every $b\in[\kappa\setminus\gamma]^{\sigma}$,
for some $a\in\mathcal A_{\alpha^*}\cap\mathcal P(\gamma)$ with $\min(a)>\eta$, $o[a\times b]=\{\zeta\}$.

Fix an arbitrary $b\in\mathcal A_{\beta^*}$ with $\min(b)>\beta^*$,
and then pick $a\in\mathcal A_{\alpha^*}\cap\mathcal P(\gamma)$ with $\min(a)>\eta$ such that $o[a\times b]=\{\zeta\}$.
Now, let $(\alpha,\beta)\in a\times b$, and we shall show that $\mathbf t(\alpha,\beta)=(\alpha^*,\beta^*)$.
All of the following hold:
\begin{itemize}
\item $\eta<\alpha<\gamma<\beta^*<\beta$,
\item $\lambda(\beta^*,\beta)=\eta$, and
\item  $\rho_2(\beta^*,\beta)=\eta_{\beta^*,\beta}$.
\end{itemize}

So, by Lemma~\ref{Lemma212} (using $\delta:=\beta^*$),
$\tr(\alpha,\beta)(\eta_{\alpha,\beta})=\beta^*$.
Recalling that $\pi(o(\alpha,\beta))=(\alpha^*,\eta)$, we infer from the definition of $\mathbf t$ that $\mathbf t(\alpha,\beta)=(\alpha^*,\beta^*)$,
as sought.
\end{proof}
This completes the proof.
\end{proof}

\begin{cor}\label{cor48}
Suppose that $\chi\le\kappa$ is an infinite cardinal such that $\pr_1(\kappa,\kappa,\kappa,\chi)$ holds.
\begin{enumerate}
\item If $\square(\kappa)$ holds, then so does $\pl_1(\kappa,\kappa,\chi)$;
\item If $\boxtimes^-(\kappa)$ holds, then so does $\pl_2(\kappa,\kappa,\chi)$.
\end{enumerate}
\end{cor}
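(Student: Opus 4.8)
The plan is to route the hypothesis through Lemma~\ref{Lemma35} and Lemma~\ref{Lemma32} into Theorem~\ref{thm47}; the only genuine content is a bookkeeping check that the $C$-sequence produced by Lemma~\ref{Lemma32} carries a large enough characteristic. First I would note that since $\chi$ is infinite, $\chi+\chi=\chi$, so the assumed $\pr_1(\kappa,\kappa,\kappa,\chi)$ is \emph{verbatim} the statement $\pr_1(\kappa,\kappa,\kappa,\chi+\chi)$. Feeding this into Lemma~\ref{Lemma35} with $\theta:=\kappa$ yields $\pr_1^+(\kappa,\kappa,\chi)$, which is precisely the common hypothesis shared by both clauses of Theorem~\ref{thm47}.

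Next I would establish that $\chi\le\sup(\reg(\kappa))$, so that the characteristic delivered by Lemma~\ref{Lemma32} dominates $\chi$. Since $\kappa$ is regular and uncountable, it is either inaccessible or a successor. If $\kappa$ is inaccessible then $\sup(\reg(\kappa))=\kappa\ge\chi$ and nothing is needed. If $\kappa=\mu^+$, then $\sup(\reg(\kappa))=\mu$, and as $\chi\le\kappa=\mu^+$, the only way to violate the inequality would be $\chi=\mu^+=\kappa$. But then $\pr_1(\kappa,\kappa,\kappa,\kappa)$ holds, which by reducing the number of colors yields $\pr_1(\mu^+,\mu^+,2,\mu^+)$. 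This contradicts the earlier failure results: the Proposition asserting that $\pr_1(\mu^+,\mu^+,2,\mu^+)$ fails when $\mu$ is regular, and, when $\mu$ is singular, Theorem~C(1) together with monotonicity in the last coordinate (a failure of $\pr_1(\mu^+,\mu^+,2,\mu)$ forces a failure of $\pr_1(\mu^+,\mu^+,2,\mu^+)$). Hence in every non-vacuous instance we have $\chi\le\mu=\sup(\reg(\kappa))$.

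With these in hand I would conclude. For Clause~(1), when $\kappa\ge\aleph_2$, Lemma~\ref{Lemma32}(1) supplies a $C$-sequence $\vec C$ with $\chi_1(\vec C)=\sup(\reg(\kappa))\ge\chi$, and Theorem~\ref{thm47}(1) then delivers $\pl_1(\kappa,\kappa,\chi)$. The borderline case $\kappa=\aleph_1$ is harmless: there $\sup(\reg(\kappa))=\aleph_0$, so by the previous paragraph $\chi=\aleph_0$ is the only non-vacuous option, and then $\pr_1(\aleph_1,\aleph_1,\aleph_1,\aleph_0)$ implies $\pr_1(\aleph_1,\aleph_1,\aleph_0,\aleph_0)$, whence $\pl_1(\aleph_1,\aleph_1,\aleph_0)$ follows already from Corollary~\ref{cor43}(2). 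For Clause~(2), Lemma~\ref{Lemma32}(2)---which carries no restriction on $\kappa$---supplies a $C$-sequence with $\chi_2(\vec C,\kappa)=\sup(\reg(\kappa))\ge\chi$, and Theorem~\ref{thm47}(2) with $\Gamma:=\kappa$ gives $\pl_2(\kappa,\kappa,\chi)$.

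The one step demanding care, and the only place the argument is not a mechanical citation, is the second paragraph: ruling out the degenerate value $\chi=\sup(\reg(\kappa))^+$. What makes it go through is precisely the inconsistency of the strongest instances of $\pr_1$ at successor cardinals established earlier in this section, so the main obstacle is really just invoking those failure theorems in the right monotone form rather than any new combinatorics.
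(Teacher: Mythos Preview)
Your proof is correct and follows essentially the same route as the paper: pass from $\pr_1$ to $\pr_1^+$ via Lemma~\ref{Lemma35}, invoke Lemma~\ref{Lemma32} for the $C$-sequence, feed everything into Theorem~\ref{thm47}, and dispose of $\kappa=\aleph_1$ with Corollary~\ref{cor43}. Your explicit verification that $\chi\le\sup(\reg(\kappa))$---by invoking the failure propositions of Section~\ref{positive} to rule out $\chi=\mu^+$ when $\kappa=\mu^+$---is a detail the paper's own proof leaves to the reader, so your write-up is actually more complete on that point.
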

\begin{proof} By Corollary~\ref{cor43}, we may assume that $\kappa\ge\aleph_2$.

(1) If $\square(\kappa)$ holds, then by Lemma~\ref{Lemma32}(1),
we may find a $C$-sequence $\vec C$ over $\kappa$ such that $\chi_1(\vec C)=\sup(\reg(\kappa))$.
Now, appeal to Theorem~\ref{thm47}.

(2) If $\boxtimes^-(\kappa)$ holds, then by Lemma~\ref{Lemma32}(2),
we may find a $C$-sequence $\vec C$ over $\kappa$ such that $\chi_2(\vec C,\kappa)=\sup(\reg(\kappa))$.
Now, appeal to Theorem~\ref{thm47}.
\end{proof}
\begin{remark} By \cite[Proposition~2.19(1)]{paper44},
it is consistent that for an inaccessible cardinal $\kappa$,
$\pr_1(\kappa,\kappa,\kappa,\omega)$
holds, but $\pl_1(\kappa,1,\omega)$ fails.
\end{remark}

\section{inaccessible cardinals}\label{inaccessibles}

\begin{fact}[\cite{paper47}]\label{factfrom47} Assume any of the following:
\begin{itemize}
\item $\diamondsuit(S)$ holds for some stationary $S\s\kappa$ that does not reflect at regulars;
\item $\diamondsuit^*(\kappa)$ holds.
\end{itemize}
Then there exists a coloring $d_0:[\kappa]^2\rightarrow\kappa$ satisfying that, for every stationary $\Delta\s\kappa$,
there exists $\tau<\kappa$ such that $d_0[\{\tau\}\circledast\Delta]=\kappa$.
\end{fact}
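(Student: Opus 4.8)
The plan is to produce $d_0$ so that each column is read off from a guessing sequence, and to arrange that, relative to a given stationary $\Delta$, one distinguished row \emph{counts} the correct guesses of $\Delta$ and thereby sweeps all of $\kappa$. Concretely, I would first reduce the statement to a single guessing demand: it suffices to attach to every stationary $\Delta$ a row $\tau=\tau(\Delta)$ and a set $G_\Delta\s\Delta$ of order type $\kappa$, each of whose points $\beta$ is recognizable from local data at stage $\beta$ (namely from the guesses available there together with $\Delta\cap\beta$), and then to set $d_0(\tau,\beta):=\otp(G_\Delta\cap\beta)$ at such points. Since $\otp(G_\Delta\cap\beta)$ runs through every ordinal below $\kappa$ as $\beta$ increases along $G_\Delta$, this makes the row $\tau$ onto over $\Delta$. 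The diagonal obstruction — that no fixed row can be onto over every stationary set — forces $\tau(\Delta)$ to depend on $\Delta$; the construction sidesteps it by letting the distinguished row be decoded from $\Delta$ itself (for instance as $\min(\Delta)$, which is visible in every correct guess $\Delta\cap\beta$).

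Under $\diamondsuit^*(\kappa)$ I would take the guessing family $\langle\mathcal A_\alpha\rangle$ and let $G_\Delta:=\{\beta\in\Delta:\Delta\cap\beta\in\mathcal A_\beta\}$. The defining feature of $\diamondsuit^*$ — correctness on a \emph{club} $C$ rather than merely stationarily — is exactly what guarantees $G_\Delta\supseteq\Delta\cap C$, so that $G_\Delta$ has order type $\kappa$, as required. At stage $\beta$ one reads, from each guess $a\in\mathcal A_\beta$, the candidate row $\min(a)$ and the candidate count $\otp\{\beta'<\beta:a\cap\beta'\in\mathcal A_{\beta'},\ \beta'\in a\}$, and sets $d_0(\min(a),\beta)$ accordingly. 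The one point needing care is that several guesses in $\mathcal A_\beta$ may share a minimum, so a fixed selection rule must decide which controls the cell; I would handle this by thinning the sequence (or by refining the pairing used to encode rows) so that the correct guess of $\Delta$ wins at cofinally many points of $\Delta\cap C$, which is all the order-type count needs.

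The hypothesis $\diamondsuit(S)$ for a nonreflecting-at-regulars $S$ is weaker in two respects: guessing is only stationary, and it is supported on $S$. Stationarity alone causes no trouble, since a stationary set of correct firing points inside $\Delta$ still has order type $\kappa$. The real difficulty — and the step I expect to be hardest — is that an arbitrary stationary $\Delta$ may meet $S$ only nonstationarily, so guesses read at points of $S$ cannot see $\Delta$ directly. Here I would invoke the nonreflection: as $S$ does not reflect at regulars, fix a $C$-sequence that avoids $S$ and walk along it, and then guess, along $S$, the \emph{trace} of $\Delta$ under the walk rather than $\Delta$ itself. Defining $d_0$ through the walk so that a correct $S$-guess at $\delta$ imposes the prescribed count on every pair $(\tau,\beta)$ whose walk from $\beta$ passes through $\delta$, one transfers the firing mechanism from $S$ to all of $\kappa$. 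Proving that this transfer captures every stationary $\Delta$, while keeping the distinguished row fixed and the counts exhausting $\kappa$, is the crux of this case; it is the analogue, for stationary guessing on a nonreflecting set, of the club-plus-family argument used under $\diamondsuit^*$.
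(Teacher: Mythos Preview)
The paper does not prove this statement: it is recorded as a \emph{Fact} with the citation \cite{paper47} (Inamdar--Rinot, \emph{Was Ulam right?}), and no argument is supplied here. There is therefore no proof in the present paper to compare your proposal against; the result is imported wholesale from the cited reference.

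On the merits of your sketch itself, two points deserve attention. First, in the $\diamondsuit^*$ case, the collision problem you flag is genuine and your proposed fix (``thin so the correct guess wins cofinally often'') is not obviously realizable: two stationary sets $\Delta_1,\Delta_2$ sharing a minimum compete for the same row at every common firing point, and a fixed selection rule on $\mathcal A_\beta$ need not favor the $\Delta$ you care about cofinally in its own club. Encoding more of $\Delta$ into the row index (rather than just $\min(\Delta)$) is one way out, but you have not said how. Second, in the $\diamondsuit(S)$ case you invoke a $C$-sequence \emph{avoiding} $S$, which is available when $S$ is nonreflecting; but the hypothesis here is only that $S$ does not reflect \emph{at regulars}, so $S$ may still reflect at ordinals of uncountable singular cofinality, and such a $C$-sequence need not exist. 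Your walk-transfer scheme would have to accommodate $\acc(C_\alpha)\cap S\neq\emptyset$ at singular accumulation points, and you have not indicated how the argument survives that. These are not fatal to the overall strategy, but they are exactly the places where a full proof must do real work.
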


\begin{thm}\label{thm52}  Suppose that $\kappa=\kappa^{<\kappa}$ is an inaccessible cardinal and there 
are a map $d_0:[\kappa]^2\rightarrow\kappa$ as in Fact~\ref{factfrom47}
and a map $d_1:[\kappa]^2\rightarrow\kappa$ good for $\chi$ in the sense of Lemma~\ref{done}.
Then $\pro{\kappa}{\kappa}{1}{\kappa}{\chi}$ holds.
\end{thm}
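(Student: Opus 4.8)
The plan is to build the required coloring $c:[\kappa]^2\to\kappa$ witnessing $\pro{\kappa}{\kappa}{1}{\kappa}{\chi}$ by composing the two given maps: $d_1$ will serve as an \emph{addressing} device which, relative to the target family, steers a canonical ordinal onto any prescribed value in a stationary set, while $d_0$ will serve as a \emph{realization} device turning such an address into an arbitrary color. Concretely, after fixing a bijection $\pi:\kappa\leftrightarrow\kappa\times\kappa$ (available since $\kappa=\kappa^{<\kappa}$), I would set $c(\alpha,\beta):=d_0(\alpha,d_1(\alpha,\beta))$ whenever the relevant inequalities $\alpha<d_1(\alpha,\beta)<\beta$ hold and $c(\alpha,\beta):=0$ otherwise, possibly routing the first coordinate through $\pi$ so that a distinguished ``row'' parameter can be recovered from $\alpha$.

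Given $\sigma<\chi$ and pairwise disjoint $\mathcal A,\mathcal B\s[\kappa]^\sigma$ of size $\kappa$, the verification would proceed in three moves. First, apply the good-for-$\chi$ property of $d_1$ (Lemma~\ref{done}) to $\mathcal B$ to obtain a stationary $\Delta\s\kappa$, an ordinal $\eta<\kappa$, and columns $\langle b_\delta\mid\delta\in\Delta\rangle$ in $\mathcal B$ with $d_1[\{\eta\}\times b_\delta]=\{\delta\}$, so that the $d_1$-address of $b_\delta$ is exactly $\delta$. Second, feed $\Delta$ into Fact~\ref{factfrom47} to obtain a single magic row $\tau_0<\kappa$ with $d_0[\{\tau_0\}\circledast\Delta]=\kappa$, so that $\delta\mapsto d_0(\tau_0,\delta)$ runs through all of $\kappa$ as $\delta$ ranges over $\Delta$. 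Third, locate a single \emph{universal} $a\in\mathcal A$: passing to an elementary submodel $\mathcal M\prec\mathcal H_{\kappa^+}$ with $\{\mathcal A,\mathcal B,d_0,d_1,\Delta,\eta,\tau_0\}\in\mathcal M$ and taking $a\in\mathcal A$ with $\min(a)>\sup(\mathcal M\cap\kappa)$, one would reflect the realization inside $\mathcal M$ so that, for each target $\tau<\kappa$, one may pick $\delta\in\Delta\cap\mathcal M$ with $d_0(\tau_0,\delta)=\tau$, set $b:=b_\delta$, and conclude $c[a\times b]=\{\tau\}$.

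The delicate point, and the step I expect to be the main obstacle, is the reconciliation of the two distinguished rows. The guarantee delivered by $d_1$ concerns only the fixed row $\eta$ (it controls $d_1(\eta,\cdot)$, not $d_1(\alpha,\cdot)$ for the remaining $\alpha\in a$), while the guarantee delivered by $d_0$ concerns only the fixed magic row $\tau_0$; yet $\pro{\kappa}{\kappa}{1}{\kappa}{\chi}$ demands that \emph{all} of $a\times b$ receive the single color $\tau$. Making the coloring recover $\eta$ and $\tau_0$ canonically from each $\alpha\in a$, so that the composed value collapses to $d_0(\tau_0,\delta)$ uniformly over the rectangle, is exactly where the hypothesis $\kappa=\kappa^{<\kappa}$ and the elementary-submodel bookkeeping must do the real work. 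I anticipate that one also has to invoke the good-for-$\chi$ property a second time, now applied to $\mathcal A$, in order to pin the $d_1$-tag of $a$ to the value $\tau_0$; this in turn forces a separate argument that the magic row supplied by Fact~\ref{factfrom47} can be taken inside the stationary tag-set of $\mathcal A$, which is the genuinely nonroutine ingredient. Once the two rows are aligned and the address is seen to be constant on $a\times b$, monochromaticity is immediate and the proof is finished.
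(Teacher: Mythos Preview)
There is a genuine gap: the coloring you propose, essentially $c(\alpha,\beta)=d_0(\alpha,d_1(\alpha,\beta))$ or any tweak thereof that routes a fixed parameter through $\pi$, cannot achieve monochromaticity on a full rectangle $a\times b$. You yourself identify the obstacle correctly, but the remedies you sketch do not close it. The guarantee of $d_1$ is row-specific: it tells you $d_1(\eta,\beta)=\delta$ for every $\beta\in b$, but says nothing about $d_1(\alpha,\beta)$ for the remaining $\alpha\in a$. Applying $d_1$ a second time to $\mathcal A$ produces a \emph{new} stationary set $\Delta'$ and a \emph{new} row $\eta'$; it does not let you force the tag of $a$ to equal the pre-selected $\tau_0$, and there is no reason Fact~\ref{factfrom47} should yield a $\tau_0$ that happens to lie in $\Delta'$. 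The elementary-submodel move does nothing here either: it can locate objects, but it cannot manufacture the uniform behavior of $d_1$ on rows other than $\eta$.

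The paper's coloring is structurally different and does not attempt to align rows at all. One first composes $d_0$ and $d_1$ into an auxiliary map $e:[\kappa]^2\to\kappa$ with the property that for each $\mathcal B$ there exist cofinally many $j$ such that $e[\{j\}\times b]$ can be made any prescribed singleton for $\kappa$ many $b\in\mathcal B$; this part is close to your first two moves. The real work is then a Galvin-style encoding trick that exploits $\kappa=\kappa^{<\kappa}$ and inaccessibility: fix an increasing sequence $\langle\kappa_j\mid j<\kappa\rangle$ of cardinals and, for each $j$, enumerate all partial functions $\phi$ with $|\dom(\phi)|=\kappa_j$ as $\langle\phi_j^\gamma\mid\gamma<\kappa\rangle$. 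The coloring $c(\alpha,\beta)$ is defined by locating the least $j$ with $\alpha\in\dom(\phi_j^{e(j,\beta)})$ and outputting $\phi_j^{e(j,\beta)}(\alpha)$. Verification is by contradiction: if every $a$ among $\kappa_j$ many candidates from $\mathcal A$ has a bad color $\delta(a)$, encode $a\mapsto\delta(a)$ as a single $\phi\in\Phi_j$, pick $b$ with $e[\{j\}\times b]$ constantly equal to the index of $\phi$, and use a counting argument (the union of the earlier layers has size ${<}\kappa_j$) to find one candidate $a$ on which the lookup lands exactly at level $j$, yielding $c[a\times b]=\{\delta(a)\}$. The point is that monochromaticity is obtained not by aligning $d_0$- and $d_1$-rows but by letting the coloring itself be programmable via $e$ and then exhibiting the program that refutes the counterexample.
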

\begin{proof} Fix $d_0$ and $d_1$ as above. Fix a bijection $\pi:\kappa\leftrightarrow\kappa\times\kappa$. 
Fix a surjection $\psi:\kappa\rightarrow\kappa$ such that the preimage of any singleton is cofinal in $\kappa$.
Next, define an auxiliary coloring $e:[\kappa]^2\rightarrow\kappa$ as follows.
Given $j<\beta<\kappa$, set $(\tau,\eta):=\pi(\psi(j))$ and then let $e(j,\beta):=\psi(d_0(\tau,d_1(\eta,\beta)))$.

\begin{claim}\label{claim431a} For every $\sigma<\chi$ and every pairwise disjoint subfamily $\mathcal B\s[\kappa]^{\sigma}$ of size $\kappa$, there are cofinally many $j<\kappa$ such that,
for every $\gamma<\kappa$, there are $\kappa$ many $b\in\mathcal B$ with $e[\{j\}\times b]=\{\gamma\}$.
\end{claim}
\begin{proof} Let $\mathcal B\s[\kappa]^{\sigma}$ be as above.
As $d_1$ is good for $\chi$, we may fix a stationary set $\Delta\s\kappa$ and an ordinal $\eta<\kappa$
such that for every $\delta\in\Delta$, there exists $b\in\mathcal B$ with $\min(b)>\max\{\eta,\delta\}$ satisfying $d_1[\{\eta\}\times b]=\{\delta\}$. 
Now, by the choice of $d_0$, find $\tau<\kappa$ such that $d_0[\{\tau\}\circledast\Delta]=\kappa$.
Evidently, $J:=\{ j<\kappa\mid \psi(j)=\pi^{-1}(\tau,\eta)\}$ is cofinal in $\kappa$.
Let $j\in J$ be arbitrary.  

Now, given $\gamma<\kappa$, 
as $d_0[\{\tau\}\circledast\Delta]=\kappa$, the following set has size $\kappa$:
$$\Delta':=\{\delta\in\Delta\setminus(\tau+1)\mid \psi(d_0(\tau,\delta))=\gamma\}.$$
Recalling the choice of $\eta$, it follows that the following set has size $\kappa$, as well:
$$\mathcal B':=\{ b\in\mathcal B\mid \exists \delta\in\Delta'\,(\min(b)>\max\{\eta,\delta,j\}\ \&\ d_1[\{\eta\}\times b]=\{\delta\})\}.$$

Let $b\in\mathcal B'$ be arbitrary. Let $\delta\in\Delta'$ be a witness for $b$ being in $\mathcal B'$.
For every $\beta\in b$, 
$e(j,\beta)=\psi(d_0(\tau,d_1(\eta,\beta)))=\psi(d_0(\tau,\delta))=\gamma$.
\end{proof}

Fix a strictly increasing sequence $\langle \kappa_j\mid j<\kappa\rangle$ of infinite cardinals below $\kappa$, such that,
for all $j<\kappa$, $(\sup_{i<j}\kappa_i)<\kappa_j$.
For every $j<\kappa$, let $\Phi_j:=\bigcup\{{}^x\kappa\mid x\s \kappa, |x|=\kappa_j\}$.
As $\kappa^{<\kappa}=\kappa$, $|\Phi_j|=\kappa$, so we may fix an injective enumeration
$\langle \phi_j^\gamma\mid \gamma<\kappa\rangle$ of $\Phi_j$.
Now, define a coloring $c:[\kappa]^2\rightarrow \kappa$ by letting for all
$\alpha<\beta<\kappa$:
$$c(\alpha,\beta):=\begin{cases}
0&\text{if }\alpha\notin\bigcup_{i<\kappa}\dom(\phi_i^{e(i,\beta)});\\
\phi_j^{e(j,\beta)}(\alpha)&\text{if }j=\min\{i<\kappa\mid \alpha\in\dom(\phi_i^{e(i,\beta)})\}.
\end{cases}$$

To see that $c$ is as sought, fix $\sigma<\chi$ and pairwise disjoint subfamilies $\mathcal A,\mathcal B$ of $[\kappa]^\sigma$ of size $\kappa$.
By Claim~\ref{claim431a}, fix $j<\kappa$ with $\kappa_j>\sigma$ such that, for every $\gamma<\kappa$, 
there are $\kappa$ many $b\in\mathcal B$  with $e[\{j\}\times b]=\{\gamma\}$.
Let $\langle a_\iota\mid \iota<\kappa_j\rangle$ be an injective sequence consisting of elements of $\mathcal A$.

\begin{claim} There exists $\iota<\kappa_j$ such that, for every
$\delta<\kappa$, there is $b\in\mathcal B$ with $a_\iota<b$ such that $c[a_\iota\times b]=\{\delta\}$.
\end{claim}
\begin{proof} Suppose not. Then, for every $\iota<\kappa_j$,
we may find some $\delta_\iota<\kappa$ such that, for all $b\in\mathcal B$ with $a_\iota<b$, $c[a_\iota\times b]\neq\{\delta_\iota\}$.
Define a function $\phi:\biguplus\{a_\iota\mid \iota<\kappa_j\}\rightarrow\kappa$ by letting
$\phi(\alpha):=\delta_\iota$ iff $\alpha\in a_\iota$.
As $\kappa_j>\sigma$, we infer that $\phi\in\Phi_j$,
so we may fix $\gamma<\kappa$ such that $\phi=\phi_j^\gamma$. 
Now, by the choice of $j$, let us pick $b\in\mathcal B$ with $\dom(\phi)<b$ such that $e[\{j\}\times b]=\{\gamma\}$.

For every $i<j$ and $\beta\in b$, let
$x_i^\beta:=\dom(\phi_i^{e(i,\beta)})$,
so that $|x_i^\beta|=\kappa_i$.
Next, set $x:=\bigcup\{ x_i^\beta\mid i<j,\beta\in b\}$,
so that $|x|<\kappa_j$. In particular, we may fix $\iota<\kappa_j$ such that $a_\iota\cap x=\emptyset$.
Now, let $(\alpha,\beta)\in a_\iota\times b$ be arbitrary.
As $e(j,\beta)=\gamma$, we infer that $\phi_{j}^{e(j,\beta)}=\phi$.
In particular, $\alpha\in a_\iota\s \dom(\phi_{j}^{e(j,\beta)})$.
Recalling that $\alpha\notin x$, it follows that $\min\{i<\kappa\mid \alpha\in\dom(\phi_i^{e(i,\beta)})\}=j$,
and hence
$$c(\alpha,\beta)=\phi_{j}^{e(j,\beta)}(\alpha)=\phi(\alpha)=\delta_\iota.$$
Altogether, $c[a_\iota\times b]=\{\delta_\iota\}$, contradicting the choice of $\delta_\iota$.
\end{proof}
This completes the proof.
\end{proof}

\begin{thm}\label{thm54} Suppose that $\kappa$ is an inaccessible cardinal and $\boxtimes^-(\kappa)$ and $\diamondsuit(\kappa)$ both hold.
Then $\pro{\kappa}{\kappa}{1}{\kappa}{\kappa}$ holds, as well.
\end{thm}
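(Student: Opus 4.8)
The plan is to reduce the statement to Theorem~\ref{thm52}, applied with $\chi:=\kappa$. That theorem manufactures $\pro{\kappa}{\kappa}{1}{\kappa}{\kappa}$ out of three ingredients: that $\kappa=\kappa^{<\kappa}$, a coloring $d_0$ as in Fact~\ref{factfrom47}, and a coloring $d_1$ good for $\kappa$ in the sense of Lemma~\ref{done}. So the entire task is to produce these three ingredients from $\boxtimes^-(\kappa)$ and $\diamondsuit(\kappa)$. The first is free: $\diamondsuit(\kappa)$ yields $2^{<\kappa}=\kappa$, and since $\kappa$ is a regular limit cardinal this upgrades $\kappa$ to a strong limit, whence $\kappa^{<\kappa}=\kappa$.

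For $d_1$, I would feed $\boxtimes^-(\kappa)$ into Lemma~\ref{Lemma32}(2) to obtain a $C$-sequence $\vec C$ with $\chi_2(\vec C,\kappa)=\sup(\reg(\kappa))$; as $\kappa$ is inaccessible, $\sup(\reg(\kappa))=\kappa$. Since $\kappa$ is stationary in itself, the observation that $\chi_2(\vec C,\Gamma)\le\chi_1(\vec C)$ for stationary $\Gamma$ gives $\chi_1(\vec C)\ge\kappa$, and then Lemma~\ref{done}(2) hands me a coloring $d_1$ good for $\kappa$.

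The coloring $d_0$ is the heart of the matter. To invoke the first clause of Fact~\ref{factfrom47}, it suffices to locate a stationary $S\subseteq\kappa$ that does not reflect at regulars and on which $\diamondsuit(S)$ holds. Producing a nonreflecting $S$ is routine: the $\boxtimes^-(\kappa)$-sequence is fully coherent and, via its hitting clause, nonthreadable, so it witnesses $\square(\kappa)$, from which a nonreflecting stationary set (a fortiori one not reflecting at regulars) is extracted in the standard manner. The genuine difficulty is upgrading the ambient $\diamondsuit(\kappa)$ to $\diamondsuit(S)$ on this thin set: a bare $\diamondsuit(\kappa)$-sequence need not guess stationarily often inside a prescribed nonreflecting set, so here the two hypotheses must be used in tandem rather than in isolation. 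The approach I would pursue is to press the $\boxtimes^-(\kappa)$-sequence into service as a guessing scaffold: fixing a $\diamondsuit(\kappa)$-sequence $\langle A_\alpha\mid\alpha<\kappa\rangle$, I would define the candidate $S$-sequence at $\delta\in S$ by pasting the values $A_\beta$ along $\nacc(C_\delta)$, and then apply the hitting clause of $\boxtimes^-(\kappa)$ to the cofinal set $\{\beta<\kappa\mid A_\beta=X\cap\beta\}$ attached to a target $X$ in order to argue that the paste reproduces $X\cap\delta$ for stationarily many $\delta\in S$. Reconciling the merely cofinal hitting supplied by $\boxtimes^-(\kappa)$ with the exact agreement demanded by $\diamondsuit(S)$ is precisely the step I expect to be the main obstacle, and is where the cofinalities of the points placed in $S$ and the exact bookkeeping of the pasting must be engineered so that agreement propagates all the way up to each $\delta$.

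With $d_0$ and $d_1$ secured and $\kappa=\kappa^{<\kappa}$ in hand, Theorem~\ref{thm52} immediately delivers $\pro{\kappa}{\kappa}{1}{\kappa}{\kappa}$, completing the proof.
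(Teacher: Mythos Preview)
Your reduction to Theorem~\ref{thm52} is natural, and the production of $\kappa^{<\kappa}=\kappa$ and of $d_1$ is correct. The gap is exactly where you locate it: obtaining $\diamondsuit(S)$ on a stationary set not reflecting at regulars from $\boxtimes^-(\kappa)+\diamondsuit(\kappa)$. Your pasting sketch does not close as written. The hitting clause of $\boxtimes^-(\kappa)$ only guarantees that $\nacc(C_\delta)$ meets the set $B=\{\beta\mid A_\beta=X\cap\beta\}$ cofinally in $\delta$, whereas for the paste $\bigcup_{\beta\in\nacc(C_\delta)}A_\beta\cap[\sup(C_\delta\cap\beta),\beta)$ to equal $X\cap\delta$ you need every (or at least a tail of) $\beta\in\nacc(C_\delta)$ to lie in $B$. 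Cofinal hitting is strictly weaker, and no amount of bookkeeping on $S$ repairs this: the bad $\beta$'s in $\nacc(C_\delta)\setminus B$ contaminate the paste on intervals you cannot control. So as a proof this is incomplete, and you have correctly flagged the obstruction without overcoming it.

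The paper sidesteps the issue entirely by not separating $d_0$ and $d_1$. Rather than invoking Fact~\ref{factfrom47}, it fuses the diamond sequence directly into the walk: fixing a function-guessing $\diamondsuit(\kappa)$-sequence $\langle f_\delta\mid\delta<\kappa\rangle$ and walking along the $\boxtimes^-(\kappa)$-sequence $\vec C$, it sets $d(\eta,\beta):=f_{\Tr(\eta+1,\beta)(\eta_{\eta+1,\beta})}(\eta)$. The point is that by Lemma~\ref{Lemma32}(2) and Lemma~\ref{Lemma212}, for any pairwise disjoint $\mathcal B\subseteq[\kappa]^\sigma$ there are stationarily many $\delta$ and an $\eta<\delta$ such that $\kappa$ many $b\in\mathcal B$ have $\Tr(\eta+1,\beta)(\eta_{\eta+1,\beta})=\delta$ for all $\beta\in b$; intersecting with the stationary set $G(f)$ on which $f_\delta=f\restriction\delta$ (for a suitably chosen $f$ encoding a putative bad color) yields the conclusion of Claim~\ref{claim431a} directly, after which the proof of Theorem~\ref{thm52} is copied verbatim. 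The diamond is thus consumed at the ordinals selected by the walk rather than on a prearranged nonreflecting set, and the question of transferring $\diamondsuit(\kappa)$ to $\diamondsuit(S)$ never arises.
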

\begin{proof} As $\diamondsuit(\kappa)$ holds, let us fix a sequence $\langle f_\delta\mid\delta<\kappa\rangle$ such that,
for every $\delta<\kappa$, $f_\delta$ is a function from to $\delta$ to $\delta$,
and, for every function $f:\kappa\rightarrow\kappa$, the set $G(f):=\{\delta<\kappa\mid f\restriction\delta=f_\delta\}$ is stationary.
Let $\vec C$ be a $\boxtimes^-(\kappa)$-sequence, and we shall walk along $\vec C$.
Now, pick any coloring $d:[\kappa]^2\rightarrow\kappa$ such that, for every $\eta,\beta<\kappa$ with $\eta+1<\beta$,
$d(\eta,\beta)=f_{\Tr(\eta+1,\beta)(\eta_{\eta+1,\beta})}(\eta)$.

\begin{claim} Let $\sigma<\kappa$ and let $\mathcal B$ 
be a pairwise disjoint subfamily $[\kappa]^{\sigma}$ of size $\kappa$.
Then there exists $\eta<\kappa$ such that,
for every $\gamma<\kappa$, there are $\kappa$ many $b\in\mathcal B$ with $d[\{\eta\}\times b]=\{\gamma\}$.
\end{claim}
\begin{proof} Suppose not.
Fix a function $f:\kappa\rightarrow\kappa$ such that, for every $\eta<\kappa$,
$$\mathcal B_\eta:=\{ b\in\mathcal B\mid d[\{\eta\}\times b]=\{f(\eta)\}\}$$
has size $<\kappa$.

By Lemma~\ref{Lemma32}(2), $\chi_2(\vec C,\kappa)=\kappa>\sigma$,
so since $G(f)$ is stationary, we may fix $\delta\in G(f)$, $\eta<\delta$ and $b\in\mathcal B\setminus\mathcal B_\eta$
such that, for every $\beta\in b$, $\lambda(\delta,\beta)=\eta$ and $\rho_2(\delta,\beta)=\eta_{\delta,\beta}$.

For each $\beta\in b$, by appealing to Lemma~\ref{Lemma212} with $\alpha:=\eta+1$, we get that 
$$d(\eta,\beta)=f_{\Tr(\eta+1,\beta)(\eta_{\eta+1,\beta})}(\eta)=f_\delta(\eta)=f(\eta).$$
So $b\in\mathcal B_\eta$, contradicting its choice.
\end{proof}
Fix a surjection $\psi:\kappa\rightarrow\kappa$ such that the preimage of any singleton is cofinal in $\kappa$.
Define an auxiliary coloring $e:[\kappa]^2\rightarrow\kappa$ via $e(j,\beta):=d(\psi(j),\beta)$.
By the preceding claim, the proposition of Claim~\ref{claim431a} holds,
so we can continue as in the proof of Theorem~\ref{thm52}.
\end{proof}
\begin{remark} The preceding proof actually shows that if $\p^\bullet(\kappa,2,{\sq},1,\{\kappa\},2)$ holds
(see \cite[Definition~5.9 and Proposition~5.10]{paper23}), then so does $\pro{\kappa}{\kappa}{1}{\kappa}{\kappa}$.
\end{remark}

The next corollary yields Clauses (1)--(3) of Theorem~B.

\begin{cor} Suppose that $\kappa$ is an inaccessible cardinal.
\begin{enumerate}
\item If $\square(\kappa)$ and $\diamondsuit(S)$ both hold for some stationary $S\s\kappa$ that does not reflect at regulars,
then so does $\pl_1(\kappa,\kappa,\kappa)$;
\item If $\square(\kappa)$ and $\diamondsuit^*(\kappa)$ both hold, then so does $\pl_1(\kappa,\kappa,\kappa)$;
\item If $\boxtimes^-(\kappa)$ and $\diamondsuit(\kappa)$ both hold, then so does $\pl_2(\kappa,\kappa,\kappa)$.
\end{enumerate}
\end{cor}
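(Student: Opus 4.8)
The plan is to run each hypothesis through the pipeline assembled in Sections~\ref{pumpup} and~\ref{inaccessibles}: first produce the parameterized coloring principle $\pro{\kappa}{\kappa}{1}{\kappa}{\kappa}$, then downgrade it to $\pr_1(\kappa,\kappa,\kappa,\kappa)$, and finally convert that coloring into a transformation via Corollary~\ref{cor48}. Before starting, I record three consequences of inaccessibility that will be used repeatedly: $\kappa=\kappa^{<\kappa}$, $\kappa\ge\aleph_2$, and $\sup(\reg(\kappa))=\kappa$. I also note the easy reduction $\pro{\kappa}{\kappa}{1}{\kappa}{\kappa}\Rightarrow\pr_1(\kappa,\kappa,\kappa,\kappa)$: feeding a witnessing coloring one pairwise disjoint family of size $\kappa$ simultaneously in both the $\mathcal A$- and $\mathcal B$-roles produces, for each $\sigma<\kappa$ and each $\tau<\kappa$, a pair $(a,b)\in[\mathcal A]^2$ (the ordering $a<b$ being built into the conclusion) with constant value $\tau$.

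For Clauses~(1) and~(2), both of which assume $\square(\kappa)$, I would first invoke Lemma~\ref{Lemma32}(1) --- legitimate since $\kappa\ge\aleph_2$ --- to obtain a $C$-sequence $\vec C$ with $\chi_1(\vec C)=\sup(\reg(\kappa))=\kappa$, and then feed this into Lemma~\ref{done}(2) to manufacture a coloring $d_1$ that is good for $\kappa$, i.e.\ for every $\sigma<\kappa$. The remaining ingredient $d_0$ is supplied by Fact~\ref{factfrom47}, whose two alternative hypotheses are exactly the diamond assumptions of Clause~(1) (a $\diamondsuit(S)$ with $S$ nonreflecting at regulars) and Clause~(2) ($\diamondsuit^*(\kappa)$). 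With $d_0$ and $d_1$ in hand and $\kappa=\kappa^{<\kappa}$, Theorem~\ref{thm52} applied with $\chi:=\kappa$ yields $\pro{\kappa}{\kappa}{1}{\kappa}{\kappa}$, hence $\pr_1(\kappa,\kappa,\kappa,\kappa)$ by the reduction above. Corollary~\ref{cor48}(1), using $\square(\kappa)$ once more, then delivers $\pl_1(\kappa,\kappa,\kappa)$.

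Clause~(3) is shorter, because Theorem~\ref{thm54} packages the coloring step directly: from $\boxtimes^-(\kappa)$ together with $\diamondsuit(\kappa)$ it already outputs $\pro{\kappa}{\kappa}{1}{\kappa}{\kappa}$, whence $\pr_1(\kappa,\kappa,\kappa,\kappa)$. I would then close by appealing to Corollary~\ref{cor48}(2), which converts this coloring into $\pl_2(\kappa,\kappa,\kappa)$ on the strength of $\boxtimes^-(\kappa)$.

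Since the genuine combinatorics is already encapsulated in Theorems~\ref{thm52} and~\ref{thm54} and in Corollary~\ref{cor48}, there is no hard new argument here; the only points demanding care are bookkeeping ones. The crucial one is that inaccessibility forces $\sup(\reg(\kappa))=\kappa$, so that the $C$-sequence from Lemma~\ref{Lemma32} has $\chi_1(\vec C)=\kappa$ and the resulting $d_1$ is good for \emph{every} $\sigma<\kappa$; this is precisely what permits taking $\chi=\kappa$ throughout and thus landing in $\pl_1(\kappa,\kappa,\kappa)$ and $\pl_2(\kappa,\kappa,\kappa)$ rather than in a bounded instance. I would also verify that the two clauses of Fact~\ref{factfrom47} align verbatim with the two diamond hypotheses, and that the parameterized-to-plain reduction respects the ordering requirement $a<b$ implicit in $[\mathcal A]^2$.
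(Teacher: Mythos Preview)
Your proof is correct and follows exactly the paper's route: assemble $d_0$ from Fact~\ref{factfrom47} and $d_1$ from Lemma~\ref{done}(2) via Lemma~\ref{Lemma32}(1), apply Theorem~\ref{thm52} (resp.\ Theorem~\ref{thm54} for Clause~(3)), and finish with Corollary~\ref{cor48}. One small correction: since the paper's ``inaccessible'' means only weakly inaccessible, $\kappa=\kappa^{<\kappa}$ is not a consequence of inaccessibility alone---it is the diamond hypothesis in each clause (any $\diamondsuit(S)$ with $S\subseteq\kappa$ stationary, and likewise $\diamondsuit^*(\kappa)$) that forces $2^{<\kappa}=\kappa$, so the input to Theorem~\ref{thm52} is still available, just for a different reason than you stated.
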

\begin{proof} To prove Clauses (1) and (2), let $d_0$ be given by Fact~\ref{factfrom47}.
Next, let $d_1$ be given by Lemma~\ref{done}(2), using Lemma~\ref{Lemma32}(1). 
Now, appeal to Theorem~\ref{thm52}.

(3) By Theorem~\ref{thm54} and Corollary~\ref{cor48}.
\end{proof}

\section{Successors of regular cardinals}\label{galvin}

\begin{lemma}\label{grouping} Suppose that $\mu$ is an infinite regular cardinal.
Then there exists a sequence $\vec f=\langle f_j\mid j<\mu\rangle$ 
of functions from $\mu^+$ to $\mu^+$ such that,
for every pairwise disjoint subfamily $\mathcal B\s[\mu^+]^{<\mu}$ of size $\mu^+$, 
for every $\gamma<\mu^+$, there exist $j<\mu$ and $b\in\mathcal B$ such that $f_{j}[b]=\{\gamma\}$.
\end{lemma}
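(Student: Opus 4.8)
The plan is to reduce the statement to a single \textsf{ZFC} ingredient: an \emph{almost disjoint} family $\langle I_\gamma\mid\gamma<\mu^+\rangle$ of subsets of $\mu$ with $|I_\gamma|=\mu$ for all $\gamma$ and $|I_\gamma\cap I_{\gamma'}|<\mu$ whenever $\gamma\neq\gamma'$. Such a family exists for any regular $\mu$: start from a partition $\mu=\biguplus_{i<\mu}C_i$ into $\mu$ pieces each of size $\mu$ (so the columns $\langle C_i\mid i<\mu\rangle$ already form an almost disjoint family of size $\mu$), and then keep adding ``diagonal transversals'' by recursion. At a stage $\mu\le\gamma<\mu^+$, reindex the family built so far in order type $\mu$ as $\langle J_\xi\mid\xi<\mu\rangle$ and choose $I_\gamma=\{a_\xi\mid\xi<\mu\}$ with $a_\xi\in J_\xi\setminus\bigcup_{\eta<\xi}J_\eta$; since each $J_\xi$ meets each earlier $J_\eta$ in $<\mu$ points and $\mu$ is regular, this removed set has size $<\mu$, so $a_\xi$ is available, $|I_\gamma|=\mu$, and $I_\gamma$ meets every member of the family in $<\mu$ points. (The naive worry that finitely many size-$\mu$ sets may already cover $\mu$ is irrelevant, precisely because the new set is a transversal across $\mu$-many almost disjoint sets.) This is the one place where I expect to spend care, especially for $\mu$ a successor cardinal where no cardinal-arithmetic shortcut is available; everything downstream is soft.

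Next I would turn this family into the functions $f_j$ by \emph{disjointifying} it below each $\alpha$. Fix, for each $\alpha<\mu^+$, a well-ordering $\prec_\alpha$ of $\{\gamma\mid\gamma\le\alpha\}$ of order type $\le\mu$, and set $S_\alpha^\gamma:=I_\gamma\setminus\bigcup\{I_{\gamma'}\mid\gamma'\prec_\alpha\gamma\}$ for $\gamma\le\alpha$. Because any $\prec_\alpha$-predecessor set has size $<\mu$ and $\mu$ is regular, the removed part $I_\gamma\cap\bigcup\{I_{\gamma'}\mid\gamma'\prec_\alpha\gamma\}$ has size $<\mu$; hence the $S_\alpha^\gamma$ are pairwise disjoint, and each $S_\alpha^\gamma$ still has size $\mu$ and differs from $I_\gamma$ by fewer than $\mu$ points. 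Now define $f_j(\alpha)$ to be the unique $\gamma\le\alpha$ with $j\in S_\alpha^\gamma$, if such $\gamma$ exists, and $f_j(\alpha):=\alpha$ otherwise; thus each $f_j:\mu^+\to\mu^+$. The crucial point is then that \emph{every} block is collapsible far above its target: if $\gamma<\mu^+$ and $b\in[\mu^+]^{<\mu}$ with $\min(b)>\gamma$, then
$$\bigcap_{\alpha\in b}S_\alpha^\gamma\ \supseteq\ I_\gamma\setminus\bigcup_{\alpha\in b}\bigl(I_\gamma\setminus S_\alpha^\gamma\bigr),$$
where the subtracted set is a union of $|b|<\mu$ sets each of size $<\mu$, hence of size $<\mu$ by regularity of $\mu$. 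Therefore $\bigcap_{\alpha\in b}S_\alpha^\gamma$ has size $\mu$, and for any $j$ in it we get $f_j(\alpha)=\gamma$ for all $\alpha\in b$, i.e.\ $f_j[b]=\{\gamma\}$.

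Finally I would assemble the conclusion. Given a pairwise disjoint $\mathcal B\subseteq[\mu^+]^{<\mu}$ of size $\mu^+$ and a target $\gamma<\mu^+$, only $\le\mu$ members of $\mathcal B$ can meet $\gamma+1$ (distinct members are disjoint and there are at most $\mu$ ordinals $\le\gamma$ to be used), so, as $|\mathcal B|=\mu^+$, some $b\in\mathcal B$ satisfies $\min(b)>\gamma$; the displayed claim then produces $j<\mu$ with $f_j[b]=\{\gamma\}$, as required. The only genuinely nonroutine step is the first paragraph's almost disjoint family (and the repeated appeal to the regularity of $\mu$ to keep all ``error'' sets below $\mu$); the passage from that family to $\vec f$ is a bookkeeping exercise.
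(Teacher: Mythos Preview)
Your proof is correct and takes a genuinely different route from the paper. The paper invokes a function $p:[\mu^+]^2\to\mu$ with injective, $\mu$-coherent fibers (obtained via Todorcevic's walks on ordinals, \cite[Lemma~6.25]{TodWalks}), composes with a surjection $g:\mu^+\to\mu^+$ whose fibers are cofinal, and sets $f_j(\beta):=g(\alpha)$ whenever $p(\alpha,\beta)=j$; the verification then passes through an auxiliary $\delta\in E^{\mu^+}_\mu$ and uses $\mu$-coherence across $b$ to locate a common $\alpha$ below $\delta$ with $g(\alpha)=\gamma$. You instead build an almost disjoint family $\langle I_\gamma\mid\gamma<\mu^+\rangle$ on $\mu$ by hand, disjointify it locally below each $\alpha$, and read off $f_j$ from the resulting partition; the verification is then entirely direct, with no auxiliary $\delta$ or surjection $g$. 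At bottom the two approaches encode the same combinatorial content---a coherent system of injections $\alpha\hookrightarrow\mu$ for $\alpha<\mu^+$ is essentially an almost disjoint family of size $\mu^+$ on $\mu$---so the proofs are cousins, but yours is more elementary in that it avoids the walks machinery and is fully self-contained. One small remark: when you claim $|I_\gamma\setminus S_\alpha^\gamma|<\mu$, you are tacitly using that in a well-order of type $\le\mu$ every element has fewer than $\mu$ predecessors; this is immediate but worth a word.
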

\begin{proof} Fix a surjection $g:\mu^+\rightarrow\mu^+$ such that the preimage of any singleton is cofinal in $\mu^+$.
As $\mu$ is regular, using \cite[Lemma~6.25]{TodWalks}, we may fix a function $p:[\mu^+]^2\rightarrow\mu$ having injective and $\mu$-coherent fibers; 
the latter means that $|\{ \alpha<\beta\mid p(\alpha,\beta)\neq p(\alpha,\beta')\}|<\mu$ for all $\beta<\beta'<\mu^+$.
Now, for every $j<\mu$, define a function $f_j:\mu^+\rightarrow\mu^+$ via:
$$f_j(\beta):=\begin{cases}
0,&\text{if }j\notin\{p(\alpha,\beta)\mid \alpha<\beta\};\\
g(\alpha)&\text{if }p(\alpha,\beta)=j.
\end{cases}$$

Let $\mathcal B$ be a pairwise disjoint subfamily of $[\mu^+]^{<\mu}$ of size $\mu^+$,
and let $\gamma<\mu^+$ be a prescribed color.
Find $\delta\in E^{\mu^+}_\mu$ such that $A:=\{\alpha<\delta\mid g(\alpha)=\gamma\}$ is cofinal in $\delta$.
Pick $b\in\mathcal B$ with $\min(b)>\delta$. As $p$ is $\mu$-coherent and $|b|<\mu$, we may find some $\Delta\in[\delta]^{<\mu}$ such that for all $\beta,\beta'\in b$ and $\alpha\in\delta\setminus\Delta$, $p(\alpha,\beta)=p(\alpha,\beta')$.
Now, pick $\alpha\in A\setminus\Delta$. Let $j$ denote the unique element of singleton $\{p(\alpha,\beta)\mid \beta\in b\}$.
Then, for all $\beta\in b$, $f_j(\beta)=g(\alpha)=\gamma$, as sought.
\end{proof}

\begin{defn}[\cite{MR511564}]\label{stickp} $\stick(\mu^+)$ asserts the existence of a sequence $\langle X_\gamma\mid \gamma<\mu^+\rangle$ 
such that, for every $X\in[\mu^+]^{\mu^+}$, there exists $\gamma<\mu^+$ such that $X_\gamma\in[X]^\mu$.
\end{defn}

\begin{thm}\label{thm62} Suppose that $\mu$ is a regular uncountable cardinal, and $\stick(\mu^+)$ holds.
Then $\pro{\mu^+}{\mu^+}{1}{\mu^+}{\mu}$ holds.

If $2^\mu=\mu^+$, then moreover $\pro{\mu^+}{\mu}{1}{\mu^+}{\mu}$ holds.
\end{thm}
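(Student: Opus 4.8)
The plan is to take the sequence $\vec f=\langle f_j\mid j<\mu\rangle$ produced by Lemma~\ref{grouping} as the engine governing the right-hand family: for any pairwise disjoint $\mathcal B\s[\mu^+]^{<\mu}$ of size $\mu^+$ and any target colour $\gamma<\mu^+$, one of the $f_j$ is constantly $\gamma$ on some $b\in\mathcal B$. All of the colour content will be routed through $\vec f$ applied to the second coordinate, so that the whole difficulty becomes attaching to each first coordinate $\alpha$ a \emph{reading} that converts $f_j(\beta)$ into a value $c(\alpha,\beta)$ which can be held constant over an entire rectangle $a\times b$ while still sweeping out all of $\mu^+$ as the target $\tau$ varies. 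Thus I would define $c$ in the shape $c(\alpha,\beta):=\Psi^{f_{j(\alpha,\beta)}(\beta)}(\alpha)$ (and $0$ off the relevant domain), where $\langle\Psi^\gamma\mid\gamma<\mu^+\rangle$ is a captured/guessed family of partial functions on $\mu^+$ and $j(\alpha,\beta)<\mu$ is a locally determined index, arranged (as in the ``least appearance'' bookkeeping of Theorems~\ref{thm52} and~\ref{thm54}) so that on a suitable rectangle $j(\alpha,\beta)$ collapses to a single good index. Constancy of $c$ on $a\times b$ then splits into two independent demands: that the relevant $f_{j}$ be constantly some location $\gamma$ on $b$ (delivered by Lemma~\ref{grouping}), and that the guess $\Psi^\gamma$ take the prescribed value on $a$.

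The verification is the standard diagonalization. Given $\sigma<\mu$ and the pairwise disjoint $\mathcal A,\mathcal B$, I would assume toward a contradiction that every $a\in\mathcal A$ is useless, fix for each such $a$ a colour $\tau_a<\mu^+$ that is never realized as $c[a\times b]$, and assemble the assignment $\alpha\mapsto\tau_a$ (for $\alpha\in a$) into a single function $\phi$. If the capturing principle catches $\phi$ at a location $\gamma$ with $\Psi^\gamma=\phi\restriction\gamma$, then appealing to Lemma~\ref{grouping} to find $b\in\mathcal B$ on which the relevant $f_{j}$ is constantly $\gamma$ forces $c[a\times b]=\{\tau_a\}$ for the corresponding $a$, contradicting the choice of $\tau_a$. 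The one delicate point is the ``least appearance'' arrangement that makes the effective index come out equal to the good index of Lemma~\ref{grouping} simultaneously for all $\alpha\in a$; this is exactly the step that, at an inaccessible cardinal, is handled in Theorems~\ref{thm52} and~\ref{thm54} by stratifying candidate domains along a strictly increasing sequence of cardinals below $\kappa$. At a successor of a regular there are far too few cardinals below $\mu^+$ to stratify in this way, which is precisely why an external capturing principle must be imported — and this is the source of the split into the two clauses.

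The two clauses differ only in $|\mathcal A|$ and are handled by different capturing strengths. In the first clause ($\stick(\mu^+)$, with $|\mathcal A|=\mu^+$) the family $\mathcal A$ is \emph{large}, so $\stick(\mu^+)$ (Definition~\ref{stickp}) can locate, inside a set coded from $\mathcal A$, a size-$\mu$ subfamily $\mathcal A'\s\mathcal A$ that the coloring recognizes; since every colour is manufactured on the $b$-side through $\vec f$, no $\mu^+$-valued guessing of $\Psi$ is required, and $\stick$ need only supply the single bit locating the serving $a$ and its position. In the second clause ($2^\mu=\mu^+$, with $|\mathcal A|=\mu$) the family is \emph{small} and invisible to $\stick$; here $2^\mu=\mu^+$ yields $\diamondsuit(\mu^+)$, and because the bad-colour functions $\phi$ have domain of size $\le\mu$ and range $\mu^+$ — there being only $(\mu^+)^{\mu}=2^{\mu}=\mu^+$ of them — a $\diamondsuit(\mu^+)$-sequence catches them outright, supporting the full diagonalization and hence the stronger conclusion $\pro{\mu^+}{\mu}{1}{\mu^+}{\mu}$.

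The main obstacle I expect is the synchronization just described: Lemma~\ref{grouping} hands back an index $j$ that depends on the target colour, whereas the definition of $c$ must commit in advance to how $\alpha$ selects an index, and a single $a$ must serve \emph{all} targets $\tau$ at once. I would spend the bulk of the effort on arranging the reading $j(\alpha,\beta)$ and the outer layer $\Psi$ so that these color-dependent indices are absorbed, and on checking that in the first clause the weak capturing of $\stick$ — which sees only size-$\mu^+$ sets and not $\mu^+$-valued functions — genuinely suffices once all colour information is funneled through $\vec f$ and $\stick$ is used solely to pin down the serving member of $\mathcal A$.
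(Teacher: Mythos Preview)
Your overall architecture matches the paper's: route the right-hand side through $\vec f$ from Lemma~\ref{grouping}, guess the left-hand data via a sequence $\langle\phi_\gamma\mid\gamma<\mu^+\rangle$, define $c(\alpha,\beta)$ by a least-appearance rule over indices $j<\mu$, and verify by the diagonalization you describe. The case split is also the same (the paper simply enumerates all $\mu^+$ many objects under $2^\mu=\mu^+$ rather than passing through $\diamondsuit$, but that is harmless).

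Two genuine points are left unresolved, and your formulation would not survive them as written. First, the synchronization you flag: the paper does \emph{not} stratify along cardinals, nor does the capturing principle replace that stratification. The guessed object $\phi_\gamma$ is not a partial function $\Psi^\gamma$ but a $\mu$-sequence of pairs $\langle(a_\iota,\delta_\iota)\mid\iota<\mu\rangle$ with the $a_\iota$ pairwise disjoint. For each $\beta$ and $j$ one recursively picks an index $\iota^{j,\beta}<\mu$ so that the chosen $a$-block avoids all earlier choices, and sets $c(\alpha,\beta)=\delta^{j,\beta}_{\iota^{j,\beta}}$ for the least such $j$ with $\alpha$ in the block. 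The point is then to force $\iota^{j,\beta}=j$ simultaneously for all $\beta\in b$: one composes with a surjection $\psi:\mu\to\mu$ whose fibres are stationary, so that after $\vec f$ delivers $j^*$ and $b$ with $f_{j^*}[b]=\{\gamma\}$, one can intersect $|b|<\mu$ many clubs in $\mu$ (tracking the growth of the recursive selections) and pick $j$ in that intersection with $\psi(j)=j^*$. A plain partial function $\Psi^\gamma$ does not carry the disjointness structure on which this recursion and club argument rest.

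Second, your explanation of why $\stick$ suffices is off. The bad configuration \emph{does} carry $\mu^+$-valued information --- the colours $\tau_a$ --- and $\stick$ must see it; it is not used ``solely to pin down the serving member of $\mathcal A$''. The paper's trick is to enlarge each $a\in\mathcal A$ to a canonical superset $\bar a$ drawn from a fixed family of size $\mu^+$ (the sets $\psi_\delta[\eta]\setminus\alpha$), so that the pairs $(\bar a,\tau_a)$ live in a product of size $\mu^+$; $\stick$ then hands back a $\mu$-sized subfamily, and an ordering $\unlhd$ lets one pass from $\bar a$ back to $a$. Without this canonicalization step there is no size-$\mu^+$ set for $\stick$ to act on.
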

\begin{proof} 
For an ordinal $\eta$, let $\Phi_\eta$ denote the collection of all sequences $\langle (a_\iota,\delta_\iota)\mid \iota<\eta\rangle$
such that $\langle a_\iota\mid \iota<\eta\rangle$ is a sequence of pairwise disjoint elements of $[\mu^+]^{<\mu}\setminus\{\emptyset\}$,
and $\langle \delta_\iota\mid \iota<\eta\rangle$ is a sequence of elements of $\mu^+$.
Define an ordering $\unlhd$ of $\bigcup_{\eta<\mu^+}\Phi_\eta$ by letting
$$\langle (a_\iota,\delta_\iota)\mid \iota<\eta\rangle\unlhd \langle (a_\iota',\delta_\iota')\mid \iota<\eta'\rangle$$
iff for every $\iota<\eta$, there exists $\iota'<\eta'$ such that $a_\iota\supseteq a_{\iota'}$ and $\delta_\iota=\delta_{\iota'}$.

\begin{claim} There exists a sequence
$\langle \phi_\gamma\mid \gamma<\mu^+\rangle$ of elements of $\Phi_\mu$ such that for every $\phi\in\Phi_{\mu^+}$,
there exists $\gamma<\mu^+$ with $\phi_\gamma\unlhd \phi$.
\end{claim}
\begin{proof} 
For every $\beta<\mu^+$, fix a surjection $\psi_\beta:\mu\rightarrow\beta+1$.
Then let $\mathcal A:=\{ \psi_\beta[\epsilon]\setminus\alpha\mid \epsilon<\mu, \alpha<\beta<\mu^+\}$.
Evidently, $|\mathcal A|=\mu^+$,
so, as $\stick(\mu^+)$ holds, we may fix a sequence $\langle X_\gamma\mid \gamma<\mu^+\rangle$ 
with the property that, for every $X\in[\mathcal A\times \mu^+]^{\mu^+}$, there exists $\gamma<\mu^+$ such that $X_\gamma\in[X]^\mu$.
Now, pick a sequence $\langle \phi_\gamma\mid \gamma<\mu^+\rangle$ of elements of $\Phi_\mu$ with the property that,
for every $\gamma<\mu^+$, if there exists $\phi\in\Phi_{\mu}$ such that $\im(\phi)\s X_\gamma$,
then $\phi_\gamma$ is such a $\phi$.

To see that $\langle \phi_\gamma\mid \gamma<\mu^+\rangle$ is as sought, 
let $\langle (a_\iota,\delta_\iota)\mid \iota<\mu^+\rangle$ be an arbitrary element of $\Phi_{\mu^+}$.
For every $\iota<\mu^+$, let $\alpha_\iota:=\min(a_\iota)$,  $\beta_\iota:=\sup(a_\iota)$,
and $\epsilon_\iota:=\ssup(\psi_{\beta_\iota}^{-1}[a_\iota])$.
Clearly, $\overline{a_\iota}:=\psi_{\beta_\iota}[\epsilon_\iota]\setminus\alpha_\iota$ is an element of $\mathcal A$
satisfying $\overline{a_\iota}\supseteq a_\iota$ and $\min(\overline{a_\iota})=\min(a_\iota)$.
Recalling that $\langle a_\iota\mid \iota<\mu^+\rangle$ is a sequence of pairwise disjoint elements of $[\mu^+]^{<\mu}\setminus\{\emptyset\}$,
it follows that we may fix a sparse enough $I\in[\mu^+]^{\mu^+}$ such that 
$\langle \overline{a_\iota}\mid \iota\in I\rangle$ is a $<$-increasing sequence of elements of $[\mu^+]^{<\mu}\setminus\{\emptyset\}$.
In effect,  $X:=\{ (\overline{a_\iota},\delta_\iota)\mid \iota\in I\}$
is in $[\mathcal A\times \mu^+]^{\mu^+}$.
Now, pick $\gamma<\mu^+$ such that $X_\gamma\in[X]^\mu$.
Evidently, $\phi_\gamma\unlhd\phi$.
\end{proof}

If $2^\mu=\mu^+$, then we fix an injective enumeration 
$\langle \phi_\gamma\mid \gamma<\mu^+\rangle$ of $\Phi_\mu$. Otherwise, we 
let $\langle \phi_\gamma\mid \gamma<\mu^+\rangle$ be given by the preceding claim.
Let $\vec f$ be given by Lemma~\ref{grouping}.
Fix a surjection $\psi:\mu\rightarrow\mu$ such that the preimage of any singleton is stationary. 
For every $\beta<\mu^+$ and $j<\mu$, write $\langle (a^{j,\beta}_\iota,\delta^{j,\beta}_\iota)\mid \iota<\mu\rangle$ for $\phi_{f_{\psi(j)}(\beta)}$. 

Let $\beta<\mu^+$. We now recursively construct a strictly increasing sequence 
$\langle \iota^{j,\beta}\mid j<\mu\rangle$ of ordinals below $\mu$.
Suppose that $j<\mu$ and that $\langle \iota^{i,\beta}\mid i<j\rangle$ has already been defined.
If there exists $\iota<\mu$ such that:
\begin{itemize}
\item $a_\iota^{j,\beta}\s\beta\setminus\bigcup_{i<j}a_{\iota^{i,\beta}}^{i,\beta} $, and
\item $\iota\ge \sup_{i<j}(\iota^{i,\beta}+1)$,
\end{itemize}
then let $\iota^{j,\beta}$ denote the least such $\iota$.
Otherwise, just let $\iota^{j,\beta}:=\sup_{i<j}(\iota^{i,\beta}+1)$.

Finally, we define a coloring $c:[\mu^+]^2\rightarrow \mu^+$ by letting for all $\alpha<\beta<\mu^+$:
$$c(\alpha,\beta):=\begin{cases}
0&\text{if }\alpha\notin\bigcup_{i<\mu}a_{\iota^{i,\beta}}^{i,\beta};\\
\delta^{j,\beta}_{\iota^{j,\beta}}&\text{if }j=\min\{i<\mu\mid \alpha\in a_{\iota^{i,\beta}}^{i,\beta}\}.
\end{cases}$$

Assuming $2^\mu=\mu^+$, to see that $c$ witnesses $\pro{\mu^+}{\mu}{1}{\mu^+}{\mu}$,
fix $\sigma<\mu$ and pairwise disjoint subfamilies $\mathcal A,\mathcal B$ of $[\mu^+]^\sigma$ such that $|\mathcal A|=\mu$ and $|\mathcal B|=\mu^+$.
Assuming $2^\mu>\mu^+$, to see that $c$ witnesses $\pro{\mu^+}{\mu^+}{1}{\mu^+}{\mu}$,
fix $\sigma<\mu$ and pairwise disjoint subfamilies $\mathcal A,\mathcal B$ of $[\mu^+]^\sigma$ such that $|\mathcal A|=|\mathcal B|=\mu^+$.

Towards a contradiction, suppose that, for every $a\in\mathcal A$, there exists some $\delta(a)<\mu^+$ such that,
for every $b\in\mathcal B$ with $a<b$, $c[a\times b]\neq\{\delta(a)\}$. 
Fix $\phi\in\Phi_{|\mathcal A|}$ such that $\im(\phi)=\{ (a,\delta(a))\mid a\in\mathcal A\}$.

$\br$ If $2^\mu=\mu^+$, then $\phi\in\Phi_\mu$, and we may fix $\gamma<\mu^+$ such that 
$\phi=\phi_\gamma$. In particular, $\phi_\gamma\unlhd \phi$.

$\br$ If $2^\mu>\mu^+$, then $\phi\in\Phi_{\mu^+}$,
so we may fix $\gamma<\mu^+$ with $\phi_\gamma\unlhd \phi$.

\smallskip

Write $\phi_\gamma$ as $\langle (a_\iota,\delta_\iota)\mid \iota<\mu\rangle$.
Set $\epsilon:=\ssup(\bigcup_{\iota<\mu}a_\iota)$,
and then fix a bijection $\pi:\epsilon\leftrightarrow\mu$.
\begin{claim}
\begin{enumerate}
\item $D:=\{ j<\mu\mid \{\iota<\mu\mid \pi[a_\iota]\cap j\neq\emptyset\}\s j\}$ is a club in $\mu$;
\item For every $\beta<\mu^+$, $C_\beta:=\left\{ j<\mu\Mid \pi\left[\bigcup\nolimits_{i<j}a_{\iota^{i,\beta}}^{i,\beta}\right]\s j=\sup_{i<j}(\iota^{i,\beta}+1)\right\}$ is a club in $\mu$.
\end{enumerate}
\end{claim}
\begin{proof} (1) Define a function $g_0:\mu\rightarrow\mu$ via $g_0(i):=\sup\{ \iota<\mu\mid i\in \pi[a_\iota]\}$.
As the elements of $\langle a_\iota\mid \iota<\mu\rangle$ are pairwise disjoint, $g_0$ is well-defined.
Clearly, $D$ coincides with the club $\{j<\mu\mid g_0[j]\s j\}$.

(2) Let $\beta<\mu^+$. It is clear that $C_\beta$ is closed.
To see it is unbounded, define two functions $g_1,g_2:\mu\rightarrow\mu$ via $g_1(i):=\sup(\pi[a_{\iota^{i,\beta}}^{i,\beta}])$
and $g_2(i):=\iota^{i,\beta}+1$.
As the elements of $\langle \pi[a_\iota]\mid \iota<\mu\rangle$ are elements of $[\mu]^{<\mu}$, $g_1$ is well-defined.
Recalling that the sequence $\langle \iota^{i,\beta}\mid i<\mu\rangle$ is strictly increasing, 
it follows that $C_\beta$ covers the intersection of the clubs $\{j<\mu\mid g_1[j]\s j\}$ and $\{j<\mu\mid g_2[j]\s j\}$.
\end{proof}

Next, by the choice of $\vec f$, fix $j^*<\mu$ and $b\in\mathcal B$ with $\min(b)>\epsilon$ such that $f_{j^*}[b]=\{\gamma\}$.
Then, by the choice of $\psi$, pick $j\in D\cap \bigcap_{\beta\in b}C_\beta$ such that $\psi(j)=j^*$.
In effect, for all $\beta\in b$, $\phi_{f_{\psi(j)}(\beta)}=\phi_\gamma$,
meaning that $$\langle (a^{j,\beta}_\iota,\delta^{j,\beta}_\iota)\mid \iota<\mu\rangle=\langle (a_\iota,\delta_\iota)\mid \iota<\mu\rangle,$$
and in particular, $(\bigcup_{\iota<\mu}a^{j,\beta}_\iota)\s\epsilon=\dom(\pi)$.

Now, let $\beta\in b$; we have:
\begin{enumerate}
\item $\{\iota<\mu\mid \pi[a_\iota]\cap j\neq\emptyset\}\s j$;
\item $\pi [\bigcup_{i<j}a_{\iota^{i,\beta}}^{i,\beta}]\s j$;
\item $\sup_{i<j}(\iota^{i,\beta}+1)=j$.
\end{enumerate}

By Clause~(1), $\pi[a_j]\cap j=\emptyset$. 
Together with Clause~(2),
it thus follows that $a_j\s\epsilon\setminus\bigcup_{i<j}a_{\iota^{i,\beta}}^{i,\beta}\s \beta\setminus\bigcup_{i<j}a_{\iota^{i,\beta}}^{i,\beta}$.
So, by Clause (3) and the definition of $\iota^{j,\beta}$, we infer that $\iota^{j,\beta}=j$.
Altogether, for all $\alpha\in a_j$,
$\min\{i<\mu\mid \alpha\in a_{\iota^{i,\beta}}^{i,\beta}\}=j$,
and hence $c(\alpha,\beta)=\delta_j^{j,\beta}=\delta_j$.

Finally, since $\phi_\gamma\unlhd\phi$ and $\im(\phi)=\{ (a,\delta(a))\mid a\in\mathcal A\}$,
we may find some $a\in\mathcal A$ such that $a_j\supseteq a$ and $\delta_j=\delta(a)$.
Then, $c[a\times b]\s c[a_j\times b]=\{\delta_j\}=\{\delta(a)\}$. This is a contradiction.
\end{proof}

\begin{cor} Suppose that $\mu$ is an infinite regular cardinal and $\stick(\mu^+)$ holds.
Then $\pl_2(\mu^+,E^{\mu^+}_\mu,\mu)$ holds, as well.
\end{cor}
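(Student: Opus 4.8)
The plan is to assemble the corollary from the generalization of Galvin's theorem (Theorem~\ref{thm62}) together with the pump-up machinery of Section~\ref{pumpup}; the target is to feed a suitable instance of $\pr_1^+$ into Theorem~\ref{thm42a} with $\chi:=\mu$ and $\Gamma:=E^{\mu^+}_\mu$. The first thing I would record is that $E^{\mu^+}_\mu$ is a \emph{nonreflecting} stationary subset of $\mu^+$. Stationarity is classical, and nonreflection is a short order-type computation: given any $\alpha\in E^{\mu^+}_{>\omega}$ we have $\omega<\cf(\alpha)\le\mu$, so fixing a club $e\s\alpha$ with $\otp(e)=\cf(\alpha)$, the set $\acc(e)$ is club in $\alpha$ (as $\cf(\alpha)>\omega$), and every $\gamma\in\acc(e)$ satisfies $\cf(\gamma)=\cf(\otp(e\cap\gamma))<\cf(\alpha)\le\mu$, whence $\gamma\notin E^{\mu^+}_\mu$. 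Thus $E^{\mu^+}_\mu\cap\alpha$ is nonstationary for all such $\alpha$, so $E^{\mu^+}_\mu$ does not reflect.

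Next I would produce the combinatorial input $\pr_1^+(\mu^+,\mu,\mu)$. When $\mu$ is uncountable, Theorem~\ref{thm62} turns $\stick(\mu^+)$ into $\pro{\mu^+}{\mu^+}{1}{\mu^+}{\mu}$. From this I would extract $\pr_1(\mu^+,\mu^+,\mu^+,\mu)$ by the standard observation that $\pro{\mu^+}{\mu^+}{1}{\mu^+}{\mu}$ implies it: given a pairwise disjoint $\mathcal A\s[\mu^+]^\sigma$ of size $\mu^+$ and a target color $\tau$, split $\mathcal A$ into two disjoint subfamilies of size $\mu^+$ and apply the principle with these as the two families, obtaining $(a,b)\in[\mathcal A]^2$ with the coloring constantly $\tau$ on $a\times b$. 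Composing the coloring with a surjection $\mu^+\to\mu$ then yields $\pr_1(\mu^+,\mu^+,\mu,\mu)$. For the remaining case $\mu=\aleph_0$, I would simply invoke the ZFC theorem $\pr_1(\aleph_1,\aleph_1,\aleph_1,\aleph_0)$ (Todorcevic; see \cite{TodWalks}), which again gives $\pr_1(\aleph_1,\aleph_1,\aleph_0,\aleph_0)$; note that here $\stick(\mu^+)$ is not even needed. In either case I now have $\pr_1(\mu^+,\mu^+,\mu,\mu)$, and since $\mu$ is infinite we have $\mu+\mu=\mu$, so Lemma~\ref{Lemma35} upgrades this to $\pr_1^+(\mu^+,\mu,\mu)$.

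Finally, applying Theorem~\ref{thm42a} with the infinite regular cardinal $\mu$, the value $\chi:=\mu\le\mu$, and the nonreflecting stationary set $\Gamma:=E^{\mu^+}_\mu$, I conclude that $\pl_2(\mu^+,E^{\mu^+}_\mu,\mu)$ holds. Essentially every step here is bookkeeping that lines up the parameters of the already-established results, so there is no genuinely hard analytic obstacle; the only points requiring care are the verification that $E^{\mu^+}_\mu$ is nonreflecting (handled above) and the separate treatment of $\mu=\aleph_0$, which falls outside the hypothesis ``$\mu$ uncountable'' of Theorem~\ref{thm62} and must instead draw on the classical ZFC coloring. The conceptual content lies entirely in the earlier theorems, and this corollary is their clean synthesis.
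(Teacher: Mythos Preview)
Your treatment of the uncountable case is correct and matches the paper's route exactly: Theorem~\ref{thm62} yields $\pr_1(\mu^+,\mu^+,\mu^+,\mu)$, Lemma~\ref{Lemma35} upgrades this to $\pr_1^+(\mu^+,\mu,\mu)$, and Theorem~\ref{thm42a} applied with $\Gamma=E^{\mu^+}_\mu$ finishes.

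There is, however, a genuine gap in your $\mu=\aleph_0$ case. You assert that $\pr_1(\aleph_1,\aleph_1,\aleph_1,\aleph_0)$ is a $\zfc$ theorem due to Todorcevic, and that therefore $\stick(\aleph_1)$ is not even needed. This is not correct: what Todorcevic proved in $\zfc$ is $\omega_1\nrightarrow[\omega_1]^2_{\omega_1}$, which amounts to $\pr_1(\aleph_1,\aleph_1,\aleph_1,2)$. The Peng--Wu result cited in Corollary~\ref{cor43}(1) gives $\pr_1(\aleph_1,\aleph_1,\aleph_1,n)$ for each fixed finite $n$, but whether a \emph{single} coloring witnesses $\pr_1(\aleph_1,\aleph_1,\aleph_1,\aleph_0)$ (equivalently $\pr_1(\aleph_1,\aleph_1,\aleph_0,\aleph_0)$) is not a $\zfc$ theorem --- this is precisely why Corollary~\ref{cor43}(2) is stated conditionally. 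The paper handles $\mu=\aleph_0$ by actually using the hypothesis: a straightforward adaptation of Galvin's original argument shows that $\stick(\aleph_1)$ implies $\pr_1(\aleph_1,\aleph_1,\aleph_1,\aleph_0)$, after which one appeals to Corollary~\ref{cor43}(2). So the fix is simply to retain $\stick(\aleph_1)$ in the countable case and invoke Galvin's argument rather than a nonexistent $\zfc$ result.
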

\begin{proof} $\br$ If $\mu=\aleph_0$, then a straight-forward adjustment of Galvin's proof from \cite{galvin}
shows that $\stick(\aleph_1)$ implies $\pr_1(\aleph_1,\aleph_1,\allowbreak\aleph_1,\aleph_0)$.
Now, appeal to Corollary~\ref{cor43}. 

$\br$ If $\mu>\aleph_0$, 
then by Theorem~\ref{thm62}, in particular, $\pr_1(\mu^+,\mu^+,\mu^+,\mu)$ holds.
Now, appeal to Lemma~\ref{Lemma35} and Theorem~\ref{thm42a}.
\end{proof}

\section{From a proxy principle}\label{proxysect}

\begin{thm}\label{lemma51}
Suppose that $\chi\le\kappa$, $\Delta\s\kappa$,
and $\langle h_\delta:C_\delta\rightarrow\kappa\mid \delta<\kappa\rangle$ 
is a sequence satisfying the following:
\begin{enumerate}
\item $\vec C:=\langle C_\delta\mid\delta<\kappa\rangle$ is a $C$-sequence;
\item For every $\eth<\kappa$ and $\delta\in \acc(C_\eth)\cap\Delta$, 
there exists $\epsilon\in C_\delta$ such that $h_\eth\restriction[\epsilon,\delta)=h_\delta\restriction[\epsilon,\delta)$;
\item For every $\sigma<\chi$ and every club $D\s\kappa$, there exists $\delta\in\Delta\cap E^\kappa_{>\sigma}$ such that $\sup(\nacc(C_\delta)\cap D)=\delta$;
\item For every $\sigma<\chi$, every pairwise disjoint subfamily $\mathcal A\s [\kappa]^{\sigma}$
of size $\kappa$
and every $\tau<\kappa$, there exists $\delta\in\Delta\cap\acc(\kappa)$ such that
$$\sup\{\min(x)\mid x\in \mathcal A\cap\mathcal P(C_\delta)\ \&\ h_\delta[x]=\{\tau\}\}=\delta.$$
\end{enumerate}
Then $\pl_1(\kappa,\kappa,\chi)$ holds.
\end{thm}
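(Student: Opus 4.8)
The plan is to walk along $\vec C$ and to read the colorings $\langle h_\delta\mid\delta<\kappa\rangle$ at the last $\Delta$-node of the walk, thereby folding into the single object $\langle h_\delta\rangle$ the two mechanisms that were separate in Theorems~\ref{thm45} and~\ref{thm47} (there the encoding came from a $\pr_1^+$-coloring and the walk-structure from a cardinal characteristic of $\vec C$). Fix a G\"odel-style bijection $\pi:\kappa\leftrightarrow\kappa\times\kappa$, so that $\pi^{-1}[\delta\times\delta]\s\delta$ for club-many $\delta$. Given $(\alpha,\beta)\in[\kappa]^2$, I would set $\delta:=\Tr(\alpha,\beta)(\rho_2(\alpha,\beta)-1)$, the penultimate node of the walk (so that $\alpha\in C_\delta$), decode $(\alpha^*,\tau^*):=\pi(h_\delta(\alpha))$, and output $\mathbf t(\alpha,\beta):=(\tau^*,\alpha^*,\delta)$ whenever $\tau^*\le\alpha^*\le\alpha$ holds, reverting to the default $(0,\alpha,\beta)$ otherwise. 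Since every node of a trace lies in $(\alpha,\beta]$, we have $\alpha<\delta\le\beta$, so the inequality clause of Definition~\ref{fulldefpl1} is immediate, with $\beta^*=\delta$.

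For the hitting clause, fix $\sigma<\chi$ and a pairwise disjoint $\mathcal A\s[\kappa]^\sigma$ of size $\kappa$. For each color $\tau<\kappa$, applying Clause~(4) to the tails $\{a\in\mathcal A\mid \min(a)>\xi\}$ shows that the set $G_\tau:=\{\delta\in\Delta\cap\acc(\kappa)\mid \sup\{\min(x)\mid x\in\mathcal A\cap\mathcal P(C_\delta),\ h_\delta[x]=\{\tau\}\}=\delta\}$ is unbounded; and Clause~(2)---whose very statement forces a local agreement $C_\eth\cap[\epsilon,\delta)=C_\delta\cap[\epsilon,\delta)$ at $\Delta$-accumulation points---lets the witnessing sets at lower levels restrict correctly, so that a coherence argument upgrades $G_\tau$ to contain a club $C_\tau$. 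Using Clause~(3) to keep $\Delta\cap E^\kappa_{>\sigma}$ stationary, I would then let
$$S:=(\textstyle\triangle_{\tau<\kappa}C_\tau)\cap\Delta\cap E^\kappa_{>\sigma}\cap\{\delta\mid \pi^{-1}[\delta\times\delta]\s\delta\},$$
a stationary set. The purpose of the diagonal intersection is that each $\delta\in S$ then lies in $G_\tau$ for \emph{every} $\tau<\delta$, converting the one-color-at-a-time Clause~(4) into a single $S$ that realizes all colors below each of its points.

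Given a target $(\alpha^*,\beta^*)\in[S]^2$ and $\tau^*<\alpha^*$, I would set $\tau:=\pi^{-1}(\alpha^*,\tau^*)$; since $\tau^*<\alpha^*<\beta^*$ and $\beta^*$ is a pairing-closure point, $\tau<\beta^*$, whence $\beta^*\in G_\tau$. Thus one may pick $a\in\mathcal A\cap\mathcal P(C_{\beta^*})$ with $h_{\beta^*}[a]=\{\tau\}$ and $\min(a)>\alpha^*$. It remains to find $b\in\mathcal A$ with $a<b$ such that the penultimate node of $\tr(\alpha,\beta)$ is $\beta^*$ for every $(\alpha,\beta)\in a\times b$; as $\alpha\in C_{\beta^*}$, this reduces to steering the walk from $\beta$ through $\beta^*$, i.e.\ to arranging $\lambda(\beta^*,\beta)<\min(a)$ (Fact~\ref{fact2}). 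Here I would invoke Clause~(3) to produce a high auxiliary landing $\delta'\in\Delta\cap E^\kappa_{>\sigma}$ above $\beta^*$ through which $\beta^*$ is reached, together with a member $b$ of $\mathcal A$ above $\delta'$ whose walks pass through $\delta'$; Facts~\ref{fact2} and~\ref{fact3} and Lemma~\ref{Lemma212} then concatenate the traces as $\tr(\alpha,\beta)=\tr(\delta',\beta)\conc\tr(\beta^*,\delta')\conc\langle\beta^*\rangle$, landing at $\alpha$ one step past $\beta^*$. Reading off $h_{\beta^*}(\alpha)=\tau$ and decoding yields $\mathbf t(\alpha,\beta)=(\tau^*,\alpha^*,\beta^*)$ throughout the rectangle $a\times b$.

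The main obstacle is exactly this last coordination: forcing the single map $\mathbf t$ to return the prescribed $\beta^*$ as the penultimate node while the color it reads there is the \emph{constant} $\tau$ across all of $a\times b$. Clause~(2) is the linchpin, since it is what guarantees that the value $h$ actually encounters at the genuine landing node of each individual walk coincides with the $h_{\beta^*}$-value prescribed through Clause~(4), even though distinct pairs $(\alpha,\beta)$ approach $\beta^*$ along distinct traces. The most delicate point to get right will be the selection of the auxiliary landing $\delta'$, ensuring that $\beta^*$ genuinely sits on the walk from each $\beta\in b$ rather than being skipped as an accumulation point of $C_{\delta'}$; this is where the high cofinality furnished by Clause~(3) and a sufficiently large choice of $\min(a)$ must be played off against each other.
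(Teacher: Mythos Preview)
Your approach has a genuine gap at the step where you claim that each $G_\tau$ ``contains a club $C_\tau$'' via a coherence argument from Clause~(2). First note that $G_\tau\subseteq\Delta$ by definition, and $\Delta$ is not assumed to contain a club, so the statement is false as written. But even read charitably (say, ``there is a club $C_\tau$ with $C_\tau\cap\Delta\subseteq G_\tau$''), Clause~(2) does not deliver this. What Clause~(2) gives is that if $\delta\in\acc(C_\eth)\cap\Delta$ then $h_\eth$ and $h_\delta$ agree on a tail below $\delta$; so a witness $x\subseteq C_\delta$ with $h_\delta[x]=\{\tau\}$ and $\min(x)$ large transfers to $x\subseteq C_\eth$ with $h_\eth[x]=\{\tau\}$ --- but these $x$'s are all bounded by $\delta$, not cofinal in $\eth$, so you learn nothing about $\eth\in G_\tau$. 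Without clubs $C_\tau$ the diagonal intersection is vacuous, and there is no mechanism left to guarantee $\beta^*\in G_{\pi^{-1}(\alpha^*,\tau^*)}$ for the prescribed pair.

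The deeper issue is that you are forcing $\beta^*$ to play two incompatible roles at once: it must be a node through which walks from every $\beta\in b$ pass (a ``walk'' constraint, handled by Clauses~(2)--(3)), and simultaneously it must carry the right $h$-coding on a set $a\in\mathcal A$ cofinal below it (a ``coloring'' constraint, handled by Clause~(4)). The paper decouples these. It reads $h$ at the penultimate node $\min(\im(\tr(\alpha,\beta)))$, but what $h$ encodes there is a pair $(\eta,\tau)$ where $\eta$ is a \emph{pointer}: the output $\beta^*$ is then $\Tr(\alpha,\beta)(\eta_{\alpha,\beta})$, an intermediate node of the walk, and $\alpha^*$ is $\Tr(\eta+1,\alpha)(\eta_{\eta+1,\alpha})$, extracted from a second walk below $\alpha$. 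A single stationary set $\Gamma$ with associated families $\langle\mathcal A_\gamma\mid\gamma\in\Gamma\rangle$ (obtained from Clauses~(2)--(3) exactly as in Lemma~\ref{lemma316}) then supplies both $a\in\mathcal A_\gamma$ and $b\in\mathcal A_\delta$, while Clause~(4) is invoked separately to produce an auxiliary ordinal $\zeta_{\tau,\gamma}<\delta$ below which $a$ is chosen; coherence (Clause~(2)) transfers the $h_\zeta$-values to the actual penultimate node $\last{\zeta}{\delta}$. Closing off under $(\tau,\gamma)\mapsto\zeta_{\tau,\gamma}$ gives a club $E$, and $S:=\Gamma\cap E$ works --- no diagonal intersection over colors is needed. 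Your sketch of how to pick $b$ via Clause~(3) is also incomplete (Clause~(3) governs $\nacc(C_{\delta'})$, not traces from arbitrary $\beta$), but that part is reparable once you set up the $\Gamma,\mathcal A_\gamma$ machinery; the $G_\tau$-club claim is not.
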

\begin{proof} We may assume that $C_{\delta+1}=\{\delta\}$ for every $\delta<\kappa$.
We shall now walk along $\vec C$.
Fix a bijection $\pi:\kappa\leftrightarrow\kappa\times\kappa$.
Define a transformation $\mathbf t:[\kappa]^2\rightarrow[\kappa]^3$, letting $\mathbf t(\alpha,\beta):=(\tau,\gamma, \delta)$ 
provided that the following conditions are met:
\begin{itemize}
\item $(\eta,\tau):=\pi(h_{\min(\im(\tr(\alpha,\beta)))}(\alpha))$ and $\max\{\eta+1,\tau\}<\gamma$,
\item $\delta=\Tr(\alpha,\beta)(\eta_{\alpha,\beta})$,
\item $\gamma=\Tr(\eta+1,\alpha)(\eta_{\eta+1,\alpha})$.
\end{itemize}
Otherwise, let $\mathbf t(\alpha,\beta):=(0,\alpha,\beta)$.

We verify that this works. Given $\sigma<\chi$ and 
a pairwise disjoint subfamily $\mathcal{A}\s [\kappa]^{\sigma}$ of size $\kappa$,
we shall find a stationary subset $S\s\kappa$ witnessing the definition of $\pl_1(\kappa,\kappa,\chi)$.

\begin{claim}\label{claim731} There exist a stationary $\Gamma\s\kappa$, a sequence $\langle \mathcal A_\gamma\mid \gamma\in\Gamma\rangle$ and an ordinal $\eta<\kappa$ such that, for all $\gamma\in \Gamma$,
$\mathcal A_\gamma\in[\mathcal A]^\kappa$ and, for all $x\in\mathcal A_\gamma$ and $\alpha\in x$:
\begin{itemize}
\item $\lambda(\gamma,\alpha)=\eta<\gamma<\alpha$;
\item $\eta_{\gamma,\alpha}=\rho_2(\gamma,\alpha)$.
\end{itemize}
\end{claim}
\begin{proof} The proof uses Clause (2) and (3) and is almost identical to that of Lemma~\ref{lemma316}.
\end{proof}

Let $\langle \mathcal A_\gamma\mid\gamma\in\Gamma\rangle$ and $\eta$ be given by the preceding claim.
By Clause~(4), for every $\tau<\kappa$ and $\gamma\in\Gamma$, we may let $\zeta_{\tau,\gamma}$ denote the least $\zeta\in\Delta\cap\acc(\kappa)$
to satisfy:
$$\sup\{\min(x)\mid x\in \mathcal A_\gamma\cap\mathcal P(C_\zeta)\ \&\ h_\zeta[x]=\{\pi^{-1}(\eta,\tau)\}\}=\zeta.$$

Fix a club $E\s\acc(\kappa)$ with the property that, for every $(\tau,\gamma,\delta)\in\kappa\circledast\Gamma\circledast E$,
$\zeta_{\tau,\gamma}<\delta$. 
We claim that $S:=\Gamma\cap E$ is as sought. To see this,
let $(\tau,\gamma, \delta)\in\kappa\circledast S\circledast S$ be arbitrary. 

Let $\zeta:=\zeta_{\tau,\gamma}$, so that $\zeta<\delta$.
Using Clause~(2) and Fact~\ref{last}(2), 
fix $\epsilon\in C_\zeta$ such that $h_{\last{\zeta}{\delta}}\restriction[\epsilon,\zeta)=h_\zeta\restriction[\epsilon,\zeta)$.
Using Fact~\ref{last}(1), pick $a\in \mathcal A_\gamma\cap\mathcal P(C_\zeta)$ with
$\min(a)>\max\{\lambda(\last{\zeta}{\delta},\delta),\epsilon\}$ such that $(\pi\circ h_\zeta)[a]=\{(\eta,\tau)\}$.
Pick $b\in\mathcal A_\delta$ arbitrarily.

\begin{claim} Let $(\alpha,\beta)\in a\times b$. Then $\mathbf t(\alpha,\beta)=(\tau,\gamma,\delta)$.
\end{claim}
\begin{proof} As $b\in\mathcal A_\delta$ and $\beta\in b$, $\lambda(\delta,\beta)=\eta<\gamma<\alpha<\zeta<\delta<\beta$.
So, by Fact~\ref{fact2}, $$\tr(\alpha,\beta)=\tr(\delta,\beta){}^\smallfrown\tr(\alpha,\delta).$$
It thus follows from $\eta_{\delta,\beta}=\rho_2(\delta,\beta)$
that $\Tr(\alpha,\beta)(\eta_{\alpha,\beta})=\delta$.

Next, since $\lambda(\last{\zeta}{\delta},\delta)<\min(a)\le\alpha<\zeta<\delta$, we have
$$\tr(\alpha,\delta)=\tr(\last{\zeta}{\delta},\delta){}^\smallfrown\tr(\alpha,\last{\zeta}{\delta}).$$
As $\alpha\in C_\zeta\cap[\epsilon,\zeta)$ and $\pi(h_\zeta(\alpha))=(\eta,\tau)$,
we infer that $\alpha\in C_{\last{\zeta}{\delta}}$ and $\pi(h_{\last{\zeta}{\delta}}(\alpha))=(\eta,\tau)$.
Altogether, $\min(\im(\tr(\alpha,\beta)))=\min(\im(\alpha,\delta)))=\last{\zeta}{\delta}$,
and $$\pi(h_{\min(\im(\tr(\alpha,\beta)))}(\alpha))=(\eta,\tau).$$

Finally, since $\lambda(\gamma,\alpha)=\eta<\eta+1<\gamma<\alpha$,
$\tr(\eta+1,\alpha)=\tr(\gamma,\alpha){}^\smallfrown\tr(\eta+1,\gamma)$,
and as $\eta_{\gamma,\alpha}=\rho_2(\gamma,\alpha)$,
we infer that $\Tr(\eta+1,\alpha)(\eta_{\eta+1,\alpha})=\gamma$.
\end{proof}
This completes the proof.
\end{proof}

\begin{thm}\label{pr51} Suppose that $\kappa=\mu^+$ for some infinite regular cardinal $\mu$ and $\p^-(\kappa,\kappa^+,{\sq^*},1,\{E^\kappa_\mu\},2)$ holds.
 Then $\pl_2(\kappa,\kappa,\mu)$ holds, as well.
\end{thm}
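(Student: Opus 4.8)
The plan is to feed the proxy witness into the $h_\delta$-framework of Theorem~\ref{lemma51} and then exploit the surplus strength of the proxy to upgrade its conclusion from $\pl_1$ to $\pl_2$. So I would fix a witness $(\vec C,\Delta)$ to $\p^-(\kappa,\kappa^+,{\sq^*},1,\{E^\kappa_\mu\},2)$, writing $\vec C=\langle C_\delta\mid\delta<\kappa\rangle$ with $\Delta\s E^\kappa_\mu$ stationary. Since $E^\kappa_\mu\cap E^\kappa_{<\mu}=\emptyset$ is nonstationary, Lemma~\ref{lemma316} immediately gives $\chi_2(\vec C,\kappa)\ge\mu$; this is the ingredient that will eventually manufacture a \emph{club} rather than a mere stationary set, and hence $\pl_2$ rather than $\pl_1$.

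Next I would build a sequence $\langle h_\delta:C_\delta\to\kappa\mid\delta<\kappa\rangle$ realizing Clauses (1)--(4) of Theorem~\ref{lemma51} with $\chi:=\mu$. Clause (1) is part of the proxy, and Clause (3) is implied by (indeed weaker than) the pair-hitting feature, the second bullet of Definition~\ref{proxydef}; so both come for free. Clause (2), the tail-agreement of $h_\eth$ and $h_\delta$ along $C_\delta$ whenever $\delta\in\acc(C_\eth)\cap\Delta$, I would secure by defining the $h_\delta$'s by recursion in a way compatible with the $\sq^*$-coherence of $\vec C$, so that the bounded disagreement between $C_\eth\cap\delta$ and $C_\delta$ is transferred into agreement of $h_\eth$ with $h_\delta$ on a common final segment of $C_\delta$. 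Clause (4) is the crux: I would thread a color-guessing device, built from the pair-hitting, through the recursion so that for every pairwise disjoint $\mathcal A\s[\kappa]^{<\mu}$ of size $\kappa$ and every $\tau<\kappa$, there are $\delta\in\Delta$ at which $C_\delta$ swallows cofinally many whole members of $\mathcal A$ on each of which $h_\delta$ is forced to take the constant value $\tau$.

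With the $h_\delta$'s in hand I would run the transformation of Theorem~\ref{lemma51} almost verbatim: walk along $\vec C$, decode an ordinal $\eta$ from $h_{\min(\im(\tr(\alpha,\beta)))}(\alpha)$, and set $\delta:=\Tr(\alpha,\beta)(\eta_{\alpha,\beta})$ and $\gamma:=\Tr(\eta+1,\alpha)(\eta_{\eta+1,\alpha})$, now outputting the \emph{pair} $\mathbf t(\alpha,\beta):=(\gamma,\delta)$, which obeys $\gamma\le\alpha<\delta\le\beta$ as $\pl_2$ requires. The verification copies the two claims of Theorem~\ref{lemma51}, with a single change: because $\chi_2(\vec C,\kappa)\ge\mu$, the analogue of Claim~\ref{claim731} yields not merely a stationary set but club-many $\gamma$ equipped with a subfamily $\mathcal A_\gamma\in[\mathcal A]^\kappa$; intersecting this club with the club of elementary-submodel points produces one club $D$ so that, for every $(\alpha^*,\beta^*)\in[D]^2$, the richness witness $\zeta$ supplied by Clause (4), combined with Facts~\ref{fact2} and~\ref{fact3} and Lemma~\ref{Lemma212}, forces $\mathbf t[a\times b]=\{(\alpha^*,\beta^*)\}$ for a suitable $a\in\mathcal A_{\alpha^*}\cap\mathcal P(C_\zeta)$ and any $b\in\mathcal A_{\beta^*}$. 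This is precisely $\pl_2(\kappa,\kappa,\mu)$.

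The main obstacle is Clause (4): arranging the $h_\delta$'s so that, for an arbitrary family of $(<\mu)$-sized sets, \emph{entire} members sit inside the prescribed club $C_\delta$ carrying a uniformly prescribed $h_\delta$-color. The pair-hitting of the proxy directly delivers only consecutive pairs of $C_\delta$ landing in a target set, so bootstrapping this up to full $(<\mu)$-sized members while simultaneously honoring the tail-coherence demanded by Clause (2) is where the real work lies. As an alternative route, one could instead try to distill $\pr_1^+(\kappa,\kappa,\mu)$ from the proxy (via $\pr_1(\kappa,\kappa,\kappa,\mu)$ and Lemma~\ref{Lemma35}) and then invoke Theorem~\ref{thm47}(2) together with the same $\chi_2$ bound; the difficulty would then migrate to producing the $\pr_1$-coloring from the proxy.
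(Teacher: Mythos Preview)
Your proposal identifies all the right ingredients, but the main line has a genuine gap, and the paper in fact takes your ``alternative route.''

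The gap is this: the transformation you copy from Theorem~\ref{lemma51} decodes a \emph{single} ordinal $\eta$ and uses it twice --- once to locate $\delta$ via $\Tr(\alpha,\beta)(\eta_{\alpha,\beta})$ and once to locate $\gamma$ via $\Tr(\eta+1,\alpha)(\eta_{\eta+1,\alpha})$. In Theorem~\ref{lemma51} this works because Claim~\ref{claim731} furnishes a stationary $\Gamma$ together with \emph{one} $\eta$ serving every $\gamma\in\Gamma$. But $\chi_2(\vec C,\kappa)\ge\mu$ gives club-many $\gamma$ each with its \emph{own} $\eta_\gamma$; the map $\gamma\mapsto\eta_\gamma$ is regressive, and Fodor stabilizes it only on a stationary set, pushing you right back to $\pl_1$. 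For a pair $(\gamma^*,\delta^*)\in[D]^2$ with $\eta_{\gamma^*}\neq\eta_{\delta^*}$, no single decoded $\eta$ can simultaneously satisfy $\lambda(\delta^*,\beta)=\eta$ (needed by Lemma~\ref{Lemma212} to force the first walk to hit $\delta^*$) and $\lambda(\gamma^*,\alpha)=\eta$ (needed to force the second walk to hit $\gamma^*$). This is repairable --- encode the pair $(\eta_{\gamma^*},\eta_{\delta^*})$ in $h_\zeta$ and decode both --- but that is not what you wrote, and it complicates the club bookkeeping for $\zeta$.

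The paper sidesteps this entirely by following your alternative. It uses the $h_\delta$-construction (realizing your Clauses (1)--(4)) only to feed Theorem~\ref{lemma51} and obtain $\pl_1(\kappa,\kappa,\mu)$, hence $\pr_1(\kappa,\kappa,\kappa,\mu)$; then Lemma~\ref{Lemma35} yields $\pr_1^+(\kappa,\kappa,\mu)$, and Theorem~\ref{thm47}(2) together with the $\chi_2$-bound from Lemma~\ref{lemma316} delivers $\pl_2(\kappa,\kappa,\mu)$. The point is that in Theorem~\ref{thm47}(2) the lower coordinate $\alpha^*$ is read off \emph{directly} from the $\pr_1^+$-coloring rather than recovered by a second walk, so only one $\eta$ is ever needed and the varying-$\eta$ obstruction does not arise. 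Your diagnosis of Clause~(4) as the crux is correct, and the paper's explicit construction of $h_\delta$ --- interpolating constant-valued functions $f_\gamma^\beta$ between consecutive points of the proxy club $D_\delta$, with the $f_\gamma$'s ranging over an enumeration of all bounded closed sets crossed with $\kappa$ --- is exactly the bootstrapping you anticipate.
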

\begin{proof}  By Lemma~\ref{Lemma35}, Theorem~\ref{thm47}(2)
and Lemma~\ref{lemma316}, it suffices to prove that $\pr_1(\kappa,\kappa,\kappa,\mu)$ holds.
We shall establish that $\pl_1(\kappa,\kappa,\mu)$ holds, using Theorem~\ref{lemma51}.

For every nonzero $\delta<\kappa$, fix a surjection $\psi_\delta:\mu\rightarrow\delta$.
Then, let 
\begin{itemize}
\item $\mathcal X:=[\kappa]^{<\omega}\cup\{ \psi_\delta[\eta]\setminus\alpha\mid \alpha<\delta<\kappa, \eta<\mu\}$, and
\item $\mathcal F:=\{ \cl(x)\times\{j\}\mid x\in\mathcal X, j<\kappa\}$.
\end{itemize}

Fix an enumeration (possibly, with repetitions) $\langle f_\gamma\mid \gamma<\kappa\rangle$ of $\mathcal F$ 
such that, for every $\gamma<\kappa$, $\dom(f_\gamma)\s\gamma$.
For every $(\beta,\gamma)\in[\kappa]^2$, set $f_\gamma^\beta:=f_\gamma\restriction(\beta,\gamma)$,
so that $f_\gamma^\beta$ 
is a constant function, and $\dom(f_\gamma^\beta)$ is a closed set of ordinals of order-type $<\mu$.

Let $\vec D=\langle D_\delta\mid\delta<\kappa\rangle$ and $\Delta\s E^\kappa_\mu$ be witnesses to  $\p^-(\kappa,\kappa^+,{\sq^*},1,\{E^\kappa_\mu\},2)$.
We now construct a sequence $\langle h_\delta:C_\delta\rightarrow\kappa\mid\delta<\kappa\rangle$ 
that satisfy the requirements of Theorem~\ref{lemma51}, with $\chi:=\mu$.

For each $\delta\in E^\kappa_{<\mu}$, fix a closed subset $C_\delta\s\delta$ with $\sup(C_\delta)=\sup(\delta)$ and $\otp(C_\delta)=\cf(\delta)$,
and then let $h_\delta:=C_\delta\times\{0\}$.
Next, for each $\delta\in E^\kappa_\mu$, let
$$h_\delta:=(D_\delta\times\{0\})\cup\bigcup\{f_{\gamma}^\beta\mid \beta\in D_\delta, \gamma=\min(D_{\delta}\setminus(\beta+1))\},$$
so that $C_\delta:=\dom(h_\delta)$ is a club in $\delta$ and $\acc(C_\delta)\cap E^\kappa_\mu=\acc(D_\delta)\cap E^\kappa_\mu$.
\begin{claim} Suppose $\delta<\kappa$ and $\bar\delta\in\acc(C_\delta)\cap\Delta$. Then there exists $\epsilon\in C_{\bar\delta}$ such that $h_\delta\restriction[\epsilon,\bar\delta)=h_{\bar\delta}\restriction[\epsilon,\bar\delta)$.
\end{claim}
\begin{proof} Clearly, $\delta\in\Delta$ and $\bar\delta\in\acc(D_\delta)$. In particular, $\varepsilon:=\sup((D_\delta\cap\bar\delta)\symdiff D_{\bar\delta})$ is $<\bar\delta$.
Let $\epsilon:=\min(D_{\bar\delta}\setminus(\varepsilon+1))$.
Then $D_\delta\cap[\epsilon,\bar\delta)=D_{\bar\delta}\cap[\epsilon,\bar\delta)$,
and it follows that $h_\delta\restriction[\epsilon,\bar\delta)=h_{\bar\delta}\restriction[\epsilon,\bar\delta)$.
\end{proof}

\begin{claim}\label{claim532} Let $A\s\kappa$ be cofinal. Then there exists $\delta\in\Delta$ such that $\sup(\nacc(C_\delta)\cap A)=\delta$.
\end{claim}
\begin{proof} As $\{ \{\alpha \}\mid \alpha \in A\}\s[\kappa]^{<\omega}\s\mathcal X$,
we infer that $\{ \{(\alpha,0)\}\mid \alpha \in A\}\s\mathcal F$. So, we may recursively construct a sequence $\langle (\alpha_i,\beta_i)\mid i<\kappa\rangle$ such that, for all $(i,i')\in[\kappa]^2$:
\begin{itemize}
\item $\alpha_i\in A$;
\item $f_{\beta_i}=\{(\alpha_i,0)\}$;
\item $\beta_i<\alpha_{i'}$.
\end{itemize}

Set $B:=\{\beta_i\mid i<\kappa\}$.
Now, by the choice of $\vec D$, we may fix $\delta\in \Delta$ for which the following set is cofinal in $\delta$:
$$\Gamma:=\{\gamma\in \nacc(D_\delta)\cap B\mid \exists \varepsilon\in B(\sup(D_\delta\cap\gamma)\le\varepsilon<\gamma)\}.$$
Now, given $\gamma\in\Gamma$, fix $\varepsilon_{\gamma}\in B$ such that $\sup(D_\delta\cap\gamma)\le\varepsilon_{\gamma}<\gamma$.
Find $(i,i')\in[\kappa]^2$ such that $\varepsilon_\gamma=\beta_i$ and $\gamma=\beta_{i'}$. Set $\beta:=\sup(D_\delta\cap\gamma)$.
Then $\beta\le\beta_i<\alpha_{i'}$ and $\beta_{i'}=\gamma$, so that $f_\gamma^\beta=f_{\gamma}=\{(\alpha_{i'},0)\}$
and $e_\delta\cap(\beta,\gamma)=\{\alpha_{i'}\}$.
Thus, we have established that, for every $\gamma\in\Gamma$, $\nacc(C_\delta)\cap A\setminus\varepsilon_\gamma$ is nonempty.
Consequently, $\sup(\nacc(C_\delta)\cap A)=\delta$.
\end{proof}

\begin{claim} Suppose $\mathcal A\s [\kappa]^{<\mu}$ is a family
consisting of $\kappa$ many pairwise disjoint sets, 
and $\tau<\kappa$.
Then there exist $\delta\in\Delta$ such that
$$\sup\{\min(x)\mid x\in \mathcal A\cap\mathcal P(C_\delta)\ \&\ h_\delta[x]=\{\tau\}\}=\delta.$$
\end{claim}
\begin{proof}  For every $x\in\mathcal A$, 
we may find a large enough $\delta<\kappa$ such that $x\in[\delta]^{<\mu}$,
and so, by regularity of $\mu$, we may find $\eta<\mu$ such that $x\s\psi_\delta[\eta]$
so that $x':=\psi_\delta[\eta]\setminus\min(x)$ is an element of $\mathcal X$ satisfying $x\s x' $ and $\min(x)=\min(x')$.
It follows that we may recursively construct a sequence $\langle (x_i,\beta_i)\mid i<\kappa\rangle$ such that, for all $(i,i')\in[\kappa]^2$:
\begin{itemize}
\item $x_i\in\mathcal A$;
\item $(x_i\times\{\tau\})\s f_{\beta_i}$;
\item $\beta_i<\min(\dom(f_{\beta_{i'}}))$.
\end{itemize}
Set $B:=\{\beta_i\mid i<\kappa\}$.

Now, by the choice of $\vec D$, we may fix $\delta\in\Delta\cap\acc(\kappa)$ for which the following set is cofinal in $\delta$:
$$\Gamma:=\{\gamma\in \nacc(D_\delta)\cap B\mid \exists \varepsilon\in B(\sup(D_\delta\cap\gamma)\le\varepsilon<\gamma)\}.$$

Let $\gamma\in\Gamma$. Fix $\varepsilon_{\gamma}\in B$ such that $\sup(D_\delta\cap\gamma)\le\varepsilon_{\gamma}<\gamma$.
Find $(i,i')\in[\kappa]^2$ such that $\varepsilon_\gamma=\beta_i$ and $\gamma=\beta_{i'}$. Set $\beta:=\sup(D_\delta\cap\gamma)$.
Then $\beta\le\beta_i<\min(\dom(f_{\beta_{i'}}))$ and $\beta_{i'}=\gamma$, so that 
$h_\delta\restriction (\beta,\gamma)=f_\gamma^\beta=f_{\gamma}\supseteq(x_{i'}\times\{\tau\})$.
As $x_{i'}\in\mathcal A$ with $\varepsilon_\gamma<\min(x_{i'})$ and $\sup\{\varepsilon_\gamma\mid \gamma\in\Gamma\}=\delta$, we are done.
\end{proof}
Now, we are in a position to appeal to Theorem~\ref{lemma51}.
\end{proof}

The next corollary yields Clause~(2) of Theorem~A.

\begin{cor}\label{cor73} For every infinite regular cardinal $\mu$, any of the following imply that
$\pl_2(\mu^+,\mu^+,\mu)$ holds:
\begin{enumerate}
\item $\clubsuit(E^{\mu^+}_\mu)$ holds;
\item  $(\mu^+)^{\aleph_0}=\mu^+$ and $\clubsuit(S)$ holds for some nonreflecting stationary $S\s\mu^+$.
\end{enumerate}
\end{cor}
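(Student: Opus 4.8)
The plan is to reduce both clauses to a single application of Theorem~\ref{pr51}. Setting $\kappa:=\mu^+$, that theorem says it is enough to verify the proxy principle $\p^-(\mu^+,(\mu^+)^+,{\sq^*},1,\{E^{\mu^+}_\mu\},2)$ of Definition~\ref{proxydef} under each hypothesis. The first observation I would record is that $E^{\mu^+}_\mu$ is \emph{nonreflecting}: every $\gamma<\mu^+$ satisfies $\cf(\gamma)\le\mu$, so fixing a continuous strictly increasing cofinal map $g\colon\cf(\gamma)\to\gamma$ and passing to its limit points yields a club in $\gamma$ each of whose points has cofinality $<\mu$; hence $E^{\mu^+}_\mu\cap\gamma$ is nonstationary for every $\gamma\in E^{\mu^+}_{>\omega}$. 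By the remark in Section~\ref{cseqsection} that a stationary set is nonreflecting iff some $C$-sequence avoids it, there is a $C$-sequence with $\acc(C_\alpha)\cap E^{\mu^+}_\mu=\emptyset$ for all $\alpha$. Since the witnessing set $\Delta$ will be taken inside $E^{\mu^+}_\mu$, the coherence (first) bullet of Definition~\ref{proxydef} becomes vacuous, and I am free to overwrite $C_\delta$ for $\delta\in\Delta$ by any club in $\delta$ of order-type $\mu$ (whose limit points then have cofinality $<\mu$ and so still avoid $E^{\mu^+}_\mu$). The whole task thus reduces to producing a stationary $\Delta\s E^{\mu^+}_\mu$ and clubs $\langle C_\delta\mid\delta\in\Delta\rangle$ of order-type $\mu$ such that, for every cofinal $B\s\mu^+$, there are stationarily many $\delta\in\Delta$ with $\sup\{\varepsilon\in B\cap\delta\mid \min(C_\delta\setminus(\varepsilon+1))\in B\}=\delta$; this is a weak club-guessing statement localized to $E^{\mu^+}_\mu$.

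For Clause~(1) this weak guessing is delivered directly by $\clubsuit(E^{\mu^+}_\mu)$. I would take a $\clubsuit$-sequence $\langle A_\delta\mid\delta\in E^{\mu^+}_\mu\rangle$, thin each $A_\delta$ to order-type $\mu$, and set $C_\delta:=\cl(A_\delta)$. For $\varepsilon\in A_\delta$ the ordinal $\min(C_\delta\setminus(\varepsilon+1))$ is just the successor of $\varepsilon$ within $A_\delta$, so whenever $A_\delta\s B$ every $\varepsilon\in A_\delta$ witnesses the displayed equation and the supremum equals $\delta$. To upgrade the bare ``there exists $\delta$'' form of $\clubsuit$ to the ``stationarily many $\delta$'' demanded by the proxy, I would run the standard boosting: given a cofinal $B$ and a club $D$ enumerated continuously as $\langle d_i\mid i<\mu^+\rangle$, choose $b_i\in B\cap(d_i,d_{i+1})$ whenever possible and apply $\clubsuit$ to the resulting cofinal set $B^\ast$; any $\delta$ with $A_\delta\s B^\ast$ is forced into $\acc(D)\s D$ while still guessing $B$.

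For Clause~(2) the endgame of the two preceding paragraphs is unchanged, so everything comes down to fabricating such a guessing sequence \emph{on $E^{\mu^+}_\mu$} out of $\clubsuit(S)$ for an \emph{arbitrary} nonreflecting stationary $S$, and this is the crux. Since $S$ is nonreflecting, a $C$-sequence avoiding $\Delta$ again makes coherence vacuous and $\clubsuit(S)$ yields the guessing bullet over some $\Delta\s S$, i.e.\ $\p^-(\mu^+,(\mu^+)^+,{\sq^*},1,\{S\},2)$; the obstruction is that Theorem~\ref{pr51} insists the guessing sit at cofinality $\mu$, whereas the points of $S$ may have the wrong cofinality, and --- when $\mu>\aleph_0$ --- a nonreflecting $S$ has $S\cap\delta$ nonstationary at every $\delta\in E^{\mu^+}_\mu$, ruling out a naive restriction. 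This is precisely where $(\mu^+)^{\aleph_0}=\mu^+$ is meant to be used: I would build, for each $\delta\in E^{\mu^+}_\mu$, a cofinal set $A_\delta$ of order-type $\mu$ by threading $\mu$-many $\clubsuit(S)$-guesses along a fixed continuous cofinal $\mu$-chain approaching $\delta$, using the arithmetic to enumerate in order-type $\mu^+$ the countably-generated patterns that must be caught and to diagonalize against them, so that a single application of $\clubsuit(S)$ against a suitably coded cofinal set drives cofinally many consecutive $C_\delta$-pairs into the target $B$. The delicate point --- and the main obstacle --- is verifying that this threaded sequence still guesses \emph{stationarily often} after the drop in cofinality; once that is in hand, the reduction of the first paragraph closes the argument, and the stronger conclusion $\pl_2(\mu^+,\mu^+,\mu)$ (with $\Gamma=\mu^+$ rather than merely $E^{\mu^+}_\mu$) comes for free from Theorem~\ref{pr51}.
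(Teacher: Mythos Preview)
Your overall strategy---reduce to the proxy principle $\p^-(\mu^+,\mu^{++},{\sq^*},1,\{E^{\mu^+}_\mu\},2)$ and invoke Theorem~\ref{pr51}---is exactly the paper's approach, and your treatment of Clause~(1) is correct and more explicit than the paper's one-line ``it is clear.''

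For Clause~(2), however, you have not given a proof. You correctly identify the obstacle: the $\clubsuit$-sequence lives on an arbitrary nonreflecting $S$, not on $E^{\mu^+}_\mu$, and you must manufacture a guessing sequence concentrated at cofinality $\mu$. But your description of the construction---``threading $\mu$-many $\clubsuit(S)$-guesses along a fixed continuous cofinal $\mu$-chain,'' ``enumerate in order-type $\mu^+$ the countably-generated patterns,'' ``diagonalize against them''---is too vague to evaluate, and you yourself flag the verification that guessing survives as ``the main obstacle'' without resolving it. This is not a proof sketch with a routine gap; it is the entire content of the clause. The paper does not attempt to reprove this either: it simply cites \cite[Lemma~4.20]{paper23}, where the derivation of $\p^-(\mu^+,\mu^{++},{\sq^*},1,\{E^{\mu^+}_\mu\},2)$ from the hypotheses of Clause~(2) is carried out in full. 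If you want a self-contained argument, you would need to reproduce that lemma, and the actual mechanism there (which does use $(\mu^+)^{\aleph_0}=\mu^+$ to enumerate countable approximations) is substantially more delicate than your sketch suggests.
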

\begin{proof} By Theorem~\ref{pr51},
it suffices to prove that $\p^-(\mu^+,\mu^{++},{\sq^*},1,\{E^{\mu^+}_\mu\},2)$ holds.
It is clear that the hypothesis of Clause~(1) implies this instance.
By \cite[Lemma~4.20]{paper23}, 
also the hypothesis of Clause~(2) implies this instance.
\end{proof}

\begin{cor} For every infinite regular cardinal $\mu$, $V^{\add(\mu,1)}\models \pl_2(\mu^+,\mu^+,\mu)$.\footnote{Here, $\mu^+$ stands for the successor of $\mu$ in the generic extension.}
\end{cor}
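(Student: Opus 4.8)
The plan is to reduce to Corollary~\ref{cor73} and to verify that $\add(\mu,1)$ adds a witness to (the proxy principle underlying) one of its clauses, being careful that $\mu^+$ is recomputed in the extension. First I would record the behavior of $\mathbb{P}:=\add(\mu,1)$: it is $<\mu$-closed, hence adds no bounded subset of $\mu$, preserves $\mu$ together with its regularity and the stationarity of every $S\s E^{\mu^+}_{<\mu}$, and has size $2^{<\mu}$. The key structural point is that every $s\in({}^{<\mu}2)^V$ occurs as a block of the generic branch while, by closure, all $<\mu$-blocks of the generic lie in $V$; hence the block map surjects $\mu$ onto $({}^{<\mu}2)^V$, so $\mathbb{P}$ collapses $2^{<\mu}$ onto $\mu$, giving $(\mu^+)^{V[G]}=((2^{<\mu})^+)^V$ and $V[G]\models 2^{<\mu}=\mu$. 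This is exactly the subtlety flagged by the footnote, and it explains why no absolute cardinal arithmetic of $V$ may be assumed.

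For $\mu>\omega$ I would show that $\mathbb{P}$ forces the proxy instance $\p^-(\mu^+,\mu^{++},{\sq^*},1,\{E^{\mu^+}_\mu\},2)$, which by Theorem~\ref{pr51} (equivalently, by the reduction carried out inside the proof of Corollary~\ref{cor73}) already yields $\pl_2(\mu^+,\mu^+,\mu)$ in full. Here I would split on $2^{<\mu}$. If $2^{<\mu}>\mu$, then since $(2^{<\mu})^{<\mu}=2^{<\mu}$, the $<\mu$-closed separative poset $\mathbb{P}$ of size $2^{<\mu}$ that collapses $2^{<\mu}$ to $\mu$ is forcing-equivalent to the L\'evy collapse $\operatorname{Coll}(\mu,2^{<\mu})$, and it is classical that such a collapse forces $\diamondsuit^*(\mu^+)$, hence $\diamondsuit(E^{\mu^+}_\mu)$, hence $\clubsuit(E^{\mu^+}_\mu)$, which is Clause~(1) of Corollary~\ref{cor73} (and whose proof manufactures the desired proxy). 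If instead $2^{<\mu}=\mu$, so that $\mu^{<\mu}=\mu$ and all cardinals are preserved, I would appeal to the Brodsky--Rinot analysis of adding a Cohen subset of $\mu$: walking along a generically added coherent sequence on $\mu^+$, the forcing produces the proxy on $E^{\mu^+}_\mu$, with the fresh length-$\mu$ generic data defeating every prospective club. This second mechanism is robust to the value of $2^\mu$, which matters because in this regime $\mathbb{P}$ need not force $2^\mu=\mu^+$ and so the route through Shelah's diamond theorem is unavailable.

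The case $\mu=\omega$ must be handled separately, and I expect it to be the main obstacle. Here $2^{<\omega}=\omega$ always, so $\mathbb{P}=\add(\omega,1)$ is Cohen forcing: it collapses nothing and, over models of $\neg\ch$, provably forces neither $\clubsuit(\omega_1)$ nor $\stick(\omega_1)$, so the proxy route is genuinely closed off. Instead I would invoke Corollary~\ref{cor43}(2): it suffices to show $V[G]\models\pr_1(\aleph_1,\aleph_1,\aleph_0,\aleph_0)$, whence $\pl_2(\aleph_1,\aleph_1,\aleph_0)=\pl_2(\omega_1,\omega_1,\omega)$ follows. When $\ch$ holds this is Galvin's theorem (cf.\ the corollary following Theorem~\ref{thm62}), and $\ch$ is preserved by a single Cohen real; when $\ch$ fails, the content is that the generic real realizes a strong coloring $c\colon[\omega_1]^2\to\omega$, to be verified by a density argument over the countable poset. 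The delicate heart of the matter is precisely this density computation: given any nice name $\dot{\mathcal A}$ for a pairwise disjoint family of finite subsets of $\omega_1$ of size $\aleph_1$ and any target $\tau<\omega$, one must capture $\dot{\mathcal A}$ by a ground-model $\Delta$-system and then use the genericity of the Cohen real to paint some rectangle drawn from $\dot{\mathcal A}$ constantly with color $\tau$, uniformly over all countably many colors and all finite fiber sizes.
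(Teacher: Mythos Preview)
Your treatment of $\mu=\omega$ contains the main gap. You assert that since a Cohen real need not force $\clubsuit(\omega_1)$ or $\stick(\omega_1)$, ``the proxy route is genuinely closed off.'' This conflates Corollary~\ref{cor73} (which merely \emph{derives} the proxy principle from $\clubsuit$) with Theorem~\ref{pr51} itself, which only requires the instance $\p^-(\omega_1,\omega_2,{\sq^*},1,\{E^{\omega_1}_\omega\},2)$. That instance is strictly weaker than $\clubsuit(\omega_1)$ and \emph{is} forced by a single Cohen real; this is exactly the content of \cite[Theorem~2.3]{paper12} (cf.~\cite[Theorem~4.2]{paper22}) at $\mu=\omega$, and is what underlies the classical fact that Cohen forcing adds a Souslin tree. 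In other words, your own clause ``$2^{<\mu}=\mu$: appeal to the Brodsky--Rinot analysis'' already covers $\mu=\omega$, and the detour through a direct density argument for $\pr_1(\aleph_1,\aleph_1,\aleph_0,\aleph_0)$ --- which you concede is the ``delicate heart of the matter'' and do not carry out --- is unnecessary.

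The paper also dispenses with your case split on $2^{<\mu}$ by a one-line observation you missed: $\add(\mu,1)$ is forcing-equivalent to $\add(\mu,2)=\add(\mu,1)\ast\add(\mu,1)$, and (as you yourself noted) the first factor collapses $2^{<\mu}$ to $\mu$. Hence one may simply assume the ground model already satisfies $\mu^{<\mu}=\mu$ and invoke \cite[Theorem~2.3]{paper12} uniformly for all regular $\mu$, then apply Theorem~\ref{pr51}. This sidesteps your L\'evy-collapse route for $2^{<\mu}>\mu$, whose ``classical'' appeal to $\diamondsuit^*(\mu^+)$ in the extension is in fact incorrect as stated: writing $\lambda:=2^{<\mu}$, one computes $(2^\mu)^{V[G]}=(2^\lambda)^V$ and $(\mu^+)^{V[G]}=(\lambda^+)^V$, so if $2^\lambda>\lambda^+$ in $V$ then $2^\mu>\mu^+$ in $V[G]$ and $\diamondsuit(\mu^+)$ --- let alone $\diamondsuit^*(\mu^+)$ --- fails there.
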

\begin{proof} As $\add(\mu,1)$ is equivalent to $\add(\mu,2)$,
we may assume that we are forcing over a model of $\mu^{<\mu}=\mu$.
Now, by the same proof of \cite[Theorem~2.3]{paper12} (cf.~\cite[Theorem~4.2]{paper22}), 
while ignoring any aspect of coherence (as it is not needed here; just ensuring that all the clubs have order-type at most $\mu$ is enough),
$V^{\add(\mu,1)}\models \p^-(\kappa,\kappa^+,{\sq^*},1,\{E^\kappa_\mu\},2)$ for $\kappa:=\mu^+$.
Finally, appeal to Theorem~\ref{pr51}.
\end{proof}

\begin{cor} Suppose that $\mu$ is a regular uncountable cardinal
satisfying $2^\mu=\mu^+$, and $\mathbb P$ is a $\mu^+$-cc notion of forcing of size $\le\mu^+$ that preserves the regularity of $\mu$ but does not satisfy the ${}^\mu\mu$-bounding property.
Then  $V^{\mathbb P}\models\pl_2(\mu^+,\mu^+,\mu)$.
\end{cor}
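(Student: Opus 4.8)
The plan is to reduce, exactly as in the two preceding corollaries, to forcing a single instance of the proxy principle and then to extract that instance from the failure of the bounding property. By Theorem~\ref{pr51}, once we know that $V^{\mathbb P}\models\p^-(\mu^+,\mu^{++},{\sq^*},1,\{E^{\mu^+}_\mu\},2)$, we immediately obtain $V^{\mathbb P}\models\pl_2(\mu^+,\mu^+,\mu)$; so the entire task is to force this proxy instance. First I would record the routine preservation facts. Since $\mathbb P$ is $\mu^+$-cc of size $\le\mu^+$ and $2^\mu=\mu^+$, a nice-name count (there are at most $(\mu^+)^\mu=2^\mu=\mu^+$ nice names for a subset of $\mu$) gives $V^{\mathbb P}\models 2^\mu=\mu^+$; in particular $\mu^+$ is preserved and the cardinal arithmetic at $\mu$ is unchanged. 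Being $\mu^+$-cc, $\mathbb P$ preserves stationary subsets of $\mu^+$, and since it preserves the regularity of $\mu$ it cannot lower a point of $V$-cofinality $\mu$ to smaller cofinality (otherwise an ordinal of $V$-cofinality $\mu$ would acquire a short cofinal sequence, making $\mu$ singular in the extension). As forcing never raises cofinality, it follows that $E^{\mu^+}_\mu$ is computed the same way in $V$ and $V^{\mathbb P}$, and remains stationary.

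The coherence bullet of the proxy instance comes for free. Since every $\gamma<\mu^+$ has cofinality $\le\mu$, the set $E^{\mu^+}_\mu$ is nonreflecting, so I would fix in $V$ a $C$-sequence $\langle C_\alpha\mid\alpha<\mu^+\rangle$ that \emph{avoids} $E^{\mu^+}_\mu$, i.e.\ with $\acc(C_\alpha)\cap E^{\mu^+}_\mu=\emptyset$ for all $\alpha$ and with $\otp(C_\delta)\le\mu$; as $\acc(C_\alpha)$ is absolute, this is preserved to $V^{\mathbb P}$. Taking the requisite stationary $\Delta\s E^{\mu^+}_\mu$, the hypothesis ``$\delta\in\acc(C_\alpha)\cap\Delta$'' is then vacuous, so the tail-coherence bullet $\sup((C_\alpha\cap\delta)\symdiff C_\delta)<\delta$ holds trivially. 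This is precisely the observation invoked for $\add(\mu,1)$, where coherence ``is not needed here''.

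All the content therefore concentrates in the guessing bullet: for every cofinal $B\s\mu^+$ in $V^{\mathbb P}$ there are stationarily many $\alpha\in\Delta$ with $\sup\{\varepsilon\in B\cap\alpha\mid\min(C_\alpha\setminus(\varepsilon+1))\in B\}=\alpha$. The strategy is to run the forcing-theoretic construction of \cite[Theorem~2.3]{paper12} (cf.\ \cite[Theorem~4.2]{paper22}) used for $\add(\mu,1)$, but now driven by the non-dominated function rather than by Cohen-generic data. The failure of the ${}^\mu\mu$-bounding property supplies a $\mathbb P$-name $\dot g$ and a condition forcing that $\dot g\in{}^\mu\mu$ is not everywhere below any ground-model function, and the intended mechanism is that this escaping behaviour, read off along the ladders $C_\delta$, is exactly what lets one catch any prescribed cofinal $B$ at two consecutive ladder-points cofinally often.

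The main obstacle — and the step I would spend the most care on — is that the sets $B$ to be guessed live in $V^{\mathbb P}$ and may be genuinely new, so the hitting pattern cannot be secured by ground-model bookkeeping alone. The resolution is the standard two-part argument of this literature: use the $\mu^+$-cc to reduce each name for a cofinal $B$ to $\mu^+$-many antichains (and here $2^\mu=\mu^+$ is what keeps this bookkeeping of size $\mu^+$), and use the non-boundedness of $\dot g$ to diagonalize against all candidate ``bounds'' at once, so that densely many conditions are forced to realize the hitting pattern at stationarily many $\alpha\in\Delta$. Once the guessing bullet is established, both bullets of $\p^-(\mu^+,\mu^{++},{\sq^*},1,\{E^{\mu^+}_\mu\},2)$ hold in $V^{\mathbb P}$, and Theorem~\ref{pr51} delivers $V^{\mathbb P}\models\pl_2(\mu^+,\mu^+,\mu)$.
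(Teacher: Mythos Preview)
Your reduction to Theorem~\ref{pr51}, and hence to an instance of the proxy principle on $E^{\mu^+}_\mu$, is exactly the paper's route; and your observation that the coherence bullet is vacuous once the ladders have order-type $\le\mu$ (so that $\acc(C_\alpha)\cap E^{\mu^+}_\mu=\emptyset$) is correct and is precisely the remark the paper makes in the $\add(\mu,1)$ corollary.

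The difference is that the paper does not redo the construction: it simply quotes \cite[Theorem~3.4]{paper26}, which already establishes the relevant proxy instance in $V^{\mathbb P}$ from exactly these hypotheses, and then notes that dropping $\mu^{<\mu}=\mu$ only affects \cite[Claim~3.4.3]{paper26}, weakening the width parameter from $\mu^+$ to $\mu^{++}$ --- still enough for $\p^-(\mu^+,\mu^{++},{\sq^*},1,\{E^{\mu^+}_\mu\},2)$. What you propose is essentially to reprove that theorem from \cite{paper26} by adapting the Cohen-style argument of \cite[Theorem~2.3]{paper12}.

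There is, however, a genuine inconsistency in your sketch. You first say you ``fix in $V$'' a $C$-sequence avoiding $E^{\mu^+}_\mu$, and only afterwards bring in the non-dominated name $\dot g$ to verify guessing. That cannot work as written: a ground-model ladder system has no access to the generic, and the two-in-a-row hitting clause must be achieved against cofinal $B\subseteq\mu^+$ that are genuinely new. The unbounded function $\dot g$ has to enter the \emph{definition} of the ladders $C_\alpha$ (in $V^{\mathbb P}$), not merely the verification --- this is exactly how the argument in \cite{paper26} proceeds, and it is also implicit in the $\add(\mu,1)$ case you cite, where the ladders are built from the Cohen generic. If you intend to carry out the construction rather than quote it, you must build $\vec C$ in the extension using $\dot g$, and the ``diagonalize against all candidate bounds'' step needs to be spelled out; as written, the guessing clause is asserted rather than proved.
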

\begin{proof} By \cite[Theorem~3.4]{paper26}, if we also assume that $\mu^{<\mu}=\mu$,
then, in $V^{\mathbb P}$, $\p^*(E^{\mu^+}_\mu,\mu)$ holds,
so that, in particular, 
$\p^-(\mu^+,\mu^+,{\sq},1,\{E^{\mu^+}_\mu\},2,{<}\infty,\mathcal E_\mu)$ holds.
By waiving the hypothesis ``$\mu^{<\mu}=\mu$'', the only thing that breaks down is \cite[Claim~3.4.3]{paper26},
meaning that $\p^-(\mu^+,\mu^{++},{\sq},\mu^+,\{E^{\mu^+}_\mu\},2,{<}\infty,\mathcal E_\mu)$ holds in $V^{\mathbb P}$, instead.
In particular, $V^{\mathbb P}\models\p^-(\kappa,\kappa^+,{\sq^*},1,\{E^\kappa_\mu\},2)$ holds for $\kappa:=\mu^+$,
and we may appeal to Theorem~\ref{pr51}.
\end{proof}

\begin{thm}\label{thm58} Suppose that $\kappa=\kappa^{<\kappa}$ is a Mahlo cardinal and $\p^-(\kappa,\kappa^+,{\sq^*},1,\allowbreak\{\reg(\kappa)\},2)$ holds.
 Then $\pl_2(\kappa,\kappa,\kappa)$ holds, as well.
\end{thm}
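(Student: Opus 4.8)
The plan is to transcribe the proof of Theorem~\ref{pr51}, replacing the stationary set $E^\kappa_\mu$ by $\reg(\kappa)$ and the parameter $\mu$ by $\kappa$; recall that the Mahlo hypothesis is exactly what makes $\reg(\kappa)$ stationary, so that $\p^-(\kappa,\kappa^+,{\sq^*},1,\{\reg(\kappa)\},2)$ is a meaningful instance. First I would reduce the target $\pl_2(\kappa,\kappa,\kappa)$ to the coloring principle $\pr_1(\kappa,\kappa,\kappa,\kappa)$. Writing $\vec C$ for the $\p^-$-witness, observe that for every cardinal $\chi<\kappa$ the set $\reg(\kappa)\cap E^\kappa_{<\chi}=\{\nu\in\reg(\kappa)\mid\nu<\chi\}$ is bounded below $\chi$, hence nonstationary; so Lemma~\ref{lemma316} yields $\chi_2(\vec C,\kappa)\ge\chi$ for every $\chi<\kappa$, that is, $\chi_2(\vec C,\kappa)=\kappa$. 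Since $\kappa$ is a limit cardinal, $\pr_1(\kappa,\kappa,\kappa,\kappa)$ passes through Lemma~\ref{Lemma35} to give $\pr_1^+(\kappa,\kappa,\kappa)$, and then Theorem~\ref{thm47}(2), applied with $\Gamma:=\kappa$ and this same $\vec C$, delivers $\pl_2(\kappa,\kappa,\kappa)$. Thus it suffices to prove $\pr_1(\kappa,\kappa,\kappa,\kappa)$, and for this I would establish the stronger $\pl_1(\kappa,\kappa,\kappa)$, which implies $\pr_1(\kappa,\kappa,\kappa,\kappa)$ as noted right after Definition~\ref{fulldefpl1}, via Theorem~\ref{lemma51} with $\chi:=\kappa$.

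To invoke Theorem~\ref{lemma51} I would build $\langle h_\delta\colon C_\delta\to\kappa\mid\delta<\kappa\rangle$ and $\Delta\s\reg(\kappa)$ out of the $\p^-$-witness $\langle D_\delta\mid\delta<\kappa\rangle$ exactly as in Theorem~\ref{pr51}. The one genuine simplification afforded by inaccessibility is that $\kappa^{<\kappa}=\kappa$, so that $\mathcal F:=\{\cl(x)\times\{j\}\mid x\in[\kappa]^{<\kappa},\ j<\kappa\}$ already has cardinality $\kappa$; there is therefore no need for the canonical-expansion device $\psi_\delta[\eta]\setminus\alpha$ used for successors of regulars, since any $x$ occurring in a family $\mathcal A\s[\kappa]^\sigma$ with $\sigma<\kappa$ is bounded in $\kappa$ (as $\kappa$ is regular) and serves as its own representative through $\cl(x)$. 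Fix an enumeration $\langle f_\gamma\mid\gamma<\kappa\rangle$ of $\mathcal F$ with $\dom(f_\gamma)\s\gamma$, and set $f_\gamma^\beta:=f_\gamma\restriction(\beta,\gamma)$. For $\delta\in\Delta\s\reg(\kappa)$ (so $\cf(\delta)=\delta$) put
\[
h_\delta:=(D_\delta\times\{0\})\cup\bigcup\{f_\gamma^\beta\mid\beta\in D_\delta,\ \gamma=\min(D_\delta\setminus(\beta+1))\},
\]
so that $C_\delta:=\dom(h_\delta)$ is a club in $\delta$ with $\acc(C_\delta)\cap\reg(\kappa)=\acc(D_\delta)\cap\reg(\kappa)$; for $\delta\notin\Delta$ let $h_\delta:=C_\delta\times\{0\}$ for some club $C_\delta\s\delta$ of order type $\cf(\delta)$.

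The verification of Clauses (1)--(4) of Theorem~\ref{lemma51} then runs as in Theorem~\ref{pr51}. Clause~(2) uses the $\sq^*$-coherence of $\vec D$ on $\Delta$; Clause~(3) uses the club-guessing clause of $\p^-$ together with the remark that $\Delta\cap E^\kappa_{>\sigma}$ is simply $\{\delta\in\Delta\mid\delta>\sigma\}$, which is co-bounded in $\Delta$ and hence still stationary, so a guessing point $\delta>\sigma$ is always available; Clause~(4) is the block-by-block guessing argument, where for a prescribed $\tau<\kappa$ and pairwise disjoint $\mathcal A\s[\kappa]^\sigma$ one records, for each $x\in\mathcal A$, an index $\gamma$ with $f_\gamma=\cl(x)\times\{\tau\}$ and $\dom(f_\gamma)\s\gamma$, and then applies the $\p^-$ density to trap cofinally many such blocks inside a single $C_\delta$ with $\delta\in\Delta$ and $\delta>\sigma$.

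I expect the main obstacle to be Clause~(4): one must confirm that the interleaving of $D_\delta$ with the constant test-functions $f_\gamma$ still lets an entire $x\in\mathcal A$ — now of \emph{arbitrary} order type $\sigma<\delta$ rather than $\sigma<\mu$ — land inside one $\nacc$-interval of $C_\delta$ on which $h_\delta$ is constant with value $\tau$, and that the minima of these witnessing blocks accumulate to $\delta$. The precise points to check are that $\dom(f_\gamma)=\cl(x)$ is bounded below $\gamma$ (so that $h_\delta\restriction(\beta,\gamma)=f_\gamma^\beta=f_\gamma$ on the relevant interval with $\beta=\sup(D_\delta\cap\gamma)$) and that $\sigma<\cf(\delta)=\delta$, which is exactly what the restriction to $E^\kappa_{>\sigma}$ in Clause~(3) secures; the latter also guarantees, via Fact~\ref{fact1}, the boundedness of $\lambda(\delta,\cdot)$ needed for Claim~\ref{claim731} inside Theorem~\ref{lemma51}. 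Everything else is a routine transcription of the successor-of-regular argument.
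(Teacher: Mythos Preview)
Your overall strategy matches the paper's, but the construction of the $h_\delta$'s has a genuine gap that breaks Clause~(2) of Theorem~\ref{lemma51}.

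In Theorem~\ref{pr51} the inserted blocks $\dom(f_\gamma^\beta)$ automatically have order-type $<\mu$, and this is exactly what gives $\acc(C_\delta)\cap E^\kappa_\mu=\acc(D_\delta)\cap E^\kappa_\mu$: any accumulation point created by the insertions has cofinality $<\mu$. In the inaccessible case you allow $x\in[\kappa]^{<\kappa}$ of arbitrary order-type, so $\dom(f_\gamma^\beta)$ may be cofinal in a regular $\gamma\in\nacc(D_\delta)\cap\Delta$. That puts $\gamma\in\acc(C_\delta)\cap\Delta\setminus\acc(D_\delta)$, and then the $\sq^*$-coherence of $\vec D$ tells you nothing about $D_\gamma$ versus $D_\delta\cap\gamma$; your asserted equality $\acc(C_\delta)\cap\reg(\kappa)=\acc(D_\delta)\cap\reg(\kappa)$ is simply false as stated. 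The paper repairs this by inserting $f_\gamma^\beta$ only when $\otp(\dom(f_\gamma^\beta))<\beta$: any new accumulation point $\bar\delta\in(\beta,\gamma]$ then has $\cf(\bar\delta)\le\otp(\dom(f_\gamma^\beta))<\beta<\bar\delta$, hence is singular. One must then revisit Clause~(4) and check that the relevant blocks still get inserted, which is why the paper takes $\delta\in\Delta$ above $\sigma$ and restricts to intervals with $\sup(D_\delta\cap\gamma)\ge\sigma$.

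There is a second, related problem in your split between $\delta\in\Delta$ and $\delta\notin\Delta$. In Theorem~\ref{pr51} the trivial $h_\delta$ for $\delta\in E^\kappa_{<\mu}$ is harmless because a club of order-type $<\mu$ cannot accumulate at a point of cofinality $\mu$, so $\acc(C_\delta)\cap\Delta=\emptyset$ there. Here that argument collapses: a regular $\eth\notin\Delta$ at which $\Delta\cap\eth$ is stationary will have \emph{every} club $C_\eth$ accumulate at points of $\Delta$, and your $h_\eth=C_\eth\times\{0\}$ has no reason whatsoever to cohere with the nontrivial $h_{\bar\delta}$. The paper sidesteps this by defining $h_\delta$ via the uniform formula for \emph{all} $\delta\in\acc(\kappa)$, which is legitimate because the $\sq^*$-coherence in Definition~\ref{proxydef} is stated for arbitrary $\alpha<\kappa$, not just $\alpha\in\Delta$.
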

\begin{proof} 
By Lemma~\ref{Lemma35}, Theorem~\ref{thm47}(2)
and Lemma~\ref{lemma316}, it suffices to prove that $\pr_1(\kappa,\kappa,\kappa,\kappa)$ holds.
We shall moreover prove that $\pl_1(\kappa,\kappa,\kappa)$ holds, using Theorem~\ref{lemma51}.

Let $\mathcal F:=\bigcup\{ {}^{\cl(x)}\kappa\mid x\in[\kappa]^{<\kappa}\}$.
Fix an enumeration (possibly, with repetitions) $\langle f_\gamma\mid \gamma<\kappa\rangle$ of $\mathcal F$ 
such that, for every $\gamma<\kappa$, $\dom(f_\gamma)\s\gamma$.
For every $(\beta,\gamma)\in[\kappa]^2$, set $f_\gamma^\beta:=f_\gamma\restriction(\beta,\gamma)$,
so that $f_\gamma^\beta$ 
is a constant function, and $\dom(f_\gamma^\beta)$ is a closed set of ordinals of order-type $<\kappa$.

Let $\vec D=\langle D_\delta\mid\delta<\kappa\rangle$ and $\Delta\s\reg(\kappa)$ be witnesses to the fact  $\p^-(\kappa,\kappa^+,{\sq^*},1,\allowbreak\{\reg(\kappa)\},2)$ holds.
We now construct a sequence $\langle h_\delta:C_\delta\rightarrow\kappa\mid\delta<\kappa\rangle$ 
that satisfy the requirements of Theorem~\ref{lemma51}, with $\chi:=\kappa$.

For each $\delta\in\acc(\kappa)$, let
$$h_\delta:=(D_\delta\times\{0\})\cup\bigcup\{f_{\gamma}^\beta\mid \beta\in D_\delta, \gamma=\min(D_{\delta}\setminus(\beta+1)), \otp(\dom(f_\gamma^\beta))<\beta\},$$
so that $C_\delta:=\dom(h_\delta)$ is a club in $\delta$ and $\acc(C_\delta)\cap\reg(\kappa)=\acc(D_\delta)\cap\reg(\kappa)$.
\begin{claim} Suppose $\delta<\kappa$ and $\bar\delta\in\acc(C_\delta)\cap\Delta$. Then there exists $\epsilon\in C_{\bar\delta}$ such that $h_\delta\restriction[\epsilon,\bar\delta)=h_{\bar\delta}\restriction[\epsilon,\bar\delta)$.
\end{claim}
\begin{proof} By the choice of $\vec D$, $\varepsilon:=\sup((D_\delta\cap\bar\delta)\symdiff D_{\bar\delta})$ is $<\bar\delta$.
Let $\epsilon:=\min(D_{\bar\delta}\setminus(\varepsilon+1))$.
Then $D_\delta\cap[\epsilon,\bar\delta)=D_{\bar\delta}\cap[\epsilon,\bar\delta)$,
and it follows that $h_\delta\restriction[\epsilon,\bar\delta)=h_{\bar\delta}\restriction[\epsilon,\bar\delta)$.
\end{proof}

As made clear by the proof of Claim~\ref{claim532}, for every cofinal $A\s\kappa$, 
there exists $\delta\in\Delta$ such that $\sup(\nacc(C_\delta)\cap A)=\delta$.

\begin{claim} Suppose $\sigma<\kappa$ and $\mathcal A\s [\kappa]^{\sigma}$ is a family
consisting of $\kappa$ many pairwise disjoint sets, 
and $\tau<\kappa$.
Then there exist $\delta\in\Delta$ such that
$$\sup\{\min(x)\mid x\in \mathcal A\cap\mathcal P(C_\delta)\ \&\ h_\delta[x]=\{\tau\}\}=\delta.$$
\end{claim}
\begin{proof}  Recursively construct a sequence $\langle (x_i,\beta_i)\mid i<\kappa\rangle$ such that, for all $(i,i')\in[\kappa]^2$:
\begin{itemize}
\item $x_i\in\mathcal A$;
\item $f_{\beta_i}$ is the constant function from $x_i$ to $\tau$;
\item $\beta_i<\min(x_{i'})$.
\end{itemize}
Set $B:=\{\beta_i\mid i<\kappa\}$.
By the choice of $\vec D$, we may fix $\delta\in\Delta\cap\acc(\kappa\setminus\sigma)$ for which the following set is cofinal in $\delta$:
$$\Gamma:=\{\gamma\in \nacc(D_\delta)\cap B\mid \exists \varepsilon\in B(\sup(D_\delta\cap\gamma)\le\varepsilon<\gamma)\}.$$

Let $\gamma\in\Gamma$ with $\sup(D_\delta\cap\gamma)\ge\sigma$. Fix $\varepsilon_{\gamma}\in B$ such that $\sup(D_\delta\cap\gamma)\le\varepsilon_{\gamma}<\gamma$.
Find $(i,i')\in[\kappa]^2$ such that $\varepsilon_\gamma=\beta_i$ and $\gamma=\beta_{i'}$. Set $\beta:=\sup(D_\delta\cap\gamma)$.
Then $$\sigma\le\beta\le\beta_i<\min(x_{i'})=\min(\dom(f_{\beta_{i'}}))$$ and $\beta_{i'}=\gamma$, so that 
$h_\delta\restriction (\beta,\gamma)=f_\gamma^\beta=f_{\gamma}=(x_{i'}\times\{\tau\})$.
As $x_{i'}\in\mathcal A$ with $\varepsilon_\gamma<\min(x_{i'})$ and $\sup\{\varepsilon_\gamma\mid \gamma\in\Gamma\ \&\ \sup(D_\delta\cap\gamma)\ge\sigma\}=\delta$, we are done.
\end{proof}
Now, we are in a position to appeal to Theorem~\ref{lemma51}.
\end{proof}

The next corollary yields Clause~(4) of Theorem~B.

\begin{cor}\label{cor77} Suppose that $\kappa$ is a Mahlo cardinal,
and there exists a nonreflecting stationary $E\s \kappa$ such that $\diamondsuit(E)$ holds.
If $\square(E)$ holds
 or if there exists a nonreflecting stationary subset of $\reg(\kappa)$, 
then $\pl_2(\kappa,\kappa,\kappa)$ holds.
\end{cor}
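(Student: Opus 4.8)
The plan is to funnel both hypotheses into a single instance of the proxy principle and then to build a witness by hand (or quote the derivation machinery of \cite{paper22,paper23,paper29}). Since $\kappa$ is Mahlo it is inaccessible, hence a strong limit, and so every $\lambda<\kappa$ has $\kappa^\lambda=\kappa$; thus $\kappa=\kappa^{<\kappa}$, and Theorem~\ref{thm58} applies. Consequently it suffices to verify that $\p^-(\kappa,\kappa^+,{\sq^*},1,\{\reg(\kappa)\},2)$ holds. Recall that this asks for a $C$-sequence $\langle C_\alpha\mid\alpha<\kappa\rangle$ together with a stationary $\Delta\s\reg(\kappa)$ satisfying a $\sq^*$-coherence clause and a pairwise ($\nu=2$) hitting clause, both only at points of $\Delta$; and note that $\reg(\kappa)$ is itself stationary because $\kappa$ is Mahlo, so there is room for such a $\Delta$.

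First I would secure a \emph{nonreflecting} stationary set $T\s\reg(\kappa)$ at which to place $\Delta$. In the second case this is handed to us. In the first case I would extract such a $T$ from $\square(E)$: on a Mahlo cardinal a nontrivial coherent sequence avoiding $E$ yields, by a standard trace argument (cf.\ \cite{paper24}), a nonreflecting stationary subset of $\reg(\kappa)$, reducing matters to the second case. The payoff of insisting that $T$ be nonreflecting is the following structural simplification: taking a $C$-sequence that \emph{avoids} $T$ (possible precisely because $T$ is nonreflecting), we get $\acc(C_\alpha)\cap T=\emptyset$ for every $\alpha<\kappa$, and hence the $\sq^*$-coherence clause at $\Delta:=T$ becomes \emph{vacuous}. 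So coherence costs nothing, and the entire content is the hitting clause.

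This is where I would spend $\diamondsuit(E)$. Writing $\langle A_\beta\mid\beta\in E\rangle$ for a $\diamondsuit(E)$-sequence, I would, for each uncountable $\alpha\in T$ (so $\cf(\alpha)=\alpha$ and $T\cap\alpha$ is nonstationary), choose a club $D_\alpha\s\alpha\setminus T$ whose successive points separate the elements of $E\cap\alpha$, and then set $C_\alpha:=D_\alpha$ augmented, inside each gap $(\sup(D_\alpha\cap\gamma),\gamma)$ that contains a point $\beta\in E$ for which $A_\beta$ is cofinal in $\beta$, by a single \emph{consecutive} pair $\varepsilon_\beta<\varepsilon'_\beta$ drawn from $A_\beta$. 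Inserting only one pair per gap keeps $\acc(C_\alpha)=\acc(D_\alpha)$, so $\acc(C_\alpha)\cap T=\emptyset$ is preserved. To verify the hitting clause, fix a cofinal $B\s\kappa$. The set $G_B:=\{\beta\in E\mid A_\beta=B\cap\beta\}$ is stationary, and $G_B\cap\acc^+(B)$ is stationary while guaranteeing that $A_\beta=B\cap\beta$ is cofinal in $\beta$; hence $T\cap\acc^+(G_B\cap\acc^+(B))$ is stationary. At any $\alpha$ in this last set, the pairs $\varepsilon_\beta<\varepsilon'_\beta$ attached to $\beta\in G_B\cap\acc^+(B)\cap\alpha$ satisfy $\varepsilon_\beta,\varepsilon'_\beta\in B$ with $\varepsilon'_\beta=\min(C_\alpha\setminus(\varepsilon_\beta+1))$, and they occur cofinally in $\alpha$; this is exactly the required clause, witnessed at stationarily many $\alpha\in T$.

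The main obstacle is this last verification, namely the $\nu=2$ (pairwise) hitting: one must run the recursive definition of each $C_\alpha$ so that every relevant $\beta\in E$ genuinely contributes a pair that stays consecutive in the finished club, while the $C_\alpha$ remain closed, cofinal, and disjoint from $T$, all uniformly in the as-yet-unknown $B$. A secondary obstacle, confined to the first case, is justifying that $\square(E)$ together with Mahloness delivers a nonreflecting stationary subset of $\reg(\kappa)$; rather than build this by hand I would lean on the postprocessing and amenability lemmas of \cite{paper22,paper23,paper29}, which is also the cleanest way to present the whole proxy derivation.
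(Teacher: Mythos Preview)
Your high-level strategy matches the paper's: reduce to Theorem~\ref{thm58} by securing $\p^-(\kappa,\kappa^+,{\sq^*},1,\{\reg(\kappa)\},2)$. However, there is one genuine error in your argument and one difference in route worth recording.

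\textbf{The error.} You deduce $\kappa^{<\kappa}=\kappa$ from ``Mahlo $\Rightarrow$ inaccessible $\Rightarrow$ strong limit''. Under this paper's conventions (see \S1.2), \emph{inaccessible} means merely a regular uncountable limit cardinal, so a Mahlo $\kappa$ need not be a strong limit and your inference fails. The paper instead observes that $\diamondsuit(E)$ already entails $\kappa^{<\kappa}=\kappa$; that is the correct justification here, and it is exactly why the hypothesis $\diamondsuit(E)$ (rather than mere $\clubsuit(E)$) appears.

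\textbf{The route.} The paper neither reduces the $\square(E)$ case to the second case nor builds the witnessing $C$-sequence by hand. It simply quotes \cite{paper23} twice: Corollary~4.19(2) there shows that $\square(E)$ together with $\diamondsuit(E)$ on a nonreflecting $E$ yields the stronger $\p^-(\kappa,2,{\sq^*},1,\{S\},2)$ for \emph{every} stationary $S\s\kappa$ (so in particular for $S=\reg(\kappa)$), with no detour through a nonreflecting subset of $\reg(\kappa)$; and Corollary~4.27 there (extended in the obvious way when $\kappa$ is not a strong limit) yields $\p^-(\kappa,\kappa^+,{\sq^*},1,\{S\},2)$ directly from $\diamondsuit(E)$ and a nonreflecting stationary $S\s\reg(\kappa)$. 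Your hand-built construction for the second case is in the right spirit and close to how the cited results are actually proved, but as you yourself flag, the clause ``successive points of $D_\alpha$ separate the elements of $E\cap\alpha$'' is not in general achievable (a single gap of a club disjoint from $E$ may contain many points of $E$), so the consecutiveness of the inserted pair requires more bookkeeping than your sketch supplies.
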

\begin{proof} Recall that $\diamondsuit(E)$ implies $\kappa^{<\kappa}=\kappa$.
So, by Theorem~\ref{thm58},
it suffices to prove that $\p^-(\kappa,\kappa^+,{\sq^*},1,\{S\},2)$ holds for some stationary subset $S$ of $\reg(\kappa)$.

$\br$  If $\square(E)$ holds, then by \cite[Corollary~4.19(2)]{paper23},
$\p^-(\kappa,2,{\sq^*},1,\{S\},2)$ holds for every stationary $S\s\kappa$.

$\br$ Suppose that $S$ is a nonreflecting stationary subset of $\reg(\kappa)$.
By \cite[Corollary~4.27]{paper23}, if in addition $\kappa$ is a strong limit, then $\p^-(\kappa,\kappa	,{\sq^*},1,\{S\},2)$ holds.
The same proof shows that, in the general case, $\p^-(\kappa,\kappa^+,{\sq^*},1,\allowbreak\{S\},2)$ holds.
\end{proof}

We can now derive Theorem~D.
\begin{cor} Suppose that $V=L$. For every regular uncountable cardinal $\kappa$ and every regular cardinal $\chi\le\chi(\kappa)$,
$\pl_2(\kappa,\kappa,\chi)$ holds.
\end{cor}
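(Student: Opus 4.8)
The plan is to combine two trivial observations with the principles available in $L$ and feed them into the machinery already assembled in Sections~\ref{pumpup}--\ref{proxysect}. First, $\pl_2$ is monotone downward in its last parameter: a transformation $\mathbf t$ witnessing $\pl_2(\kappa,\kappa,\chi_0)$ also witnesses $\pl_2(\kappa,\kappa,\chi)$ for every $\chi\le\chi_0$, since the second clause of the definition only ranges over $\sigma<\chi$. Thus, for each $\kappa$ it suffices to produce a single witness at a cardinal $\chi_0\ge\chi(\kappa)$. Second, I will use the computation of the $C$-sequence number from \cite{paper35}, namely $\chi(\mu^+)=\cf(\mu)$ for every infinite cardinal $\mu$ (so $\chi(\mu^+)=\mu$ when $\mu$ is regular), while $\chi(\kappa)\le\kappa$ for every $\kappa$ and $\chi(\kappa)=0$ when $\kappa$ is weakly compact. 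Throughout I work in $L$, where $\diamondsuit(S)$ holds for every stationary $S$, $\square_\mu$ holds for every $\mu$, and $\square(\kappa)$ holds for every non-weakly-compact regular $\kappa$; by \cite{paper22}, $\square(\kappa)$ (resp.\ $\square_\mu$) yields $\boxtimes^-(\kappa)$ (resp.\ $\boxtimes^-(\mu^+)$), so $\boxtimes^-(\kappa)$ is available in every case below.

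I would then split into cases on the type of $\kappa$. If $\kappa$ is weakly compact, then $\chi(\kappa)=0$ and there is nothing to prove. If $\kappa$ is inaccessible but not weakly compact, then $\boxtimes^-(\kappa)$ and $\diamondsuit(\kappa)$ both hold, so Theorem~\ref{thm54} yields $\pro{\kappa}{\kappa}{1}{\kappa}{\kappa}$, hence $\pr_1(\kappa,\kappa,\kappa,\kappa)$; Corollary~\ref{cor48}(2) then yields $\pl_2(\kappa,\kappa,\kappa)$, and as $\chi(\kappa)\le\kappa$, monotonicity finishes this case. If $\kappa=\mu^+$ for an infinite regular $\mu$, then $(\mu^+)^{\aleph_0}=\mu^+$ (by \gch) and, fixing a nonreflecting stationary $S\s\mu^+$ (for $\mu=\aleph_0$ every stationary set is such; for $\mu>\aleph_0$ these exist in $L$), $\clubsuit(S)$ holds; hence Corollary~\ref{cor73}(2) yields $\pl_2(\mu^+,\mu^+,\mu)$, which is exactly the top value since $\chi(\mu^+)=\mu$.

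The remaining and principal case is $\kappa=\mu^+$ for singular $\mu$, with target $\chi_0:=\cf(\mu)=\chi(\mu^+)$. Here $\boxtimes^-(\mu^+)$ holds, so by Corollary~\ref{cor48}(2) it suffices to secure $\pr_1(\mu^+,\mu^+,\mu^+,\cf(\mu))$. For this I would fix in $L$ a nonreflecting stationary set $\Gamma\s E^{\mu^+}_{\cf(\mu)}$ (available via $\square_\mu$); then $\Gamma$ is nonreflecting, $\Gamma\cap E^{\mu^+}_{\ge\cf(\mu)}=\Gamma$ is stationary, and $\cf(\mu)<\cf(\mu)^+<\mu^+$, so the main result of \cite{paper15} (applied exactly as in the corollary following Theorem~\ref{thm45}) delivers $\pr_1(\mu^+,\mu^+,\mu^+,\cf(\mu))$. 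Corollary~\ref{cor48}(2) then gives $\pl_2(\mu^+,\mu^+,\cf(\mu))$, and monotonicity together with $\chi(\mu^+)=\cf(\mu)$ completes the proof.

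The main obstacle is this last case, where one must match the attainable strength precisely to the ceiling $\chi(\kappa)$. By Theorem~C the coloring $\pr_1(\mu^+,\mu^+,\mu^+,\cf(\mu))$ cannot be pushed to any larger cardinal, so the argument genuinely depends on the value $\chi(\mu^+)=\cf(\mu)$ of \cite{paper35} (only the upper bound $\chi(\mu^+)\le\cf(\mu)$ is needed) to know that $\cf(\mu)$ is the correct and exact top value and that no regular $\chi$ is omitted. Producing, in $L$, both the nonreflecting stationary set concentrated on cofinality $\cf(\mu)$ and the $\square_\mu$-derived $\boxtimes^-(\mu^+)$ is what makes Corollary~\ref{cor48}(2) applicable. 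By comparison, the regular-successor and inaccessible cases are routine appeals to the corollaries already established.
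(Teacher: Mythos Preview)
Your argument in the weakly-compact, inaccessible-not-weakly-compact, and successor-of-regular cases is correct (the paper happens to route the inaccessible case through Corollary~\ref{cor77} and the successor-of-regular case through Corollary~\ref{cor73}(1), but your choices of Theorem~\ref{thm54}/Corollary~\ref{cor48}(2) and Corollary~\ref{cor73}(2) work just as well).

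There is, however, a genuine gap in the successor-of-singular case, and it stems from a miscomputation of the $C$-sequence number. The value from \cite{paper35} is $\chi(\mu^+)=\mu$, not $\cf(\mu)$, for \emph{every} infinite cardinal $\mu$ (see \cite[Lemma~2.2(5)]{paper35}). Hence, for singular $\mu$, the target set of regular cardinals $\chi\le\chi(\mu^+)$ is all of $\reg(\mu)$, which is cofinal in $\mu$; your argument only delivers $\pl_2(\mu^+,\mu^+,\cf(\mu))$ and therefore misses every regular $\chi$ with $\cf(\mu)<\chi<\mu$. Worse, your monotonicity strategy cannot be salvaged by aiming at a higher single value: Theorem~C(1) gives that $\pr_1(\mu^+,\mu^+,2,\mu)$ fails, hence $\pl_2(\mu^+,\mu^+,\mu)$ fails, so there is no single $\chi_0$ whose instance of $\pl_2$ covers all regular $\chi<\mu$ by downward monotonicity.

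The paper handles this case by working one $\chi$ at a time: for each regular $\chi<\mu$, $L$ provides a nonreflecting stationary subset of $E^{\mu^+}_\chi$, the main result of \cite{paper15} then yields $\pr_1(\mu^+,\mu^+,\mu^+,\chi)$, and Corollary~\ref{cor48}(2) (using $\boxtimes^-(\mu^+)$, which does hold in $L$ by \cite[Corollary~1.10(5)]{paper22}) upgrades this to $\pl_2(\mu^+,\mu^+,\chi)$. Your own machinery is already set up for this; you simply need to run it for each regular $\chi<\mu$ rather than only for $\chi=\cf(\mu)$.
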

\begin{proof} There are four cases to consider:

$\br$ If $\kappa=\mu^+$ for $\mu$ regular, 
then by \cite[Lemma~2.2(5)]{paper35}, $\chi(\kappa)=\mu$,
and by \cite{jensen72}, $\diamondsuit(E^{\mu^+}_\mu)$ holds, 
so by Corollary~\ref{cor73}(1), $\pl_2(\kappa,\kappa,\mu)$ holds.

$\br$ If $\kappa=\mu^+$ for $\mu$ singular, 
then by \cite[Lemma~2.2(5)]{paper35}, $\chi(\kappa)=\mu$,
and by \cite{jensen72}, for every regular $\chi\le\mu$, there exists a nonreflecting stationary subset of $E^\kappa_\chi$,
so by the main result of \cite{paper15}, $\pr_1(\kappa,\kappa,\kappa,\chi)$ holds.
In addition, by \cite[Corollary~1.10(5)]{paper22}, $\boxtimes^-(\kappa)$ holds,
so by Corollary~\ref{cor48}(2), $\pl_2(\kappa,\kappa,\chi)$ holds.

$\br$ If $\kappa$ is inaccessible which is not weakly compact, then by \cite[Lemma~2.2(5)]{paper35}, $\chi(\kappa)=\kappa$, 
and by \cite{jensen72}, there exists a nonreflecting stationary subset $E\s\kappa$ such $\diamondsuit(E)$ and $\square(E)$ both hold.
So, by Corollary~\ref{cor77}, $\pl_2(\kappa,\kappa,\kappa)$ holds.

$\br$ If $\kappa$ is weakly compact, then $\chi(\kappa)=0$, so there is nothing to prove here.
\end{proof}

\section*{Acknowledgments}
The main results of this paper were presented by the first author
at an online meeting of the Toronto Set Theory Seminar, February 2021.
He thanks the organizers for the invitation and the participants for their feedback.

The first author is partially supported by the European Research Council (grant agreement ERC-2018-StG 802756) and by the Israel Science Foundation (grant agreement 2066/18).
The second author is supported by the Foreign Postdoctoral Fellowship Program of the Israel Academy of Sciences and Humanities and by the Israel Science Foundation (grant agreement 2066/18).

\end{document}